\documentclass{article}
\usepackage[utf8]{inputenc}
\usepackage{amsmath}
\usepackage{amsthm}
\usepackage{amsfonts}
\usepackage{amssymb, amsbsy}
\usepackage{color}
\usepackage{hyperref}
\usepackage{enumitem}
\usepackage{comment}

\usepackage[a4paper, textwidth=14cm, top=3cm]{geometry}

\usepackage{mathtools}
\usepackage{cancel}

\title{Extension operators and Korn inequality for variable coefficients in perforated domains with applications to homogenization of viscoelastic non-simple materials}
\author{Markus Gahn}

\date{}

\newcommand{\R}{\mathbb{R}}
\newcommand{\N}{\mathbb{N}}
\newcommand{\Z}{\mathbb{Z}}
\newcommand{\vareps}{\varepsilon}
\newcommand{\oeps}{\Omega_{\varepsilon}}
\newcommand{\geps}{\Gamma_{\varepsilon}}
\newcommand{\veps}{v_{\varepsilon}}
\newcommand{\ueps}{u_{\varepsilon}}

\newcommand{\weps}{w_{\varepsilon}}
\newcommand{\tveps}{\widetilde{v}_{\varepsilon}}
\newcommand{\tueps}{\widetilde{u}_{\varepsilon}}
\newcommand{\dueps}{\dot{u}_{\varepsilon}}
\newcommand{\dtueps}{\dot{\widetilde{u}}_{\varepsilon}}
\renewcommand{\oe}{\Omega_{\varepsilon}}

\newcommand{\E}{\mathcal{E}}

\newcommand{\hY}{\widehat{Y}}

\newcommand{\per}{\mathrm{per}}

\newcommand{\teps}{\mathcal{T}_{\varepsilon}}
\newcommand{\meps}{\mathcal{M}_{\varepsilon}}

\newcommand{\fxe}{\frac{x}{\vareps}}

\newcommand{\Meps}{\mathcal{M}_{\varepsilon}}

\newcommand{\ie}{i.\,e.,\,}

\newcommand{\rightwts}[1]{\xrightharpoonup[]{2,#1}}
\newcommand{\rightsts}[1]{\xrightarrow[]{2,#1}}

\newcommand{\uepstau}[1]{u_{\varepsilon,\tau}^{(#1)}}
\newcommand{\lepstau}[1]{l_{\varepsilon,\tau}^{(#1)}}
\newcommand{\fepstau}[1]{f_{\varepsilon,\tau}^{(#1)}}
\newcommand{\gepstau}[1]{g_{\varepsilon,\tau}^{(#1)}}
\newcommand{\tuepstau}[1]{\tilde{u}_{\varepsilon,\tau}^{(#1)}}
\newcommand{\huepstau}{\widehat{u}_{\varepsilon,\tau}}
\newcommand{\dhuepstau}{\dot{\widehat{u}}_{\varepsilon,\tau}}
\newcommand{\buepstau}{\overline{u}_{\varepsilon,\tau}}
\newcommand{\lbuepstau}{\underline{u}_{\vareps,\tau}}
\newcommand{\blepstau}{\overline{l}_{\varepsilon,\tau}}

\newtheorem{theorem}{Theorem}[section]
\newtheorem{corollary}[theorem]{Corollary}
\newtheorem{lemma}[theorem]{Lemma}
\newtheorem{proposition}[theorem]{Proposition}
\newtheorem{remark}[theorem]{Remark}
\newtheorem{definition}[theorem]{Definition}

\newcommand{\id}{\mathrm{id}}
\newcommand{\Yepsid}{\mathcal{Y}_{\mathrm{id},\vareps}}
\newcommand{\zeps}{z_{\varepsilon}}
\newcommand{\dtau}{\delta_{\tau}}
\newcommand{\Reps}{R_{\varepsilon}}
\newcommand{\ssubset}{\subset\joinrel\subset}

\newcommand{\theps}{\widetilde{h}_{\varepsilon}}

\begin{document}

\maketitle
\vspace{-2em}
\begin{center}
\begin{minipage}{25em}
\centering\textit{Institute for Mathematics, University Heidelberg, \\
Im Neuenheimer Feld 205, 69120 Heidelberg, Germany.}
\end{minipage}
\end{center}

\vspace{2em}
\begin{abstract}
In this paper we present the homogenization for nonlinear viscoelastic second-grade non-simple perforated materials at large strain in the quasistatic setting. The reference domain $\oeps$ is periodically perforated and is depending on the scaling parameter $\vareps$ which describes the ratio between the size of the whole domain and the small periodic perforations. The mechanical energy depends on the gradient and also the second gradient of the deformation, and also respects positivity of the determinant of the deformation gradient. For the viscous stresses we assume dynamic frame indifference and is therefore depending of the rate of the Cauchy-stress tensor. For the derivation of the homogenized model for $\vareps \to 0$ we use the method of two-scale convergence. For this uniform \textit{a priori} estimates with respect to $\vareps$ are necessary. The most crucial part is to estimate the rate of the deformation gradient. Due to the time-dependent frame indifference of the viscous term, we only get coercivity with respect to the rate of the Cauchy-stress tensor. To overcome this problem we derive a Korn inequality for non-constant coefficients on the perforated domain. The crucial point is to verify that the constant in this inequality, which is usually depending on the domain, can be chosen independently of the parameter $\vareps$. Further, we construct an extension operator for second order Sobolev spaces on perforated domains with operator norm independent of $\vareps$. 
\end{abstract}

\section{Introduction}

The aim of this paper is the rigorous homogenization of a microscopic model for viscoelastic  (second-grade) non-simple  periodically perforated materials at large strains in the quasistatic setting, where our model in particular includes dynamic frame indifference for the viscous stresses. For this, we develop new multi-scale techniques adapted to the underlying microscopic problem, such as a Korn inequality with non-constant coefficients in perforated domains
%, general two-scale compactness results for second order Sobolev spaces,
 and an extension operator for second order Sobolev functions defined on the perforated domain. We emphasize that these methods are not restricted to the micro-model treated in this paper, but also can be applied to more general problems including  for example additional transport processes.  Hence, the contribution of the paper is two-folded: Derivation of general multi-scale techniques arising in particular in continuum mechanics, as well as the homogenization of a viscoelastic model for non-simple materials.

Our microscopic model is motivated by \cite{mielke2020thermoviscoelasticity}, see also \cite[Chapter 9]{kruvzik2019mathematical} for an overview, where existence for a thermoviscoelastic model was shown by using a staggered time-discretization together with a regularization argument. This method was later improved in \cite{badal2023nonlinear}, where they where able to avoid the regularization argument. For the pure viscoelastic case with self-penetration we refer to \cite{Kroemer2020}. In our paper we only consider the pure viscoelastic problem (\cite{mielke2020thermoviscoelasticity} with constant temperature). More precisely, we consider the differential equation
\begin{align*}
- \nabla \cdot \left( \partial_F W_{\vareps}(\nabla \ueps) + \partial_{\dot{F}} R_{\vareps} (\nabla \ueps , \nabla \dueps) - \nabla \cdot (\partial_G H_{\vareps}(\nabla^2 \ueps)) \right) &= f_{\vareps} &\mbox{ in }& (0,T) \times \oeps
\end{align*}
for a fixed finite time $T>0$ and the microscopic periodically perforated domain $\oeps$, where the small parameter $\vareps$ describes the ratio between the macroscopic domain and the micro-cells (perforations). The system is closed by suitable boundary conditions. Here, $W_{\vareps}$ is the elastic energy potential, $H_{\vareps}$ the convex strain gradient energy potential, and $R_{\vareps}$ the potential of dissipative forces. Our aim is the derivation of a macroscopic model for $\vareps\to 0$ formulated on a fixed macroscopic domain $\Omega$.

For fixed $\vareps$ the existence of a weak solution was already established in \cite{mielke2020thermoviscoelasticity} and \cite{badal2023nonlinear}. However, for the homogenization process it is crucial to have suitable \textit{a priori} estimates uniformly with respect to the scaling parameter $\vareps$. These estimate build the basis when passing to the limit $\vareps \to 0$. Therefore, the first essential step is the derivation of such \textit{a priori} estimates. For this we follow the existence proof in \cite{badal2023nonlinear} (resp. \cite{mielke2020thermoviscoelasticity}) via a time-discretization, where in every time step a minimization problem is solved. Now, the continuous solution can be obtained as the approximation via piecewise constant/linear interpolations, where uniform \textit{a priori} estimates with respect to the time step are necessary.  To obtain $\vareps$-uniform estimates for the microscopic solution, we show that the estimates for the discrete solutions are also independent of $\vareps$. Such estimates are obtained via the assumptions on the data and general inequalities like Korn-, Poincar\'e, Sobolev- or the trace-inequality. These inequalities include constants depending on the geometry of the domain and therefore on $\vareps$, and we have to exhibit the explicit dependence on $\vareps$. We emphasize that in \cite{mielke2020thermoviscoelasticity} a balance equation for the mechanical energy was derived, which would allow to establish $\vareps$-uniform \textit{a priori} estimates without using the time-discretization. However, here we prove the estimates via the time-discretization for several reasons: First of all, for the reader who is not familiar with the results in \cite{badal2023nonlinear} and \cite{mielke2020thermoviscoelasticity} we can repeat/summarize the existence proof for the pure viscoelastic problem, which is much simpler as for the  thermoviscoelastic case. Secondly, our methods also include the techniques for the treatment of (stationary) problems for non-simple second-grade materials. Finally, our results are also a first step in the treatment of the much more complicated homogenization for the thermovisoelastic problem, where we expect to obtain \textit{a priori} estimates only via the time-discretization, since the heat equation has no gradient structure.

Our model includes dynamic frame indifference for the viscous stresses what means that the material response is unaffected by time-depending rigid body motions. Thus the viscous term including $\partial_{\dot{F}}R_{\vareps}(\nabla \ueps,\nabla \dueps)$ is depending on the Cauchy-Green tensor $\nabla \ueps \nabla\ueps^{\top}$ and its rate $\nabla \dueps^{\top} \nabla \ueps + \nabla \ueps^{\top} \nabla \dueps$. Hence, to control the rate of the deformation gradient by the rate of the Cauchy-Green tensor we need an inequality of the form
\begin{align}\label{ineq:introduction}
\int_{\oeps} | \nabla \dueps|^2 dx \le C_{\vareps} \int_{\oeps} \left| \nabla \ueps^{\top} \nabla \dueps + \nabla \dueps^{\top} \nabla \ueps \right|^2 dx.
\end{align}
This is a Korn inequality with the non-constant coefficients $\nabla \ueps$ on the perforated domain $\oeps$. Such an inequality was shown in \cite[Theorem 3.3]{mielke2020thermoviscoelasticity} for $\{\nabla \ueps\}_{\vareps >0}$ compact in the space of continuous matrix valued functions $C^0(\overline{\Omega})^{n\times n}$, positive determinant of the deformation gradient $\det (\nabla \ueps)\geq \kappa >0$, and for fixed domains $\Omega$ (not depending on $\vareps$). Hence, we first have to establish that the sequence of deformation gradients has determinant bounded away from zero uniformly with respect to $\vareps$. However, the most crucial point is to show that the Korn inequality $\eqref{ineq:introduction}$ with $A \in C^0(\overline{\Omega})^{n\times n}$ instead of $\nabla \ueps$ and $\det (A) \geq \kappa >0$ is valid with a constant $C_{\vareps}>0$ which can be chosen independently of $\vareps$. For this we approximate $A$ by  $A_{\vareps}$ constant on every micro-cell and construct an extension operator $E_{A_{\vareps}}: W^{1,p}(\oeps)^n \rightarrow W^{1,p}(\Omega)^n$ such that
\begin{align*}
\left\|A_{\vareps}^{\top}\nabla\left(E_{A_{\vareps}}\veps\right) +  \nabla\left(E_{A_{\vareps}}\veps\right)^{\top} A_{\vareps} \right\|_{L^p(\Omega)} \le C \left\| A_{\vareps}^{\top} \nabla \veps + \nabla \veps^{\top} A_{\vareps} \right\|_{L^p(\oeps)}.
\end{align*}
This allows to transfer the problem to the fixed domain $\Omega$ where we can use a perturbation argument (since $A_{\vareps}$ approximates $A$) and the Korn inequality with continuous non-constant coefficients from \cite{pompe2003korn}.

To pass to the limit $\vareps \to 0$ in the microscopic problem we use the method of two-scale convergence, see \cite{Allaire_TwoScaleKonvergenz,Nguetseng}, and the unfolding method, see \cite{Cioranescu_Unfolding1} and especially the monograph \cite{CioranescuGrisoDamlamian2018}.  
Since the microscopic deformation has values in the second order Sobolev space $W^{2,p}(\oeps)^n$ and our problem is formulated on the perforated domain $\oeps$ we have to extend these microscopic solutions to the whole macroscopic domain $\Omega$. For this we construct an extension operator from $W^{2,p}(\oeps)$ to $W^{2,p}(\Omega)$ such that the norm for the extended function can be controlled by the norm of the function itself uniformly with respect to $\vareps$. We emphasize that since the rate of the deformation gradient is only an element of $H^1(\oeps)^n$, this extension operator has to act simultaneously on $W^{2,p}(\oeps)$ and $H^1(\oeps)$, or in other words it has to be total. In this way we can treat sequences on the perforated domain $\oeps$ as sequences defined on the whole macroscopic domain $\Omega$. Another important consequence of such extension operators is the possibility to establish general inequalities (like for example Poincar\'e inequality or emeddings) with constants independent of $\vareps$, which is crucial to obtain \textit{a priori} estimates for the microscopic solution.
\\
For the derivation of the macroscopic model for $\vareps \to 0$ 
we use the (two-scale) compactness results obtained from the $\vareps$-uniform \textit{a priori} bounds for the microscopic solution given in the space
\begin{align*}
L^{\infty}((0,T),W^{2,p}(\oeps))^n\cap H^1((0,T),H^1(\oeps))^n.
\end{align*}
We obtain strong convergence results for the deformation $\ueps$ and its gradient $\nabla \ueps$, or more precisely for their extensions, and also the weak two-scale convergence for the rate of the deformation gradient $\nabla \dueps$. This allows to pass to the limit in the first order terms. The critical part is the convergence of the nonlinear second order term, since we only have the weak two-scale convergence of $\nabla^2 \ueps$. To overcome this problem we use some kind of Minty-trick. In the macroscopic model the first order terms (depending on the gradient of the deformation and its rate) occur as averaged quantities, while for the second order term depending on the second gradient we have to solve a nonlinear cell problem.
\\

\noindent\textbf{Literature overview:} As already mentioned above existence for the microscopic problem for fixed $\vareps$ was already established for the more general thermoviscoelastic case in \cite{mielke2020thermoviscoelasticity}, and later with an improved time-discretization in \cite{badal2023nonlinear}. In \cite{Kroemer2020} the pure viscoelastic problem with self-penetration was analyzed. We also refer to the monograph \cite{kruvzik2019mathematical} for an overview of such problems and also some results including inertial forces are given.
\\
There is a huge literature for  homogenization in linear elasticity, where we in particular mention the monograph \cite{Oleinik1992}.  Also for nonlinear problems in elasticity without viscosity a lot of results have been obtained. Here we have to mention the seminal works \cite{Marcellini1978} for convex integrals and \cite{Braides1985,Mueller1987} for non-convex integrals, where the method of $\Gamma$-convergence (see for example \cite{dal1993introduction}) introduced in \cite{de1975tipo} was used. A nonlinear Kelvin-Voigt model for viscoelastic materials with small deformations was considered in \cite{Visintin2006}. In contrast, to the best of the authors knowledge, for nonlinear viscoelastic problems no rigorous homogenization results exist. Here a crucial problem is the $\vareps$-uniform positive lower bound for the determinant of the deformation gradient. In \cite{GahnPop2023Mineral} and \cite{WIEDEMANN2023113168} a reaction-diffusion equation in an evolving microstructure depending on the concentration of the system was homogenized. However, in this case the evolution of the geometry (the free boundary) was given by an ordinary differential equation with strong constraints, such that the evolution of the perforated domain reduces to a one-dimensional problem, and the control of the determinant from below becomes much more simple. In \cite{nika2024effective} they homogenized a linear problem for elastic non-simple second-grade materials on a fixed domain ($\vareps$-independent).
A Korn inequality for non-constant coefficients in perforated domains was shown in \cite{MR5411390-Wiedemann-Peter-2024} for functions with vanishing traces on (a part of) the perforations in every micro-cell. This allows to use a decomposition argument and use the Korn inequality from \cite{mielke2020thermoviscoelasticity} respectively \cite{pompe2003korn} on the reference element. In our setting this procedure is not possible what makes the situation much more complicated.
Finally, we mention the work \cite{braides2000quasiconvexity} where homogenization results in the context of $\Gamma$-convergence for quasiconvex integrals under differential constraints were obtained. Especially, those results include cell problems for integrals depending on higher order derivatives.
\\

\noindent Finally, let us summarize the main contributions of the paper:
\begin{itemize}
\item Korn inequality with non-constant coefficients on perforated domains;
\item Norm-preserving extension operator for second order Sobolev spaces;
\item $\vareps$-uniform positive lower bound for the determinant of the deformation gradient for sequences with bounded mechanical energy;
\item Homogenization (including $\vareps$-uniform \textit{a priori} estimates) for the microscopic model for viscoelastic second-grade non-simple materials in perforated domains.
\end{itemize}\

The paper is organized as follows: In Section \ref{sec:Micro_Model} we formulate the microscopic model and its underlying geometry. Further, we give the assumptions on the data. In Section \ref{sec:Main_Results} we formulate the main results of the paper and the macroscopic model.  
The Korn inequality for perforated domains with non-constant coefficients is shown in Section \ref{sec:Korn_inequality} (for the case $\Omega \setminus \oeps$ disconnected). In Section \ref{sec:Extension_second_order} we construct the extension operator on $W^{2,p}(\oeps)$.
Existence for the microscopic problem with $\vareps$-uniform \textit{a priori} estimates are shown in Section \ref{sec:existence_a_priori_estimates}. Finally, in Section \ref{sec:derivation_macro_model} we derive the macroscopic model for $\vareps \to 0$. In the Appendix \ref{sec:auxiliary_results} we formulate some technical results, in particular general inequalities on perforated domains and also a parabolic embedding result. The concept of two-scale convergence and unfolding method with some basic properties is introduced in Appendix \ref{SectionTwoScaleConvergence}. The proof of the Korn inequality for $\Omega \setminus \oeps$ connected is finally given in Appendix \ref{sec:Korn_connected}.

\subsection{Notations}

We denote be $E_n\in \R^{n\times n}$ the identity matrix and $\id:\R^n\rightarrow \R^n$ with $\id(x)=x$ stands for the identity map. Further we set $GL^+(n):=\{F\in \R^{n\times n} \, : \, \det(F)>0\}$. For a bounded Lipschitz domain $\Omega \subset \R^n$ we denote by $\nu$ the outer unit normal on $\partial \Omega$. The inner products between vectors $a,b\in \R^n$, matrices $A,B \in \R^{n\times n}$, and 3rd order tensors $T,S\in \R^{n\times n\times n}$ we denote by 
\begin{align*}
a\cdot b:= \sum_{i=1}^n a_i b_i,\quad A:B:= \sum_{i,j=1}^n A_{ij}B_{ij}, \quad T\vdots S:= \sum_{i,j,k=1}^n T_{ijk}S_{ijk}.
\end{align*}

Let $n,d\in \N$, then for $\Omega\subset \R^n$ we denote by $L^p(\Omega)^d, \, W^{1,p}(\Omega)^d $ the standard Lebesgue and Sobolev spaces with $p \in [1,\infty]$. Especially, for $p=2$ we write $H^1(\Omega)^d:= W^{1,2}(\Omega)^d$. For $\Omega$ a Lipschitz domain and $\Gamma \subset \partial \Omega$ we define
\begin{align*}
W_{\Gamma}^{1,p}(\Omega) : = \left\{ u \in W^{1,p}(\Omega) \, : \, u=0 \mbox{ on } \Gamma \right\}.
\end{align*}
 For the norms we neglect the upper index $d$, for example we write $\|\cdot\|_{L^p(\Omega)}$ instead of $\|\cdot \|_{L^p(\Omega)^d}$. For a Banach space $X$ and $p \in [1,\infty]$ we denote the usual Bochner spaces by $L^p(\Omega,X)$. For the dual space of $X$ we use the notation $X'$. Further, we consider the following periodic function spaces. Let $Y:= (0,1)^n$ be the unit cell in $\R^n$, then $C^{\infty}_{\per}(Y)$ is the space of smooth functions on $\R^n$ which are $Y$-periodic. $W^{k,p}_{\per}(Y)$ for $k\in \N_0$ is the closure of $C^{\infty}_{\per}(Y)$ with respect to the norm on $W^{k,p}(Y)$. For a subset $Y_s\subset Y$ we denote by $W^{k,p}_{\per}(Y_s)$ the space of  functions from  $W^{k,p}_{\per}(Y)$ restricted to $Y_s$.

The (weak) gradient of a function $f: \R^n\supset\Omega \rightarrow \R^m$ is denoted by $\nabla f$ with $(\nabla f)_{ij} = \partial_j f_i$ for $i=1,\ldots,m$ and $j=1,\ldots,n$. We emphasize that this is a slight abuse of notation since the gradient of a scalar valued function is usually the transpose of the Jacobi matrix, while here the gradient is equal to the Jacobi matrix. However, we would like to keep the notation close to the related literature. We also identify the gradient of a scalar function with a vector in $\R^n$. Further, we denote the second gradient by $\nabla^2 f: \Omega \rightarrow \R^{m\times n \times n}$ with $(\nabla^2 f)_{ijk} = \partial_{jk} f_i = \partial_{kj} f_i$ for $j,k\in \{1,\ldots,n\} $ and $i \in \{1,\ldots,m\}$. 
For scalar valued functions $W:\R^{n \times n} \rightarrow \R$ and $H:\R^{n\times n \times n } \rightarrow \R$ we identify their Fr\'echet derivatives $\partial_F W(F)$ for $F\in \R^{n\times n}$ and $\partial_G H(G) $ for $G \in \R^{n\times n \times n}$ with their related transformation matrix respectively 3rd order tensor with respect to the standard basis in $\R^{n\times n}$ and $\R^{n\times n \times n}$. 
The symmetric gradient for a (weakly differentiable) function $u:\R^n \supset \Omega \rightarrow \R^n$ is denoted by $e(u):=\frac12 (\nabla u + \nabla u^{\top})$.

Next,  we introduce some notations which are necessary for the formulation of the strong microscopic and macroscopic model. For the analysis we only work with the associated weak formulation, for which the following notations are not necessary. Hence, the reader who is only interested in the analysis and not in the modelling may skip this part.
For $G \in \R^{n\times n\times n} $ and a vector $a \in \R^n$ we define
\begin{align*}
Ga\in \R^{n\times n}, \quad (Ga)_{ij} = \sum_{k=1}^n G_{ijk}a_k \quad\mbox{for } i,j=1,\ldots,n,
\end{align*}
and for $A \in \R^{n\times n}$ we set
\begin{align*}
G:A \in \R^n, \quad (G:A)_i := \sum_{j,k=1}^n G_{ijk} A_{jk} \quad\mbox{ for } i =1, \ldots,n.
\end{align*}
The divergence of a matrix valued function $A:\R^n \supset \Omega \rightarrow \R^{n\times n}$ (smooth enough) is defined  for every row
\begin{align*}
(\nabla \cdot A)_i = \nabla \cdot (A^{\top}e_i) = \sum_{j=1}^n \partial_j A_{ij} \quad\mbox{ for } i=1,\ldots,n.
\end{align*}
The divergence of 3rd-order tensor valued functions $G:\R^n \supset \Omega \rightarrow \R^{n\times n \times n }$ is defined by
\begin{align*}
(\nabla \cdot G)_{ij} = \sum_{k=1}^n \partial_k G_{ijk} \mbox{ for } i,j=1,\ldots,n.
\end{align*}
For a smooth vector field $f$ in the neighborhood of $\partial \Omega$ we define its surface divergence by
\begin{align*}
\nabla_s \cdot f := \nabla \cdot f - \nu \cdot (\nabla f)\nu.
\end{align*}
For a smooth matrix valued function $A$ we define its surface divergence by
\begin{align*}
(\nabla_s \cdot A)_i:= \nabla_s \cdot (A^{\top} e_i).
\end{align*}
In other words, it is defined for every row.

\section{Microscopic model}
\label{sec:Micro_Model}

In this section we introduce the microscopic model. We start with the underlying microscopic geometry.
For $n \in \N$, $n \geq 2$ and $a,b \in \Z^n$ such that $a_i < b_i$ for all $i = 1, \dots n$, we consider the hyper-rectangle $\Omega = (a,b)\subset \R^n$ as the macroscopic domain. Further $\epsilon >0$ is a sequence of small parameters such that $\epsilon^{-1} \in \N$. 
$\oeps \subset \Omega$ is the periodically perforated microscopic domain constructed as follows. With $Y:=(0,1)^n$, we let $Y_s \subset Y$ be a connected subdomain with opposite faces matching each other, \ie for $i=1,\ldots, n$ it holds that
\begin{align*}
\partial Y_s \cap \{x_i = 0\} + e_i = \partial Y_s \cap \{x_i = 1\}.
\end{align*}
We define $\Gamma:= \partial Y_s \setminus \partial Y$ and assume that $\Gamma $ is a Lipschitz boundary. Further, let 
$K_{\epsilon}:= \left\{k \in \Z^n\, : \, \epsilon(Y + k) \subset \Omega \right\}$. Clearly, $\Omega = \mathrm{int} \left(\bigcup_{k \in K_{\epsilon}}  \epsilon \big(\overline{Y} + k \big) \right)
$, where \textit{int} denotes the interior of a set. Now we define $\oe$ by
\begin{align*}
\oe:= \mathrm{int} \left(\bigcup_{k \in K_{\epsilon}}  \epsilon \big(\overline{Y_s} + k \big) \right)
\end{align*}
and the oscillating  (inner) boundary $\geps$ as 
\begin{align*}
\geps := \partial \oe \setminus \partial \Omega.
\end{align*}
We assume that $\oeps $ is connected and $\geps$ is a Lipschitz boundary. Hence, we have $|\partial Y_s \cap \{x_i=0\}| >0$ for $i=1,\ldots,n$.  We emphasize that the complement $\Omega \setminus \oeps$ may be connected (if $n\geq 3$) or disconnected. 
Further, let $\Gamma^N,\Gamma^D\subset \partial \Omega$ with $\Gamma^N \cap \Gamma^D = \emptyset$, $\overline{\Gamma^D} \cup \overline{\Gamma^N} = \partial \Omega $, and $|\Gamma^D|>0$. We now define 
\begin{align*}
\geps^D := \mathrm{int}\left( \partial \oeps \cap  \Gamma^D\right),\qquad \geps^N:= \mathrm{int}\left( \partial \oeps \cap \Gamma^N\right).
\end{align*}
If $Y \setminus Y_s $ is strictly included in $Y$ (in other words $\Omega \setminus \oeps$ is disconnected), we have $\geps^D = \Gamma^D$ and $\geps^N = \Gamma^N$.
We assume that for every $\vareps$ the sets $\Gamma^D$ and $\Gamma^N$ can be decomposed into small micro-cells $\vareps \left(S\left((0,1)^{n-1} \times \{0\}\right) + k\right) $ with a rotation $S\in \R^{n\times n}$ and $k \in \Z^n$. A simple example would be that $\Gamma^N$ and $\Gamma^D$ are sides of the hyper-rectangle $\Omega$.

\begin{remark}
We made the assumption that $\Omega$ is a rectangle. This could be easily generalized to domains which are unions of rectangles, and a sequence $\vareps$ such that this domain can be decomposed exactly in shifted and scaled micro-cells $\vareps (k + Y)$ with $k \in \Z^n$. In other words, no micro-cells intersect with the boundary $\partial \Omega$. 
\end{remark}

\subsection{Microscopic equations}

For $T>0$ we are looking for a deformation $\ueps : (0,T)\times \oeps \rightarrow \R^n$ which solves the microscopic problem
\begin{align}
\begin{aligned}\label{MicroModel}
- \nabla \cdot \left( \partial_F W_{\vareps}(\nabla \ueps) + \partial_{\dot{F}} R_{\vareps} (\nabla \ueps , \nabla \dueps) - \nabla \cdot (\partial_G H_{\vareps}(\nabla^2 \ueps)) \right) &= f_{\vareps} &\mbox{ in }& (0,T) \times \oeps,
\\
\ueps &= \id &\mbox{ on }& (0,T)\times \geps^D,
\\
\left(\partial_F W_{\vareps}(\nabla \ueps ) + \partial_{\dot{F}} R(\nabla\ueps , \nabla \dueps ) - \nabla \cdot (\partial_G H_{\vareps}(\nabla^2 \ueps))\right) \nu 
\\
-\nabla_s\cdot \left(\partial_G H_{\vareps}(\nabla^2 \ueps) \nu\right)  &= g_{\vareps} &\mbox{ on }& (0,T)\times \big( \geps \cup \geps^N\big),
\\
\partial_G H_{\vareps} (\nabla^2 \ueps) : (\nu \otimes \nu) &= 0 &\mbox{ on }&  (0,T) \times \partial\oeps,
\\
\ueps(0) &= \ueps^{in} &\mbox{ in }& \oeps.
\end{aligned}
\end{align}
We assume that $p>n$ (in the microscopic model) and define the solution space of the microscopic deformation $\ueps$  with respect to the spatial variable as
\begin{align*}
\Yepsid:= \left\{ \veps \in W^{2,p}(\oeps)^n \, : \, \ueps = \id \mbox{ on } \geps^D, \, \det(\nabla \veps ) >0 \mbox{ in } \oeps\right\}.
\end{align*}
Additionally, for given $\kappa >0$ we define the subspace
\begin{align*}
\Yepsid^{\kappa}:= \left\{\veps \in \Yepsid \, : \, \det(\nabla \veps) \geq \kappa \right\}.
\end{align*}
In the following we define a weak solution for the microscopic problem. We refer to \cite[Theorem 2.5.2]{kruvzik2019mathematical} for the relation between the strong and the weak formulation.
\begin{definition}\label{def:weak_sol_micro_model} We say $\ueps :(0.T)\times \oeps \rightarrow \R^n$ is a weak solution of the microscopic problem $\eqref{MicroModel}$ if $\ueps \in L^{\infty}((0,T),\Yepsid)\cap H^1((0,T),H^1(\oeps))^n$ with $\ueps(0) = \ueps^{in}$ and it holds 
\begin{align}
\begin{aligned}\label{eq:micro_model_var}
\int_0^T \int_{\oeps} &\left(\partial_F W_{\vareps}(\nabla \ueps) + \partial_{\dot{F}}R_{\vareps}(\nabla \ueps,\nabla \dueps) \right) : \nabla \zeps +  \partial_G H_{\vareps}(\nabla^2\ueps) \vdots \nabla^2 \zeps dx dt
\\
&= \int_0^T \int_{\oeps} f_{\vareps} \cdot \zeps dx d t + \int_0^T \int_{\geps} g_{\vareps} \cdot \zeps d\sigma dt
\end{aligned}
\end{align}
for all $\zeps \in  L^2((0,T),W_{\geps^D}^{2,p}(\oeps))^n $.
\end{definition}

\begin{remark}
We emphasize that due to the assumptions below the integrals in $\eqref{eq:micro_model_var}$ are well-defined. In particular, we have with $\partial_{\dot{F}} R(F,\dot{F}) = 2 F \left(D(C)\dot{C}\right)$ (see  assumption \ref{ass:diss_R} below) that
\begin{align*}
\partial_{\dot{F}} R_{\vareps} (\nabla \ueps , \nabla \dueps)  \in L^2((0,T)\times \oeps)^{n\times n}
\end{align*}
and with assumption \ref{AssumptionLowerBoundH}
\begin{align*}
\partial_G H_{\vareps}(\nabla^2 \ueps) \in L^{\infty}((0,T),L^{p'}(\oeps))^{n\times n \times n}.
\end{align*}
\end{remark}

\subsection{Assumptions on the data}

The elastic energy potential $W_{\vareps}$ is given by $W_{\vareps}(x,F):= W\left(\fxe, F\right)$ with $W: \R^n \times GL^+(n)  \rightarrow [0,\infty)$ is $Y$-periodic with respect to the first variable  and fulfills:
\begin{enumerate}
[label = (W\arabic*)]
\item\label{ass:W_regularity} $W$ is continuous and  continuously differentiable in the second variable, \ie $\partial_F W$ is continuous.
\item\label{AssumptionLowerBoundW} Lower bound: For all $F\in GL^+(n)$ and $y \in \R^n$ it holds that
\begin{align*}
W(y,F) \geq c_0 \left(|F|^2 + \det(F)^{-q}\right) - C_0,
\end{align*}
for constants $C_0,c_0>0$ and $q\geq \frac{pn}{p-n}$.
\end{enumerate}

\noindent For the strain gradient energy potential $H_{\vareps}$ we assume that $H_{\vareps}(x,G):= H\left( \fxe , G\right)$ with $H: \R^n \times \R^{n\times n \times n} \rightarrow [0,\infty) $ is $Y$-periodic with respect to the first variable and fulfills:
\begin{enumerate}
[label = (H\arabic*)]
\item\label{AssumpHConvexity} $H$ is continuous. Additionally it is convex and continuously differentiable with respect to the second variable ($\partial_G H$ is continuous).
\item\label{AssumptionLowerBoundH} For every $y \in \R^n$ and $G \in \R^{n\times n\times n}$ it holds that ($p>n$)
\begin{align*}
c_0 |G|^p \le H(y,G) &\le C_0\left(1 + |G|^p\right),
\\
|\partial_G H(y,G)| &\le C_0 |G|^{p-1}.
\end{align*}
\end{enumerate}

\noindent The potential of dissipative force $\Reps$ is given by $\Reps(x,F,\dot{F}):= R\left(\fxe,F,\dot{F}\right)$, where $R: \R^n \times \R^{n\times n} \times \R^{n\times n} \rightarrow [0,\infty)$ fulfills:
\begin{enumerate}
[label = (D\arabic*)]
\item\label{ass:diss_R} For every $y \in \R^n$ and $F,\dot{F} \in \R^{n\times n}$ it holds that
\begin{align*}
R(y,F,\dot{F}) = \frac12 \dot{C} : D(y,C) \dot{C}
\end{align*}
with $C=F^{\top} F $ and $\dot{C}= \dot{F}^{\top} F + F^{\top} \dot{F}$ and $D \in C^0\left(\R^n \times \R^{n\times n}_{sym}\right)^{n\times n\times n\times n}$ is $Y$-periodic with respect to the first variable and fulfills $D_{ijkl} = D_{jikl} = D_{klij}$ for $i,j,k,l \in \{1,\ldots,n\}$.
\item\label{ass:diss_lower_bound}  For every $C,\dot{C} \in R^{n\times n}_{sym}$ and every $y \in \R^n$ it holds that
\begin{align*}
c_0|\dot{C}|^2 \le \dot{C} : D(y,C)\dot{C} \le C_0 |\dot{C}|^2
\end{align*}
for constants $c_0, C_0>0$.
\end{enumerate}
Finally, we make the following assumptions on the forces and the initial data:
\begin{enumerate}
[label = (A\arabic*)]
\item\label{ass:f_eps} We assume $f_{\vareps} \in W^{1,1}((0,T),L^2(\oeps))^n$ with 
\begin{align*}
\|f_{\vareps}\|_{W^{1,1}((0,T),L^2(\oeps))} \le C
\end{align*}
for a constant $C>0$ independent of $\vareps$.
Further, there exists $f_0 \in W^{1,1}((0,T),L^2(\Omega \times Y_s))^n$  such that $f_{\vareps}$ converges (weakly) in the two-scale sense to $f_0$ (see Section \ref{SectionTwoScaleConvergence} in the appendix for the definition of the two-scale convergence). Additionally, we assume that for every interval $I \subset (0,T)$ it holds that 
\begin{align*}
\sup_{\vareps >0}\int_I  \|\dot{f}_{\vareps}\|_{L^2(\oeps)} dt \rightarrow 0 \quad\mbox{ for } |I| \rightarrow 0.
\end{align*}

\item\label{ass:g_eps} We assume $g_{\vareps} = 0$ on $\geps^N$ and  $g_{\vareps} \in W^{1,1}((0,T),L^2(\geps))^n$ with 
\begin{align*}
\|g_{\vareps}\|_{W^{1,1}((0,T),L^2(\geps))} \le C\sqrt{\vareps}
\end{align*}
for a constant $C>0$ independent of $\vareps$.
Further, there exists $g_0 \in W^{1,1}((0,T),L^2(\Omega \times \Gamma))^n$  such that $g_{\vareps}$ converges (weakly) in the two-scale sense to $g_0$. Additionally, we assume that for every interval $ I \subset (0,T)$ it holds that 
\begin{align*}
\sup_{\vareps >0}\int_I  \|\dot{g}_{\vareps}\|_{L^2(\geps)} dt \rightarrow 0 \quad\mbox{ for } |I| \rightarrow 0.
\end{align*}

\item\label{ass:ueps_in} For the initial condition $\ueps^{in}$ we assume $\ueps^{in}\in \Yepsid^{\kappa_{in}}$ with $\kappa_{in}>0$ independent of $\vareps$ and such that (see below for the definition of $\meps$)
\begin{align*}
\meps(\ueps^{in}) \le C.
\end{align*}
Further we assume that there exists $u_0^{in} \in W^{2,p}(\Omega)^n$ such that $\chi_{\oeps} \ueps^{in}$ converges (weakly) in the two-scale sense to $u_0^{in}$.
\end{enumerate}
We also define the mechanical energy for $\veps \in \Yepsid$ by %and the total dissipation potential by
\begin{align*}
\meps(\ueps) &:= \int_{\oeps} W_{\vareps} (\nabla \ueps) + H_{\vareps}(\nabla^2 \ueps) dx,
%\\
%\mathcal{R}_{\vareps}(\ueps, \dueps) &:= \int_{\oeps} R_{\vareps}(\nabla \ueps , \nabla \dueps) dx.
\end{align*}

\begin{remark}\
\begin{enumerate}
[label = (\roman*)]
\item Lower regularity assumptions for $\partial_F W$ and $\partial_G H$ with respect to the oscillating variable are possible (for example $L^\infty$ is enough). To keep the notation in the assumptions simpler, we just assume continuity.
\item In \cite{mielke2020thermoviscoelasticity} they assumed that $W$ is twice continuously differentiable (with respect to $F$). This is necessary to obtain some kind of $\Lambda$-convexity (see the proof of \cite[Proposition 3.2]{mielke2020thermoviscoelasticity}) and necessary for to guarantee the existence for the thermoviscoelastic problem. So we can drop this assumption.

\item For the sake of simplicity we assume $g_{\vareps} = 0$ on the lateral boundary $\geps^N$. Some obvious generalizations are possible. For example we can assume $g_{\vareps} = g$ on $\geps^N$ with $g \in W^{1,1}((0,T),L^2(\Gamma^N))^n)$. In this case we obtain in the macroscopic model $\eqref{eq:macro_model}$ in inhomogeneous boundary condition on $\Gamma^N$ with $g$ instead of $0$.

\item The existence of $\kappa_{in}$ in assumption \ref{ass:ueps_in} follows directly from the uniform bound of the mechanical energy $\meps(\ueps^{in})$ and Lemma \ref{lem:Lower_bound_det}. Further, we also obtain $u_0^{in} \in Y_{\id}^{\kappa_{in}}$.
\end{enumerate}

\end{remark}

\section{Main results}
\label{sec:Main_Results}

In this section we formulate the main results of the paper. First of all, we have general multi-scale results including a Korn inequality with non-constant coefficients on perforated domains, as well as extension and two-scale compactness results for Sobolev functions of second order. Secondly, we give the existence result for the microscopic problem and the associated homogenization result for $\vareps \to 0$.

\subsection{Korn inequality and extension operator}

The following results are crucial for the derivation of the \textit{a priori} estimates for the microscopic solution and the derivation of the macroscopic model. However, they are independent of the microscopic model and therefore of interest on their own and important for other applications in the homogenization theory, in particular for problems including second order gradients.
We start with a Korn inequality with non-constant coefficients (which can depend on $\vareps$) on the perforated domain $\vareps$. For our application this inequality is crucial to control the strain rate $\nabla \dueps$ of the microscopic solution. In this case, the non-constant coefficients are given by the deformation gradient For this, we introduce a generalization of the symmetric gradient with non-constant coefficients. Let $U\subset \R^n$ open and bounded. For  $A: U \rightarrow \R^{n\times n}$ and $v\in W^{1,1}(U)^n$ we use  the notation 
\begin{align}\label{def:sym_gradient_nonconstant}
e_A(v):= \frac12\left( A\nabla v + \left(A\nabla v\right)^{\top}\right).% = e(Av).
\end{align}
We emphasize that for $A\in \R^{n\times n}$ constant, we have $e_A(v) = e(Av)$.
\begin{theorem}\label{thm:main_thm_Korn_inequality}
Let $\mathcal{F} \subset C^0(\overline{\Omega})^{n\times n}$ be a compact subset, such that for all $A \in \mathcal{F}$ it holds that $\det A> \mu_0$ for a constant  $\mu_0>0$ independent of $\vareps$. Further, let $p \in (1,\infty)$. Then, there exists a constant $C_{\mathcal{F}}>0$ independent of $\vareps$, such that for all $\veps \in W^{1,p}_{\geps^D}(\oeps)^n$ and all $A \in \mathcal{F}$ it holds that
\begin{align*}
\|\veps\|_{W^{1,p}(\oeps)} \le C_{\mathcal{F}} \|e_A(\veps)\|_{L^p(\oeps)} = C_{\mathcal{F}} \|A\nabla \veps + (A \nabla \veps)^T \|_{L^p(\oeps)}.
\end{align*}
\end{theorem}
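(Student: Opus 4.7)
The plan is to follow the roadmap sketched in the introduction: replace each $A\in\F$ by a piecewise-constant approximation $A_{\vareps}$ that is constant on every micro-cell, use a Korn-preserving extension $E_{A_{\vareps}}\colon W^{1,p}_{\geps^D}(\oeps)^n\to W^{1,p}_{\Gamma^D}(\Omega)^n$ to transfer the problem to the fixed domain $\Omega$, apply Pompe's Korn inequality with continuous non-constant coefficients~\cite{pompe2003korn} there, and finally absorb the coefficient-perturbation error $A-A_{\vareps}$. Uniformity over $\F$ is produced by the common modulus of continuity granted by Arzel\`a--Ascoli.

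\textbf{Cell-wise approximation.} For $k\in K_{\vareps}$ fix a point $y_k\in k+Y$ and set $A_{\vareps}\bigl|_{\vareps(k+Y)}:=A(\vareps y_k)$. Compactness of $\F$ in $C^0(\overline{\Omega})^{n\times n}$ yields a shared modulus of continuity $\omega$ with $\|A-A_{\vareps}\|_{L^\infty(\Omega)}\le\omega(\sqrt{n}\,\vareps)\to 0$ uniformly in $A\in\F$. In particular, $\det A_{\vareps}\ge \mu_0/2$ on every cell, and $\|A_{\vareps}\|_\infty$ stays uniformly bounded, provided $\vareps\le \vareps_0(\F,\mu_0)$.

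\textbf{The adapted extension.} On each cell $\vareps(k+Y)$, since $A_{\vareps}$ is constant, the substitution $w_{\vareps}:=A_{\vareps}\veps$ gives $e(w_{\vareps})=e_{A_{\vareps}}(\veps)$ in the cell, so the twisted symmetric gradient is reduced to the standard one. I then apply a cell-wise symmetric-gradient-preserving extension on the reference element $Y_s$ (as in~\cite{Oleinik1992}), rescale to $\vareps(k+Y)$, and transform back via $\widetilde{\veps}:=A_{\vareps}^{-1}\widetilde{w}_{\vareps}$. By scaling, this yields a local estimate
\begin{equation*}
\|e_{A_{\vareps}}(\widetilde{\veps})\|_{L^p(\vareps(k+Y))}\le C\|e_{A_{\vareps}}(\veps)\|_{L^p(\vareps(k+Y)\cap\oeps)}
\end{equation*}
with $C$ depending only on $Y_s$, $p$, $\mu_0$ and $\sup_{A\in\F}\|A\|_\infty$. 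Gluing the cell-wise pieces across inter-cell faces needs the matching $\widetilde{\veps}=\veps$ on each common face lying in $\oeps$: in the case $\Omega\setminus\oeps$ disconnected (Section~\ref{sec:Korn_inequality}) every inter-cell face lies in $\oeps$ and the standard extension satisfies this automatically; in the connected case (Appendix~\ref{sec:Korn_connected}) a more delicate patching is required. Preservation of the Dirichlet datum on $\geps^D$ comes from the fact that the reference extension vanishes wherever $\veps$ vanishes on $\partial Y_s\cap\partial Y$. Altogether,
\begin{equation*}
\|E_{A_{\vareps}}\veps\|_{L^p(\Omega)}+\|e_{A_{\vareps}}(E_{A_{\vareps}}\veps)\|_{L^p(\Omega)}\le C\bigl(\|\veps\|_{L^p(\oeps)}+\|e_{A_{\vareps}}(\veps)\|_{L^p(\oeps)}\bigr),
\end{equation*}
uniformly in $\vareps$ and in $A\in\F$.

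\textbf{Pompe on the fixed domain, and absorption.} Set $\tveps:=E_{A_{\vareps}}\veps$. Pompe's Korn inequality on $\Omega$ with continuous coefficient $A$ yields $\|\tveps\|_{W^{1,p}(\Omega)}\le C(A)\|e_A(\tveps)\|_{L^p(\Omega)}$, and a standard compactness-contradiction argument (using $C^0$-compactness of $\F$ and $\det A\ge\mu_0$) shows that $\sup_{A\in\F}C(A)\le C_\F<\infty$. The triangle inequality gives both
\begin{equation*}
\|e_A(\tveps)\|_{L^p(\Omega)}\le\|e_{A_{\vareps}}(\tveps)\|_{L^p(\Omega)}+\omega(\sqrt{n}\vareps)\|\nabla\tveps\|_{L^p(\Omega)},
\end{equation*}
and $\|e_{A_{\vareps}}(\veps)\|_{L^p(\oeps)}\le\|e_A(\veps)\|_{L^p(\oeps)}+\omega(\sqrt{n}\vareps)\|\nabla\veps\|_{L^p(\oeps)}$. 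Combining these with the extension bound produces
\begin{equation*}
\|\veps\|_{W^{1,p}(\oeps)}\le\|\tveps\|_{W^{1,p}(\Omega)}\le C_\F\|e_A(\veps)\|_{L^p(\oeps)}+C_\F\,\omega(\sqrt{n}\vareps)\|\veps\|_{W^{1,p}(\oeps)}.
\end{equation*}
For $\vareps\le\vareps_1(\F,\mu_0)$ the last term is absorbed; for the finitely many $\vareps>\vareps_1$ in the sequence one applies Pompe's inequality directly on the (fixed) domain $\oeps$ with coefficient $A$ and collects the maximum of the finitely many Korn constants into~$C_\F$.

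\textbf{Main obstacle.} The technical heart is the construction of an extension operator preserving the twisted symmetric gradient $e_{A_{\vareps}}$ rather than the ordinary one. The change of variables $w=A_{\vareps}\veps$ diagonalises the situation inside each cell but introduces jumps of $A_{\vareps}$ across interfaces of neighbouring cells, so the local pieces must be glued with care, compatibly with the Dirichlet datum on $\geps^D$ and with enough regularity to lie in $W^{1,p}(\Omega)^n$. This is straightforward when $\Omega\setminus\oeps$ is disconnected (inter-cell faces lie in $\oeps$), but forces the connected case into a separate, more intricate argument deferred to Appendix~\ref{sec:Korn_connected}.
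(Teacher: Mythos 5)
Your proposal is correct and follows essentially the same approach as the paper's Section~\ref{sec:Korn_inequality} (and Appendix~\ref{sec:Korn_connected} for the connected case): piecewise-constant approximation $A_{\vareps}$, a cell-wise extension preserving $e_{A_{\vareps}}$ built from the change of variables $w=A_{\vareps}\veps$ on each cell, transfer to the fixed domain $\Omega$ via Pompe's Korn inequality, absorption of the coefficient-perturbation error, and compactness over $\F$. The only cosmetic divergences are the reference for the local symmetric-gradient-preserving extension (the paper uses \cite{GahnJaegerTwoScaleTools} rather than \cite{Oleinik1992}) and your stated $L^p$-bound for the extension, which should also carry a $\vareps\|\nabla\veps\|_{L^p(\oeps)}$ term as in Proposition~\ref{TheoremGlobalExtension} --- neither affects the argument.
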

For the proof we refer to Section \ref{sec:Korn_inequality} for the case $\Omega \setminus \oeps$ and the (more technical) general case is treated Section \ref{sec:Korn_connected} in the appendix.
For Sobolev functions including second order derivatives we have the following extension result, which allows to treat functions on perforated domains as functions on the whole domain. This allows to generalize many results from the fixed domain $\Omega$ to the perforated domain $\oeps$. The proof can be found in Section \ref{sec:Extension_second_order}.
\begin{theorem}\label{thm:main_thm_extension_operator}
Let $p \in [1,\infty)$. There exists an extension operator $E_{\vareps}: W^{2,p}(\oeps) \rightarrow W^{2,p}(\Omega)$ such that for all $\ueps \in W^{2,p}(\oeps)$ it holds
\begin{align*}
\|E_{\vareps}\ueps\|_{L^p(\Omega)} &\le C \left( \|\ueps\|_{L^p(\oeps)} + \vareps \|\nabla \ueps \|_{L^p(\oeps)} \right),
\\
\|\nabla E_{\vareps} \ueps \|_{L^p(\Omega)} &\le C  \|\nabla \ueps \|_{L^p(\oeps)} ,
\\
\|\nabla^2 E_{\vareps} \ueps\|_{L^p(\Omega)} &\le C \|\nabla^2 \ueps\|_{L^p(\oeps)},
\end{align*}
for a constant $C>0$ independent of $\vareps$. Further, $E_{\vareps}$ also acts as an extension operator $E_{\vareps}:W^{1,s}(\oeps) \rightarrow W^{1,s}(\Omega)$ for $s \in [1,\infty)$ such that for all $\veps \in W^{1,s}(\oeps)$
\begin{align*}
\|E_{\vareps}\veps\|_{L^s(\Omega)} \le C \|\veps\|_{W^{1,s}(\oeps)},\qquad \|\nabla E_{\vareps}\veps\|_{L^s(\Omega)} \le C \|\nabla \veps \|_{L^s(\oeps)}.
\end{align*}
\end{theorem}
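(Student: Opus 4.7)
The strategy is the classical periodic construction: build a single reference extension $E_0$ on the unit cell, then rescale it into each microcell $\vareps(k+Y)$ and glue the pieces. The key refinement compared to the $W^{1,p}$ Cioranescu--Saint Jean Paulin construction is that $E_0$ must preserve not only constants but also all affine functions, for the following reason. If $\hat u$ is affine then $\nabla^2 \hat u=0$, so an estimate of the form $\|\nabla^2 E_0\hat u\|_{L^p(Y)}\le C\|\nabla^2\hat u\|_{L^p(Y_s)}$ forces $E_0\hat u$ to be affine as well; an analogous remark for the first-order bound forces constants to be preserved.

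Concretely, I would fix a bounded Sobolev extension $\tilde E: W^{k,p}(Y_s)\to W^{k,p}(Y)$ valid simultaneously for $k=0,1,2$ (e.g.\ Stein's universal extension, available because $\Gamma$ is Lipschitz) and a bounded projection $P: W^{2,p}(Y_s)\to\mathcal{P}_1$ onto the affine functions, chosen so that $\hat u - P\hat u$ enjoys both Poincar\'e--Wirtinger inequalities on $Y_s$, for instance $\int_{Y_s}(\hat u-P\hat u)\,dy=0$ and $\int_{Y_s}\nabla(\hat u-P\hat u)\,dy=0$. Setting $E_0\hat u := P\hat u + \tilde E(\hat u-P\hat u)$ gives an operator with $E_0\hat u=\hat u$ on $Y_s$ and $E_0 q=q$ for every affine $q$; combining $\|\tilde E v\|_{W^{k,p}(Y)}\le C\|v\|_{W^{k,p}(Y_s)}$ with iterated Poincar\'e--Wirtinger yields the reference bounds $\|\nabla^k E_0\hat u\|_{L^p(Y)}\le C\|\nabla^k \hat u\|_{L^p(Y_s)}$ for $k=1,2$ together with $\|E_0\hat u\|_{L^p(Y)}\le C(\|\hat u\|_{L^p(Y_s)}+\|\nabla\hat u\|_{L^p(Y_s)})$. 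For $k\in K_\vareps$ I would then set $\hat u_k(y):=\ueps(\vareps(k+y))$ and define $(E_\vareps\ueps)(x):=(E_0\hat u_k)(x/\vareps-k)$ on $\vareps(k+Y)$; the change of variables $x\mapsto\vareps(k+y)$ transfers each reference bound to a cell-wise bound with the correct power of $\vareps$, and summing the $p$-th powers over $K_\vareps$ yields the three stated global inequalities. The $W^{1,s}$ claim follows from the same operator: preservation of constants, which $E_0$ inherits a fortiori from preservation of affine functions, together with the first-order Poincar\'e inequality on $Y_s$ produces $\|E_0 v\|_{L^s(Y)}\le C\|v\|_{W^{1,s}(Y_s)}$ and $\|\nabla E_0 v\|_{L^s(Y)}\le C\|\nabla v\|_{L^s(Y_s)}$, and the same rescaling applies.

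The main obstacle is to show that $E_\vareps\ueps$ is globally in $W^{2,p}(\Omega)$ when $\Omega\setminus\oeps$ is connected, so that some holes cross microcell boundaries and meet along the hole-side faces $\vareps(\partial Y\setminus\partial Y_s)+\vareps k$. On the solid-side faces $\vareps(\partial Y\cap\partial Y_s)+\vareps k$ the extension coincides with $\ueps$ and thus inherits the continuity of $\ueps$ and of $\nabla\ueps$ (using $p>n$). On the hole-side faces, however, two neighbouring cells use independent data to produce their extensions, and without further care neither the traces of $E_\vareps\ueps$ nor those of its gradient will match. I would cure this by arranging the reference extension to be compatible with the matching $\partial Y_s\cap\{y_i=0\}+e_i=\partial Y_s\cap\{y_i=1\}$: since this matching lets one view $Y_s$ as a Lipschitz subdomain of the flat torus $\mathbb{T}^n$, one can choose $\tilde E$ as a Sobolev extension from $Y_s$ into $\mathbb{T}^n$, which automatically produces traces that agree on opposite faces of $Y$, so that neighbouring cells paste together smoothly. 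An equivalent fix is to work on overlapping $2\times\cdots\times 2$ supercells and interpolate via a periodic partition of unity; either route preserves the affine-preservation and the reference bounds, and the rescaling-and-summation argument then delivers the uniform estimates.
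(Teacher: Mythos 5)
For the case $\Omega\setminus\oeps$ disconnected your construction is essentially identical to the paper's: the paper also takes the Stein total extension $S$ on $Y_s$, subtracts the affine function $m_u(y)=Fy+b$ with $F=\fint_{Y_s}\nabla u\,dy$ and $b$ chosen so that $u-m_u$ has zero mean, sets $\tau u:=S(u-m_u)+m_u$, obtains the three reference bounds by two applications of Poincar\'e--Wirtinger, and rescales cell-by-cell. Your ``projection onto affine functions with two mean-zero conditions'' is precisely this $m_u$, and your observation about constants and $W^{1,s}$ is the paper's inequality $\eqref{ineq:tau_W1s}$. So that part is a match.

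The gap is in the connected case, and specifically in your first proposed fix. Choosing the reference extension $\tilde E:W^{2,p}(Y_s)\to W^{2,p}(\mathbb{T}^n)$ periodic does not make the glued function globally $W^{2,p}$, because the mismatch across a hole-side face is not caused by a failure of periodicity of the operator but by the fact that neighbouring cells are fed \emph{different data}. On the common face of $\vareps(Y+k)$ and $\vareps(Y+k+e_1)$ you need
\begin{align*}
\bigl[P\hat u_k+\tilde E(\hat u_k-P\hat u_k)\bigr](1,y') \ \stackrel{!}{=}\ \bigl[P\hat u_{k+e_1}+\tilde E(\hat u_{k+e_1}-P\hat u_{k+e_1})\bigr](0,y')
\end{align*}
on the hole part of the face, where $\hat u_k$ and $\hat u_{k+e_1}$ are the pullbacks from the two cells. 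Torus-periodicity of $\tilde E$ would only force $\tilde E(w)(1,y')=\tilde E(w)(0,y')$ for a \emph{fixed} $w$, which is not what is required; here $\hat u_k\neq\hat u_{k+e_1}$ and $P\hat u_k\neq P\hat u_{k+e_1}$, so the two extensions are different functions and there is no reason for their traces, or those of their gradients, to agree. The assertion that ``either route'' works is therefore wrong; only your second route (local extensions on the enlarged reference cells $\hat Y_s$ combined with a $Y$-periodic partition of unity subordinate to the overlapping cover, \`a la Acerbi et al.) actually closes the gap, because the overlap guarantees that neighbouring local extensions are built from data that agree on the overlap region. That is also what the paper does: it defers the connected case to Acerbi and to the explicit supercell-plus-partition-of-unity machinery developed in its Appendix~\ref{sec:Korn_connected}. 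If you want a self-contained proof you should develop the supercell construction: replace $Y_s$ by $\hY_s$, use the local operator from Lemma~\ref{lem:local_extension_A_connected}-type statements, still subtract a single affine function per supercell to keep the affine-preservation, and track the extra commutator terms produced by the derivatives hitting the cutoffs, which is where the $\|\nabla\ueps\|$ term in the $L^p$-bound (but not in the $\nabla^2$-bound) is used.

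A small secondary point: you invoke $p>n$ to get continuity of $\ueps$ and $\nabla\ueps$ on the solid-side faces, but the theorem is stated for all $p\in[1,\infty)$; no continuity is needed there, only that the traces from both sides come from the same global $W^{2,p}(\oeps)$ function and therefore coincide as Sobolev traces.
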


\subsection{Existence for the micro-model and homogenization}

We formulate the existence result for the microscopic problem together with $\vareps$-uniform \textit{a priori} estimates, which build the basis for the compactness results for the microscopic solutions and thus for the derivation of the macroscopic model.
\begin{theorem}\label{thm:main_thm_existence}
There exists a weak solution $\ueps$ of the microscopic problem $\eqref{MicroModel}$ in the sense of Definition \ref{def:weak_sol_micro_model}, which fulfills the following \textit{a priori} estimates:
\begin{align*}
\|\ueps\|_{L^{\infty}((0,T),W^{2,p}(\oeps))} +\|\ueps\|_{H^1((0,T),H^1(\oeps))} \le C,
\end{align*}
for a constant $C>0$ independent of $\vareps$. Further $\ueps \in L^{\infty}((0,T),\Yepsid^{\kappa_M})$ for a constant $\kappa_M>0$ independent of $\vareps$.
\end{theorem}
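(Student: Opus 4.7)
The plan is to follow the time-discretization scheme of \cite{badal2023nonlinear}, keeping careful track of every constant to ensure $\vareps$-independence. Fix $N\in\N$, set $\tau=T/N$, put $\uepstau{0}:=\ueps^{in}$, and define $\uepstau{k}\in\Yepsid$ inductively as a minimizer over $\{\veps\in\Yepsid:\veps=\id\text{ on }\geps^D\}$ of the incremental functional
\begin{align*}
\mathcal{J}_{\vareps,\tau}^{(k)}(\veps) := \meps(\veps) + 2\tau\!\int_{\oeps}\! R_{\vareps}\!\Big(\nabla\uepstau{k-1},\, \tfrac{\nabla\veps-\nabla\uepstau{k-1}}{\tau}\Big)dx - \langle \lepstau{k}, \veps\rangle,
\end{align*}
where $\lepstau{k}$ denotes the time-averaged force. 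Existence of a minimizer is via the direct method: coercivity rests on \ref{AssumptionLowerBoundW}, \ref{AssumptionLowerBoundH}, \ref{ass:diss_lower_bound} together with a Poincar\'e inequality on $\oeps$ whose constant is $\vareps$-uniform thanks to Theorem \ref{thm:main_thm_extension_operator}; weak lower semicontinuity of the $H$- and $R$-contributions follows from \ref{AssumpHConvexity} and \ref{ass:diss_R}, while the $W$-contribution is handled by strong convergence of $\nabla\veps$ in $C^0(\overline{\oeps})$ via the Morrey embedding $W^{2,p}\hookrightarrow C^{1,\alpha}$ (using $p>n$).

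Testing minimality at step $k$ against the previous iterate yields the discrete energy-dissipation inequality
\begin{align*}
\meps(\uepstau{k}) + 2\tau\!\int_{\oeps}\! R_{\vareps}\!\Big(\nabla\uepstau{k-1},\,\tfrac{\nabla\uepstau{k}-\nabla\uepstau{k-1}}{\tau}\Big)dx \le \meps(\uepstau{k-1}) + \langle \lepstau{k}, \uepstau{k}-\uepstau{k-1}\rangle.
\end{align*}
Summation over $k$, combined with \ref{ass:f_eps}--\ref{ass:ueps_in}, \ref{AssumptionLowerBoundW} and \ref{AssumptionLowerBoundH}, produces $\vareps$-uniform bounds for $\nabla\uepstau{k}$ and $\nabla^2\uepstau{k}$ in $L^p(\oeps)$, for $\det(\nabla\uepstau{k})^{-1}$ in $L^q(\oeps)$, and for the discrete rate of the Cauchy--Green tensor in the $\ell^2_\tau L^2(\oeps)$-norm (via \ref{ass:diss_lower_bound}). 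Using the exponent condition $q\ge pn/(p-n)$ and the Healey--Kr\"omer-type argument of Lemma~\ref{lem:Lower_bound_det}, the integral determinant bound is upgraded to the pointwise lower bound $\det\nabla\uepstau{k}\ge\kappa_M>0$ uniformly in $\vareps,\tau,k$. Together with Theorem \ref{thm:main_thm_extension_operator} and compact Morrey embedding $W^{2,p}(\Omega)\Subset C^1(\overline{\Omega})$, the extended deformation gradients lie in a single compact subset $\F\subset C^0(\overline{\Omega})^{n\times n}$ with $\det A\ge\kappa_M$ on $\overline{\Omega}$.

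Since $\uepstau{k}\equiv\id$ on $\geps^D$, the difference quotient $(\uepstau{k}-\uepstau{k-1})/\tau$ lies in $W^{1,p}_{\geps^D}(\oeps)^n$, and Theorem \ref{thm:main_thm_Korn_inequality} applied with coefficient $A=(\nabla\uepstau{k-1})^{\!\top}\!\in\F$ gives
\begin{align*}
\Big\|\nabla\tfrac{\uepstau{k}-\uepstau{k-1}}{\tau}\Big\|_{L^2(\oeps)} \le C_\F\Big\|(\nabla\uepstau{k-1})^{\!\top}\nabla\tfrac{\uepstau{k}-\uepstau{k-1}}{\tau} + \nabla\tfrac{\uepstau{k}-\uepstau{k-1}}{\tau}^{\!\top}(\nabla\uepstau{k-1})\Big\|_{L^2(\oeps)}
\end{align*}
with $C_\F$ independent of $\vareps,\tau,k$. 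The right-hand side is (twice) the discrete rate of the Cauchy--Green tensor, which was already controlled by the dissipation sum. Hence the piecewise affine interpolant $\huepstau$ satisfies the full $L^\infty_t W^{2,p}_x\cap H^1_t H^1_x$ bound, uniformly in $\vareps$ and $\tau$.

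For the passage $\tau\to 0$, Aubin--Lions applied to the extended interpolants (via Theorem \ref{thm:main_thm_extension_operator}) yields, along a subsequence, a limit $\ueps\in L^\infty_t W^{2,p}_x\cap H^1_t H^1_x$ with $\nabla\huepstau\to\nabla\ueps$ strongly in $L^p$ and even in $C^0$, $\nabla\dhuepstau\rightharpoonup\nabla\dueps$ weakly in $L^2$, and $\nabla^2\huepstau\rightharpoonup\nabla^2\ueps$ weakly-$\ast$ in $L^\infty_t L^p_x$; pointwise convergence of $\nabla\uepstau{k}$ preserves $\det\nabla\ueps\ge\kappa_M$, so $\ueps\in L^\infty((0,T),\Yepsid^{\kappa_M})$. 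Continuity of $\partial_F W_\vareps$ and $\partial_{\dot F}R_\vareps$ in their second arguments lets us pass to the limit in the first-order terms of \eqref{eq:micro_model_var}, while the second-gradient term is identified through the convexity of $H_\vareps$ in its principal variable by a standard Minty-type monotonicity argument against $\nabla^2\ueps$. The main obstacle — and the reason every step depends on the $\vareps$-uniform constants from Theorems \ref{thm:main_thm_Korn_inequality} and \ref{thm:main_thm_extension_operator} — is that the frame-indifferent dissipation only gives coercive control of the Cauchy--Green rate: without the $\vareps$-uniform Korn inequality with non-constant coefficients, together with its prerequisite $\vareps$-uniform determinant lower bound, the $H^1_t H^1_x$ estimate simply cannot be closed as $\vareps\to0$.
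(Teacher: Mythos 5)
Your argument follows the paper's proof almost step for step: incremental minimization in time, a discrete energy--dissipation inequality summed with a discrete Gronwall argument, the $\vareps$-uniform determinant lower bound of Lemma~\ref{lem:Lower_bound_det}, the non-constant-coefficient Korn inequality Theorem~\ref{thm:main_thm_Korn_inequality} applied with $A=(\nabla\uepstau{k-1})^{\top}$ to turn dissipation control into the $H^1_tH^1_x$ rate bound, compactness via Theorem~\ref{thm:main_thm_extension_operator} and Arzel\`a--Ascoli (equivalently Aubin--Lions), and a Minty-type identification of the limit of the convex second-gradient term. The only slip is cosmetic: by the two-homogeneity of $R$ in $\dot F$ the dissipation contribution to the incremental functional should carry the prefactor $\tau$ (equivalently $\tfrac{1}{\tau}\Reps(\uepstau{k-1},\veps-\uepstau{k-1})$), not $2\tau$, so that the Euler--Lagrange equation reproduces $\partial_{\dot F}R_\vareps(\nabla\uepstau{k-1},\dtau\nabla\uepstau{k})$ rather than twice it.
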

The proof can be found in Section \ref{sec:existence_a_priori_estimates}.
Next, we formulate the macroscopic model. We are looking for a function $u_0: (0,T)\times \Omega \rightarrow \R^n$ which solves the problem
\begin{align}
\begin{aligned}\label{eq:macro_model}
- \nabla \cdot \left[ \partial_F \overline{W}(\nabla u_0) + \partial_{\dot{F}}\overline{R}(\nabla u_0, \nabla \dot{u}_0) - \nabla \cdot \partial_G H_{hom} (\nabla^2 u_0) \right] &= \bar{f}_0 + \bar{g}_0 &\mbox{ in }& (0,T)\times \Omega,
\\
u_0 &= \id &\mbox{ on }& (0,T)\times \Gamma^D,
\\
\left[ \partial_F \overline{W} (\nabla u_0) + \partial_{\dot{F}} \overline{R}(\nabla u_0,\nabla \dot{u}_0)  - \nabla \cdot \partial_G H_{hom} (\nabla^2 u_0)\right]\nu
\\
 - \nabla_S \cdot \left[\partial_G H_{hom}(\nabla^2 u_0)\nu\right] &=0 &\mbox{ on }& (0,T)\times \Gamma^N,
\\
\partial_G H_{hom}(\nabla^2 u_0) : (\nu \times \nu) &= 0 &\mbox{ on }& (0,T)\times \partial \Omega,
\\
u_0(0) &= u^{in} &\mbox{ in }& \Omega.
\end{aligned}
\end{align}
Here, $\overline{W}$ and $\overline{R}$ denote the averaged values with respect to $Y_s$ of $W$ and $R$ respectively, this means we have for all $F,\dot{F} \in \R^{n\times n}$
\begin{align*}
\overline{W}(F):= \int_{Y_s} W(y,F)dy,\quad \overline{R}(F,\dot{F}) := \int_{Y_s} R(y,F,\dot{F}) dy.
\end{align*}
In a similar way we define the averaged quantities $\bar{f}_0$ and $\bar{g}_0$ for almost every $t \in (0,T)\times \Omega$ via 
\begin{align*}
\bar{f}_0(t,x):= \int_{Y_s} f_0(t,x,y)dy ,\qquad \bar{g}_0(t,x) := \int_{Y_s} g_0(t,x,y)dy.
\end{align*}
Further, the homogenized strain gradient energy potential $H_{hom}$ is given by
\begin{align}
H_{hom}(G):= \inf_{v_2 \in W_{\per}^{2,p}(Y_s)^n} \int_{Y_s} H(y, G + \nabla_y v_2) dy .
\end{align}
for $G \in \R^{n\times n \times n}$. We also introduce the spaces
\begin{align*}
\mathcal{Y}_{\id}:= \left\{ u \in W^{2,p}(\Omega)^n \, : \, u = \id \mbox{ on } \Gamma^D,\, \det(\nabla u)>0 \right\},
\end{align*}
and for $\kappa>0$
\begin{align*}
\mathcal{Y}_{\id}^{\kappa}:=\left\{ u \in \mathcal{Y}_{\id} \, : \, \det(\nabla u) \geq \kappa\right\}.
\end{align*}
\begin{definition}\label{def:weak_macro_solution} We call $u_0$ a weak solution of the macroscopic problem $\eqref{eq:macro_model}$, if
\begin{align*}
u_0 \in L^{\infty}((0,T),W^{2,p}(\Omega))^n\cap H^1((0,T),H^1(\Omega))^n,
\end{align*}
and for all $v_0 \in L^2((0,T),W^{2,p}_{\Gamma^D}(\Omega))^n$ it holds that
\begin{align}
\begin{aligned}\label{eq:macro_model_var}
\int_0^T& \int_{\Omega} \left[ \partial_F \overline{W}(\nabla u_0) + \partial_{\dot{F}} \overline{R}(\nabla u_0 , \nabla \dot{u}_0) \right] : \nabla v_0 dx dt
\\
&+ \int_0^T\int_{\Omega} \partial_G H_{hom} (\nabla^2 u_0) \vdots \nabla^2 v_0 dx dt
= \int_0^T \int_{\Omega} \left[\bar{f}_0  + \bar{g}_0\right] \cdot v_0  dx dt .
\end{aligned}
\end{align}
\end{definition}
Now, we are able to formulate the main homogenization result of the paper. For the definition of the two-scale convergence we again refer to Section \ref{SectionTwoScaleConvergence} in the appendix.
\begin{theorem}\label{thm:main_thm_conv_and_macro_model}
There exist $u_0 \in L^{\infty}((0,T),\mathcal{Y}_{\id}^{\kappa_M})\cap H^1((0,T),H^1(\Omega))^n $ for some $\kappa_M>0$ and $u_2 \in  L^{\infty}((0,T),L^p(\Omega,W^{2,p}_{\per}(Y)/\R))$, such that the sequence of weak solutions $\ueps$ from Theorem \ref{thm:main_thm_existence} fulfills 
\begin{align*}
\tueps &\rightharpoonup^{\ast} u_0 &\mbox{ weakly$^{\ast}$ in }& L^{\infty}((0,T),W^{2,p}(\Omega))^n,
\\
\tueps &\rightarrow u_0 &\mbox{ in }& C^0([0,T],C^1(\overline{\Omega}))^n,
\\
\chi_{\oeps} 
\nabla^2 \ueps &\rightwts{s,p}  \chi_{Y_s}\nabla_x^2 u_0 + \nabla_y^2 u_2 ,
\\
\tueps &\rightharpoonup u_0 &\mbox{ weakly in }& H^1((0,T),H^1(\Omega))^n,
\\
\nabla\dtueps &\rightwts{2} \nabla \dot{u}_0
\end{align*}
with $\tueps:= E_{\vareps}\ueps$ and the extension operator $E_{\vareps}$ from Theorem \ref{thm:main_thm_extension_operator}. Further, the limit function $u_0$ is a weak solution of the macroscopic model $\eqref{eq:macro_model}$.
\end{theorem}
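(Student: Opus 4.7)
From the $\vareps$-uniform a priori estimates of Theorem \ref{thm:main_thm_existence}, transferred to $\Omega$ via the extension operator of Theorem \ref{thm:main_thm_extension_operator}, $\tueps$ is uniformly bounded in $L^{\infty}((0,T),W^{2,p}(\Omega))\cap H^1((0,T),H^1(\Omega))$. Banach--Alaoglu yields the stated weak-$\ast$ and weak limits along a subsequence, and standard two-scale compactness (Appendix \ref{SectionTwoScaleConvergence}) produces the corrector $u_2$ associated with $\nabla^2\ueps$ and identifies the two-scale limit of $\nabla\dtueps$. Since $p>n$, the compact embedding $W^{2,p}(\Omega)\hookrightarrow C^1(\overline{\Omega})$ combined with an Aubin--Lions/parabolic embedding argument (Appendix \ref{sec:auxiliary_results}) upgrades the weak-$\ast$ convergence to $\tueps\to u_0$ in $C^0([0,T],C^1(\overline{\Omega}))^n$. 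In particular $\nabla\ueps\to\nabla u_0$ uniformly, and the $\vareps$-uniform lower bound $\det(\nabla\ueps)\ge\kappa_M$ from Theorem \ref{thm:main_thm_existence} passes to $u_0\in L^{\infty}((0,T),\mathcal{Y}_{\id}^{\kappa_M})$.

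\textbf{First-order terms.} Inserting a test function $\zeps=v_0$ with $v_0\in C^1([0,T],C^{\infty}(\overline{\Omega}))^n$ vanishing on $\Gamma^D$ into \eqref{eq:micro_model_var}, the sources on the right converge to $\int_0^T\!\!\int_\Omega(\bar f_0+\bar g_0)\cdot v_0\,dx\,dt$ by the two-scale hypotheses \ref{ass:f_eps} and \ref{ass:g_eps} (the $\sqrt{\vareps}$-rescaling in \ref{ass:g_eps} compensating the surface measure on $\geps$). For the elastic term, uniform convergence of $\nabla\ueps$ and continuity of $\partial_F W$ give uniformly bounded convergence of $\partial_F W\bigl(\cdot/\vareps,\nabla\ueps\bigr)$, so that two-scale convergence of continuous admissible oscillating factors produces the $y$-averaged limit $\partial_F\overline{W}(\nabla u_0)$. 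The viscous term factorises as $2\nabla\ueps\,D(\cdot/\vareps,C_\vareps)\dot C_\vareps$, linear in $\nabla\dueps$ with uniformly converging coefficients; combining strong uniform convergence of the coefficients with weak two-scale convergence of $\nabla\dtueps$ identifies the limit as $\partial_{\dot F}\overline{R}(\nabla u_0,\nabla\dot u_0)$.

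\textbf{The nonlinear second-order term (main obstacle).} Only weak two-scale convergence of $\nabla^2\ueps$ is available, so passage in $\partial_G H_\vareps(\nabla^2\ueps)$ requires a Minty/monotonicity argument in two stages. First, inserting the oscillating test function $\zeps=v_0(x)+\vareps^2\phi(x)\psi(x/\vareps)$ with $\phi\in C^{\infty}_c(\Omega)$ and $\psi\in W^{2,p}_{\per}(Y_s)^n$ into \eqref{eq:micro_model_var} and sending $\vareps\to 0$ (first-order contributions vanish by the $\vareps^2$ prefactor) yields the cell equation
\[
\int_{Y_s}\partial_G H\bigl(y,\nabla^2 u_0(t,x)+\nabla_y^2 u_2(t,x,y)\bigr)\vdots\nabla_y^2\psi(y)\,dy=0
\]
for a.e.\ $(t,x)$ and all admissible $\psi$, which by convexity of $H(y,\cdot)$ characterises $u_2(t,x,\cdot)$ as a minimiser in the definition of $H_{hom}(\nabla^2 u_0(t,x))$. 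Second, identifying the weak two-scale limit of $\partial_G H_\vareps(\nabla^2\ueps)$ as $\partial_G H(y,\nabla^2 u_0+\nabla_y^2 u_2)$ proceeds by the Minty trick: test the monotonicity inequality $(\partial_G H_\vareps(\nabla^2\ueps)-\partial_G H_\vareps(G^\vareps))\vdots(\nabla^2\ueps-G^\vareps)\ge 0$ integrated over $(0,T)\times\oeps$ against the recovery sequence $G^\vareps(x):=\nabla^2 u_0(x)+(\nabla_y^2 u_2^\sharp)(x,x/\vareps)$ built from a smooth approximation $u_2^\sharp$ of $u_2$, and compute $\limsup_\vareps\int\partial_G H_\vareps(\nabla^2\ueps)\vdots\nabla^2\ueps$ from \eqref{eq:micro_model_var} itself after subtracting the already identified first-order terms and sources.

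\textbf{Conclusion.} Taking the $y$-average of the two-scale equation over $Y_s$ eliminates $u_2$ via the cell equation and produces exactly $\partial_G H_{hom}(\nabla^2 u_0)$ as the second-order stress in \eqref{eq:macro_model_var}; combined with the first-order passage of the preceding paragraph this is the macroscopic weak form of Definition \ref{def:weak_macro_solution}. The initial condition follows from \ref{ass:ueps_in} and the strong $C^0([0,T],C^1(\overline{\Omega}))$-convergence of $\tueps$. Uniqueness of $u_0$ (from convexity of $H$ and strict coercivity of the viscous dissipation in $\dot C$) upgrades subsequence convergence to convergence of the full sequence.
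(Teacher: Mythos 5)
Your proposal captures the overall architecture of the paper's proof (uniform a priori bounds plus extension operator for compactness; strong $C^0C^1$ convergence for the first-order terms; a Minty-type argument for the second-order term; cell problem for $H_{hom}$), and your treatment of the first-order terms matches the paper's Corollary \ref{cor:conv_nonlinear}. However, the Minty argument as you sketch it has two genuine errors, and the closing uniqueness claim is unjustified.

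First, your ``first stage'' asserts that inserting the oscillating test function and sending $\vareps\to 0$ directly yields the cell equation
$\int_{Y_s}\partial_G H(y,\nabla^2 u_0 + \nabla_y^2 u_2)\vdots\nabla_y^2\psi\,dy = 0$. This cannot be derived at that point: since $\partial_G H$ is nonlinear and only \emph{weak} two-scale convergence of $\nabla^2\ueps$ is available, passing to the limit only produces the equation $\int_{Y_s} h_0\vdots\nabla_y^2\psi\,dy = 0$ with $h_0$ the (a priori unidentified) weak two-scale limit of $\partial_G H_\vareps(\nabla^2\ueps)$. The identification $h_0 = \partial_G H(y,\nabla^2 u_0+\nabla_y^2 u_2)$ is exactly what the Minty trick must deliver, and it has to come \emph{before} the cell equation in your form, not after. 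As written, the first stage is circular.

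Second, the recovery sequence $G^\vareps(x) := \nabla^2 u_0(x) + (\nabla_y^2 u_2^\sharp)(x,x/\vareps)$ built from a smooth approximation of $u_2$ is the wrong object to test the monotonicity inequality against. If $u_2^\sharp\approx u_2$, then $G^\vareps$ two-scale converges to $\nabla^2_x u_0 + \nabla_y^2 u_2$, which is precisely the two-scale limit of $\nabla^2\ueps$; in the limit the inequality degenerates to $0\ge 0$ and extracts nothing. One must keep the comparison point away from the limit: the paper uses $\mu_\vareps := \nabla^2_x u_0 + \nabla_y^2 v_2(\cdot,\cdot/\vareps) + \delta\phi(\cdot,\cdot/\vareps)$ with $v_2$ general and an extra perturbation $\delta\phi$, passes to the limit, chooses $v_2 = u_2$ by density, and only then lets $\delta\searrow 0$ and $\delta\nearrow 0$ to turn the one-sided inequality into an equality. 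This $\delta$-perturbation is the essential step your sketch omits.

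Finally, the claim that ``uniqueness of $u_0$ (from convexity of $H$ and strict coercivity of the viscous dissipation in $\dot C$) upgrades subsequence convergence to convergence of the full sequence'' is not in the paper and is not warranted: the elastic potential $W$ is only assumed continuous with a lower bound (\ref{ass:W_regularity}, \ref{AssumptionLowerBoundW}), not convex, and the paper even remarks that the cell minimizer $u_2$ need not be unique unless $H$ is strictly convex. The convergences in the theorem are established only along a subsequence, and one should not claim more.
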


\section{Korn inequality with non-constant coefficients in perforated domains}
\label{sec:Korn_inequality}
In this section we prove the Korn inequality from Theorem \ref{thm:main_thm_Korn_inequality}. Such a result was obtained for fixed domains (independent of $\vareps$) in \cite[Theorem 3.3]{mielke2020thermoviscoelasticity} using a Korn inequality from \cite{pompe2003korn} and a perturbation argument. In our case the perforated domain $\oeps$ is depending on $\vareps$ and the crucial point is to show that the constant in the Korn inequality can be chosen independent of $\vareps$. An essential step in our proof is to show a similar result for $A_{\vareps} \in L^{\infty}(\Omega)^{n\times n}$ constant on every micro-cell $\vareps (Y + k)$ with $k \in K_{\vareps}$. For this we construct an (global)  extension operator  $E_{A_{\vareps}}: W^{1,p}(\oeps)^n \rightarrow W^{1,p}(\Omega)^n$ for $p\in (1,\infty)$ such that
\begin{align}\label{ineq:aux_korn_constant_micro}
\left\| e_{A_{\vareps}} \left(E_{A_{\vareps}}\veps\right) \right\|_{L^p(\Omega)} \le C \left\| e_{A_{\vareps}} (\veps) \right\|_{L^p(\oeps)}
%\left\|A_{\vareps}^{\top} \nabla E_{A_{\vareps}}(\veps) + (\nabla E_{A_{\vareps}}(\veps))^{\top} A_{\vareps}\right\|_{L^p(\Omega)} \le C \left\|F_{\vareps}^{\top} \nabla \veps + (\nabla \veps)^{\top} F_{\vareps}\right\|_{L^p(\oeps)}
\end{align}
for a constant $C>0$ independent of $\vareps$. We refer to $\eqref{def:sym_gradient_nonconstant}$ for the definition of the symmetric gradient with non-constant coefficients $e_{A_{\vareps}}$. With inequality $\eqref{ineq:aux_korn_constant_micro}$ we reduce the problem to the case of a domain $\Omega$ independent of $\vareps$, such that we can use the results from \cite{pompe2003korn}.

\begin{remark}
In this section we only give the proof for $Y\setminus Y_s $ strictly included in $Y$, in particular $\Omega \setminus \oeps$ is disconnected. This allows to avoid some technical aspects and to focus on the main ideas of the proof. The general case is treated in Section \ref{sec:Korn_connected} in the appendix.
\end{remark}

We start with the construction of a local extension operator:
\begin{lemma}\label{lem:local_extension_A}
Let $p\in (1,\infty)$ and $A \in \R^{n\times n}$ such that $\det(A) >0$. Then there exists an extension operator $\tau_A:W^{1,p}(Y_s)^n \rightarrow W^{1,p}(Y)^n$ such that for all $v \in W^{1,p}(Y_s)^n$ it holds that
\begin{align*}
\|\tau_A(v)\|_{L^p(Y)} &\le C \left|A^{-1}\right| |A|  \|v\|_{W^{1,p}(Y_s)},
\\
\|\nabla \tau_A(v) \|_{L^p(Y)} &\le C \left|A^{-1}\right| |A| \|\nabla v\|_{L^p(Y_s)},
\\
\|e_A(\tau_A(v))\|_{L^p(Y)} &\le C \|e_A(v)\|_{L^p(Y_s)},
\end{align*}
for a constant $C>0$ independent of $A$.
\end{lemma}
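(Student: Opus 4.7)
My plan is to reduce the weighted statement to a standard symmetric-gradient-preserving extension on $Y$, exploiting that $A$ is \emph{constant} so that left multiplication by $A$ commutes with taking the gradient. Concretely, given $v\in W^{1,p}(Y_s)^n$ I will set $w:=Av$. Because $A$ is constant, $\nabla w = A\nabla v$ and therefore $e(w) = \frac{1}{2}(A\nabla v + (A\nabla v)^\top) = e_A(v)$ on $Y_s$. The map $v\mapsto w$ is a bijection $W^{1,p}(Y_s)^n\to W^{1,p}(Y_s)^n$ with operator norm controlled by $|A|$ and $|A^{-1}|$, so it suffices to build an extension for the standard symmetric gradient and then pull back by $A^{-1}$.

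The first main step is therefore to produce a linear extension operator $E_0:W^{1,p}(Y_s)^n\to W^{1,p}(Y)^n$ satisfying simultaneously $\|E_0 w\|_{L^p(Y)}\le C\|w\|_{W^{1,p}(Y_s)}$, $\|\nabla E_0 w\|_{L^p(Y)}\le C\|\nabla w\|_{L^p(Y_s)}$ and the crucial Korn-type bound $\|e(E_0 w)\|_{L^p(Y)}\le C\|e(w)\|_{L^p(Y_s)}$. Any standard Sobolev extension will give the first two properties but not the third. To get the third I will use a projection/decomposition argument: let $\pi w =: Bx+b$ be the best infinitesimal rigid motion approximating $w$ on $Y_s$ (so $B$ is skew-symmetric, chosen for instance by $L^2$-projection on the finite-dimensional space $\mathcal R$ of infinitesimal rigid motions). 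The Korn inequality on the fixed Lipschitz domain $Y_s$ gives $\|w-\pi w\|_{W^{1,p}(Y_s)}\le C\|e(w)\|_{L^p(Y_s)}$. Let $E_1$ be any standard (bounded) linear extension operator for $W^{1,p}(Y_s)\to W^{1,p}(Y)$, and define
\[
E_0 w := E_1(w-\pi w)+\pi w,
\]
where $\pi w$ is interpreted as an affine function on all of $Y$. Then $E_0 w = w$ on $Y_s$, and since $e(\pi w)\equiv 0$ we have $e(E_0 w) = e\bigl(E_1(w-\pi w)\bigr)$, giving the Korn estimate after bounding by $\|E_1(w-\pi w)\|_{W^{1,p}(Y)} \le C\|w-\pi w\|_{W^{1,p}(Y_s)}\le C\|e(w)\|_{L^p(Y_s)}$. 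The $L^p$ and gradient bounds follow from the triangle inequality and the equivalence of $L^p$-norms of affine functions on $Y_s$ and on $Y$.

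With $E_0$ in hand, I define
\[
\tau_A(v):= A^{-1}\, E_0(Av).
\]
Since $A$ is constant, $\nabla\tau_A(v) = A^{-1}\nabla E_0(Av)$ and $A\,\tau_A(v) = E_0(Av)$, so $e_A(\tau_A v) = e(A\tau_A v) = e(E_0 w)$. The three claimed inequalities then follow directly: the $L^p$ and gradient bounds pick up the factor $|A^{-1}||A|$ because I first multiply $v$ by $A$ and then by $A^{-1}$, whereas the symmetric-gradient bound $\|e_A(\tau_A v)\|_{L^p(Y)} = \|e(E_0 w)\|_{L^p(Y)}\le C\|e(w)\|_{L^p(Y_s)} = C\|e_A(v)\|_{L^p(Y_s)}$ is dimensionless in $A$, exactly because the algebraic identity $e(Au) = e_A(u)$ cancels the $A$-factors on both sides.

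The only subtle point, which I expect to be the main obstacle at this local level, is the construction of the Korn-preserving extension $E_0$; everything else is bookkeeping about how constant factors of $A$ propagate. Once $E_0$ is set up via the rigid-motion-correction trick above, the lemma follows with a constant depending only on $p$, $n$, and the geometry of $Y_s$, and in particular independent of $A$.
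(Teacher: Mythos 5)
Your proof takes essentially the same route as the paper: both define $\tau_A(v) := A^{-1}S(Av)$ for a Korn-preserving extension operator $S: W^{1,p}(Y_s)^n \rightarrow W^{1,p}(Y)^n$ satisfying $\|Su\|_{L^p(Y)}\le C\|u\|_{W^{1,p}(Y_s)}$, $\|\nabla Su\|_{L^p(Y)}\le C\|\nabla u\|_{L^p(Y_s)}$, and $\|e(Su)\|_{L^p(Y)}\le C\|e(u)\|_{L^p(Y_s)}$, and then exploit the algebraic identity $e_A(u)=e(Au)$ for constant $A$ to make the third estimate $A$-free while the first two pick up the factor $|A||A^{-1}|$. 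The only divergence is that the paper simply cites such an $S$ from \cite[Lemma 2]{GahnJaegerTwoScaleTools}, whereas you reconstruct it via the rigid-motion correction $E_0w := E_1(w-\pi w)+\pi w$ (with $\pi$ the projection onto infinitesimal rigid motions, $e(\pi w)=0$, and $E_1$ a standard Sobolev extension); this is the classical construction of such an operator and is correct, so your argument is a bit more self-contained but conceptually identical.
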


\begin{proof}
In \cite[Lemma 2]{GahnJaegerTwoScaleTools} the existence of an extension operator $S:W^{1,p}(Y_s)^n \rightarrow W^{1,p}(Y)^n$ with the following properties was established:
\begin{align*}
\Vert Sv\Vert_{L^p(Y)} &\le C\left(\Vert v \Vert_{L^p(Y_s)} + \Vert \nabla v \Vert_{L^p(Y_s)}\right),
\\
\Vert\nabla S v \Vert_{L^p(Y)} &\le C \Vert \nabla v \Vert_{L^p(Y_s)},
\\
\Vert e(S v)\Vert_{L^p(Y)} &\le C \Vert e(v) \Vert_{L^p(Y_s)}.
\end{align*}
Now, we define $\tau_A$ for $v \in W^{1,p}(Y_s)^n$ by
\begin{align*}
\tau_A(v):&= A^{-1} S\left(Av\right) .
\end{align*}
Obviously, $\tau_A$ is an extension operator from $W^{1,p}(Y_s)^n$ to $W^{1,p}(Y)^n$. It remains to show the estimates. First of all, we have
\begin{align*}
\|\tau_A(v)\|_{L^p(Y)} &\le C  |A^{-1}|\|S(Av)\|_{L^p(Y)}
\le C |A^{-1}|\|Av\|_{W^{1,p}(Y_s)} .
\end{align*}
Next, we consider the gradient $\nabla \tau_A(v)$. We have
\begin{align*}
\|\nabla \tau_A(v)\|_{L^p(Y)} &= \left\|A^{-1} \nabla S(Av)\right\|_{L^p(Y)}
\le |A^{-1}|\|\nabla S(Av) \|_{L^p(Y)}
\\
&\le C |A^{-1}| |A| \|\nabla v \|_{L^p(Y_s)}.
\end{align*}
Finally, for $e_A(\tau_A(v))$ we get
\begin{align*}
\|e_A(\tau_A(v))\|_{L^p(Y)} &= \left\|e(S(Av)) \right\|_{L^p(Y)}
\\
&\le C \|e(Av)\|_{L^p(Y_s)}
= C \|e_A(v)\|_{L^p(Y_s)}.
\end{align*}
\end{proof}
Next, we construct a global extension operator on $\oeps$ with a uniform bound for $e_{A_{\vareps}}$, where $A_{\vareps}$ is a matrix valued function constant on every micro-cell $\vareps(Y + k)$ with $k \in K_{\vareps}$.

\begin{proposition}\label{TheoremGlobalExtension}
Let $\Omega \setminus \oeps$ be disconnected, \ie $Y\setminus Y_s \ssubset Y$ strictly included. Let $A_{\vareps} \in L^{\infty}(\Omega)^{n\times n}$ such that $A_{\vareps}$ is constant on every micro-cell $\vareps (Y+ k)$ and fulfills
\begin{align*}
\|A_{\vareps}\|_{L^{\infty}(\Omega)} \le M, \qquad \det A_{\vareps} \geq \mu >0
\end{align*}
for constants $\mu>0$ and $ M>0$ independent of $\vareps$. Then for $p\in (1,\infty)$ there exists an extension operator $E_{A_{\vareps}} : W^{1,p}(\oeps)^n \rightarrow W^{1,p}(\Omega)^n$, such that  for all $\veps \in W^{1,p}(\oeps)^n$ it holds that
\begin{align*}
\|E_{A_\vareps}(\veps)\|_{L^p(\Omega)} &\le C_G \frac{M^n}{\mu} \left( \|\veps\|_{L^p(\oeps)}  + \vareps \|\nabla \veps \|_{L^p(\oeps)} \right),
\\
\|\nabla E_{A_{\vareps}}(\veps) \|_{L^p(\Omega)} &\le C_G \frac{M^n}{\mu} \|\nabla \veps \|_{L^p(\oeps)} ,
\\
\|e_{A_{\vareps}}(E_{A_{\vareps}}(\veps))\|_{L^p(\Omega)} &\le C_G \|e_{A_{\vareps}} (\veps)\|_{L^p(\oeps)},
\end{align*}
for a constant $C_G>0$ independent of $\vareps$.
\end{proposition}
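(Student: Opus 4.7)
The plan is to glue together rescaled copies of the local extension operator $\tau_A$ from Lemma \ref{lem:local_extension_A}, exploiting two features of the setting: first, that $A_{\vareps}$ is constant on each micro-cell $\vareps(Y+k)$, so $e_{A_{\vareps}}$ behaves like a pure first-order differential operator there (no derivatives of $A_{\vareps}$ appear); second, that the perforations $Y\setminus Y_s$ lie strictly inside $Y$, so neighbouring micro-cells glue along full faces that are entirely contained in $\oeps$.

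For each $k \in K_{\vareps}$, write $A_{\vareps}^k$ for the constant value of $A_{\vareps}$ on the cell $\vareps(Y+k)$, and define the rescaled function $v_{\vareps}^k(y):=\veps(\vareps(y+k))$ for $y \in Y_s$. Apply Lemma \ref{lem:local_extension_A} with $A=A_{\vareps}^k$ to obtain $\tau_{A_{\vareps}^k}(v_{\vareps}^k) \in W^{1,p}(Y)^n$, and set
\begin{align*}
(E_{A_{\vareps}}\veps)(x) := \tau_{A_{\vareps}^k}(v_{\vareps}^k)\!\left(\tfrac{x}{\vareps}-k\right) \qquad \text{for } x \in \vareps(Y+k),\; k \in K_{\vareps}.
\end{align*}
Since $\tau_{A_{\vareps}^k}$ is an extension operator, $E_{A_{\vareps}}\veps = \veps$ on each $\vareps(Y_s+k)$, and in particular the values on the shared faces $\vareps(\partial Y \cap (Y+k))$ are inherited directly from $\veps$, so the pieces glue to a function in $W^{1,p}(\Omega)^n$ that agrees with $\veps$ on $\oeps$.

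It remains to establish the three bounds cell-by-cell and sum. Writing $\tilde{u}=\tau_{A_{\vareps}^k}(v_{\vareps}^k)$ and using the scaling identities $\|u(\cdot/\vareps-k)\|_{L^p(\vareps(Y+k))}^p = \vareps^n\|u\|_{L^p(Y)}^p$ and $\|\nabla u(\cdot/\vareps-k)\|_{L^p(\vareps(Y+k))}^p = \vareps^{n-p}\|\nabla u\|_{L^p(Y)}^p$ together with Lemma \ref{lem:local_extension_A}, we obtain
\begin{align*}
\|E_{A_{\vareps}}\veps\|_{L^p(\vareps(Y+k))} &\le C\,|A_{\vareps}^{-1}|\,|A_{\vareps}|\,\bigl(\|\veps\|_{L^p(\vareps(Y_s+k))} + \vareps \|\nabla \veps\|_{L^p(\vareps(Y_s+k))}\bigr),\\
\|\nabla E_{A_{\vareps}}\veps\|_{L^p(\vareps(Y+k))} &\le C\,|A_{\vareps}^{-1}|\,|A_{\vareps}|\,\|\nabla \veps\|_{L^p(\vareps(Y_s+k))},\\
\|e_{A_{\vareps}}(E_{A_{\vareps}}\veps)\|_{L^p(\vareps(Y+k))} &\le C\,\|e_{A_{\vareps}}(\veps)\|_{L^p(\vareps(Y_s+k))},
\end{align*}
where the last line uses the crucial fact that the constant in the $e_A$-estimate of Lemma \ref{lem:local_extension_A} is \emph{independent} of $A$; this is the only place one needs to be careful, and it relies precisely on $A_{\vareps}$ being piecewise constant so that $e_{A_{\vareps}}$ acts on each cell like $A_{\vareps}^k\nabla(\cdot)$ symmetrised, with no derivative of $A_{\vareps}^k$ falling on the extension. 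Summing the $p$-th powers over $k \in K_{\vareps}$ and using the bound $|A_{\vareps}^{-1}|\le |A_{\vareps}|^{n-1}/\det A_{\vareps} \le M^{n-1}/\mu$ from the cofactor formula gives the required estimates with the claimed factor $M^n/\mu$.

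The main obstacle, conceptually, is to ensure that the $|A|$-dependent constants in the $L^p$ and gradient bounds do \emph{not} contaminate the $e_{A_{\vareps}}$ estimate. This is resolved by the specific form $\tau_A=A^{-1} S(A\cdot)$ in Lemma \ref{lem:local_extension_A}, which conjugates the standard extension $S$ by $A$ so that the symmetric gradient with coefficients $A$ is transformed into the classical symmetric gradient of $Av$. The piecewise constancy of $A_{\vareps}$ is what allows this cell-by-cell conjugation to commute with the $\vareps$-rescaling, keeping the $e_{A_{\vareps}}$-constant globally $\vareps$-independent.
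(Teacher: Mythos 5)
Your proof is correct and follows essentially the same approach as the paper's: define $E_{A_{\vareps}}$ cell-by-cell as the rescaled local extension $\tau_{A_{\vareps}^k}(\veps^k)(\cdot/\vareps-k)$ from Lemma \ref{lem:local_extension_A}, then sum the $p$-th powers over $k\in K_{\vareps}$ using the scaling identities and the cofactor bound $|A_{\vareps}^{-1}|\le C M^{n-1}/\mu$. The two key observations you highlight---that the $e_A$-constant in Lemma \ref{lem:local_extension_A} is $A$-independent because $\tau_A=A^{-1}S(A\,\cdot)$, and that the pieces glue because strict inclusion places the shared faces inside $\oeps$---are precisely the points the paper's proof relies on, the latter only implicitly.
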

\begin{proof}
For $k \in K_{\vareps}$ and $\veps \in W^{1,p}(\oeps)^n$ we define for almost every $y \in Y_s$
\begin{align*}
\veps^k(y):= \veps(\vareps(y + k))\in W^{1,p}(Y_s)^n, \qquad A_{\vareps}^k:=A_{\vareps}|_{\vareps(Y + k)}\in \R^{n\times n}.
\end{align*}
Now, we define $E_{A_{\vareps}}$ for almost every $x \in \vareps (Y + k)$ by
\begin{align}\label{def:global_extension_E_Aeps}
E_{A_{\vareps}}(\veps)(x) := \tau_{A_{\vareps}^k} (\veps^k) \left(\dfrac{x}{\vareps} - k\right).
\end{align}
Obviously, $E_{A_{\vareps}}$ defines an extension operator on $W^{1,p}(\oeps)^n$. To show the $\vareps$-uniform estimates, we first notice that for almost every $x\in \Omega$ it holds that
\begin{align*}
\left| A_{\vareps}(x)^{-1} \right| \le C \frac{|A_{\vareps}(x)|^{n-1}}{\det A_{\vareps}(x)} \le C \dfrac{M^{n-1}}{\mu},
\end{align*}
and therefore $\|A_{\vareps}^{-1}\|_{L^{\infty}(\Omega)} \le C \frac{M^{n-1}}{\mu}$. Hence, we obtain with Lemma \ref{lem:local_extension_A}
\begin{align*}
\|E_{A_{\vareps}}(\veps)\|_{L^p(\Omega)}^p &= \sum_{k \in K_{\vareps}} \int_{\vareps(Y + k)} \left| \tau_{A_{\vareps}^k} (\veps^k)\left(\dfrac{x}{\vareps} - k\right)\right|^p dx
\\
&= \sum_{k \in K_{\vareps}} \vareps^n \int_Y \left| \tau_{A_{\vareps}^k} (\veps^k)\right|^p dy
\\
&\le C \sum_{k \in K_{\vareps}} |A_{\vareps}^k|^p |(A_{\vareps}^k)^{-1}|^p \vareps^n \int_{Y_s} |\veps^k|^p + |\nabla_y \veps^k|^p dy
\\
&\le C \dfrac{M^{np}}{\mu^p}  \sum_{k \in K_{\vareps}} \int_{\vareps(Y_s + k)} |\veps|^p + \vareps^p |\nabla \veps|^p dx
\\
&\le C \dfrac{M^{np}}{\mu^p} \left(\|\veps \|_{L^p(\oeps)}^p + \vareps^p \|\nabla \veps\|_{L^p(\oeps)}^p \right).
\end{align*}
Next, for the estimate of $\nabla E_{A_{\vareps}}(\veps)$ we first notice that for almost every $x \in \vareps(Y + k)$ it holds that 
\begin{align*}
\nabla E_{A_{\vareps}}(\veps)(x) = \vareps^{-1} \left( \nabla_y \tau_{A_{\vareps}^k}(\veps^k)\right) \left(\fxe - k\right).
\end{align*}
This implies 
\begin{align*}
\left\| \nabla E_{A_{\vareps}}(\veps) \right\|_{L^p(\Omega)}^p &= \sum_{k \in K_{\vareps}} \vareps^{n-p} \int_Y \left| \nabla_y \tau_{A_{\vareps}^k}(\veps^k) \right|^p dy 
\\
&\le C \sum_{k \in K_{\vareps}} \vareps^{n-p} |A_{\vareps}^k|^p |(A_{\vareps}^k)^{-1}|^p \int_{Y_s} |\nabla_y \veps^k|^p dy 
\\
&\le C \frac{M^{np}}{\mu^p} \sum_{k \in K_{\vareps}} \int_{\vareps (Y_s + k)} |\nabla \veps|^p dx
\\
&= C\frac{M^{np}}{\mu^p}  \|\nabla \veps\|_{L^p(\oeps)}^p.
\end{align*}
Finally, we consider the norm of $e_{A_{\vareps}}(E_{A_{\vareps}}(\veps))$. 
In the following we use the same notation $e_A$ (for arbitrary $A\in \R^{n\times n}$) for derivatives with respect to $x$ and $y$. More precisely, for functions defined on the microscopic (respectively macroscopic) domain $\oeps$ (respectively $\Omega$) the operator $e_A$ includes derivatives with respect to $x$, and for functions on  the reference element $Y_s$ (respectively $Y$) we have derivatives with respect to $y$.
For almost every $x \in \vareps(Y  + k)$ it holds that
\begin{align*}
e_{A_{\vareps}}\left(E_{A_{\vareps}}(\veps)\right)(x) = \dfrac{1}{\vareps} e_{A_{\vareps}^k}\left(\tau(\veps^k)\right) \left(\fxe - k\right).
\end{align*}
Using again  Lemma \ref{lem:local_extension_A} we get
\begin{align*}
\left\|e_{A_{\vareps}}\left(E_{A_{\vareps}}(\veps)\right)\right\|_{L^p(\Omega)}^p &= \sum_{k \in K_{\vareps}} \vareps^{n-p} \int_Y \left| e_{A_{\vareps}^k}\left(\tau_{A_{\vareps}^k}(\veps^k)\right)\right|^p dy
\\
&\le C \sum_{k \in K_{\vareps}} \vareps^{n-p} \int_{Y_s} \left| e_{A_{\vareps}^k}(\veps^k)\right|^p dy 
\\
&= C \sum_{k \in K_{\vareps}} \int_{\vareps (Y_s +k)} |e_{A_{\vareps}^k}(\veps)|^p dx
\\
&= C \|e_{A_{\vareps}}(\veps)\|_{L^p(\oeps)}^p.
\end{align*}
\end{proof}

Now we are able to show a $\vareps$-uniform Korn inequality for non-constant continuous coefficients on the perforated domain $\oeps$. We emphasize that this result is valid for general domains $\oeps$ from our assumptions ($\Omega \setminus \oeps$ connected or disconnected). However, here we only give the proof for  the disconnected case, see Section \ref{sec:Korn_connected} for the general setting.
\begin{theorem}\label{TheoremKornContFunct}
For every $A\in C^0(\overline{\Omega})^{n\times n}$ with $\det A \geq \mu_0 >0$ and $\mu_0$ independent of $\vareps$, there exists a constant $C_A>0$ independent of $\vareps$ such that for all $\veps \in W_{\Gamma_\vareps^D}^{1,p}(\oeps)^n$ with $p\in (1,\infty)$ it holds that
\begin{align}\label{KornInequalityContFunct}
\|\veps \|_{W^{1,p}(\oeps)} \le C_A \|e_A(\veps)\|_{L^p(\oeps)}.
\end{align}
\end{theorem}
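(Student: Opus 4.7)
The plan is to transfer the inequality from the $\vareps$-dependent perforated domain $\oeps$ to the fixed domain $\Omega$ via the global extension operator of Proposition \ref{TheoremGlobalExtension} applied to a piecewise constant approximation $A_\vareps$ of $A$, then invoke the Korn inequality with continuous non-constant coefficients from \cite{pompe2003korn} on $\Omega$, and finally close the argument by a perturbation step replacing $A_\vareps$ by $A$.

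First I would exploit uniform continuity of $A$ on $\overline{\Omega}$ to define $A_\vareps \in L^{\infty}(\Omega)^{n\times n}$ constant on each micro-cell by $A_\vareps|_{\vareps(Y+k)} := A(\vareps k)$ for $k\in K_\vareps$. Then $\delta_\vareps := \|A-A_\vareps\|_{L^{\infty}(\Omega)} \to 0$ as $\vareps\to 0$, and for $\vareps$ small enough one has $\det A_\vareps \geq \mu_0/2$ and $\|A_\vareps\|_{L^{\infty}(\Omega)} \leq \|A\|_{L^{\infty}(\Omega)} + 1$, so Proposition \ref{TheoremGlobalExtension} provides the extension $E_{A_\vareps}:W^{1,p}(\oeps)^n\to W^{1,p}(\Omega)^n$ with $\vareps$-independent constants. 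Set $\tveps := E_{A_\vareps}(\veps)$. Since $\Gamma^D$ is an exact union of micro-cell faces by the geometric assumption and the extension is built cell-by-cell from a local extension that preserves boundary traces on $\partial Y\cap \partial Y_s$, one has $\tveps \in W^{1,p}_{\Gamma^D}(\Omega)^n$. Applying \cite{pompe2003korn} on $\Omega$ with the continuous coefficient $A$ (for which $\det A\geq \mu_0$) then produces a constant $C_A>0$, independent of $\vareps$, with $\|\tveps\|_{W^{1,p}(\Omega)} \leq C_A \|e_A(\tveps)\|_{L^p(\Omega)}$.

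The perturbation step uses the identity
\begin{align*}
e_A(w) = e_{A_\vareps}(w) + \tfrac{1}{2}\bigl((A-A_\vareps)\nabla w + (\nabla w)^{\top}(A-A_\vareps)^{\top}\bigr),
\end{align*}
applied first with $w = \tveps$ on $\Omega$ and then with $w = \veps$ on $\oeps$, combined with the third bound of Proposition \ref{TheoremGlobalExtension}, namely $\|e_{A_\vareps}(\tveps)\|_{L^p(\Omega)} \leq C_G \|e_{A_\vareps}(\veps)\|_{L^p(\oeps)}$, and the second bound $\|\nabla \tveps\|_{L^p(\Omega)} \leq C \|\nabla \veps\|_{L^p(\oeps)}$. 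Chaining these estimates with $\|\veps\|_{W^{1,p}(\oeps)} \leq \|\tveps\|_{W^{1,p}(\Omega)}$ yields
\begin{align*}
\|\veps\|_{W^{1,p}(\oeps)} \leq C_1 \|e_A(\veps)\|_{L^p(\oeps)} + C_2 \delta_\vareps \|\veps\|_{W^{1,p}(\oeps)},
\end{align*}
with constants $C_1, C_2$ independent of $\vareps$. Choosing $\vareps_0>0$ small enough that $C_2 \delta_\vareps \leq 1/2$ for all $\vareps \leq \vareps_0$ allows absorption of the last term into the left-hand side and gives \eqref{KornInequalityContFunct} for such $\vareps$. For the finitely many remaining values $\vareps>\vareps_0$ the inequality holds on each individual perforated domain by a direct application of \cite{pompe2003korn}; taking a maximum produces one $\vareps$-uniform constant $C_A$.

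The main obstacle is the absorption step: it succeeds precisely because the constant $C_G$ in Proposition \ref{TheoremGlobalExtension} controlling $e_{A_\vareps}(\tveps)$ is independent of $\vareps$ and does not blow up with $\|A_\vareps\|_{L^{\infty}}$ or $1/\det A_\vareps$ in the $e_{A_\vareps}$-estimate itself, so the small parameter $\delta_\vareps\to 0$ dominates all prefactors. A secondary technical point, much simpler in the disconnected case treated here than in the connected case of Section \ref{sec:Korn_connected}, is to verify that the cell-by-cell extension preserves the Dirichlet trace on $\Gamma^D$; this relies on the geometric assumption that $\Gamma^D$ splits into whole micro-cell faces and on the fact that the local extension operator $\tau_{A_\vareps^k}$, built from the standard operator of \cite{GahnJaegerTwoScaleTools}, transfers vanishing traces on $\partial Y_s \cap \partial Y$ to vanishing traces on the corresponding face of $\partial Y$.
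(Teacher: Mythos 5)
Your proposal is correct and follows the same strategy as the paper's own proof: approximate $A$ by the piecewise-constant $A_\vareps$, extend via $E_{A_\vareps}$ of Proposition \ref{TheoremGlobalExtension}, apply Pompe's Korn inequality on the fixed domain $\Omega$, perform the perturbation/absorption argument using $\|A-A_\vareps\|_{L^\infty}\to 0$, and handle the finitely many $\vareps$ above the threshold directly. The only presentational differences from the paper are that you combine the two perturbation steps into a single absorption and fold Poincar\'e's inequality directly into the statement of Pompe's estimate on $\Omega$; you also make explicit the preservation of the Dirichlet trace by the cell-wise extension (in the disconnected case $\Gamma^D\subset\overline{\oeps}$, so this is immediate from $E_{A_\vareps}(\veps)|_{\oeps}=\veps$), a point the paper leaves implicit.
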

\begin{proof}
Let $\Omega \setminus \oeps$ be disconnected (see end of Section \ref{sec:Korn_connected} for the connected case), especially we have $\Gamma^D:= \Gamma_{\vareps}^D$ is independent of $\vareps$.
There exists a sequence $A_{\vareps} \in L^{\infty}(\Omega)^{n\times n}$ constant on every micro-cell $\vareps (Y + k)$ for $k \in K_{\vareps}$, such that $A_{\vareps} \rightarrow A $ in $L^{\infty}(\Omega)^{n\times n}$. More precisely, we can define for $x \in \vareps(Y + k)$ the function $A_{\vareps}(x):= A(x_k)$ with  $x_k \in \vareps(Y + k)$ arbitrary but fixed. Hence, there exists $\vareps_1 >0$, and $M,\mu>0$, such that for all $\vareps \le \vareps_1$ it holds almost everywhere in $\Omega$ that
\begin{align*}
\|A_{\vareps}\|_{L^{\infty}(\Omega)} \le M, \qquad \det A_{\vareps} \geq \mu >0.
\end{align*}
For given $\veps \in W_{\Gamma^D}^{1,p}(\oeps)^n$ we denote the extension from Proposition \ref{TheoremGlobalExtension} by $\tveps:= E_{A_{\vareps}}(\veps)$. Hence, from the uniform estimates in Proposition \ref{TheoremGlobalExtension} we obtain with \cite[Corollary 4.1]{pompe2003korn} for a constant $C_{A,1}>0$ (independent of $\vareps$)
\begin{align}
\begin{aligned}\label{ineq:thm_korn_C0_aux}
\|\nabla \veps \|_{L^p(\oeps)} &\le \|\nabla \tveps\|_{L^p(\Omega)} \le C_{A,1} \|e_A(\tveps)\|_{L^p(\Omega)}
\\
&\le C_{A,1} \left( \|e_{A_{\vareps}}(\tveps)\|_{L^p(\Omega)} + 2 \|A_{\vareps} - A \|_{L^{\infty}(\Omega)} \|\nabla \tveps \|_{L^p(\Omega)}\right)
\\
&\le C_G C_{A,1} \|e_{A_{\vareps}}(\veps)\|_{L^p(\oeps)} + 2C_G C_{A,1} \dfrac{M^n}{\mu} \|A_{\vareps} - A\|_{L^{\infty}(\Omega)} \|\nabla \veps \|_{L^p(\oeps)}. 
\end{aligned}
\end{align}
Choosing $\vareps_2>0$ so small that for every $\vareps < \min\{\vareps_1,\vareps_2\}$ it holds that
\begin{align*}
\|A_{\vareps} - A \|_{L^{\infty}(\Omega)} \le \frac{\mu}{4 C_G C_{A,1} M^n},
\end{align*}
we get 
\begin{align*}
\|\nabla \veps \|_{L^p(\oeps)} &\le \underbrace{2 C_G C_{A,1} }_{=: C_{A,2}} \|e_{A_{\vareps}}(\veps)\|_{L^p(\oeps)}
\\
&\le C_{A,2}\|e_A(\veps)\|_{L^p(\oeps)} + 2C_{A,2} \|A_{\vareps} - A\|_{L^{\infty}(\Omega)} \|\nabla \veps\|_{L^p(\oeps)}. 
\end{align*}
Now, choosing $\vareps_3>0$ such that for every $\vareps < \min\{\vareps_1,\vareps_2,\vareps_3\}$ it holds that
\begin{align*}
\|A_{\vareps} - A\|_{L^{\infty}(\Omega)} \le \frac{1}{4C_{A,2}},
\end{align*}
we obtain with the Poincar\'e inequality, see Lemma \ref{lem:Poincare-inequality}, for a constant $C_{A,0}>0$ that
\begin{align*}
\| \veps \|_{W^{1,p}(\oeps)} \le C_{A,0} \| e_A(\veps)\|_{L^p(\oeps)}
\end{align*}
for all $\vareps < \widetilde{\vareps}_0 := \min\{\vareps_1,\vareps_2,\vareps_3\}$. Since $\vareps^{-1} \in \N$, there are only finitely many elements $\vareps\geq \widetilde{\vareps}_0 $, which we denote by $\vareps^{1}, \ldots,\vareps^l$ with $l\in \N$. Using again \cite[Corollary 4.1]{pompe2003korn} and the Poincar\'e inequality, we obtain for every $\vareps^i$ with $i \in \{1,\ldots,l\}$ for a constant $\widetilde{C}_{A,i}$ that 
\begin{align*}
\|v_{\vareps^i}\|_{W^{1,p}(\Omega_{\vareps^i})} \le \widetilde{C}_{A,i} \|e_A(v_{\vareps^i})\|_{L^p(\Omega_{\vareps^i})},
\end{align*}
for all $v_{\vareps^i} \in W^{1,p}(\Omega_{\vareps^i})$. Choosing $C_A:= \max\{C_{A,0}, \widetilde{C}_{A,1},\ldots,\widetilde{C}_{A,l}\}$ we obtain the desired result.
\end{proof}

\begin{remark}
From the proof of Theorem \ref{TheoremKornContFunct} we also obtain that inequality $\eqref{KornInequalityContFunct}$ is valid (possibly after a change of $C_A$) for all $A_{\vareps} \in L^{\infty}(\Omega)^{n\times n}$ constant on $\vareps(Y + k)$ for $k \in K_{\vareps}$, such that $A_{\vareps}$ lies in a small neighborhood of a continuous matrix-valued function $A\in C^0(\overline{\Omega})^{n\times n}$ with $det A> \mu_0>0$.
\end{remark}

For applications, where for example $A$ is given as the (transposed) deformation gradient, it is often not enough to consider the inequality $\eqref{KornInequalityContFunct}$ for a fixed element $A$, but for example for a sequence $A_{\vareps}$. In Theorem \ref{thm:main_thm_Korn_inequality} a sufficient condition is given such that the inequality is still valid uniformly with respect to $\vareps$ for all such elements $A_{\vareps}$.
\begin{proof}[Proof of Theorem \ref{thm:main_thm_Korn_inequality}]
The proof follows the same lines as the proof of \cite[Theorem 3.3]{mielke2020thermoviscoelasticity}, and we only sketch some details. Due to Theorem \ref{TheoremKornContFunct}, for given $A \in \mathcal{F}$ there exists a constant $C_A>0$ (independent of $\vareps$) such that for all $\veps \in W^{1,p}(\oeps)^n$ we have
\begin{align*}
\|\veps\|_{W^{1,p}(\oeps)} \le C_A \|e_A(\veps)\|_{L^p(\oeps)}.
\end{align*}
The mapping $A\mapsto C_A$ is constant and due to the compactness of $\mathcal{F}$ attains its minimum on $\mathcal{F}$ for some element $A^{\ast} \in \mathcal{F}$. Hence, the desired result is valid for $C_{\mathcal{F}}:= C_{A^{\ast}}>0$.
\end{proof}

\section{Extension operator for second order Sobolev spaces}
\label{sec:Extension_second_order}
In the following we construct a norm preserving extension operator for functions in $W^{2,p}(\oeps)$ for $p\in [1,\infty)$ which allows to treat functions defined on perforated domains as functions defined on the whole domain. Extension operators for perforated domains including holes (not touching each other) can be found for functions in $W^{1,p}(\oeps)$ in \cite{CioranescuSJPaulin}. The situation gets more complicated if the perforations are connected. This problem was solved in \cite{Acerbi1992} for functions in $W^{1,p}(\oeps)$. Here, we want to extend the latter results to second order Sobolev spaces, The crucial point is to construct suitable local extension operators on  reference elements. 

For the sake of simplicity and to avoid technical issues we only give the proof for strict inclusions, \ie $\Omega \setminus \oeps$ is disconnected. For the general case we refer to the methods in \cite{Acerbi1992} and also the arguments in Section \ref{sec:Korn_connected} in the appendix.

In the extension Theorem \ref{thm:main_thm_extension_operator} we need some kind of total extension operator, which means that the operator acts simultaneously on $W^{2,p}(\oeps)$ and $W^{1,s}(\oeps)$ with suitable estimates. Hence, also our local extension operator has to be total. For this in \cite{Acerbi1992} an arbitrary strong-1-extension acting on $L^p$ and $W^{1,p}$, see \cite[5.17]{AdamsSobelevSpaces2003} for the precise definition of such operators, was used for the extension of $W^{1,p}$-functions. Such operators can be obtained via a mirror argument for Lipschitz domains. However, for higher-order Sobolev spaces this mirror argument fails for the construction of strong-2-extension operators. Therefore we use here the Stein-extension operator from \cite{Stein1970}, see also \cite[Theorem 5.24]{AdamsSobelevSpaces2003}. More precisely, there exists an extension operator $S: W^{2,p}(Y_s)\rightarrow W^{2,p}(Y)$ with $p \in [1,\infty)$, such that for all $u \in W^{2,p}(Y_s)$ it holds that
\begin{align}
\begin{aligned}\label{ineq:SteinExtensionLocal}
\|Su\|_{L^p(Y)} &\le C \|u\|_{L^p(Y_s)},
\\
 \|Su\|_{W^{1,p}(Y)} &\le C \|u\|_{W^{1,p}(Y_s)},
 \\
 \|Su\|_{W^{2,p}(Y)} &\le C\|u\|_{W^{2,p}(Y_s)}.
\end{aligned}
\end{align}
We start with the construction of the local extension operator.
\begin{lemma}\label{LemmaLocalExtension}
Let $p\in [1,\infty)$. There exists an extension operator $\tau : W^{2,p}(Y_s) \rightarrow W^{2,p}(Y)$ such that for all $u \in W^{2,p}(Y_s)$ it holds that
\begin{align*}
\|\tau u\|_{L^p(Y)} &\le C \| u\|_{W^{1,p}(Y_s)},
\\
\|\nabla \tau u \|_{L^p(Y)} &\le C \|\nabla u \|_{L^p(Y_s)},
\\
\|\nabla^2 \tau u\|_{L^p(Y)} &\le C \|\nabla^2 u\|_{L^p(Y_s)}.
\end{align*}
Further, for $s\in [1,\infty)$ the operator $\tau$ also defines an extension operator $\tau: W^{1,s}(Y_s)\rightarrow W^{1,s}(Y)$ such that for all $v\in W^{1,s}(Y_s)$ it holds that
\begin{align}\label{ineq:tau_W1s}
\|\tau v\|_{L^s(Y)} \le C \|v\|_{W^{1,s}(Y_s)},\quad \|\nabla \tau v \|_{L^s(Y)} \le C \|\nabla v\|_{L^s(Y_s)}.
\end{align}
\end{lemma}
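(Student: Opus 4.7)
The plan is to compose the Stein extension operator $S$ from \eqref{ineq:SteinExtensionLocal} with subtraction of the best affine approximation of $u$ on $Y_s$. For $u \in W^{2,p}(Y_s)$ define the affine map
\begin{equation*}
L(u)(y) := a\cdot y + b, \qquad a := \frac{1}{|Y_s|}\int_{Y_s}\nabla u \, dy, \qquad b := \frac{1}{|Y_s|}\int_{Y_s}\bigl(u - a\cdot y\bigr) dy,
\end{equation*}
and set
\begin{equation*}
\tau u := S\bigl(u - L(u)\bigr) + L(u).
\end{equation*}
Since $L(u)$ is globally defined on $\R^n$ and $S(u-L(u))$ extends $u - L(u)$ from $Y_s$ to $Y$, the map $\tau$ is indeed an extension operator from $W^{2,p}(Y_s)$ to $W^{2,p}(Y)$. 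Its definition uses only zeroth and first moments of $u$ and $\nabla u$, so it is also well-defined on $W^{1,s}(Y_s)$ for any $s\in [1,\infty)$.

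By construction both $u - L(u)$ and $\nabla(u - L(u)) = \nabla u - a$ have zero mean on $Y_s$, so two iterated applications of the Poincar\'e-Wirtinger inequality yield
\begin{equation*}
\|u - L(u)\|_{L^p(Y_s)} \le C\|\nabla u - a\|_{L^p(Y_s)} \le C\|\nabla^2 u\|_{L^p(Y_s)},
\end{equation*}
whence $\|u - L(u)\|_{W^{2,p}(Y_s)} \le C\|\nabla^2 u\|_{L^p(Y_s)}$. Invoking \eqref{ineq:SteinExtensionLocal} gives immediately $\|\nabla^2 \tau u\|_{L^p(Y)} = \|\nabla^2 S(u - L(u))\|_{L^p(Y)} \le C\|\nabla^2 u\|_{L^p(Y_s)}$. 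For the gradient estimate I combine the Jensen-type bound $|a| \le C\|\nabla u\|_{L^p(Y_s)}$ with one application of Poincar\'e-Wirtinger to obtain $\|u - L(u)\|_{W^{1,p}(Y_s)} \le C\|\nabla u\|_{L^p(Y_s)}$; together with \eqref{ineq:SteinExtensionLocal} and the trivial bound $\|a\|_{L^p(Y)} \le C|a|$ this gives $\|\nabla \tau u\|_{L^p(Y)} \le C\|\nabla u\|_{L^p(Y_s)}$. The $L^p$-estimate is proved analogously, additionally using $|b| \le C\|u\|_{W^{1,p}(Y_s)}$ to control $\|L(u)\|_{L^p(Y)}$.

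For the $W^{1,s}$-part, the same identity $\tau v = S(v - L(v)) + L(v)$ is used; since the Stein operator is in fact a total extension operator \cite{Stein1970} (see also \cite[Theorem 5.24]{AdamsSobelevSpaces2003}), it also satisfies $\|Sw\|_{W^{1,s}(Y)} \le C\|w\|_{W^{1,s}(Y_s)}$, and the same Poincar\'e-Wirtinger arguments (now applied once, with $p$ replaced by $s$) furnish \eqref{ineq:tau_W1s}. The only real obstacle is the Hessian bound: the Stein operator alone delivers only $\|\nabla^2 Sw\|_{L^p(Y)} \le C\|w\|_{W^{2,p}(Y_s)}$, and subtracting just the mean value $\bar{u}$ would still leave the lower-order term $\|\nabla u\|_{L^p(Y_s)}$ on the right-hand side after a single Poincar\'e-Wirtinger step. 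Subtracting the full affine part $L(u)$ eliminates this residual contribution while keeping $\tau$ compatible with the weaker $W^{1,s}$-setting, which is precisely what the lemma requires.
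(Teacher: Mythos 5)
Your proposal is correct and matches the paper's proof essentially line for line: you define the same affine map $m_u(y)=Fy+b$ (your $L(u)$, $a$, $b$) matching the mean of $u$ and $\nabla u$ on $Y_s$, set $\tau u = S(u-m_u)+m_u$ with the Stein operator, and use the same two-step Poincar\'e--Wirtinger argument to control $u-m_u$ in $W^{2,p}$ by $\|\nabla^2 u\|_{L^p}$. The only difference is notation, so there is nothing further to add.
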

\begin{proof}
Let $u \in W^{2,p}(Y_s)$  and define 
\begin{align*}
F:= \frac{1}{|Y_s|} \int_{Y_s} \nabla u(y) dy, \quad b:= \frac{1}{|Y_s|} \int_{Y_s} u(y)- Fy dy,\quad m_u(y):= Fy + b.
\end{align*}
We define the local extension operator $\tau$ by
\begin{align*}
\tau u := S\left(u - m_u\right) + m_u.
\end{align*}
Obviously $\tau u$ is an extension of $u$. It remains to show the estimates. Since the mean value of $m_u$ is equal to the mean value of $u$, and $\nabla m_u=F$ is equal to the mean value of $\nabla u$, we obtain using Poincar\'e's inequality twice ($Y_s$ is connected) 
\begin{align*}
\|u - m_u\|_{L^p(Y_s)} \le C \|\nabla u - F \|_{L^p(Y_s)} \le C \| \nabla^2 u\|_{L^p(Y_s)},
\end{align*}
and therefore
\begin{align*}
\|u - m_u\|_{W^{1,p}(Y_s)} &\le C \|\nabla u\|_{L^p(Y_s)},
\\
\|u-m_u\|_{W^{2,p}(Y_s)} &\le C \|\nabla^2 u\|_{L^p(Y_s)}.
\end{align*}
We have
\begin{align}
\begin{aligned}\label{InequalityTauLp}
\|\tau u\|_{L^p(Y)} &\le \| S(u - m_u)\|_{L^p(Y)} + \| m_u \|_{L^p(Y)}
\\
&\le C \| u - m_u \|_{L^p(Y_s)}  + C \|u\|_{W^{1,p}(Y_s)}
\\
&\le C \|u\|_{W^{1,p}(Y_s)}.
\end{aligned}
\end{align}
Next, for the gradient of $\tau u$ we have with similar arguments
\begin{align}
\begin{aligned}\label{InequalityGradTauLp}
\|\nabla \tau u\|_{L^p(Y)} &\le \|\nabla S(u - m_u)\|_{L^p(Y)} + \|F\|_{L^p(Y)}
\\
 &\le C\|u - m_u\|_{W^{1,p}(Y_s)} + \|F\|_{L^p(Y)}
 \\
&\le C \|\nabla u\|_{L^p(Y_s)}.
\end{aligned}
\end{align}
Finally, for the second order derivatives we get
\begin{align*}
\|\nabla^2 \tau u\|_{L^p(Y_s)} &= \|\nabla^2 S(u - m_u)\|_{L^p(Y_s)} 
\\
&\le C\|u - m_u \|_{W^{2,p}(Y_s)}
\\
&\le C\|\nabla^2 u \|_{L^p(Y_s)}.
\end{align*}
Inequalities $\eqref{InequalityTauLp}$ and $\eqref{InequalityGradTauLp}$ especially show that $\tau$ also acts as an operator on $W^{1,s}(Y_s)$ (change $p$ and $s$) with the inequalities $\eqref{ineq:tau_W1s}$. This finishes the proof.
\end{proof}
Now we are able to give to proof for the extension Theorem \ref{thm:main_thm_extension_operator}:
\begin{proof}[Proof of Theorem \ref{thm:main_thm_extension_operator}]
We only sketch the proof for the case of strict inclusions, since the general case follows by the more technical arguments from \cite{Acerbi1992}, see also Section \ref{sec:Korn_connected} in the appendix. Let $\ueps \in W^{2,p}(\oeps)$. We define for almost every $x \in \vareps (Y + k)$ for $k\in K_{\vareps}$ the extension (with $\ueps^k:= \ueps(\epsilon(k + \cdot))$)
\begin{align*}
E_{\vareps}\ueps(x):= \tau \ueps^k \left(\fxe - k\right)
\end{align*}
with the local extension operator from Lemma \ref{LemmaLocalExtension}.
An elemental calculation shows 
\begin{align*}
\nabla E_{\vareps} \ueps = \vareps^{-1} [\nabla \tau \ueps^k]\left(\fxe - k\right), \qquad \nabla^2 E_{\vareps} \ueps = \vareps^{-2} [\nabla^2 \tau \ueps^k] \left(\fxe - k\right).
\end{align*}
With a straightforward calculation we get with the estimates from Lemma \ref{LemmaLocalExtension}
\begin{align*}
\|E_{\vareps} \ueps\|_{L^p(\Omega)}^p &= \sum_{k \in K_{\vareps}} \int_{\vareps(Y + k)} \left|\tau \ueps^k \left( \fxe  - k\right)\right|^p dx 
\\
&=\sum_{k \in K_{\vareps}} \vareps^n \int_Y |\tau \ueps^k|^p dy
\\
&\le C \sum_{k \in K_{\vareps}} \|\ueps^k\|^p_{W^{1,p}(Y_s)}
\\
&\le C\left( \|\ueps\|^p_{L^p(\oeps)} + \vareps^p \|\nabla \ueps\|^p_{L^p(\oeps)} \right).
\end{align*}
The estimates for the first and second gradient of $E_{\vareps} \ueps$ follow by similar arguments. For $\veps \in W^{1,s}(\oeps)$ the estimates follow by the same arguments.
\end{proof}

\section{Existence of a microscopic solution and \textit{a priori} estimates}
\label{sec:existence_a_priori_estimates}
The aim of this section is to show existence of a weak microscopic solution of $\eqref{MicroModel}$ together with $\vareps$-uniform \textit{a priori} estimates. For the existence we follow the approach of \cite{badal2023nonlinear} and use similar notations, where the solution is obtained via a time-discretization. We emphasize that the existence of a weak microscopic solution $\ueps$ for fixed $\vareps$ is already guaranteed by the results in \cite{badal2023nonlinear} and \cite{mielke2020thermoviscoelasticity}. The crucial point in this section is to obtain the $\vareps$-uniform \textit{a priori} bounds and $\vareps$-independent lower bound for the determinant of the deformation gradient. However, for the sake of completeness and since the proof of the pure viscoelastic problem is much simpler compared to the thermovisoelastic problem considered in  \cite{badal2023nonlinear} and \cite{mielke2020thermoviscoelasticity}, we also give a detailed existence proof. We emphasize that in the following we assume $p>n$.

\subsection{The time-discretized microscopic model}
 We fix a time step $\tau \in (0,1)$ and assume that $\tau $ evenly divides the time interval $[0,T]$. Hence, there exists $N_{\tau} \in \N$, such that $\frac{T}{\tau} = N_{\tau}$. Further, for an arbitrary sequence $(a_l)_{l\in \N_0}$ we use the following notation for discrete differences:
\begin{align*}
\dtau a_l := \frac{a_l - a_{l-1}}{\tau}, \qquad l\in \N.
\end{align*}
As initial step we choose 
\begin{align*}
\uepstau{0}:= \ueps^{in}.
\end{align*}
Now, let $k \in \{1,\ldots, N_{\tau}\}$ and  $\uepstau{k-1} \in \Yepsid$ given. We are looking for a solution $\uepstau{k}$ of the minimization problem
\begin{align}\label{Min_Problem_discr}
\min_{\veps \in \Yepsid} \left\{ \Meps(\veps) + \frac{1}{\tau} \Reps\left(\uepstau{k-1} , \veps - \uepstau{k-1}\right) - \langle \lepstau{k},\veps \rangle \right\},
\end{align}
with
\begin{align*}
\langle \lepstau{k},\veps\rangle &:= \int_{\oeps} \fepstau{k} \cdot \veps dx + \int_{\geps} \gepstau{k} \cdot \veps d\sigma,
\end{align*}
where $\fepstau{k}$ and $\gepstau{k}$ are given by
\begin{align*}
\fepstau{k} &:= \frac{1}{\tau} \int_{(k-1)\tau}^{k\tau} f_{\vareps} (s) ds,
\\
\gepstau{k} &:= \frac{1}{\tau} \int_{(k-1)\tau}^{k\tau} g_{\vareps}(s)ds.
\end{align*}

In a first step, we show that the discrete minimization problem $\eqref{Min_Problem_discr}$ has a solution in $\Yepsid$. In Proposition \ref{prop:minimizer_discrete} below we give no explicit bound with respect to $\vareps$ and $\tau$.

\begin{proposition}\label{prop:minimizer_discrete}
Let $\uepstau{k-1} \in \Yepsid^{\kappa_{\vareps}}$  with $k \in \{1,\ldots, N_{\tau}\}$ and $\Meps(\uepstau{k-1}) \le M_{\vareps}$, for constants $M_{\vareps} , \kappa_{\vareps}>0$ which might depend on $\vareps$. Then the  minimization problem $\eqref{Min_Problem_discr}$ admits a solution $\uepstau{k} \in \Yepsid$.
\end{proposition}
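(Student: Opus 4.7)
I would apply the direct method of the calculus of variations to the functional
\[
J(\veps) := \Meps(\veps) + \tfrac{1}{\tau}\Reps\bigl(\uepstau{k-1},\veps - \uepstau{k-1}\bigr) - \langle \lepstau{k},\veps\rangle.
\]
First I check that $\inf_{\Yepsid} J$ is finite. The element $\uepstau{k-1}$ is admissible, and the lower bound $\det\nabla\uepstau{k-1}\geq\kappa_\vareps$ together with $\Meps(\uepstau{k-1})\leq M_\vareps$ yields $J(\uepstau{k-1})<\infty$. For the lower bound, I combine the coercivity estimates \ref{AssumptionLowerBoundW}, \ref{AssumptionLowerBoundH}, the nonnegativity of the viscous term (\ref{ass:diss_lower_bound}), and Young's inequality applied to the linear term (using Cauchy–Schwarz, the trace inequality, and absorption into the higher-order contribution) to obtain, for this fixed $\vareps,\tau$,
\[
J(\veps) \;\geq\; c\bigl(\|\nabla^2\veps\|_{L^p(\oeps)}^p + \|\nabla\veps\|_{L^2(\oeps)}^2\bigr) + c\!\int_{\oeps}\det(\nabla\veps)^{-q}\,dx - C.
\]
Thus $J$ is coercive on $\Yepsid$, and any minimizing sequence $\{\veps^{(j)}\}\subset\Yepsid$ is bounded in $W^{2,p}(\oeps)^n$ with $\sup_j\int_{\oeps}\det(\nabla\veps^{(j)})^{-q}\,dx<\infty$.

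Next I extract a subsequence (not relabelled) with $\veps^{(j)}\rightharpoonup\veps^\ast$ weakly in $W^{2,p}(\oeps)^n$. Because $p>n$, the Morrey embedding $W^{2,p}(\oeps)\hookrightarrow C^{1,\alpha}(\overline{\oeps})$ is compact, so $\veps^{(j)}\to\veps^\ast$ strongly in $C^1(\overline{\oeps})^n$. This transfers the Dirichlet condition $\veps^\ast=\id$ on $\geps^D$ and gives uniform convergence of the deformation gradient. To secure $\det\nabla\veps^\ast>0$ I invoke a Healey–Krömer type argument (Lemma~\ref{lem:Lower_bound_det}): the $W^{2,p}$-bound, the exponent condition $q\geq pn/(p-n)$, and the uniform bound on $\int_{\oeps}\det(\nabla\veps^{(j)})^{-q}\,dx$ together yield $\det\nabla\veps^{(j)}\geq\kappa'>0$ uniformly on $\overline{\oeps}$, and this lower bound is preserved in the $C^1$-limit, so $\veps^\ast\in\Yepsid$.

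Finally I verify weak lower semicontinuity of each term of $J$. The strain-gradient integral is weakly lsc in $W^{2,p}$ since $H$ is convex in its second argument (\ref{AssumpHConvexity}); the elastic integral $\int_{\oeps} W_{\vareps}(\nabla\veps)\,dx$ in fact converges by dominated convergence, because $\nabla\veps^{(j)}\to\nabla\veps^\ast$ uniformly inside a compact subset of $GL^+(n)$ on which $W$ is continuous; the dissipative term is a convex quadratic form in $\nabla\veps$ with fixed bounded coefficients (depending on $\nabla\uepstau{k-1}$ through the assumption \ref{ass:diss_R}), hence weakly lsc in $W^{1,2}$; and the linear functional $\langle\lepstau{k},\cdot\rangle$ is continuous. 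Combining these, $J(\veps^\ast)\leq\liminf_j J(\veps^{(j)})=\inf_{\Yepsid} J$, so $\veps^\ast$ realises the infimum. The main obstacle is preserving the open constraint $\det\nabla\veps>0$ under weak limits; this is precisely where the coercive blow-up $W(y,F)\geq c_0\det(F)^{-q}$ with $q\geq pn/(p-n)$, together with $p>n$ and the Healey–Krömer mechanism, becomes indispensable.
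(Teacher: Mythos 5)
Your proof is correct and follows the direct method, as does the paper; the substantive difference is how you establish boundedness of the minimizing sequence. The paper compares a minimizing sequence $\veps^n$ with the competitor $\uepstau{k-1}$, then applies Pompe's Korn inequality (valid here because $\nabla\uepstau{k-1}\in C^0(\overline{\oeps})^{n\times n}$ with determinant bounded below) to make the dissipative term $\tfrac{1}{\tau}\Reps(\uepstau{k-1},\cdot-\uepstau{k-1})$ coercive in $\|\nabla(\cdot-\uepstau{k-1})\|_{L^2(\oeps)}$, and the force terms are absorbed into that. You instead drop the dissipative term by nonnegativity and extract coercivity directly from the lower bound \ref{AssumptionLowerBoundW} (which gives $c_0\|\nabla\veps\|_{L^2(\oeps)}^2$) together with Poincar\'e and the trace inequality, absorbing $\langle\lepstau{k},\cdot\rangle$ by Young. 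Both routes yield the needed $W^{2,p}$-bound plus control of $\int_{\oeps}\det(\nabla\veps)^{-q}\,dx$; your version is more elementary and avoids Korn entirely, whereas the paper's version produces an estimate of the form $\Meps(\veps^n)+\tfrac{c_\vareps}{\tau}\|\nabla\veps^n-\nabla\uepstau{k-1}\|_{L^2}^2\le\Meps(\uepstau{k-1})+\ldots$ which is a warm-up for the $\vareps$- and $\tau$-uniform discrete energy estimates of Lemma~\ref{lem:uniform_bound_dis_energy}. One small citation point: Lemma~\ref{lem:Lower_bound_det} requires an $\vareps$-independent energy bound, whereas here you only have $M_\vareps$; since $\vareps$ is fixed in this Proposition it would be cleaner to invoke the fixed-domain Healey--Kr\"omer result \cite[Theorem 3.1]{mielke2020thermoviscoelasticity} (as the paper does), or Lemma~\ref{lem:lower_bound_det_eps}, which already yields the $\vareps$-dependent lower bound that suffices here. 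The remaining steps (compact Morrey embedding, transfer of the Dirichlet condition and the determinant constraint to the $C^1$-limit, convexity of $H$ for weak lsc, and continuity of the first-order terms under uniform convergence of $\nabla\veps^{(j)}$) match the paper's argument.
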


\begin{remark}
In the formulation of Proposition \ref{prop:minimizer_discrete} we allowed the constants $M_{\vareps}$ and $\kappa_{\vareps}$ to depend on $\vareps$ (and also on $\tau$). This causes no problems here, since this Proposition is a pure existence result without giving additional \textit{a priori} bounds. This will be done later in Lemma \ref{lem:uniform_bound_dis_energy} below, where we show that under our assumptions on $\ueps^{in}$, $f_{\vareps}$, and $g_{\vareps}$ we can choose $M_{\vareps}$ and $\kappa_{\vareps}$ independent of $\vareps$ and $\tau$ (for $\tau$ small enough).
\end{remark}

\begin{proof}
We follow the proof of \cite[Proposition 4.1]{mielke2020thermoviscoelasticity} and \cite[Proposition 3.5]{badal2023nonlinear}. Since we have no coupling to thermal transport, the proof simplifies, and for the sake of completeness we sketch the crucial steps.
In the following $C_{\vareps},c_{\vareps}>0$ are constants which might depend on $\vareps$. From our assumptions we have $\det(\nabla \uepstau{k-1} )\geq \kappa_{\vareps}>0$ and the embedding $\Yepsid^{\kappa_{\vareps}} \hookrightarrow C^{1,\gamma}(\overline{\oeps})^n$ with $\gamma \in \left(0,1 -\frac{n}{p}\right)$ for $p>n$ implies $\nabla \uepstau{k-1} \in C^0(\overline{\oeps})^{n\times n}$. Hence, we can apply the Korn inequality from \cite[Corollary 4.1]{pompe2003korn} to obtain for a constant $c_{\vareps}>0$ and all $\veps \in H^1_{\geps^D}(\oeps)^n$
\begin{align}\label{aux_Korn_inequality}
c_{\vareps}\|\veps \|_{H^1(\oeps)} \le \left\| \nabla \veps^{\top} \nabla \uepstau{l} + (\nabla \uepstau{l})^{\top} \nabla \veps \right\|_{L^2(\oeps)}.
\end{align}

Let $(\veps^n)_{n\in \N} \subset \Yepsid$ for $n\in \N$ be a minimizing sequence for $\eqref{Min_Problem_discr}$. Hence, for every $\delta>0$ we obtain for $n$ large enough
\begin{align}
\begin{aligned}\label{eq:aux_dis_inequ}
\Meps(\veps^n) + \frac{1}{\tau} &\Reps\left(\uepstau{k-1}, \veps^n - \uepstau{k-1} \right) - \langle \lepstau{k} , \veps^n\rangle
\\
&\le \Meps(\uepstau{k-1}) + \frac{1}{\tau} \Reps(\uepstau{k-1},0) - \langle \lepstau{k}, \uepstau{k-1}\rangle + \delta. 
\end{aligned}
\end{align}
Without loss of generality we put $\delta  = 0$. Now, with $\eqref{aux_Korn_inequality}$  (remember $\uepstau{k-1} - \veps^n =0$ on $\geps^D$) we get from the assumption \ref{ass:diss_lower_bound}
\begin{align}
\begin{aligned}\label{ineq:diss_korn_exist_dis_min}
\frac{1}{\tau}\Reps&\left(\uepstau{k-1},\veps^n - \uepstau{k-1}\right) \\
&\geq \frac{c_0}{\tau} \left\|\nabla \left(\veps^n - \uepstau{k-1}\right)^{\top} \nabla \uepstau{k-1} + (\nabla \uepstau{k-1})^{\top} \nabla \left(\veps^n - \uepstau{k-1}\right)\right\|^2_{L^2(\oeps)}
\\
&\ge \frac{c_{\vareps}}{\tau} \left\|\nabla \veps^n - \nabla \uepstau{k-1}\right\|^2_{L^2(\oeps)}.
\end{aligned}
\end{align}
To estimate the terms including $\lepstau{k}$, we use the Poincar\'e inequality and the trace inequality, to obtain for arbitrary $\theta >0$ and a constant $C_{\vareps}(\theta)>0$% (which can be chosen in fact independent of $\vareps$ and $\tau$)
\begin{align*}
\left|\langle \lepstau{k}, \uepstau{k-1} - \veps^n \rangle\right| \le& \|\fepstau{k}\|_{L^2(\oeps)} \|\uepstau{k-1} - \veps^n \|_{L^2(\oeps)} 
\\
&\hspace{4em}+ \|\gepstau{k}\|_{L^2(\geps)} \|\uepstau{k-1} - \veps^n\|_{L^2(\geps)}
\\
\le& \tau C_{\vareps}(\theta) \|\fepstau{k}\|_{L^2(\oeps)}^2 + \frac{\theta}{\tau} \|\nabla \uepstau{k-1} - \nabla \veps^n \|_{L^2(\oeps)}
\\
&+  \tau C_{\vareps}(\theta)\|\gepstau{k}\|_{L^2(\geps)} + \frac{\theta}{\tau} 
 \|\nabla \uepstau{k-1} - \nabla \veps^n\|_{L^2(\oeps)}^2.
\end{align*}
Altogether, for $\theta >0$ small enough we obtain from $\eqref{eq:aux_dis_inequ}$
\begin{align}
\begin{aligned}\label{eq:aux_energy_discrete}
\Meps(\veps^n) + \frac{c_{\vareps}}{\tau} & \|\nabla \uepstau{k-1} - \nabla \veps^n\|_{L^2(\oeps)}^2 
\\
&\le \Meps(\uepstau{k-1}) + C_{\vareps} \tau \left(\left\|\gepstau{k}\right\|^2_{L^2(\geps)} + \left\|\fepstau{k}\right\|^2_{L^2(\oeps)}\right).
\end{aligned}
\end{align}
Since the right-hand side is independent of $n$, we obtain with the assumptions \ref{AssumptionLowerBoundW}, \ref{AssumptionLowerBoundH}, \ref{ass:f_eps}, and \ref{ass:g_eps} that $\veps^n$ fulfills
\begin{align*}
\|\veps^n\|_{H^1(\oeps)} + \|\nabla^2 \veps^n\|_{L^p(\oeps)} \le C_{\vareps}
\end{align*}
for a constant $C_{\vareps}>0$ independent of $n$. From the Poincar\'e inequality in Lemma \ref{lem:Poincare-inequality} in the appendix we also obtain
\begin{align*}
\|\veps^n\|_{W^{2,p}(\oeps)} \le C_{\vareps}.
\end{align*}
Since $W_{\vareps}$ is continuous and $H_{\vareps} $ is convex (see \ref{ass:W_regularity} and \ref{AssumpHConvexity} ),  and the embedding $W^{2,p}(\oeps)^n$ into $W^{1,\infty}(\oeps)^n$ is compact since $p>n$, we obtain that $\Meps$ is lower semicontinuous on $W^{2,p}(\oeps)^n$. This guarantees the existence of a minimizer $\uepstau{k}$ with $\veps^n \rightharpoonup \uepstau{k}$ weakly in $W^{2,p}(\oeps)^n$. From the calculations above we also see that a minimizer has bounded energy $\meps(\uepstau{k}) \le C_{\vareps}$ and \cite[Theorem 3.1]{mielke2020thermoviscoelasticity} implies the existence of $\kappa_{\vareps}>0$ such that $\det(\nabla \uepstau{k}) \geq \kappa_{\vareps}$ in $\oeps$. This finishes the proof.
\end{proof}

\begin{remark}\label{rem:Euler_Lagrange_discrete}
Following the ideas in \cite[Proposition 3.2 and 4.1]{mielke2020thermoviscoelasticity} we obtain that $\uepstau{k}$ from Proposition \ref{prop:minimizer_discrete} fulfills for all $\zeps \in W^{2,p}_{\geps^D}(\oeps)^n$ the equation
\begin{align*}
\int_{\oeps}\bigg\{ \left[ \partial_F W_{\vareps} (\nabla \uepstau{k}) + \partial_{\dot{F}} R_{\vareps}(\nabla \uepstau{k-1},\delta_{\tau} \nabla \uepstau{k})\right]& : \nabla \zeps 
\\
& \hspace{-5em}+ \partial_G H_{\vareps} (\nabla^2 \ueps) \vdots \nabla^2 \zeps \bigg\} dx - \langle \lepstau{k},\zeps\rangle = 0.
\end{align*}
Here it is crucial that the function $\partial_{\dot{F}} R_{\vareps}$ is linear in the $\dot{F}$-variable.
\end{remark}

Proposition \ref{prop:minimizer_discrete} implies the existence of a sequence $\{\uepstau{k}\}_{k=0}^{N_{\tau}}$ with $N_{\tau} = \frac{T}{\tau}$ and $\uepstau{0} = \ueps^{in}$ of minimizers of $\eqref{Min_Problem_discr}$. However, we have no uniform bound of the energies $\meps(\uepstau{k})$ with respect to $\vareps$ and $\tau$. The reason is the constant $c_{\vareps}$ in $\eqref{ineq:diss_korn_exist_dis_min}$. In the following we will refine the estimates from the proof above, where in particular we use the higher regularity for the data $f_{\vareps}$ and $g_{\vareps}$. This gives a uniform bound for the mechanical energies $\meps(\uepstau{k})$, which allows to control the determinant $\det (\nabla \uepstau{k})$ from below uniformly with respect to $\vareps$ and $\tau$, see Lemma \ref{lem:Lower_bound_det} below.

\subsection{$\vareps$-uniform lower bound for the determinant of the deformation gradient for bounded energies}

We show an $\vareps$-uniform positive lower bound for functions with bounded energy on the perforated domain $\oeps$. More precisely, we define 
\begin{align*}
\mathcal{F}_{\vareps}^M:=\left\{ \veps \in W^{2,p}(\oeps)^n\, : \, \Meps(\veps) \le M\right\},
\end{align*}
and show that the determinant of the deformation gradient of such functions are bounded away from zero with a constant independent of $\vareps$. Such a result for fixed domains (not depending on $\vareps$) was shown in \cite[Theorem 3.1]{mielke2020thermoviscoelasticity} with similar arguments as in \cite{healey2009injective}. We start with a Lemma giving a lower bound depending on $\vareps$ by repeating the proof of \cite{mielke2020thermoviscoelasticity}.

\begin{lemma}\label{lem:lower_bound_det_eps}
For every $\veps \in \mathcal{F}_{\vareps}^M$  for a constant $M>0$ independent of $\vareps$  and $p>n$ it holds that
\begin{align*}
\det(\nabla \veps) \geq \vareps^{\frac{n}{q}} \tilde{\kappa}_M\quad \mbox{ in } \oeps,
\end{align*}
for a constant $ \tilde{\kappa}_M>0$ independent of $\vareps$.
\end{lemma}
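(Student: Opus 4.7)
The proof I have in mind follows the Healey--Kr\"omer--Mielke--Roubí\v{c}ek strategy for pointwise determinant bounds from mechanical energy, adapted to the perforated setting. The key input is uniform H\"older continuity of $\nabla\veps$, which together with the $L^q$-bound on $\det(\nabla\veps)^{-1}$ rules out points where the determinant is too small, up to the $\vareps^{n/q}$ scale dictated by the micro-geometry.

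First, I would upgrade the available bounds to the whole domain $\Omega$ via the extension operator $E_{\vareps}$ of Theorem \ref{thm:main_thm_extension_operator}. The bound $\Meps(\veps)\le M$ combined with \ref{AssumptionLowerBoundW} and \ref{AssumptionLowerBoundH} yields $\|\nabla\veps\|_{L^2(\oeps)}+\|\nabla^2\veps\|_{L^p(\oeps)}+\int_{\oeps}\det(\nabla\veps)^{-q}\,dx\le C_M$. Setting $\tveps:=E_{\vareps}\veps$, the extension estimates give $\|\nabla^2\tveps\|_{L^p(\Omega)}\le C$ and $\|\nabla\tveps\|_{L^2(\Omega)}\le C$; Poincar\'e on the fixed domain $\Omega$ (applied to $\nabla\tveps$ minus its mean, with the mean controlled by the $L^2$-norm) then upgrades this to $\|\nabla\tveps\|_{W^{1,p}(\Omega)}\le C$. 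Since $p>n$, Morrey's embedding on $\Omega$ provides $\|\nabla\tveps\|_{C^{0,\gamma}(\overline{\Omega})}\le K$ with $\gamma:=1-n/p$ and $K$ independent of $\vareps$. In particular $|\nabla\veps|\le K$ and $\nabla\veps$ is H\"older on $\oeps$, so the polynomial nature of $\det$ on $\{|F|\le K\}$ gives $|\det\nabla\veps(x)-\det\nabla\veps(y)|\le L'|x-y|^{\gamma}$ with $L'$ independent of $\vareps$.

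Next, I would run the ball-shrinking argument. Fix $x_0\in\oeps$, set $\alpha:=\det\nabla\veps(x_0)$, and note that $\det\nabla\veps\le 2\alpha$ on $B_r(x_0)\cap\oeps$ whenever $r\le(\alpha/L')^{1/\gamma}$. To convert this into a lower bound on $\alpha$, I need a $\vareps$-uniform measure estimate $|B_r(x_0)\cap\oeps|\ge c_1 r^n$. This is the place where the perforated geometry enters: since $Y_s$ is Lipschitz and connected, it satisfies an interior cone condition with some aperture $\omega$ and height $h$, and the matching of opposite faces assumed in Section \ref{sec:Micro_Model} guarantees that $\oeps$ inherits a cone condition with the same aperture and height $c_0\vareps$, hence the measure estimate holds for $r\le c_0\vareps$. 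Choosing $r:=\min\{(\alpha/L')^{1/\gamma},c_0\vareps\}$ and inserting this into $C_M\ge c_1 r^n(2\alpha)^{-q}$ gives $\alpha\ge c_2 r^{n/q}$. If $r=c_0\vareps$, this is exactly $\alpha\ge\tilde{\kappa}_M\vareps^{n/q}$, as claimed. If instead $r=(\alpha/L')^{1/\gamma}<c_0\vareps$, the inequality rearranges to $\alpha^{\,1-n/(q\gamma)}\ge c_3$, and the assumption $q\ge pn/(p-n)=n/\gamma$ makes the exponent non-negative, so one obtains a $\vareps$-independent lower bound on $\alpha$, which is stronger than the claimed $\vareps^{n/q}$-bound.

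The main obstacle is ensuring that every constant appearing (Morrey, Poincar\'e, the H\"older constant of $\det$, and the measure-density constant) is genuinely independent of $\vareps$. The decisive tool enabling this is Theorem \ref{thm:main_thm_extension_operator}, without which Morrey's embedding applied directly on $\oeps$ would carry a constant depending on the Lipschitz character of $\oeps$, hence on $\vareps$. The unavoidable factor $\vareps^{n/q}$ in the conclusion reflects the fact that in a periodically perforated domain the cone condition only holds up to scale $\vareps$, so the ball argument ceases to produce bulk volume beyond that scale.
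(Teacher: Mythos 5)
Your strategy is the same as the paper's: uniform $C^{1,\gamma}$-control on $\nabla\veps$ via the extension operator of Theorem~\ref{thm:main_thm_extension_operator} (this is exactly how Lemma~\ref{cor:embedding_Sobolev_hoelder} is proved), combined with the cone condition at scale $\vareps$ and the $L^q$-bound on $\det(\nabla\veps)^{-1}$. The difference is technical: the paper integrates $(\delta_{\vareps}(x)+L\,r^{n/q})^{-q}$ over the full cone of height $\vareps r_*$, producing a logarithmic lower bound $\log\bigl(1+(\vareps r_*)^n/\delta_{\vareps}(x)^q\bigr)$, whereas you freeze a single radius $r$ on which $\det$ is comparable to $\alpha=\det\nabla\veps(x_0)$, and use only the crude volume bound $|B_r\cap\oeps|\geq c_1 r^n$.

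That simplification opens a genuine gap at the borderline exponent $q=\frac{pn}{p-n}$, which assumption~\ref{AssumptionLowerBoundW} explicitly permits. In your second case $r=(\alpha/L')^{1/\gamma}<c_0\vareps$, the inequality $C_M\geq c_1 r^n(2\alpha)^{-q}$ becomes $\alpha^{\,q-n/\gamma}\geq c_3$, and when $q\gamma=n$ (i.e.\ $q=n/\gamma=\frac{pn}{p-n}$) the left-hand side is identically $1$: you obtain the vacuous constraint $1\geq c_3$ on the fixed constants $c_1,C_M,L'$, and no lower bound on $\alpha$ at all, not even one of order $\vareps^{n/q}$. Your sentence ``the exponent is non-negative, so one obtains a $\vareps$-independent lower bound on $\alpha$'' is only valid when the exponent is strictly positive. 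The paper's cone integral avoids this because the $\log$ diverges precisely when $\delta_{\vareps}(x)^q\ll(\vareps r_*)^n$, which still yields $\delta_{\vareps}(x)\geq\vareps^{n/q}\tilde\kappa_M$ at the critical exponent. To repair your argument at $q=\frac{pn}{p-n}$ you would need to sum the contributions from a dyadic family of annuli between radii $(\alpha/L')^{1/\gamma}$ and $c_0\vareps$, recovering the logarithm by hand; as written, the ball-shrinking step does not close.
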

\begin{proof}
The proof follows by similar arguments as in \cite[Theorem 3.1]{mielke2020thermoviscoelasticity} and we  use a similar notation, where here we have to analyze in detail the dependence on the parameter $\vareps$.
Let $\veps \in \mathcal{F}_{\vareps}^M$. From Lemma \ref{lem:Poincare-inequality} and \ref{cor:embedding_Sobolev_hoelder} we obtain for a constant $C>0$ independent of $\vareps$ and $\lambda = 1 -\frac{n}{p}$ for $p>n$ that
\begin{align*}
\|\veps\|_{C^{1,\lambda}(\overline{\oeps})} \le C \|\veps\|_{W^{2,p}(\oeps)} \le \meps(\veps) \le C.
\end{align*}
We define $\delta_{\vareps}:= \det(\nabla \veps) \in C^{0,\lambda}(\overline{\oeps})$.
 Since $\oeps$ is a Lipschitz domain, it also fulfills the uniform cone condition, as well as the reference element $Y_s$. Especially, we can choose cones with height $\vareps$. More precisely we have cones for every $x \in \oeps$
\begin{align*}
C_{\vareps}(x) = \left\{x + z \, : \, 0 < |z| < \vareps r_{\ast}, \, \frac{z}{|z|} \in A \right\},
\end{align*}
for $r_{\ast}>0$  and $A \subset \mathbb{S}^{n-1}$ with $\int_A d\omega = \alpha_{\ast}$. Using the H\"older continuity of $\delta_{\vareps}$ we get for a constant $L>0$ independent of $\vareps$ for all $x,y \in \overline{\Omega}$ that 
\begin{align*}
\delta_{\vareps}(y) \le \delta_{\vareps}(x) + L |x - y|^{\lambda}. 
\end{align*}
Now, we have for a constant $C_0>0$ 
\begin{align*}
C &\geq \int_{\oeps} \frac{1}{\delta_{\vareps}(y)^q} dy \geq \int_{y \in C_{\vareps}(x)} \frac{1}{\left(\delta_{\vareps}(x) + L | x -y |^{\lambda} \right)^q } dy 
\\
&\geq \int_{y \in C_{\vareps}(x)} \frac{1}{\left(\delta_{\vareps}(x) + L | x -y |^{\frac{n}{q}} \right)^q } dy  = \int_{\omega \in A} \int_{r=0}^{\vareps r_{\ast}} \frac{r^{n-1}dr}{\left(\delta_{\vareps}(x) + L|x - y|^{\frac{n}{q}}\right)^q} d\omega
\\
&\geq  \frac{\alpha_{\ast}}{C_0} \int_{r=0}^{\vareps r_{\ast}} \frac{r^{n-1} dr }{\left(\delta_{\vareps}(x)^q + r^n\right)} = \frac{\alpha_{\ast}}{nC_0} \log\left(1 + \frac{(\vareps r_{\ast})^n}{\delta_{\vareps}(x)^q}\right).
\end{align*}
From this we obtain for every $x \in \overline{\oeps}$
\begin{align*}
\delta_{\vareps} \geq \vareps^{\frac{n}{q}} r_{\ast}^{\frac{n}{q}}\left(\exp\left(\frac{C C_0 n}{\alpha_{\ast}}\right) - 1 \right)^{-\frac{1}{q}} =: \vareps^{\frac{n}{q}} c_0.
\end{align*}
\end{proof}

Now, we show that in fact the lower bound can be chosen independently of $\vareps$ by using a contradiction argument.

\begin{lemma}\label{lem:Lower_bound_det}
Let $M>0$ independent of $\vareps$. There exists $\kappa_M>0$ such that for all $\veps \in \mathcal{F}_{\vareps}^M$ ($p>n$) it holds everywhere in $\oeps$
\begin{align}\label{LowerBound_Determinant}
\det(\nabla \veps) \geq \kappa_M.
\end{align}
%and we define 
%\begin{align*}
%\mathcal{F}_{\vareps}^M:=\left\{ \veps \in W^{2,p}(\oeps)^n\, : \, \Meps(\veps) \le M\right\}.
%\end{align*}
In particular for $\vareps$ small enough and with the extension operator $E_{\vareps} $ from Theorem \ref{thm:main_thm_extension_operator}, for all $\veps \in \mathcal{F}_{\vareps}^M$ it holds everywhere in $\Omega$
\begin{align*}
\det(\nabla E_{\vareps}\veps) \geq \frac{\kappa_M}{2}.
\end{align*}
\end{lemma}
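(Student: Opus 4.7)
The plan is to argue by contradiction, using the uniform $C^{1,\lambda}$-estimate provided by the second-order extension operator of Theorem \ref{thm:main_thm_extension_operator} to reduce the problem to the fixed domain $\Omega$, and then to exploit the integrability of $\det(\nabla \veps)^{-q}$ forced by the lower bound \ref{AssumptionLowerBoundW}. This is the $\vareps$-uniform analogue of the strategy from \cite[Theorem 3.1]{mielke2020thermoviscoelasticity}; the difficulty solved in Lemma \ref{lem:lower_bound_det_eps} (the loss of a factor $\vareps^{n/q}$ coming from cones of height $\vareps$) is bypassed precisely because the extension lives on a fixed domain.

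First I would set $\tveps := E_{\vareps}\veps$ and observe that Theorem \ref{thm:main_thm_extension_operator} together with \ref{AssumptionLowerBoundW}, \ref{AssumptionLowerBoundH} and the Poincar\'e inequality (Lemma \ref{lem:Poincare-inequality}) yields $\|\tveps\|_{W^{2,p}(\Omega)} \le C_M$ uniformly in $\vareps$. Since $p > n$, the compact embedding $W^{2,p}(\Omega) \hookrightarrow C^{1,\lambda}(\overline{\Omega})$ with $\lambda = 1 - n/p$ gives a uniform $C^{1,\lambda}$-bound on $\{\tveps\}$. Simultaneously, \ref{AssumptionLowerBoundW} implies the uniform bound
\[
\int_{\oeps} \det(\nabla \veps)^{-q}\, dx \le C_M.
\]

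Now assume the first assertion fails. Then there exist $\vareps_k \to 0$, $\veps_k \in \mathcal{F}_{\vareps_k}^M$ and $x_k \in \Omega_{\vareps_k}$ with $\det(\nabla \veps_k)(x_k) \to 0$. Extracting subsequences, $\widetilde{\veps}_k \to u$ in $C^1(\overline{\Omega})$ and $x_k \to x_0 \in \overline{\Omega}$. Since $\widetilde{\veps}_k = \veps_k$ on $\Omega_{\vareps_k}$, continuity gives $\det(\nabla u)(x_0) = 0$. For any compact $K \subset \{\det(\nabla u) > 0\}$, uniform convergence of $\det(\nabla \widetilde{\veps}_k)^{-q}$ to $\det(\nabla u)^{-q}$ on $K$ combined with $\chi_{\Omega_{\vareps_k}} \rightharpoonup^{*} |Y_s|$ in $L^{\infty}(\Omega)$ and the uniform $L^1$-bound above yields
\[
|Y_s| \int_K \det(\nabla u)^{-q}\, dx \le C_M,
\]
hence $\det(\nabla u)^{-q} \in L^1(\Omega)$. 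On the other hand, from $u \in C^{1,\lambda}(\overline{\Omega})$ and $\det(\nabla u)(x_0) = 0$ I obtain $\det(\nabla u)(x) \le L|x - x_0|^{\lambda}$ in a neighbourhood of $x_0$, and since $q \ge pn/(p-n) = n/\lambda$ this forces $\int_{B_r(x_0) \cap \Omega}\det(\nabla u)^{-q}\, dx = \infty$, contradicting the previous inequality.

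For the second assertion I would run an analogous contradiction argument: assuming there exist $y_k \in \Omega$ with $\det(\nabla E_{\vareps_k}\veps_k)(y_k) < \kappa_M/2$, passing to a $C^1$-limit $u$ and $y_k \to y_0$ gives $\det(\nabla u)(y_0) \le \kappa_M/2$. But the first part, together with $\widetilde{\veps}_k = \veps_k$ on $\Omega_{\vareps_k}$ and the density of $\bigcup_k \Omega_{\vareps_k}$ in $\Omega$ (every $y\in\overline{\Omega}$ is approximable by points $z_k \in \Omega_{\vareps_k}$), forces $\det(\nabla u) \ge \kappa_M$ on $\overline{\Omega}$, contradicting the previous bound. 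The main obstacle is clearly the first step: transferring the $L^1$-bound on $\det(\nabla \veps)^{-q}$ from the moving domain $\oeps$ to a usable limit statement on the fixed domain $\Omega$, which is where both Theorem \ref{thm:main_thm_extension_operator} and the positive volume fraction $|Y_s|$ arising from the two-scale weak limit of $\chi_{\oeps}$ become indispensable.
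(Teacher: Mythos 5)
Your overall strategy coincides with the paper's: argue by contradiction, use the extension operator to move to the fixed domain $\Omega$, extract a $C^1$-convergent subsequence, transfer the $L^q$-integrability of $\det(\nabla\veps)^{-1}$ to the limit, and then use the H\"older continuity of $\det(\nabla u)$ plus the exponent relation $q\lambda\geq n$ to force a cone-type divergence. Where you differ is that you re-prove the Healey--Kr\"omer cone estimate inline (the paper instead cites \cite[Theorem 3.1]{mielke2020thermoviscoelasticity} for the fixed domain), and you obtain the limit $L^q$-bound via the weak-$\ast$ convergence $\chi_{\oeps}\rightharpoonup^{\ast}|Y_s|$ and uniform convergence on compacta, where the paper uses the unfolding operator together with Fatou's lemma and a weak-convergence result from \cite{HewittStromberg}. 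Your observation that the $\vareps^{n/q}$ loss in Lemma \ref{lem:lower_bound_det_eps} disappears once one works on the fixed domain is exactly the point of the paper's argument, and your direct approach for the second assertion, while longer than the paper's one-line $C^{1,\gamma}$-Lipschitz argument, is also sound.

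However, there is a genuine gap in the passage ``hence $\det(\nabla u)^{-q}\in L^1(\Omega)$'' and the ensuing contradiction. Your bound $|Y_s|\int_K\det(\nabla u)^{-q}\,dx\le C_M$ is only for compact $K\subset\{\det(\nabla u)>0\}$, so it controls $\int_{\{\det(\nabla u)>0\}}\det(\nabla u)^{-q}\,dx$ and nothing else. It does not exclude that $\{\det(\nabla u)=0\}$ has positive measure (where $\det(\nabla u)^{-q}=+\infty$), and if it does, then no compact $K\subset\{\det(\nabla u)>0\}\cap B_r(x_0)$ need see the pointwise divergence $|x-x_0|^{-q\lambda}$: the set $\{\det(\nabla u)>0\}$ could have measure zero near $x_0$, making $\int_K\det(\nabla u)^{-q}\,dx$ bounded despite the cone estimate on the full ball. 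Thus the claimed contradiction between ``$\det(\nabla u)^{-q}\in L^1$'' and ``$\int_{B_r(x_0)\cap\Omega}\det(\nabla u)^{-q}\,dx=\infty$'' compares two quantities living on different sets. You need to first show that $\{\det(\nabla u)=0\}$ is Lebesgue-null; the paper does this with Fatou's lemma applied to the unfolded sequence $(\det(\teps(\nabla\veps)))^{-q}$ (which converges a.e.\ on $\Omega\times Y_s$). An alternative way to close your version: work directly with $\det(\nabla\tveps)^{-q}\geq L^{-q}(|x-x_0|+\delta_\vareps)^{-q\lambda}$ where $\delta_\vareps\to 0$, pass to the weak-$\ast$ limit of $\chi_{\oeps}$ on $B_r(x_0)\setminus B_\rho(x_0)$ for fixed $\rho>0$, and then let $\rho\to 0$ to deduce $|Y_s|\int_{B_r(x_0)\cap\Omega}|x-x_0|^{-q\lambda}\,dx<\infty$, which is the contradiction. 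As stated, that intermediate step is missing.
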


\begin{remark}
The result of Lemma \ref{lem:Lower_bound_det} is still valid, if we replace in the definition of $\mathcal{F}_{\vareps}^M$ the energy $\Meps(\veps)$ by
\begin{align*}
\|\veps\|_{W^{2,p}(\oeps)} + \|(\det(\nabla \veps)^{-1}\|_{L^q(\oeps)} .
\end{align*}
\end{remark}

\begin{proof}
We argue by contradiction and assume that $\eqref{LowerBound_Determinant}$ is not valid. Hence, there exists a subsequence $(\vareps_k)_{k\in \N} \subset \{\vareps\}_{\vareps>0}$ and a sequence $(x_{\vareps_k})_{k \in \N} \subset\oeps$, such that
\begin{align}\label{lem:pos_det_Ass}
0<\det(\nabla v_{\vareps_k}(x_{\vareps_k})) < \frac{1}{k}.
\end{align}
Especially, it holds that $\vareps_k \to 0$ for $k\to \infty$. This is a consequence of Lemma \ref{lem:lower_bound_det_eps}. To simplify the notation we shortly write $\vareps = \vareps_k$. Since $\Omega$ is bounded, we obtain the existence of  $x \in \overline{\Omega}$, such that up to a subsequence $x_{\vareps} \to x$.
We define $\tveps:=E_{\vareps} \veps$. The properties of the extension operator from Theorem \ref{thm:main_thm_extension_operator} and the bound for $\veps \in \mathcal{F}_{\vareps}^M$ in the assumption imply (see also Lemma \ref{lem:Poincare-inequality})
\begin{align*}
\|\tveps\|_{W^{2,p}(\Omega)} \le C \|\veps\|_{W^{2,p}(\oeps)} \le C.
\end{align*}
Especially, there exists $v_0 \in W^{2,p}(\Omega)^n$ such that up to a subsequence
\begin{align*}
\tveps &\rightharpoonup v_0 &\mbox{ in }& W^{2,p}(\Omega)^n,
\\
\tveps &\rightarrow v_0 &\mbox{ in }& C^{1,\gamma}(\overline{\Omega})^n
\end{align*}
with  $0 < \gamma <1 - \frac{n}{p}$, where we used the compact embedding of $W^{2,p}(\Omega)^n$ into $C^{1,\gamma}(\overline{\Omega})^n$ for $n<p$. We immediately obtain 
\begin{align*}
\det(\nabla \tveps) \rightarrow \det(\nabla u_0) \quad\mbox{ in } C^0(\overline{\Omega}).
\end{align*}
Now, Lemma \ref{lem:unfolding_c0_linfty} implies 
\begin{align*}
\teps (\det(\nabla \tveps)) = \det(\teps(\nabla \tveps)) \rightarrow \det (\nabla u_0) \quad\mbox{in } L^{\infty}(\Omega \times Y)^{n\times n},
\end{align*}
and therefore $\det(\teps(\nabla \veps)) \rightarrow \det(\nabla u_0) $ in $L^{\infty}(\Omega \times Y_s)^{n\times n}$. 
Let us assume $\det(\nabla u_0) = 0$ on a set $A\subset \Omega$ with positive measure, i.e., $\det(\teps(\nabla \veps)) \rightarrow 0$ almost everywhere in $A\times Y_s$. Fatou's lemma and $\veps \in \mathcal{F}_{\vareps}^M$ imply (with the assumption \ref{AssumptionLowerBoundW})
\begin{align*}
C \geq \liminf_{\vareps \to 0} \int_{\oeps}\frac{1}{(\det(\nabla \veps)^q} dx &= \liminf_{\vareps \to 0} \int_{\Omega}\int_{Y_s}\frac{1}{(\det(\teps(\nabla \veps))^q} dy dx
\\
&\geq  \int_A \int_{Y_s} \liminf_{\vareps \to 0} \frac{1}{(\det(\teps(\nabla \veps))^q}dy dx
\\
&= +\infty.
\end{align*}
This contradicts the boundedness of the energy and we obtain $\det(\nabla u_0) >0$ almost everywhere in $\Omega$. Therefore, the function $\eta_0:= (\det(\nabla u_0))^{-1}$ is well-defined almost everywhere in $\Omega$ and for $\eta_{\vareps}:= (\det(\nabla \veps))^{-1}$ it holds that up to a subsequence
\begin{align*}
\teps(\eta_{\vareps})\rightarrow \eta_0 \quad \mbox{a.e. in } \Omega \times Y_s.
\end{align*}
Further, we have using again $\veps \in \mathcal{F}_{\vareps}^M$
\begin{align*}
\|\teps(\eta_{\vareps})\|_{L^q(\Omega \times Y_s)} = \|\eta_{\vareps}\|_{L^q(\oeps)} \le C.
\end{align*}
From \cite[Theorem 13.44]{HewittStromberg} we obtain the existence of a subsequence such that
\begin{align*}
(\det(\teps(\nabla \veps))^{-1} \rightharpoonup (\det(\nabla u_0))^{-1} \quad\mbox{ in } L^q(\Omega \times Y_s).
\end{align*}
Then the lower semicontinuity of the norm with respect to the weak convergence implies 
\begin{align*}
\left\|(\det(\nabla u_0))^{-1}\right\|_{L^q(\Omega)} \leq \liminf_{\vareps \to 0} \|\teps(\eta_{\vareps})\|_{L^q(\Omega \times Y_s)} \le C.
\end{align*}
Hence, we have 
\begin{align*}
\|u_0\|_{W^{2,p}(\Omega)} +  \left\|(\det(\nabla u_0))^{-1}\right\|_{L^q(\Omega)} \le C,
\end{align*}
and \cite[Theorem 3.1]{mielke2020thermoviscoelasticity} implies the existence of $\kappa>0$ independent of $\vareps$ such that
\begin{align*}
\det(\nabla u_0) \geq \kappa.
\end{align*}
Hence, from $\eqref{lem:pos_det_Ass}$ and the convergence of $\det(\nabla \tveps)$ in $C^0(\overline{\Omega})$, as well as the convergence $x_{\vareps} \to x$, we obtain
\begin{align*}
0 = \lim_{\vareps \to 0}  \det\left(\nabla \tveps(x_{\vareps})\right) = \det(\nabla v_0(x)) > \kappa,
\end{align*}
leading to a contradiction. 

Now, we show that also the determinant of $\tveps$ is uniformly bounded from below. From $\tveps \in C^{1,\gamma}(\overline{\Omega})$ we obtain for every $x \in \vareps(k + Y)$ for $k \in K_{\vareps}$ and fixed $x_k \in \vareps(k + Y_s)$ that
\begin{align*}
|\det(\nabla \tveps (x)) - \underbrace{\det(\nabla \tveps(x_k))}_{= \det(\nabla \veps(x_k)) } | \le C|x - x_k| \le C\vareps.
\end{align*}
Since $\det(\nabla \veps(x_k))\geq \kappa_M$ we obtain the desired result for $\vareps $ small enough.
\end{proof}

\subsection{Uniform bounds for the discrete mechanical energy and velocity}

We show a uniform bound with respect to $\vareps$ and $\delta$ for the discrete mechanical energies $\meps(\uepstau{k})$ and the discrete velocities $\dtau \nabla \uepstau{k}$.
We start with a simple estimate for functions in $H_{\geps^D}^1(\oeps)$:

\begin{lemma}\label{lem:bound_veps_H1}
There exists a constant $C>0$ independent of $\vareps$, such that for all $\veps \in H^1_{\geps^D}(\oeps)$ it holds that
\begin{align*}
\|\veps \|_{H^1(\oeps)} \le C \left( 1 + \|\nabla \veps\|_{L^2(\oeps)}\right).
\end{align*}
\end{lemma}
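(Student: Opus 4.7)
The statement is an immediate weakening of an $\varepsilon$-uniform Poincar\'e inequality on the perforated domain $\oeps$ for functions vanishing on $\geps^D$. The plan is to invoke Lemma~\ref{lem:Poincare-inequality} from the appendix, which provides a constant $C_P > 0$ independent of $\varepsilon$ such that
\[
\|\veps\|_{L^2(\oeps)} \le C_P \|\nabla \veps\|_{L^2(\oeps)} \qquad\text{for every } \veps \in H^1_{\geps^D}(\oeps).
\]
Squaring this bound, adding $\|\nabla \veps\|_{L^2(\oeps)}^2$ on both sides, and taking square roots then gives $\|\veps\|_{H^1(\oeps)} \le \sqrt{1+C_P^2}\,\|\nabla \veps\|_{L^2(\oeps)}$, and the asserted inequality follows with $C := \sqrt{1+C_P^2}$ since $\|\nabla \veps\|_{L^2(\oeps)} \le 1 + \|\nabla \veps\|_{L^2(\oeps)}$.

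The essential point is the $\varepsilon$-independence of $C_P$, which is ultimately a consequence of the extension Theorem~\ref{thm:main_thm_extension_operator}. Namely, one first extends $\veps$ to $\tveps := E_{\varepsilon}\veps \in H^1(\Omega)$ with
\[
\|\nabla \tveps\|_{L^2(\Omega)} \le C\|\nabla \veps\|_{L^2(\oeps)}.
\]
Since $|\Gamma^D|>0$ independently of $\varepsilon$ and (in the disconnected case $\geps^D = \Gamma^D$; the connected case is analogous using the construction sketched in Section~\ref{sec:Korn_connected}) the trace of $\tveps$ on $\Gamma^D$ inherits a vanishing condition on a set of positive surface measure, a standard fixed-domain Poincar\'e inequality on $\Omega$ bounds $\|\tveps\|_{L^2(\Omega)}$, and hence $\|\veps\|_{L^2(\oeps)}$, by $\|\nabla \tveps\|_{L^2(\Omega)}$ with an $\varepsilon$-independent constant.

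I do not expect any real obstacle: once the $\varepsilon$-uniform extension operator and the resulting perforated-domain Poincar\'e inequality are available, the present lemma is a one-line algebraic consequence. The "$+1$" on the right-hand side is not essential for the bound itself (the homogeneous estimate $\|\veps\|_{H^1(\oeps)} \le C\|\nabla \veps\|_{L^2(\oeps)}$ is already true); it is presumably kept in the statement as an affine form convenient for later applications where inhomogeneous data contribute an additive constant to the same inequality.
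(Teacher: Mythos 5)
Your proof is correct for the statement as literally written, but it does not match the paper's proof, and more importantly it misreads the situation. The paper's own argument decomposes
\begin{align*}
\|\veps\|_{H^1(\oeps)} \le \|\veps-\id\|_{H^1(\oeps)} + \|\id\|_{H^1(\oeps)} \le C\|\nabla\veps - E_n\|_{L^2(\oeps)} + \|\id\|_{H^1(\oeps)},
\end{align*}
applying Lemma~\ref{lem:Poincare-inequality}(ii) to $\veps-\id$ rather than to $\veps$ itself, and then uses $\|\id\|_{H^1(\oeps)}\le C|\oeps|^{1/2}$ and the triangle inequality. This only makes sense if the hypothesis is $\veps=\id$ on $\geps^D$ (so that $\veps-\id$ has vanishing trace and the Poincar\'e inequality applies to it), which is indeed exactly what is needed when the lemma is invoked in Lemma~\ref{lem:uniform_bound_dis_energy} and Corollary~\ref{cor:apriori_interpolations}, where the arguments $\uepstau{k}$, $\huepstau$ lie in $\Yepsid$ and satisfy the inhomogeneous boundary condition $\uepstau{k}=\id$ on $\geps^D$. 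So the notation $H^1_{\geps^D}(\oeps)$ in the lemma statement is a slight abuse: the intended class is $\id + H^1_{\geps^D}(\oeps)^n$. In that setting the additive ``$+1$'' is essential, not decorative: for $\veps\equiv\id$ one has $\|\veps\|_{H^1(\oeps)}\sim|\oeps|^{1/2}$ while $\|\nabla\veps\|_{L^2(\oeps)}=\|E_n\|_{L^2(\oeps)}$ is of the same order, so no homogeneous bound of the kind you claim can absorb the translate. Your explanation that the constant $1$ is ``presumably kept as a convenience'' is therefore the genuine gap: it signals you have applied the Poincar\'e inequality directly to $\veps$, which is not licensed once the boundary trace is $\id$ rather than $0$, and hence your proof would not suffice for what the lemma is actually used for. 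Also, you reprove the $\varepsilon$-uniform Poincar\'e inequality inside your argument via the extension operator and a fixed-domain Poincar\'e inequality; this is redundant since Lemma~\ref{lem:Poincare-inequality}(ii) is already available and cited for precisely that purpose.
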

\begin{proof}
The result follows from the Poincar\'e inequality in Lemma \ref{lem:Poincare-inequality}. In fact, we have
\begin{align*}
\|\veps\|_{H^1(\oeps)} &\le \| \veps - \id\|_{H^1(\oeps)} + \|\id\|_{H^1(\oeps)}
\\
&\le C \|\nabla \veps - E_n\|_{L^2(\oeps)} + \|\id\|_{H^1(\oeps)}.
\end{align*}
The result follows from the triangle inequality and the fact that $\|\id\|_{H^1(\oeps)} \le C |\oeps|^{\frac12}$.
\end{proof}
In the next Lemma we show uniform \textit{a priori} estimates (with respect to $\vareps$ and $\tau$) for the energies of $\uepstau{k}$ and the discrete velocities $\dtau \nabla \uepstau{k}$. These \textit{a priori} estimates are a crucial step to show existence for the continuous model, but in particular also for the homogenization $\vareps \to 0$. First of all, let us define the following interpolations: Let $\tau \in (0,1)$ with $N_{\tau}= \frac{T}{\tau} \in \N$ and $\uepstau{0}, \ldots, \uepstau{N_{\tau}}$ the sequence of minimizers from Proposition \ref{prop:minimizer_discrete}. Now, for $k\in \{1,\ldots,N_{\tau})$ and $t \in [(k-1)\tau,k\tau)$ we define 
\begin{align*}
\buepstau(t):= \uepstau{k},\quad \lbuepstau(t) := \uepstau{k-1}, \quad \huepstau(t) := \frac{k\tau - t}{\tau} \uepstau{k-1} + \frac{t - (k-1)\tau}{\tau} \uepstau{k}.
\end{align*}
We emphasize that we have $\dhuepstau(t) = \dtau \uepstau{k}$.
\begin{lemma}\label{lem:uniform_bound_dis_energy}
Let $\tau \in (0,1)$ with $N_{\tau}:= \frac{T}{\tau} \in \N$ and  $\{\uepstau{k}\}_{k=1}^{N_{\tau}}$ the sequence of minimizers of $\eqref{Min_Problem_discr}$ from Proposition \ref{prop:minimizer_discrete} There exists $\tau_0>0$ such that for all $\tau \in (0,\tau_0)$ and $k=0,\ldots,N_{\tau}$ it holds for a constant $M>0$ independent of $\vareps$ and $\tau$
\begin{subequations}
\begin{align}
\label{lem:uniform_bound_dis_energy_in_meps}\meps(\uepstau{k}) &\le M,
\\
\label{lem:uniform_bound_dis_energy_in_dtau_ueps}\sum_{l=1}^{k} \tau \int_{\oeps} \left|\dtau \nabla \uepstau{l}\right|^2 dx &\le M.
\end{align}
\end{subequations}
Moreover, there exists a constant $\kappa_M>0$ independent of $\vareps$ such that $\uepstau{k} \in \Yepsid^{\kappa_M}$.
\end{lemma}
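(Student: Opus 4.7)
The plan is to test the minimization property of $\uepstau{k}$ in \eqref{Min_Problem_discr} against the previous iterate and then telescope after absorbing the linear forcing into the dissipation term. Since $\uepstau{k-1} \in \Yepsid$ is admissible and $\Reps(\uepstau{k-1},0)=0$, the minimizing property immediately yields
\[
\meps(\uepstau{k}) + \tau^{-1}\Reps(\uepstau{k-1},\uepstau{k}-\uepstau{k-1}) \le \meps(\uepstau{k-1}) + \langle \lepstau{k}, \uepstau{k}-\uepstau{k-1}\rangle.
\]
Everything then hinges on showing that the dissipation on the left controls $\tau^{-1}\|\nabla \uepstau{k} - \nabla \uepstau{k-1}\|^2_{L^2(\oeps)}$ and that the linear term on the right can be absorbed, both with constants independent of $\vareps$, $\tau$, and $k$.

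I would proceed by induction on $k$ with the hypothesis that $\meps(\uepstau{l}) \le M$ for all $l \le k-1$, for some $M$ independent of $\vareps$ and $\tau$. Under this hypothesis, Lemma \ref{lem:Poincare-inequality} gives the $\vareps$-uniform bound $\|\uepstau{l}\|_{W^{2,p}(\oeps)} \le C(M)$, and the extension operator of Theorem \ref{thm:main_thm_extension_operator} lifts this to $\|E_\vareps \uepstau{l}\|_{W^{2,p}(\Omega)} \le C(M)$. Because $p > n$, the gradients $\nabla E_\vareps \uepstau{l}$ are then bounded in $C^{0,\gamma}(\overline{\Omega})^{n\times n}$ and hence lie in a precompact subset of $C^0(\overline{\Omega})^{n\times n}$, while Lemma \ref{lem:Lower_bound_det} supplies the uniform lower bound $\det(\nabla E_\vareps \uepstau{l}) \geq \kappa_M/2$. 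The $C^0$-closure $\mathcal{F}_M$ of this family therefore meets the hypotheses of Theorem \ref{thm:main_thm_Korn_inequality}, which I would apply to the increment $v := \uepstau{k} - \uepstau{k-1} \in W^{1,2}_{\geps^D}(\oeps)^n$ with $A = (\nabla \uepstau{k-1})^\top$. Combined with \ref{ass:diss_lower_bound} and the observation that $\dot C = A\nabla v + (A\nabla v)^\top$, this produces
\[
\tau^{-1}\Reps(\uepstau{k-1},\uepstau{k}-\uepstau{k-1}) \geq c_M \tau^{-1}\|\nabla \uepstau{k} - \nabla \uepstau{k-1}\|_{L^2(\oeps)}^2,
\]
with $c_M > 0$ depending only on $M$.

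For the linear functional I would use the $\vareps$-uniform Poincar\'e inequality on $W^{1,2}_{\geps^D}(\oeps)$ together with the $\vareps$-scaled trace inequality $\|w\|_{L^2(\geps)} \le C\vareps^{-1/2}\|w\|_{H^1(\oeps)}$ from Appendix \ref{sec:auxiliary_results}; the singular factor $\vareps^{-1/2}$ is exactly cancelled by the scaling $\|\gepstau{k}\|_{L^2(\geps)} = O(\sqrt \vareps)$ from \ref{ass:g_eps}. Young's inequality then absorbs half of the dissipation from the previous step into the left-hand side, leaving only a data term of order $\tau\bigl(\|\fepstau{k}\|^2_{L^2(\oeps)} + \vareps^{-1}\|\gepstau{k}\|^2_{L^2(\geps)}\bigr)$. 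Summing over $l = 1,\ldots,k$ telescopes $\meps$ on the left, while Jensen's inequality applied to the defining averages turns the discrete data sums into $\int_0^T \|f_\vareps\|^2_{L^2(\oeps)}\,dt$ and $\vareps^{-1}\int_0^T \|g_\vareps\|^2_{L^2(\geps)}\,dt$, both uniformly bounded by \ref{ass:f_eps} and \ref{ass:g_eps}. This delivers both \eqref{lem:uniform_bound_dis_energy_in_meps} and \eqref{lem:uniform_bound_dis_energy_in_dtau_ueps}, and the uniform lower bound $\det(\nabla \uepstau{k}) \geq \kappa_M$ follows from Lemma \ref{lem:Lower_bound_det} applied with the energy bound $M$.

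The main obstacle is closing the induction self-consistently: the right-hand-side constant inherits the Korn constant $C_{\mathcal{F}_M}$ and therefore depends on $M$, so the telescoped bound has the form $\meps(\ueps^{in}) + T\,C(M)$ rather than immediately $M$. Since $\meps(\ueps^{in})$ is bounded by \ref{ass:ueps_in} and $C(M)$ varies continuously with $M$, an admissible $M$ can be chosen either by a fixed-point argument on the map $M \mapsto \meps(\ueps^{in}) + T\,C(M)$ or, if $C(M)$ grows too rapidly, by first establishing the bound on short subintervals of length at most $\tau_0$ and iterating finitely many times. This is precisely the role of the smallness requirement $\tau < \tau_0$ in the statement, and once $M$ is fixed the induction closes, completing the proof.
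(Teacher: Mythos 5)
Your overall skeleton—testing the minimizer against the previous iterate, summing, and separating an energy bound from a velocity bound—matches the paper, and the Korn/determinant machinery you invoke (Lemmas \ref{lem:Poincare-inequality}, \ref{lem:Lower_bound_det}, Theorems \ref{thm:main_thm_extension_operator} and \ref{thm:main_thm_Korn_inequality}) is exactly what the paper uses for the velocity bound \eqref{lem:uniform_bound_dis_energy_in_dtau_ueps}. However, there is a genuine gap in how you establish the \emph{energy} bound \eqref{lem:uniform_bound_dis_energy_in_meps}, and you have in fact identified it yourself without resolving it.

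Your route to the energy bound requires absorbing the forcing $\langle \lepstau{k}, \uepstau{k}-\uepstau{k-1}\rangle$ into the dissipation, which forces you to invoke the Korn inequality inside the induction step. This makes the surviving data term scale like $C/c_M$ with $c_M$ the Korn coercivity constant for the compact family $\mathcal{F}_M$, so the closing condition is $M \geq \meps(\ueps^{in}) + C/c_M$. Nothing in the paper controls how $c_M$ degenerates as $M$ grows: the constant in Theorem \ref{thm:main_thm_Korn_inequality} comes from a compactness argument and may decay arbitrarily fast in $M$, so neither the fixed-point map $M\mapsto\meps(\ueps^{in})+T\,C(M)$ nor the short-interval iteration is guaranteed to have a solution. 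The short-interval argument is essentially an explicit Euler scheme for $\dot{M}=C/c_M$, which blows up in finite time if $c_M$ decays too fast, and you do not supply a growth estimate that rules this out. Moreover, you misidentify the role of $\tau_0$: in the paper it has nothing to do with partitioning the time horizon; it is chosen so small that $b_l:=\tilde C\int_{(l-1)\tau}^{l\tau}\|\dot f_\vareps\|_{L^2(\oeps)}+\vareps^{-1/2}\|\dot g_\vareps\|_{L^2(\geps)}\,dt\le\frac14$, which is possible uniformly in $\vareps$ precisely because of the equi-absolute-continuity conditions in \ref{ass:f_eps}, \ref{ass:g_eps}.

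The paper sidesteps the circularity entirely by \emph{not} using the coercivity of the dissipation when establishing the energy bound. After writing $\tau\langle\lepstau{k},\dtau\uepstau{k}\rangle=\int_{(k-1)\tau}^{k\tau}\langle f_\vareps+g_\vareps,\dhuepstau\rangle\,dt$, it integrates by parts in time (Abel summation), shifting the time derivative onto $f_\vareps,g_\vareps$. The resulting terms are bounded through the Poincar\'e/trace inequalities and Lemma \ref{lem:bound_veps_H1} in terms of the energies $\meps(\uepstau{l})$ themselves, not the increments $\nabla\uepstau{l}-\nabla\uepstau{l-1}$; the dissipation is simply dropped as nonnegative. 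A discrete Gronwall argument with the small coefficients $b_l$ then yields the energy bound with no reference to Korn. Only afterwards, with $M$ and $\kappa_M$ in hand, is Theorem \ref{thm:main_thm_Korn_inequality} applied to the already-controlled dissipation to extract \eqref{lem:uniform_bound_dis_energy_in_dtau_ueps}. To repair your proof you would either need to adopt this Abel-summation route, or else prove a quantitative growth bound on the Korn constant $C_{\mathcal{F}}$ in terms of $\|A\|_{C^0}$ and $\inf\det A$, neither of which is in your proposal.
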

\begin{proof}
Since $\uepstau{k}$ is a minimizer of $\eqref{Min_Problem_discr}$, we obtain
\begin{align}\label{lem:bound_disc_energies_aux_ineq}
\meps(\uepstau{k}) - \meps(\uepstau{k-1}) + \frac{1}{\tau} \Reps\left(\uepstau{k-1},\uepstau{k} - \uepstau{k-1}\right) \le \tau \langle \lepstau{k} ,\dtau \uepstau{k}\rangle.
\end{align}
Using the piecewise continuous interpolation $\huepstau$, we obtain for the right-hand side above
\begin{align*}
\tau\langle \lepstau{k}, \dtau \uepstau{k} \rangle = \int_{(k-1)\tau}^{k\tau} \left[ \int_{\oeps} f_{\vareps}  \cdot \dhuepstau dx + \int_{\geps} g_{\vareps} \cdot \dhuepstau d\sigma \right] dt=: A_{\vareps} + B_{\vareps}.
\end{align*}
We only estimate in detail the second term $B_{\vareps}$, since $A_{\vareps}$ can be treated in a similar way. Using integration by parts in time we obtain
\begin{align*}
B_{\vareps} =-\int_{(k-1)\tau}^{k\tau} \int_{\geps} \dot{g}_{\vareps} \cdot \huepstau d\sigma dt + \int_{\geps} g_{\vareps} (k\tau) \cdot \uepstau{k} d\sigma - \int_{\geps} g_{\vareps}((k-1)\tau) \cdot \uepstau{k-1} d\sigma.
\end{align*}
To estimate the first term, we use the following bound for $\huepstau$ obtained from the scaled trace inequality in Lemma \ref{Lem:TraceInequality} and Lemma \ref{lem:bound_veps_H1}: For all $t \in ((k-1)\tau,k\tau)$ it holds for a constant $C>0$ independent of $\vareps$ and $\tau$ (for $\vareps <1$)
\begin{align*}
\sqrt{\vareps} \|\huepstau(t)\|_{L^2(\geps)} &\le C \left( \|\huepstau(t)\|_{L^2(\oeps)} + \vareps \|\nabla \huepstau(t)\|_{L^2(\oeps)} \right)
%\\
%&\le C \|\huepstau(t)\|_{H^1(\oeps)}
\\
&\le C \left( \|\uepstau{k}\|_{H^1(\oeps)}  +\|\uepstau{k-1}\|_{H^1(\oeps)}  \right)
\\
&\le C \left(1 + \|\nabla \uepstau{k}\|_{L^2(\oeps)} + \|\nabla \uepstau{k-1}\|_{L^2(\oeps)} \right)
\\
&\le C \left( 1 + \|\nabla \uepstau{k}\|_{L^2(\oeps)}^2 + \|\nabla \uepstau{k-1}\|_{L^2(\oeps)}^2 \right)
\\
&\le C \left(1 + \meps(\uepstau{k}) + \meps(\uepstau{k-1})\right).
\end{align*}
Hence, we obtain 
\begin{align*}
\left|\int_{(k-1)\tau}^{k\tau} \int_{\geps} \dot{g}_{\vareps} \cdot \huepstau d\sigma dt \right| \le C\left(1 + \meps(\uepstau{k}) + \meps(\uepstau{k-1})\right) \cdot \frac{1}{\sqrt{\vareps}} \int_{(k-1)\tau}^{k\tau} \|\dot{g}_{\vareps} \|_{L^2(\geps)} dt.
\end{align*}
Summing up inequality $\eqref{lem:bound_disc_energies_aux_ineq}$ from $1$ to $k$, we obtain (for a constant $\tilde{C}>0$ independent of $\tau$ and $\vareps$)
\begin{align}
\begin{aligned}\label{ineq:bound_dis_energy_aux}
\meps&(\uepstau{k})  + \sum_{l=1}^k \frac{1}{\tau} \Reps\left(\uepstau{l-1},\uepstau{l} - \uepstau{l-1}\right) 
\\
\le& \meps(\ueps^{in}) + \tilde{C}\sum_{l=1}^k \left(\meps(\uepstau{l})  + \meps(\uepstau{l-1})\right)\int_{(l-1)\tau}^{l\tau} \|\dot{f}_{\vareps} \|_{L^2(\oeps)} +\frac{1}{\sqrt{\vareps}} \|\dot{g}_{\vareps}\|_{L^2(\geps)} dt
\\
&+ C \int_0^{k\tau} \|\dot{f}_{\vareps}\|_{L^2(\oeps)} +\frac{1}{\sqrt{\vareps}}\|\dot{g}_{\vareps}\|_{L^2(\geps)} dt 
\\
&+ \int_{\oeps} f_{\vareps}(k\tau) \cdot \uepstau{k} dx + \int_{\geps} g_{\vareps}(k\tau) \cdot \uepstau{k} d\sigma
\\
&- \int_{\oeps} f_{\vareps}(0) \cdot \ueps^{in} dx - \int_{\geps} g_{\vareps} (0) \cdot \ueps^{in} dx.
\end{aligned}
\end{align}
Due to the assumptions on $f_{\vareps}$ and $g_{\vareps}$, see \ref{ass:f_eps} and \ref{ass:g_eps}, we obtain for every $t \in [0,T]$
\begin{align*}
\|f_{\vareps}(t)\|_{L^2(\oeps)} &\le C \|f_{\vareps}\|_{W^{1,1}((0,T),L^2(\oeps))} \le C,
\\
\|g_{\vareps}(t)\|_{L^2(\geps)} &\le C \|g_{\vareps}\|_{W^{1,1}((0,T),L^2(\geps))} \le C\sqrt{\vareps},
\end{align*}
for a constant $C>0$ independent of $\vareps$, see \cite[Section 5.9.2, Theorem 2]{EvansPartialDifferentialEquations}. Hence, we obtain for $\theta >0$ arbitrary (with similar arguments as above)
\begin{align*}
 \int_{\oeps} f_{\vareps}(k\tau) \cdot \uepstau{k} dx + \int_{\geps} g_{\vareps}(k\tau) \cdot \uepstau{k} d\sigma \le C \|\uepstau{k}\|_{H^1(\oeps)} \le C(\theta) + \theta \meps(\uepstau{k})
\end{align*}
for a constant $C(\theta)>0$ independent of $\vareps$. Due to the assumptions on the initial values \ref{ass:ueps_in}, we get that the terms including in initial states in $\eqref{ineq:bound_dis_energy_aux}$ are bounded by a constant $C>0$ independent of $\vareps$. Further, using again the assumptions on $f_{\vareps}$ and $g_{\vareps}$, we can choose $\tau_0$ so small, such that for all $\tau \in (0,\tau_0)$ it holds for $l=0,\ldots, k$ that  ($\tilde{C}$ from $\eqref{ineq:bound_dis_energy_aux}$)
\begin{align*}
b_l := \tilde{C} \int_{(l-1)\tau}^{l\tau} \|\dot{f}_{\vareps}\|_{L^2(\oeps)} + \frac{1}{\sqrt{\vareps}} \|\dot{g}_{\vareps}\|_{L^2(\geps)} dt \le \frac14.
\end{align*}
Altogether, we obtain choosing $\theta = \frac14$ with some elemental calculations from $\eqref{ineq:bound_dis_energy_aux}$
\begin{align}\label{inqu:lower_bound_dtau_ueps_aux}
\frac12\meps(\uepstau{k})+ \sum_{l=1}^k \frac{1}{\tau} \Reps\left(\uepstau{l-1},\uepstau{l} - \uepstau{l-1}\right)  \le C + \sum_{l=1}^{k-1} (b_l + b_{l+1}) \meps(\uepstau{l}).
\end{align}
Now, using the non-negativity of the dissipation terms, the discrete Gronwall inequality implies
\begin{align*}
\meps(\uepstau{k}) &\le C \exp\left( 2 \sum_{l=1}^{k-1} (b_l + b_{l+1})\right) \le C \exp\left(4 \sum_{l=1}^k b_l\right) 
\\
&\le C\exp\left( 4 \left(\|f_{\vareps}\|_{W^{1,1}((0,T),L^2)(\oeps))} + \|g_{\vareps}\|_{W^{1,1}((0,T),L^2(\geps))} \right)\right) =:M,
\end{align*}
since
\begin{align*}
\sum_{l=1}^k b_l = \tilde{C} \int_0^{k\tau} \|\dot{f}_{\vareps}\|_{L^2(\oeps)} + \frac{1}{\sqrt{\vareps}} \|\dot{g}_{\vareps}\|_{L^2(\geps)} dt .
\end{align*}
The constant $M$ is independent of $\vareps$ and $\tau$. This implies the uniform bound $\eqref{lem:uniform_bound_dis_energy_in_meps}$.

It remains to check $\eqref{lem:uniform_bound_dis_energy_in_dtau_ueps}$. First of all, from $\eqref{inqu:lower_bound_dtau_ueps_aux}$  and the bounds on $\meps(\uepstau{k})$ already obtained, we get
\begin{align}\label{ineq:lower_bound_dtau_ueps_aux2}
\sum_{l=1}^k \frac{1}{\tau} \Reps\left(\uepstau{l-1},\uepstau{l} - \uepstau{l-1}\right) \le C.
\end{align}
Since $\uepstau{k} \in \mathcal{F}_{\vareps}^M$ for every $k \in \{1,\ldots,N_{\tau}\}$, we can apply Lemma \ref{lem:Lower_bound_det} to obtain a constant $\kappa_M>0$ independent of $\vareps$ (and $\tau$), such that 
\begin{align*}
\det(\nabla \tuepstau{k})>\kappa_M,
\end{align*}
where $\tuepstau{k}$ denotes the extension of $\uepstau{k}$ with the extension operator from Theorem \ref{thm:main_thm_extension_operator}. Hence, we have ($p>n$)
\begin{align*}
\left\{\nabla \tuepstau{k}\right\}_{k=0}^{N_{\tau}} \subset \left\{F \in W^{1,p}(\Omega)^{n\times n }\, : \, \det(F)>\kappa_M\right\} \subset C^0(\overline{\Omega})^{n\times n}
\end{align*}
and the last embedding is compact. An application of the Korn inequality from Theorem \ref{thm:main_thm_Korn_inequality} implies the existence of a constant $c_M>0$ independent of $\vareps$, such that (see also assumption \ref{ass:diss_lower_bound})
\begin{align*}
\frac{1}{\tau} \Reps&\left(\uepstau{l-1}, \uepstau{l} - \uepstau{l-1}\right)
\\
&\geq \frac{c_0}{\tau} \left\| \nabla (\uepstau{l} - \uepstau{l-1})^{\top} \nabla \uepstau{l-1} + (\nabla \uepstau{l-1})^{\top} \nabla (\uepstau{l}-\uepstau{l-1})\right\|^2_{L^2(\oeps)}
\\
&\geq \frac{c_M}{\tau}\left\|\nabla \uepstau{l} - \nabla \uepstau{l-1}\right\|_{L^2(\oeps)}^2 = \tau c_M \left\| \nabla \dtau \uepstau{l}\right\|^2_{L^2(\oeps)}.
\end{align*}
From $\eqref{ineq:lower_bound_dtau_ueps_aux2}$ we obtain the desired result.
\end{proof}

As an immediate consequence we obtain the following \textit{a priori} estimates for the interpolations:
\begin{corollary}\label{cor:apriori_interpolations}
Let $\{\uepstau{k}\}_{k=1}^{N_{\tau}}$ be the sequence of minimizers from Proposition \ref{prop:minimizer_discrete} for $\tau \in (0,\tau_0)$ with $\tau_0$ from Lemma \ref{lem:uniform_bound_dis_energy}. We have $\buepstau ,\lbuepstau \in L^{\infty}((0,T),\Yepsid^{\kappa_M})$ with $\kappa_M $ from Lemma \ref{lem:uniform_bound_dis_energy} and $\huepstau   \in L^{\infty}((0,T),\Yepsid^{\kappa_M/2}) \cap H^1((0,T),H_{\geps^D}^1(\oeps))^n$. Then there exists $\alpha \in (0,1)$ and a constant $C>0$ independent of $\vareps $ and $\tau$, such that
\begin{subequations}
\begin{align}
\|\buepstau\|_{L^{\infty}((0,T),W^{2,p}(\oeps))} + \|\lbuepstau\|_{L^{\infty}((0,T),W^{2,p}(\oeps))} + \|\huepstau\|_{L^{\infty}((0,T),W^{2,p}(\oeps))}  &\le C
\\
\label{ineq:apriori_inter_time_der}
\|\dhuepstau\|_{L^2((0,T),H^1(\oeps))} &\le C,
\\
\label{ineq:apriori_inter_hoelder}
\|\huepstau\|_{C^{0,\alpha}([0,T],C^{1,\alpha}(\overline{\oeps}))} &\le C.
\end{align}
\end{subequations}
\end{corollary}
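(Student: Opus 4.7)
The plan is to transfer the $\vareps$- and $\tau$-uniform discrete bounds from Lemma \ref{lem:uniform_bound_dis_energy} to the three interpolants. The two routine points are the $W^{2,p}$ and $H^1$ estimates, while the only delicate step is preserving the positive lower bound on the determinant under the convex combination defining $\huepstau$.

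First I would deduce the uniform $L^\infty((0,T),W^{2,p}(\oeps))$-bound. Assumptions \ref{AssumptionLowerBoundH} and \ref{AssumptionLowerBoundW} together with $\meps(\uepstau{k})\le M$ provide bounds on $\|\nabla^2\uepstau{k}\|_{L^p(\oeps)}$ and $\|\nabla\uepstau{k}\|_{L^2(\oeps)}$. Applying the extension operator $E_\vareps$ from Theorem \ref{thm:main_thm_extension_operator} to transfer the problem to $\Omega$ and combining standard Sobolev interpolation with the Poincaré inequality of Lemma \ref{lem:Poincare-inequality} applied to $\uepstau{k}-\id$ yields $\|\uepstau{k}\|_{W^{2,p}(\oeps)}\le C$ uniformly in $\vareps$ and $k$. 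Since the three interpolants are, at each time, convex combinations of two consecutive $\uepstau{k}$'s, the corresponding $L^\infty((0,T),W^{2,p}(\oeps))$-bound is immediate, as is the Dirichlet condition $=\id$ on $\geps^D$. Estimate \eqref{ineq:apriori_inter_time_der} follows by observing that $\dhuepstau=\dtau\uepstau{k}$ on $((k-1)\tau,k\tau)$, which vanishes on $\geps^D$; the uniform Poincaré inequality gives $\|\dtau\uepstau{k}\|_{H^1(\oeps)}\le C\|\nabla\dtau\uepstau{k}\|_{L^2(\oeps)}$, and $\int_0^T\|\dhuepstau\|_{H^1(\oeps)}^2\,dt=\sum_k\tau\|\dtau\uepstau{k}\|_{H^1(\oeps)}^2\le CM$ by \eqref{lem:uniform_bound_dis_energy_in_dtau_ueps}. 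The Hölder bound \eqref{ineq:apriori_inter_hoelder} would then follow by extending $\huepstau$ to $\Omega$ via $E_\vareps$ and invoking the parabolic embedding from the appendix on the fixed domain, using that $p>n$ gives $W^{2,p}(\Omega)\hookrightarrow C^{1,\alpha'}(\overline{\Omega})$ and that $H^1$-time regularity supplies a Hölder modulus in $t$.

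The main obstacle is $\det(\nabla\huepstau)\ge\kappa_M/2$. On $((k-1)\tau,k\tau)$ one has $\nabla\huepstau=\nabla\uepstau{k-1}+s\tau\,\nabla\dtau\uepstau{k}$ with $s\in[0,1)$, so a single increment obeys $\|\nabla\uepstau{k}-\nabla\uepstau{k-1}\|_{L^2(\oeps)}=\tau\|\nabla\dtau\uepstau{k}\|_{L^2(\oeps)}\le\sqrt{M\tau}$ by \eqref{lem:uniform_bound_dis_energy_in_dtau_ueps}. To upgrade this smallness to $L^\infty(\oeps)$ uniformly in $\vareps$, I would apply a Gagliardo–Nirenberg interpolation to $E_\vareps(\uepstau{k}-\uepstau{k-1})$ on the fixed domain $\Omega$: since $p>n$ there exist $\theta\in(0,1)$ and $C>0$ independent of $\vareps$ with $\|f\|_{L^\infty(\Omega)}\le C\|f\|_{L^2(\Omega)}^{\theta}\|f\|_{W^{2,p}(\Omega)}^{1-\theta}$, which combined with the $W^{2,p}$-bound of the first step yields $\|\nabla\uepstau{k}-\nabla\uepstau{k-1}\|_{L^\infty(\oeps)}\le C(M\tau)^{\theta/2}$. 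Local Lipschitz continuity of $\det$ on the uniformly bounded set containing the $\nabla\uepstau{k}$'s, together with $\det(\nabla\uepstau{k-1})\ge\kappa_M$ from Lemma \ref{lem:uniform_bound_dis_energy}, then forces $\det(\nabla\huepstau)\ge\kappa_M/2$ pointwise, after shrinking $\tau_0$ once more. This is where the extension operator of Theorem \ref{thm:main_thm_extension_operator} plays an essential role, since the Gagliardo–Nirenberg constants on $\oeps$ would otherwise degenerate with $\vareps$.
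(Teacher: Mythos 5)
Your proposal is correct and takes essentially the same approach as the paper: derive the uniform $W^{2,p}$ and $H^1$ bounds from Lemma \ref{lem:uniform_bound_dis_energy} via the $\vareps$-uniform Poincar\'e inequality, obtain the H\"older bound by extending to $\Omega$ and applying the parabolic embedding (Lemma \ref{lem:embedding_hoelder}, which is itself a Gagliardo--Nirenberg argument on the fixed domain), and propagate the lower determinant bound from $\uepstau{k}$ to $\huepstau$ using the $L^\infty$-smallness of the per-step increment of $\nabla\uepstau{k}$ together with local Lipschitz continuity of $\det$. One small slip: the Gagliardo--Nirenberg inequality you state bounds $\|f\|_{L^\infty(\Omega)}$, but the conclusion you draw in the next line needs the gradient version $\|\nabla f\|_{L^\infty(\Omega)}\le C\|f\|_{L^2(\Omega)}^{\theta}\|f\|_{W^{2,p}(\Omega)}^{1-\theta}$, which does hold for $p>n$ with a suitable $\theta\in(0,1)$ and is what you should invoke.
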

\begin{proof}
From the bound $\eqref{lem:uniform_bound_dis_energy_in_meps}$ in Lemma \ref{lem:uniform_bound_dis_energy} on the discrete mechanical energies we obtain for every $k\in \{0,\ldots,N_{\tau}\}$ and the assumptions on $W_{\vareps}$ and $H_{\vareps}$ in \ref{AssumptionLowerBoundW} and \ref{AssumptionLowerBoundH}
\begin{align*}
\|\uepstau{k}\|_{H^1(\oeps)} + \|\nabla^2 \uepstau{k}\|_{L^p(\oeps)} \le C
\end{align*}
for a constant $C>0$ independent of $\vareps$ and $\tau$. Now, the Poincar\'e inequality in Lemma \ref{lem:Poincare-inequality} applied to $\uepstau{k}$ and $\nabla \uepstau{k}$ implies
\begin{align*}
\|\uepstau{k}\|_{W^{2,p}(\oeps)} \le C.
\end{align*}
Hence, we obtain for every $t \in ((k-1)\tau,k\tau)$
\begin{align*}
\|\huepstau(t)\|_{W^{2,p}(\oeps)} \le \|\uepstau{k}\|_{W^{2,p}(\oeps)} + \|\uepstau{k-1}\|_{W^{2,p}(\oeps)} \le C.
\end{align*}
This implies the bound for $\huepstau$ in $L^{\infty}((0,T),W^{2,p}(\oeps))^n$. The bounds for $\buepstau$ and $\lbuepstau$ follow by the same arguments. To estimate the time derivative of $\huepstau$ we use inequality $\eqref{lem:uniform_bound_dis_energy_in_dtau_ueps}$ from Lemma \ref{lem:uniform_bound_dis_energy} together with the Poincar\'e inequality from Lemma \ref{lem:Poincare-inequality} in the appendix
\begin{align*}
\|\dhuepstau\|_{L^2((0,T)\times \oeps)}^2 &= \sum_{l=1}^{N_{\tau}} \int_{(l-1)\tau}^{l\tau} \|\dtau \uepstau{k} \|^2_{L^2(\oeps)} dt
%\\
%&
= \tau  \sum_{l=1}^{N_{\tau}} \|\dtau \uepstau{k}\|_{L^2(\oeps)}^2
\\
&\le C  \sum_{l=1}^{N_{\tau}}\tau \|\dtau \nabla \uepstau{k}\|^2_{L^2(\oeps)} \le C.
\end{align*} 
In the same way we can estimate $\nabla \dhuepstau$, which gives $\eqref{ineq:apriori_inter_time_der}$. Now, the estimate in the H\"older-norm follows from the \textit{a priori} estimates already obtained and Lemma \ref{lem:embedding_hoelder}. Finally, since $\nabla\huepstau \in C^{0,\alpha}([0,T],C^{0,\alpha}(\overline{\oeps}))^n$ and $\nabla \huepstau (k\tau) = \uepstau{k}$ for $k=0,\ldots,N_{\tau}$, we obtain from $\uepstau{k} \in \Yepsid^{\kappa_M}$ and the Lipschitz continuity of the determinant on compact sets (remember $\|\nabla \huepstau\|_{L^{\infty}((0,T)\times\oeps)}$ is bounded) that for $\tau$ small enough we have $\huepstau(t,\cdot) \in \Yepsid^{\kappa_M /2}$ for all $t \in [0,T]$.
\end{proof}

\subsection{Existence of a weak solution and \textit{a priori} estimates}

Finally, we are able to show the existence of a weak solution of the microscopic model $\eqref{MicroModel}$ and give the $\vareps$-uniform \textit{a priori} estimate for such solutions. We start with some immediate compactness results for the interpolations, which are a consequence of the estimates obtained in Corollary \ref{cor:apriori_interpolations} in the previous section.

\begin{proposition}\label{prop:comp_interpolations}
Let $\{\uepstau{k}\}_{k=1}^{N_{\tau}}$ be the sequence of minimizers from Proposition \ref{prop:minimizer_discrete} for $\tau \in (0,\tau_0)$ with $\tau_0$ from Lemma \ref{lem:uniform_bound_dis_energy}.
There exists $\ueps \in L^{\infty}((0,T), \Yepsid^{\kappa_M})\cap H^1((0,T),H^1(\oeps))^n$, such that up to a subsequence of $\tau$ (still denoted by $\tau$) it holds for $\tau \to 0$ that
\begin{align*}
\buepstau,\lbuepstau,\huepstau &\rightharpoonup^{\ast} \ueps &\mbox{ weakly$^{\ast}$ in }& L^{\infty}((0,T),W^{2,p}(\oeps))^n,
\\
\huepstau &\rightharpoonup \ueps &\mbox{ weakly in }& H^1((0,T),H^1(\oeps))^n,
\\
\nabla \huepstau &\rightarrow \nabla \ueps &\mbox{ in }& C^0([0,T]\times \overline{\oeps})^{n\times n},
\\
\nabla \lbuepstau, \nabla \buepstau &\rightarrow \nabla \ueps &\mbox{ in }& L^{\infty}((0,T)\times \oeps)^{n\times n}.
\end{align*}
\end{proposition}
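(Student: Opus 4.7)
The plan is to extract convergent subsequences from the uniform bounds in Corollary~\ref{cor:apriori_interpolations} via standard Banach–Alaoglu/Arzelà–Ascoli arguments, then to identify the limits of the three interpolations and finally to propagate the lower bound on the determinant to the limit.

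First I would use the uniform bounds
\[
\|\buepstau\|_{L^{\infty}((0,T),W^{2,p}(\oeps))} + \|\lbuepstau\|_{L^{\infty}((0,T),W^{2,p}(\oeps))} + \|\huepstau\|_{L^{\infty}((0,T),W^{2,p}(\oeps))} \le C
\]
together with the separability/reflexivity of $W^{2,p}(\oeps)$ to extract, via Banach–Alaoglu applied to the dual of $L^{1}((0,T),W^{2,p}(\oeps)')$, weak-$\ast$ limits in $L^{\infty}((0,T),W^{2,p}(\oeps))^n$ for all three interpolations along a single subsequence $\tau\to 0$. Independently, the bound $\|\dhuepstau\|_{L^2((0,T),H^1(\oeps))}\le C$ together with the $L^{\infty}$-in-time bound for $\huepstau$ gives weak convergence $\huepstau\rightharpoonup \ueps$ in $H^1((0,T),H^1(\oeps))^n$ along a further subsequence.

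Next I would exploit the Hölder estimate $\|\huepstau\|_{C^{0,\alpha}([0,T],C^{1,\alpha}(\overline{\oeps}))}\le C$ from \eqref{ineq:apriori_inter_hoelder}. Since $C^{0,\alpha}([0,T],C^{1,\alpha}(\overline{\oeps}))$ embeds compactly into $C^{0}([0,T],C^{1}(\overline{\oeps}))$ (Arzelà–Ascoli applied in both the time and space directions, the latter being possible because $\oeps$ is fixed for each $\vareps$), after passing to a further subsequence we obtain $\nabla \huepstau \to \nabla \ueps$ in $C^{0}([0,T]\times\overline{\oeps})^{n\times n}$. This identifies the limit with the one obtained above, since uniform convergence is in particular distributional convergence. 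For $\buepstau$ and $\lbuepstau$ I would then note that
\[
\buepstau(t)-\huepstau(t) = (t-(k-1)\tau)\,\dtau\uepstau{k}, \qquad \lbuepstau(t)-\huepstau(t) = (t-k\tau)\,\dtau\uepstau{k},
\]
so, combining the Hölder regularity of $\huepstau$ in time with the uniform bound on $\nabla\huepstau$ in $L^{\infty}$ (from the $W^{2,p}$-bound and the embedding $W^{1,p}(\oeps)\hookrightarrow L^{\infty}(\oeps)$ for $p>n$, and from $\eqref{ineq:apriori_inter_hoelder}$ even Hölder continuity of $\nabla\huepstau$ in time), I get $\|\nabla\buepstau-\nabla\huepstau\|_{L^{\infty}((0,T)\times\oeps)}\le C\tau^{\alpha}\to 0$, and analogously for $\lbuepstau$. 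Hence $\nabla\lbuepstau,\nabla\buepstau\to \nabla\ueps$ in $L^{\infty}((0,T)\times\oeps)^{n\times n}$, and the weak-$\ast$ limits of all three interpolations coincide.

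It remains to check that $\ueps(t,\cdot)\in \Yepsid^{\kappa_M}$ for almost every $t$. Here the main (mild) obstacle is that weak-$\ast$ convergence in $L^{\infty}((0,T),W^{2,p}(\oeps))$ does not by itself preserve the pointwise constraints $\ueps=\id$ on $\geps^D$ and $\det(\nabla\ueps)\ge\kappa_M$. For the Dirichlet condition I would use that the trace is weakly continuous, so $\ueps=\id$ on $\geps^D$ is inherited from $\huepstau=\id$ on $\geps^D$ for every $t$. For the determinant bound I would argue directly from the strong convergence $\nabla\huepstau\to\nabla\ueps$ in $C^{0}([0,T]\times\overline{\oeps})^{n\times n}$: since $\det(\nabla\huepstau(t))\ge \kappa_M/2$ for all $t\in[0,T]$ by Corollary~\ref{cor:apriori_interpolations}, passing to the uniform limit yields $\det(\nabla\ueps(t,x))\ge\kappa_M/2$ everywhere; together with the lower semicontinuity of the energy (which gives $\ueps\in\mathcal{F}_{\vareps}^{M}$ for a.e.\ $t$) and Lemma~\ref{lem:Lower_bound_det} applied to $\ueps(t,\cdot)$, one upgrades this to $\det(\nabla\ueps(t,\cdot))\ge\kappa_M$ a.e.\ in $t$, so that $\ueps\in L^{\infty}((0,T),\Yepsid^{\kappa_M})$ as claimed.
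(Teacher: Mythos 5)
Your extraction of limits via Banach--Alaoglu and Arzel\`a--Ascoli mirrors the paper's proof, and the resulting convergences are established in the same way. One genuine improvement: to identify the limits of $\buepstau,\lbuepstau$ and to get the $L^\infty$ convergence of their gradients, you observe that $\buepstau(t)=\huepstau(k\tau)$ with $|k\tau-t|\le\tau$, so that the H\"older-in-time bound \eqref{ineq:apriori_inter_hoelder} immediately gives $\|\nabla\buepstau-\nabla\huepstau\|_{L^\infty((0,T)\times\oeps)}\le C\tau^\alpha$. The paper instead re-runs a Gagliardo--Nirenberg interpolation in a separate lemma (Lemma~\ref{lem:error_interpolations}); your shortcut is cleaner, since the interpolation is already baked into \eqref{ineq:apriori_inter_hoelder}.

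The one place where your argument does not quite close is the final step, $\ueps\in L^\infty((0,T),\Yepsid^{\kappa_M})$. Your route is: get $\det(\nabla\ueps)\ge\kappa_M/2$ from $\huepstau$, then ``upgrade'' by asserting $\ueps(t,\cdot)\in\mathcal{F}_\vareps^M$ via lower semicontinuity of the energy along $\huepstau$, and applying Lemma~\ref{lem:Lower_bound_det}. But $\meps(\huepstau(t))\le M$ is not known: the uniform energy bound from Lemma~\ref{lem:uniform_bound_dis_energy} is only for the grid values $\uepstau{k}$, and since $W$ is not assumed convex, $\meps$ of a convex combination is not controlled by the convex combination of the $\meps$'s. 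To make a lower-semicontinuity argument work you would have to use the piecewise-constant interpolants $\buepstau(t)=\uepstau{k}\in\mathcal{F}_\vareps^M$, not $\huepstau$. In fact, the detour through lower semicontinuity is unnecessary: since $\det(\nabla\buepstau(t,x))=\det(\nabla\uepstau{k}(x))\ge\kappa_M$ everywhere and you already have $\nabla\buepstau\to\nabla\ueps$ in $L^\infty((0,T)\times\oeps)^{n\times n}$, passing to the limit pointwise a.e.\ directly gives $\det(\nabla\ueps)\ge\kappa_M$, which is exactly what the paper does (in Corollary~\ref{cor:apriori_ueps}).
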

\begin{proof}
We first consider the sequence $\huepstau$. From the \textit{a priori} estimates in Corollary \ref{cor:apriori_interpolations} we immediately obtain the existence of $\ueps \in L^{\infty}((0,T),W^{2,p}(\oeps))^n \cap H^1((0,T),H^1(\oeps))^n$ such that up to a subsequence  (for $\tau \to 0$)
\begin{align*}
\huepstau &\rightharpoonup^{\ast} \ueps &\mbox{ weakly$^{\ast}$ in }& L^{\infty}((0,T),W^{2,p}(\oeps))^n,
\\
\huepstau &\rightharpoonup \ueps &\mbox{ weakly in }& H^1((0,T),H^1(\oeps))^n.
\end{align*}
Further, since $\huepstau$ is bounded in $C^{0,\alpha}([0,T],C^{1,\alpha}(\overline{\oeps}))^n$, see $\eqref{ineq:apriori_inter_hoelder}$ in Corollary \ref{cor:apriori_interpolations}, the Arzel\`a-Ascoli theorem implies that up to a subsequence $\nabla\huepstau$ converges strongly to $\ueps$ in $C^0([0,T]\times \overline{\oeps})^n$. It remains to check the convergences of $\buepstau$ and $\lbuepstau$. We only consider $\buepstau$. Again from the \textit{a priori} estimates in Corollary \ref{ineq:apriori_inter_hoelder} we get (up to a subsequence) that $\buepstau$ converges weakly$^{\ast}$ in $L^{\infty}((0,T),W^{2,p}(\oeps))^n$. From Lemma \ref{lem:error_interpolations} below we obtain that the limit is equal to $\ueps$, and we also obtain the strong convergence of $\nabla \buepstau$ in $L^{\infty}((0,T)\times \oeps)^{n\times n}$.
\end{proof}

\begin{lemma}\label{lem:error_interpolations}
Under the assumptions of Proposition \ref{prop:comp_interpolations} there exists a constant $C>0$  independent of $\vareps$ and $\theta \in (0,1)$, such that
\begin{align*}
\|\buepstau - \huepstau\|_{L^{\infty}((0,T),W^{1,\infty}(\oeps))} + \|\lbuepstau - \huepstau\|_{L^{\infty}((0,T),W^{1,\infty}(\oeps))} \le C\tau^{\frac{\theta}{2}}.
\end{align*}

\end{lemma}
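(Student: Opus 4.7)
The plan is to reduce the estimate to the Hölder-in-time regularity of $\huepstau$ already established in Corollary \ref{cor:apriori_interpolations}. For $t \in [(k-1)\tau, k\tau)$ a direct computation gives
\[
\buepstau(t) - \huepstau(t) = \tfrac{k\tau - t}{\tau}\bigl(\uepstau{k}-\uepstau{k-1}\bigr), \qquad \lbuepstau(t)-\huepstau(t) = -\tfrac{t-(k-1)\tau}{\tau}\bigl(\uepstau{k}-\uepstau{k-1}\bigr),
\]
or, equivalently, since $\huepstau(k\tau) = \uepstau{k}$ and $\huepstau((k-1)\tau) = \uepstau{k-1}$, both quantities can be written as increments of $\huepstau$ over a subinterval of $[(k-1)\tau, k\tau)$ of length at most $\tau$. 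Thus everything reduces to a modulus-of-continuity estimate for $\huepstau$ as a function of time.

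Next, I would invoke the bound \eqref{ineq:apriori_inter_hoelder}, which provides a uniform (in $\vareps$ and $\tau$) estimate $\|\huepstau\|_{C^{0,\alpha}([0,T],C^{1,\alpha}(\overline{\oeps}))} \le C$ for some $\alpha \in (0,1)$. Inserted into the representations above, this yields
\[
\|\buepstau(t)-\huepstau(t)\|_{C^{1,\alpha}(\overline{\oeps})} = \|\huepstau(k\tau)-\huepstau(t)\|_{C^{1,\alpha}(\overline{\oeps})} \le C\,|k\tau-t|^{\alpha} \le C\tau^{\alpha},
\]
and an entirely analogous estimate for $\lbuepstau(t)-\huepstau(t)$. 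Since $C^{1,\alpha}(\overline{\oeps}) \hookrightarrow W^{1,\infty}(\oeps)$ (the $L^{\infty}$-norms of a function and its gradient are trivially dominated by the $C^{1,\alpha}$-norm), taking the supremum over $t\in(0,T)$ gives a $C\tau^{\alpha}$ bound in $L^{\infty}((0,T),W^{1,\infty}(\oeps))$. Finally, choosing any $\theta \in (0,1)$ with $\theta/2 \le \alpha$ (e.g.\ $\theta := \min\{2\alpha, 1/2\}$) produces the claimed exponent, because $\tau \in (0,1)$ implies $\tau^{\alpha} \le \tau^{\theta/2}$.

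I do not expect any substantial obstacle in this short argument itself; the genuine work has already been performed upstream. The crux is the $\vareps$- and $\tau$-uniform Hölder-in-time estimate \eqref{ineq:apriori_inter_hoelder}, whose proof relies on the parabolic embedding on the perforated domain (Lemma \ref{lem:embedding_hoelder}) with constants independent of $\vareps$, which is in turn made possible by the second-order extension operator from Theorem \ref{thm:main_thm_extension_operator} together with the uniform $L^{2}$-bound on $\dhuepstau$ in \eqref{ineq:apriori_inter_time_der}. Once those ingredients are available, Lemma \ref{lem:error_interpolations} follows essentially immediately from the telescoping identity for $\buepstau - \huepstau$ and $\lbuepstau - \huepstau$ sketched above.
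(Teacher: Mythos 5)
Your proof is correct, and it rests on the same underlying mechanism as the paper's: both arguments exploit the identity $\buepstau(t)-\huepstau(t)=(k\tau-t)\dtau\uepstau{k}$ (so the quantity to estimate is an increment of $\huepstau$ over a subinterval of length $\le\tau$), and both ultimately depend on the Gagliardo--Nirenberg interpolation between $W^{2,p}(\oeps)$ and $H^1(\oeps)$ together with the $\tau^{1/2}$ bound for $\|\uepstau{k}-\uepstau{k-1}\|_{H^1(\oeps)}$ that follows from the discrete dissipation estimate \eqref{lem:uniform_bound_dis_energy_in_dtau_ueps}. The only difference is one of packaging: you invoke the already-established H\"older-in-time bound \eqref{ineq:apriori_inter_hoelder} from Corollary \ref{cor:apriori_interpolations} directly, whereas the paper re-runs the interpolation inline --- it first shows $\|\buepstau(t)-\huepstau(t)\|_{H^1(\oeps)}\le C\tau^{1/2}$ and then applies Gagliardo--Nirenberg with the exponents $\theta, q, s, \beta$ of Lemma \ref{lem:embedding_hoelder} to land directly in $C^{1,\beta}(\overline{\oeps})$ with bound $C\tau^{\theta/2}$. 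Your shortcut avoids duplicating the interpolation computation, which is slightly cleaner; note also that the $\alpha$ in \eqref{ineq:apriori_inter_hoelder} is precisely the $\theta/2$ from Lemma \ref{lem:embedding_hoelder}, so your final adjustment (passing from $\tau^{\alpha}$ to $\tau^{\theta'/2}$ for smaller $\theta'$) is unnecessary but harmless.
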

\begin{proof}
We only give the proof for $\buepstau$. Since the embedding  $W^{2,p}(\oeps) \hookrightarrow W^{1,\infty}(\oeps)$ is continuous, we obtain that the functions $\buepstau,\huepstau: (0,T)\rightarrow W^{1,\infty}(\oeps))^n$ are Bochner-measurable (we emphasize that the range of the embedding is separable in $W^{1,\infty}(\oeps)$) and therefore  $\buepstau,\huepstau \in L^{\infty}((0,T),W^{1,\infty}(\oeps))^n$.
%
% First of all, due to the embedding $W^{2,p}(\oeps) \hookrightarrow W^{1,\infty}(\oeps)$, it holds that $\buepstau,\huepstau \in L^{\infty}((0,T),W^{1,\infty}(\oeps))^n$. 
 Further, we have for $t \in ((k-1)\tau,k\tau)$
\begin{align*}
\buepstau(t) - \huepstau(t) = (k\tau- t) \dtau \uepstau{k}.
\end{align*}
Estimate $\eqref{lem:uniform_bound_dis_energy_in_dtau_ueps}$ implies (with the Poincar\'e inequality from Lemma \ref{lem:Poincare-inequality} in the appendix)
\begin{align*}
\|\buepstau(t)-\huepstau(t)\|_{H^1(\oeps)} \le \tau \|\dtau \uepstau{k}\|_{H^1(\oeps)} \le C\tau^{\frac12},
\end{align*}
and therefore
\begin{align*}
\|\buepstau - \huepstau\|_{L^{\infty}((0,T),H^1(\oeps))} \le C\tau^{\frac12}.
\end{align*}
Now, let $\theta,q,s,$ and $\beta$ be as in Lemma \ref{lem:embedding_hoelder} in the appendix, and $\bar{s} = \theta + 2(1-\theta) = 2 - \theta = s+1$. Then the Gagliardo-Nirenberg inequality implies 
\begin{align*}
\|\buepstau (t) - \huepstau(t)\|_{C^{1,\beta}(\overline{\oeps})} &\le C \|\uepstau{k} - \uepstau{k-1}\|_{W^{\bar{s},q}(\oeps)}
\\
&\le C \|\uepstau{k} - \uepstau{k-1}\|_{W^{2,p}(\oeps)}^{1-\theta} \|\uepstau{k}-\uepstau{k-1}\|_{H^1(\oeps)}^{\theta}
\\
&\le C\|\uepstau{k}-\uepstau{k-1}\|_{H^1(\oeps)}^{\theta}
\\
&\le C\tau^{\frac{\theta}{2}},
\end{align*}
for a constant $C>0$ independent of $\vareps$ (for this we can use again the extension operator as in the proof of Lemma \ref{lem:embedding_hoelder}).  This implies the desired result.
\end{proof}

From the convergence results in Proposition \ref{prop:comp_interpolations} we get the following \textit{a priori} estimates for the limit functions $\ueps$:
\begin{corollary}\label{cor:apriori_ueps}
The limit function $\ueps$ from Proposition \ref{prop:comp_interpolations} fulfills the following \textit{a priori} estimates:
\begin{align*}
\|\ueps\|_{L^{\infty}((0,T),W^{2,p}(\oeps))} 
%&\le C,
%\\
+ \|\ueps\|_{H^1((0,T),H^1(\oeps))} &\le C,
\end{align*}
for a constant $C>0$ independent of $\vareps$. Addtionally, it holds that $\ueps \in L^{\infty}((0,T),\Yepsid^{\kappa_M})$ with $\kappa_M>0$ independent of $\vareps$ obtained in Lemma \ref{lem:uniform_bound_dis_energy}.
\end{corollary}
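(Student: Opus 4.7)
The plan is to pass to the limit $\tau \to 0$ in the $\vareps$- and $\tau$-uniform bounds already established for the time-discrete interpolations in Corollary \ref{cor:apriori_interpolations}, using the convergences of Proposition \ref{prop:comp_interpolations}. No new machinery is required; the proof is essentially a lower semicontinuity argument combined with the strong convergence of the deformation gradient.

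For the norm estimate in $L^{\infty}((0,T),W^{2,p}(\oeps))^n$, I would invoke the weak$^{\ast}$ lower semicontinuity of this norm together with the convergence $\huepstau \rightharpoonup^{\ast} \ueps$ in that space. Since $\|\huepstau\|_{L^{\infty}((0,T),W^{2,p}(\oeps))}$ is bounded by a constant independent of $\vareps$ and $\tau$, the same bound (depending only on the data) transfers to $\ueps$ and is therefore $\vareps$-uniform. The estimate in $H^1((0,T),H^1(\oeps))^n$ follows analogously from weak lower semicontinuity of the Hilbert space norm and the weak convergence $\huepstau \rightharpoonup \ueps$ there.

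For the final assertion $\ueps \in L^{\infty}((0,T),\Yepsid^{\kappa_M})$, the key is the strong convergence $\nabla \buepstau \to \nabla \ueps$ in $L^{\infty}((0,T)\times\oeps)^{n\times n}$. Since $\buepstau \in L^{\infty}((0,T),\Yepsid^{\kappa_M})$ by Corollary \ref{cor:apriori_interpolations}, we have $\det(\nabla \buepstau) \geq \kappa_M$ pointwise, and by continuity of the determinant this lower bound is preserved in the uniform limit, so $\det(\nabla \ueps) \geq \kappa_M$ almost everywhere. The Dirichlet condition $\ueps = \id$ on $(0,T)\times\geps^D$ passes to the limit via continuity of the trace operator, using for instance the weak convergence in $H^1((0,T),H^1(\oeps))^n$. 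I do not foresee a genuine obstacle in this corollary: all the hard work, namely the $\vareps$-uniform discrete energy estimates of Lemma \ref{lem:uniform_bound_dis_energy}, the $\vareps$-independent lower bound on the Jacobian of Lemma \ref{lem:Lower_bound_det}, and the $L^{\infty}$-convergence of $\nabla \buepstau$ in Proposition \ref{prop:comp_interpolations}, has already been carried out.
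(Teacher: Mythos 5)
Your proposal is correct and follows essentially the same route as the paper: the norm estimates come from weak and weak$^{\ast}$ lower semicontinuity of the norms under the convergences established in Proposition \ref{prop:comp_interpolations}, and the determinant bound $\det(\nabla\ueps)\geq\kappa_M$ is obtained by passing the pointwise bound $\det(\nabla\buepstau)\geq\kappa_M$ to the limit using the strong $L^{\infty}((0,T)\times\oeps)$-convergence of $\nabla\buepstau$. The only cosmetic difference is that you additionally spell out that the Dirichlet trace $\ueps=\id$ on $\geps^D$ is preserved in the weak $H^1$-limit, a point the paper leaves implicit; this is harmless and correct.
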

\begin{proof}
The estimates follow directly from the weak and weak$^{\ast}$ lower semicontinuity of the norms and the convergence results in Proposition \ref{prop:comp_interpolations}. To show that $\ueps$ maps into $\Yepsid^{\kappa_M}$ it is enough to show that almost everywhere in $(0,T)\times \oeps$ it holds that $\det(\nabla \ueps)\geq \kappa_M$. For every $t \in((k-1)\tau , k\tau)$ with $k \in \{1,\ldots,N_{\tau}\}$ and $x \in \oeps$ we have 
\begin{align*}
\det(\nabla \buepstau(t,x) ) = \det(\nabla \uepstau{k}(x)) \geq \kappa_M.
\end{align*}
Now, the desired result follows from the strong convergence of $\buepstau$ in $L^{\infty}((0,T)\times \oeps)$  in Proposition \ref{prop:comp_interpolations}.
\end{proof}
Now, we are able to prove the existence of a weak solution of the microscopic model $\eqref{MicroModel}$.
\begin{proof}[Proof of Theorem \ref{thm:main_thm_existence}]
We follow the same arguments as in \cite[Porposition 5.2]{mielke2020thermoviscoelasticity} and \cite[Proposition 4.4]{badal2023nonlinear}, but for the sake of completeness we give the proof. Let $X:= L^p((0,T),W^{2,p}(\oeps))^n$ and define the operator $A:X \rightarrow X'$ (we suppress the index $\vareps$) by 
\begin{align*}
\langle A\veps, \zeps\rangle_{X',X} := \int_0^T \int_{\Omega} \partial_G H_{\vareps} (\nabla^2 \veps) \vdots \nabla^2 \zeps dx dt.
\end{align*}
We emphasize that the $L^p$-integrability in time in the choice of $X$ is necessary, since this implies $\partial_G H_{\vareps} (\nabla^2 \veps) \in L^{p'}((0,T)\times \oeps)^{n\times n \times n}$ (see assumption \ref{AssumptionLowerBoundH}), and therefore the right-hand side above is well-defined. We want to apply Minty's trick to $A$ and the sequence $\buepstau$. First of all we notice that the convexity of $H$ implies the monotonicity of $A$, and using again \ref{AssumptionLowerBoundH} we obtain the hemicontinuity of $A$. Proposition \ref{prop:comp_interpolations} implies the weak convergence of $\buepstau$ in $X$. Now, using the weak equation in Remark \ref{rem:Euler_Lagrange_discrete}, we obtain after an elemental calculation for all $\zeps \in X$ with $\blepstau(t):= \lepstau{k}$ for $t \in [(k-1)\tau,k\tau)$ with $k =1,\ldots, N_{\tau}$
\begin{align*} 
\langle A \buepstau , \zeps \rangle_{X',X} =&  -\int_0^T \int_{\oeps} \left[ \partial_F W_{\vareps} (\nabla \buepstau) + \partial_{\dot{F}}R_{\vareps} (\nabla \lbuepstau ,  \nabla \dhuepstau ) \right] : \nabla \zeps dx dt 
\\
&+ \int_0^T \langle \blepstau ,\zeps \rangle dt. 
\end{align*}
We show
\begin{align*}
\langle A \buepstau ,\zeps \rangle_{X',X} \overset{\tau \to 0}{\longrightarrow}& -\int_0^T \int_{\oeps} \left[\partial_F W_{\vareps} (\nabla \ueps) + \partial_{\dot{F}} R_{\vareps}(\nabla \ueps, \nabla\dueps ) \right] : \nabla \zeps dx dt 
\\
&+ \int_0^T \int_{\oeps} f_{\vareps} \cdot \zeps dx dt + \int_0^T \int_{\geps } g_{\vareps} \cdot \zeps d\sigma dt 
=: \langle b_{\vareps} ,\zeps\rangle_{X',X}
\end{align*}
and
\begin{align}\label{conv:Minty_trick}
\langle A \buepstau ,\buepstau \rangle_{X',X} \overset{\tau\to 0}{\longrightarrow} \langle b_{\vareps} , \ueps\rangle_{X',X}.
\end{align}
Then the Minty trick implies $A\ueps = b_{\vareps}$ which implies the desired result. We only show the convergence in $\eqref{conv:Minty_trick}$, since the other one follows by similar (simpler) arguments.
For the term including $\partial_F W_{\vareps}(\nabla \buepstau)$ we use the continuity of $\partial_F W$, see assumption \ref{ass:W_regularity}, and the strong convergence of $\nabla \buepstau$ in $L^{\infty}((0,T)\times \oeps)^{n\times n}$ from Proposition \ref{prop:comp_interpolations}. For the dissipation term we use the continuity of $\partial_{\dot{F}} R_{\vareps}$ and its linearity with respect to the $\dot{F}$-variable together with the strong convergence of $\nabla \lbuepstau$ in $L^{\infty}((0,T)\times \oeps)^{n \times n}$ and the weak convergence of $\nabla \dhuepstau$ in $L^2((0,T)\times \oeps)^{n\times n}$  from Proposition \ref{prop:comp_interpolations}. Further, by an elemental calculation we obtain 
\begin{align*}
\bigg|  \sum_{k=1}^{N_{\tau}} \int_{\geps} \gepstau{k} \cdot \buepstau d\sigma &- \int_0^T \int_{\geps } g_{\vareps} \cdot \buepstau d\sigma dt  \bigg|
\\
&\le \sum_{k=1}^{N_{\tau}} \left\|\dot{g}_{\vareps}\right\|_{L^1(((k-1)\tau,k\tau),L^2(\geps))}\|\ueps\|_{L^1(((k-1)\tau,k\tau),L^2(\geps))} 
\\
&\hspace{5em}+ \|g_{\vareps}\|_{L^2((0,T)\times \geps)} \|\ueps - \buepstau\|_{L^2((0,T)\times \geps)}
\\
&\le C\tau\left(\left\|\dot{g}_{\vareps}\right\|_{L^2((0,T)\times \oeps)}^2 + \|\ueps\|^2_{L^2((0,T)\times \oeps)}\right)
\\
&\hspace{5em} + \|g_{\vareps}\|_{L^2((0,T)\times \geps)} \|\ueps - \buepstau\|_{L^2((0,T)\times \geps)}.
\end{align*}
For fixed $\vareps$ the right-hand side converges to $0$ for $\tau \to 0$. A similar argument holds for $\fepstau{k}$ and $f_{\vareps}$, which implies 
\begin{align*}
\int_0^T \langle \blepstau,\buepstau \rangle dt \overset{\tau \to 0}{\longrightarrow} \int_0^T \int_{\oeps} f_{\vareps} \cdot \ueps dx dt + \int_0^T \int_{\geps } g_{\vareps} \cdot \ueps d\sigma dt.
\end{align*}
Hence, we obtain $\eqref{conv:Minty_trick}$ and therefore the desired result.
\end{proof}

\section{Derivation of the macroscopic model}
\label{sec:derivation_macro_model}
Based on the \textit{a priori} estimates from Section 7 in Corollary \ref{cor:apriori_ueps} we are able to pass to the limit $\vareps$ in the microscopic problem.
We denote by $\tueps$ the extension of $\ueps$ to the whole domain $\Omega$ via the extension operator from Theorem \ref{thm:main_thm_extension_operator}. More precisely, for  almost every $t \in (0,T)$ we define $\tueps(t):= E_{\vareps}\ueps(t)$. Then we have the following bounds for $\tueps$:
\begin{lemma}\label{lem:apriori_tueps}
For a constant $C>0$ independent of $\vareps$ it holds that
\begin{align*}
\|\tueps\|_{L^{\infty}((0,T),W^{2,p}(\Omega))}
% &\le C,
%\\
+\|\tueps\|_{H^1((0,T),H^1(\Omega))} &\le C,
\end{align*}

\end{lemma}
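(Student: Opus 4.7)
The plan is to apply Theorem \ref{thm:main_thm_extension_operator} pointwise in time and then combine with the $\vareps$-uniform \textit{a priori} bounds for $\ueps$ from Corollary \ref{cor:apriori_ueps}. The main observation is that the extension operator $E_{\vareps}$ is linear, bounded, and independent of $t$, so it commutes with Bochner integration and weak time differentiation.

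First I would argue for the spatial estimate. Since $\ueps \in L^{\infty}((0,T),W^{2,p}(\oeps))^n$, the composition $\tueps(t) := E_{\vareps}\ueps(t)$ is Bochner-measurable with values in $W^{2,p}(\Omega)^n$ by linearity and boundedness of $E_{\vareps}$. Applying the three estimates of Theorem \ref{thm:main_thm_extension_operator} pointwise in $t$ and absorbing the factor $\vareps \|\nabla \ueps(t)\|_{L^p(\oeps)}$ (using $\vareps \le 1$) into the $W^{2,p}(\oeps)$-norm yields
\begin{align*}
\|\tueps(t)\|_{W^{2,p}(\Omega)} \le C \|\ueps(t)\|_{W^{2,p}(\oeps)}
\end{align*}
for a.e. $t \in (0,T)$, with $C$ independent of $\vareps$. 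Taking the essential supremum in $t$ and using Corollary \ref{cor:apriori_ueps} gives the first bound.

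For the time-regularity part, I would first show that $\partial_t \tueps = E_{\vareps}(\dueps)$ in the sense of distributions on $(0,T)$ with values in $H^1(\Omega)^n$. This follows from the linearity and $t$-independence of $E_{\vareps}$: for any $\varphi \in C_c^{\infty}(0,T)$,
\begin{align*}
\int_0^T \tueps(t)\, \varphi'(t)\, dt = E_{\vareps}\!\left(\int_0^T \ueps(t)\, \varphi'(t)\, dt\right) = -E_{\vareps}\!\left(\int_0^T \dueps(t)\, \varphi(t)\, dt\right),
\end{align*}
where the interchange with the Bochner integral is valid since $E_{\vareps}$ is bounded. Then applying the second part of Theorem \ref{thm:main_thm_extension_operator} with $s=2$ pointwise in $t$ yields
\begin{align*}
\|\partial_t \tueps(t)\|_{H^1(\Omega)} \le C \|\dueps(t)\|_{H^1(\oeps)}.
\end{align*}
Squaring and integrating over $(0,T)$ and invoking Corollary \ref{cor:apriori_ueps} gives the bound on $\|\partial_t \tueps\|_{L^2((0,T),H^1(\Omega))}$. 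The bound on $\|\tueps\|_{L^2((0,T),H^1(\Omega))}$ itself follows either from the $L^{\infty}((0,T),W^{2,p}(\Omega))$ bound already obtained (via the continuous embedding $W^{2,p}(\Omega) \hookrightarrow H^1(\Omega)$ for bounded $\Omega$ and $p>n\ge 2$) or by the same pointwise argument with the $W^{1,2}$-estimate of $E_{\vareps}$.

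No step poses a genuine obstacle: the lemma is essentially an immediate corollary of Theorem \ref{thm:main_thm_extension_operator} together with Corollary \ref{cor:apriori_ueps}. The only subtlety worth stating explicitly is the commutation of $E_{\vareps}$ with the time derivative, which relies on the crucial fact that the extension operator constructed in Section \ref{sec:Extension_second_order} is defined purely spatially and acts as a single bounded linear operator simultaneously on $W^{2,p}(\oeps)$ and on $H^1(\oeps)$.
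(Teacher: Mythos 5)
Your argument is correct and follows essentially the same route as the paper: apply $E_{\vareps}$ pointwise in time, use that it is a single bounded linear operator acting simultaneously on $W^{2,p}(\oeps)$ and $H^1(\oeps)$ with $\vareps$-uniform norms, and then transfer the time-derivative bound by exploiting that $E_{\vareps}$ is time-independent and linear. The only cosmetic difference is that the paper establishes $\partial_t \tueps = E_{\vareps}\dueps$ via a difference-quotient argument, whereas you commute $E_{\vareps}$ with the Bochner integral against a test function; both are standard and equivalent, and both rest on the same crucial fact you highlight at the end.
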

\begin{proof}
For the extension operator $E_{\vareps}$ from Theorem \ref{thm:main_thm_extension_operator} it holds that $E_{\vareps}:H^1(\oeps)^n \rightarrow H^1(\Omega)^n$ and $E_{\vareps}:W^{2,p}(\oeps)^n \rightarrow W^{2,p}(\Omega)^n$, and for all $\veps \in H^1(\oeps)^n$ and $ \weps \in W^{2,p}(\oeps)^n$ we have
\begin{align*}
\|E_{\vareps}\veps\|_{H^1(\Omega)}\le C\|\veps\|_{H^1(\oeps)},\quad \|E_{\vareps}\weps\|_{W^{2,p}(\Omega)}\le C\|\weps\|_{W^{2,p}(\oeps)}.
\end{align*}
We immediately obtain the bound of $\tueps$ in $L^{\infty}((0,T),W^{2,p}(\Omega))^n$. To estimate the time derivative we introduce for $0<h\ll 1$ and $t \in (0,T-h)$ the difference quotient
\begin{align*}
\delta_h \tueps(t):= \frac{\tueps(t+h) - \tueps(t)}{h},
\end{align*}
and in a similar way $\delta \ueps(t)$.
Since the operator $E_{\vareps}$ is linear and acts pointwise on $\ueps$ with respect to $t$, we get for almost every $t\in (0,T-h)$
\begin{align*}
\|\delta_h \tueps(t)\|_{H^1(\Omega)} = \|E_{\vareps} \delta_h \ueps(t) \|_{H^1(\Omega)} \le C \|\delta_h \ueps(t)\|_{H^1(\oeps)} \le C \|\dueps(t)\|_{H^1(\oeps)}.
\end{align*}
With the \textit{a priori} bound for $\dueps$ from Corollary \ref{cor:apriori_ueps} we get
\begin{align*}
\|\delta_h \tueps\|_{L^2((0,T-h),H^1(\Omega))} \le C,
\end{align*}
which implies $\dtueps \in L^2((0,T),H^1(\Omega))^n$ and 
\begin{align*}
\|\dtueps\|_{L^2((0,T),H^1(\Omega))} \le C.
\end{align*}
\end{proof}

For the sequence of microscopic solutions we have the following (two-scale) compactness result:
\begin{proposition}\label{prop:conv:tueps}
There exist limit functions $u_0 \in L^{\infty}((0,T),\mathcal{Y}_{\id}^{\kappa_M}) \cap H^1((0,T),H^1(\Omega))^n$  and $u_2 \in L^{\infty}((0,T),L^p(\Omega,W^{2,p}_{\per}(Y)/\R))$  such that up to a subsequence
 for all $s \in [1,\infty)$
\begin{subequations}
\begin{align}
\label{conv:tueps_linftyw2p}\tueps &\rightharpoonup^{\ast} u_0 &\mbox{ weakly$^{\ast}$ in }& L^{\infty}((0,T),W^{2,p}(\Omega))^n,
\\
\label{conv:tueps_c0c1}\tueps &\rightarrow u_0 &\mbox{ in }& C^0([0,T],C^1(\overline{\Omega}))^n,
\\
\label{conv:tueps_ts_nabla2}\chi_{\oeps} 
\nabla^2 \ueps &\rightwts{s,p}  \chi_{Y_s}\left(\nabla_x^2 u_0 + \nabla_y^2 u_2\right) ,
\\
\label{conv:tueps_H^1H^1}\tueps &\rightharpoonup u_0 &\mbox{ weakly in }& H^1((0,T),H^1(\Omega))^n,
\\
\label{conv:tueps_time_2scale}
\nabla\dtueps &\rightwts{2} \nabla \dot{u}_0.
\end{align}
\end{subequations}
\end{proposition}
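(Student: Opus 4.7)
The plan is to extract limits from the $\vareps$-uniform a priori estimates of Lemma \ref{lem:apriori_tueps} and then identify them using (i) a parabolic Aubin--Lions-type compactness, (ii) the positivity result of Lemma \ref{lem:Lower_bound_det}, (iii) the standard structural theorem for weak two-scale limits from Appendix \ref{SectionTwoScaleConvergence}, and (iv) an integration-by-parts argument in time to rule out a microscopic corrector for $\nabla\dtueps$.

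First I would use Lemma \ref{lem:apriori_tueps} and Banach--Alaoglu to select a subsequence and a limit $u_0 \in L^{\infty}((0,T),W^{2,p}(\Omega))^n \cap H^1((0,T),H^1(\Omega))^n$ realising the weak$^{\ast}$ convergence \eqref{conv:tueps_linftyw2p} and the weak convergence \eqref{conv:tueps_H^1H^1}; the two limits coincide by uniqueness of distributional limits. Since $\ueps = \id$ on $\geps^D$, the extension $E_{\vareps}$ from Theorem \ref{thm:main_thm_extension_operator} is constructed so that $\tueps = \id$ on $\Gamma^D$ (automatic in the disconnected case where $\Gamma^D = \geps^D$), and continuity of the trace operator $W^{2,p}(\Omega) \to W^{2-1/p,p}(\Gamma^D)$ then yields $u_0 = \id$ on $\Gamma^D$ in the limit.

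For the strong convergence \eqref{conv:tueps_c0c1} I would invoke an Aubin--Lions-type parabolic embedding (as stated in the appendix): with the triple $W^{2,p}(\Omega) \hookrightarrow\hookrightarrow C^1(\overline{\Omega}) \hookrightarrow H^1(\Omega)$ (compact since $p>n$), the bounds $\|\tueps\|_{L^{\infty}((0,T),W^{2,p})} + \|\dtueps\|_{L^2((0,T),H^1)} \leq C$ produce relative compactness of $\{\tueps\}$ in $C^0([0,T],C^1(\overline{\Omega}))^n$, and uniqueness of the distributional limit identifies the limit as $u_0$. In particular $\nabla\tueps \to \nabla u_0$ uniformly on $[0,T]\times\overline{\Omega}$, which combined with Lemma \ref{lem:Lower_bound_det} applied to $\ueps(t) \in \mathcal{F}_{\vareps}^M$ (the energy bound being provided by Corollary \ref{cor:apriori_ueps}) propagates the pointwise lower bound $\det(\nabla u_0) \geq \kappa_M$ and yields $u_0 \in L^{\infty}((0,T),\mathcal{Y}_{\id}^{\kappa_M})$. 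For \eqref{conv:tueps_ts_nabla2}, the $L^{\infty}((0,T),W^{2,p}(\Omega))$-bound gives boundedness of $\tueps$ and $\nabla^2\tueps$ in $L^s((0,T),L^p(\Omega))$ for every $s\in[1,\infty)$, and the standard structural theorem for two-scale limits of bounded sequences in $W^{2,p}$ yields $u_2 \in L^{\infty}((0,T),L^p(\Omega,W^{2,p}_{\per}(Y)/\R))$ with $\nabla^2\tueps \rightwts{s,p} \nabla^2_x u_0 + \nabla^2_y u_2$; multiplying by $\chi_{\oeps}$, whose two-scale limit is $\chi_{Y_s}$, produces the claimed convergence.

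Finally, for \eqref{conv:tueps_time_2scale} the key is an integration by parts in time: for any admissible test function $\phi \in C^{\infty}_c((0,T),C^{\infty}(\overline{\Omega};C^{\infty}_{\per}(Y)))^{n\times n}$, using that $\nabla\dtueps = \partial_t \nabla\tueps$ as distributions,
\begin{align*}
\int_0^T \int_{\Omega} \nabla \dtueps : \phi(t,x,x/\vareps)\, dx\, dt = -\int_0^T \int_{\Omega} \nabla \tueps : \partial_t \phi(t,x,x/\vareps)\, dx\, dt.
\end{align*}
The right-hand side converges, by the uniform convergence $\nabla\tueps \to \nabla u_0$ from \eqref{conv:tueps_c0c1}, to $-\int_0^T\!\int_{\Omega}\!\int_Y \nabla u_0 : \partial_t \phi\, dy\, dx\, dt$, and a second integration by parts in $t$ (legitimate because $\nabla u_0 \in H^1((0,T),L^2(\Omega))^{n\times n}$) identifies the two-scale limit of $\nabla\dtueps$ as $\nabla \dot u_0$, with no $y$-corrector. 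The main obstacle I anticipate is the strong convergence \eqref{conv:tueps_c0c1}: it drives both the pointwise passage $\det(\nabla\tueps) \to \det(\nabla u_0)$ and the integration-by-parts argument giving \eqref{conv:tueps_time_2scale}, and its proof relies on the parabolic embedding from the appendix reconciling the spatial $W^{2,p}$-regularity with the weaker temporal $H^1(\Omega)$-regularity.
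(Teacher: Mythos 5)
Your proposal is correct and follows the paper's approach closely; the only cosmetic differences are that you invoke an Aubin--Lions--Simon compactness where the paper uses the H\"older embedding of Lemma \ref{lem:embedding_hoelder} plus Arzel\`a--Ascoli, and you establish $\eqref{conv:tueps_time_2scale}$ by a direct limit computation against smooth oscillating test functions where the paper first extracts a two-scale corrector $U_1$ via Lemma \ref{BasicTwoScaleCompactness} and then shows $U_1=0$ by the same integration-by-parts identity. Both variations lead to the same conclusions through equivalent means.
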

\begin{proof}
$\eqref{conv:tueps_linftyw2p}$ follows directly from the \textit{a priori} estimates in Lemma \ref{lem:apriori_tueps}. For the second inequality we use the embedding (which was shown in a slightly different version for perforated domains in Lemma \ref{lem:embedding_hoelder})
\begin{align*}
L^{\infty}((0,T),W^{2,p}(\Omega)) \cap H^1((0,T),H^1(\Omega)) \hookrightarrow  C^{0,\alpha}([0,T],C^{1,\alpha}(\overline{\Omega}))
\end{align*}
for a suitable $\alpha \in (0,1)$, where we used $p>n$. Now, using again the estimates from Lemma \ref{lem:apriori_tueps} and the Arzel\`a-Ascoli theorem, we obtain $\eqref{conv:tueps_c0c1}$. The two-scale convergence in $\eqref{conv:tueps_ts_nabla2}$ follows from Corollary \ref{cor:two_scale_conv_second_order}. The  convergence $\eqref{conv:tueps_H^1H^1}$ is again a direct consequence of Lemma \ref{lem:apriori_tueps}. For the last convergence $\eqref{conv:tueps_time_2scale}$ we first notice that there exists $U_1 \in L^2((0,T)\times \Omega, H^1_{\per}(Y)/\R)^n$ such that up to a subsequence  (see Lemma \ref{BasicTwoScaleCompactness} in the appendix for the stationary case)
\begin{align*}
\nabla \dtueps \rightwts{2} \nabla \dot{u}_0 + \nabla_y U_1.
\end{align*}
Hence, for every $\phi \in C_0^{\infty}(\Omega,C_{\per}^{\infty}(Y))^{n\times n}$ and $\psi \in C_0^{\infty}(0,T)$ we obtain by integration by parts:
\begin{align*}
\int_0^T \int_{\Omega} \int_Y \nabla \dot{u}_0 : \phi \psi  dy dx dt &= -\int_0^T \int_{\Omega} \int_Y \nabla u_0  : \phi \psi' dy dx dt 
\\
&= \lim_{\vareps \to 0} -\int_0^T \int_{\Omega} \nabla \tueps : \phi\left(x,\fxe\right) \psi'dx dt
\\
&= \lim_{\vareps \to 0} \int_0^T \int_{\Omega} \nabla \dtueps : \phi\left(x,\fxe\right) \psi dx dt
\\
&= \int_0^T \int_{\Omega} \int_Y \left[\nabla \dot{u}_0 + \nabla_y U_1\right] : \phi \psi dy dx dt.
\end{align*}
This implies $\nabla_y U_1 = 0$ and therefore $U_1 = 0$, which gives $\eqref{conv:tueps_time_2scale}$. The lower bound for the determinant $\det(\nabla u_0) \geq \kappa_M$ follows by similar arguments as at the end of the proof of Lemma \ref{lem:Lower_bound_det} and therefore $u_0 \in L^{\infty}((0,T),\mathcal{Y}_{\id}^{\kappa_M})$.
This finishes the proof.
\end{proof}
From the (strong) convergence results above we obtain the following two-scale compactness results for the nonlinear terms of lower order:
\begin{corollary}\label{cor:conv_nonlinear}
The subsequence from Proposition \ref{prop:conv:tueps} fulfills for every $s \in [1,\infty)$
\begin{align*}
\partial_F W\left(\cdot_y , \teps(\nabla \tueps) \right) &\rightarrow  \partial_F W(\cdot_y, \nabla u_0) &\mbox{ in }& L^s((0,T)\times \Omega \times Y)^{n\times n},
\\
\partial_{\dot{F}} R\left(\cdot_y , \teps(\nabla \tueps) , \teps(\nabla \dtueps)\right) &\rightharpoonup \partial_{\dot{F}} R(\cdot_y \nabla u_0 , \nabla \dot{u}_0) &\mbox{ weakly in }& L^2((0,T)\times \Omega \times Y)^{n\times n}.
\end{align*}
In other words, we have $\partial_F W_{\vareps}(\nabla \tueps) \rightsts{s} \partial_F W(\nabla u_0)$ and $\partial_{\dot{F}} \Reps(\nabla \tueps,\nabla \dtueps) \rightwts{2} \partial_{\dot{F}} R(\nabla u_0 , \nabla \dot{u}_0)$.
\end{corollary}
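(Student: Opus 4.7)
The plan is to reduce both convergences to a strong-times-(strong or weak) argument at the level of the unfolded functions on $(0,T)\times\Omega\times Y$.

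First I would upgrade the uniform convergence $\nabla\tueps\to\nabla u_0$ in $C^0([0,T]\times\overline{\Omega})^{n\times n}$ from \eqref{conv:tueps_c0c1} to strong convergence of the unfolded gradients. Since $\nabla u_0\in C^0([0,T]\times\overline{\Omega})^{n\times n}$ (by the embedding used in the proof of Proposition \ref{prop:conv:tueps}) and $\nabla\tueps$ is uniformly bounded, Lemma \ref{lem:unfolding_c0_linfty} gives $\teps(\nabla\tueps)\to \nabla u_0$ in $L^{\infty}((0,T)\times\Omega\times Y)^{n\times n}$. In particular, the values of $\teps(\nabla\tueps)$ stay in a fixed compact subset $K\subset\R^{n\times n}$ on which $\partial_F W$ is uniformly continuous in $F$, uniformly in $y\in\overline{Y}$ (using \ref{ass:W_regularity} and $Y$-periodicity).

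For the first convergence, continuity of $\partial_F W$ together with strong convergence $\teps(\nabla\tueps)\to\nabla u_0$ in $L^{\infty}$ then yields
\begin{align*}
\partial_F W\bigl(y,\teps(\nabla\tueps)(t,x,y)\bigr)\longrightarrow \partial_F W(y,\nabla u_0(t,x))
\end{align*}
in $L^{\infty}((0,T)\times\Omega\times Y)^{n\times n}$. Since $(0,T)\times\Omega\times Y$ has finite measure, convergence in $L^{\infty}$ implies convergence in $L^s$ for every $s\in[1,\infty)$, which is the first claim.

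For the second convergence I would exploit the linearity of $\partial_{\dot F}R$ in its last argument. By \ref{ass:diss_R}, a direct computation gives $\partial_{\dot F}R(y,F,\dot F)=\mathcal{L}(y,F)\dot F$ where $\mathcal{L}$ is a fourth-order tensor depending continuously on $(y,F)$ and $Y$-periodic in $y$. First, by the same argument as above, $\mathcal{L}(y,\teps(\nabla\tueps))\to\mathcal{L}(y,\nabla u_0)$ in $L^{\infty}((0,T)\times\Omega\times Y)$. Secondly, the weak two-scale convergence $\nabla\dtueps\rightwts{2}\nabla\dot u_0$ in \eqref{conv:tueps_time_2scale} translates via the unfolding operator into $\teps(\nabla\dtueps)\rightharpoonup \nabla\dot u_0$ weakly in $L^2((0,T)\times\Omega\times Y)^{n\times n}$. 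Combining strong $L^\infty$ convergence with weak $L^2$ convergence in the product yields
\begin{align*}
\mathcal{L}(y,\teps(\nabla\tueps))\,\teps(\nabla\dtueps)\rightharpoonup \mathcal{L}(y,\nabla u_0)\,\nabla\dot u_0
\end{align*}
weakly in $L^2((0,T)\times\Omega\times Y)^{n\times n}$, which is precisely the desired weak two-scale convergence.

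The only delicate point is the passage from strong uniform convergence of $\nabla\tueps$ on the macroscopic domain to strong $L^\infty$ convergence of the unfolded sequence on $\Omega\times Y$; all other steps are routine consequences of continuity and the strong-times-weak product rule. Since $\partial_{\dot F}R$ is linear in $\dot F$, no compensated-compactness or Minty-type trick is needed here, in contrast to the genuinely nonlinear second-gradient term, which must be treated separately later.
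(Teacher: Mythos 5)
Your proposal is correct and follows essentially the same route as the paper. The only minor stylistic difference is in the first convergence: you pass through uniform continuity of $\partial_F W$ on a compact set $K$ (yielding $L^\infty$ and then $L^s$ convergence), whereas the paper goes via pointwise a.e.\ convergence and Lebesgue's dominated convergence theorem; both are valid. One detail worth making explicit, since $\partial_F W$ is only defined on $GL^+(n)$: the compact set $K$ must lie in $GL^+(n)$, which holds because $\nabla\tueps$ is uniformly bounded in $C^0$ and $\det(\nabla\tueps)\geq\kappa_M/2$ by Lemma \ref{lem:Lower_bound_det} (and $\det(\nabla u_0)\geq\kappa_M$), so the values stay in $\{F\,:\,|F|\le C,\ \det F\geq \kappa_M/2\}$. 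The second part, writing $\partial_{\dot F}R(y,F,\dot F)=\mathcal{L}(y,F)\dot F$ and combining strong $L^\infty$ convergence of $\mathcal{L}(\cdot_y,\teps(\nabla\tueps))$ with weak $L^2$ convergence of $\teps(\nabla\dtueps)$, matches the paper's argument.
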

\begin{proof} 
From Lemma \ref{lem:unfolding_c0_linfty} we get with the strong convergence of $\nabla \tueps$  in $C^0([0,T]\times \overline{\Omega})^{n\times n}$ from $\eqref{conv:tueps_c0c1}$ that 
\begin{align*}
\teps(\nabla\tueps) \rightarrow \nabla u_0 \quad \mbox{ in } L^{\infty}((0,T)\times \Omega \times Y)^{n\times n}.
\end{align*}
Especially, the sequence $\teps(\nabla \ueps)$ is pointwise bounded. We have almost everywhere in $(0,T)\times \Omega \times Y$ (remember that $\partial_F W$ is continuous on $GL^+(n)$)
\begin{align*}
\partial_F W(\cdot_y, \teps(\nabla \tueps)) \rightarrow \partial_F W(\cdot_y, \nabla u_0).
\end{align*}
Now, the convergence theorem of Lebesgue implies the convergence of $\partial_F W(\cdot_y,\teps(\nabla \tueps))$ in $L^s((0,T)\times \Omega \times Y)^{n\times n}$ for every $s\in [1,\infty)$.
Next, we notice that
\begin{align*}
\partial_{\dot{F}} R &\left(\cdot_y , \teps(\nabla \tueps) , \teps(\nabla \dtueps)\right) 
\\
&= 2 \teps(\nabla \tueps) \left( D\left(\cdot_y,\teps(\nabla \tueps)\teps(\nabla \tueps)^{\top}\right) \left[\teps(\nabla \dtueps)^{\top} \teps(\nabla \tueps) + \teps(\nabla \tueps)^{\top} \teps(\nabla \dtueps) \right]\right).
\end{align*}
With similar arguments as above we can show that every factor in this product except $\teps(\nabla \dtueps)$ and $\teps(\nabla \dtueps)^{\top}$ converge strongly in $L^{\infty}((0,T)\times \Omega \times Y)$. Since the product between a strong converging sequence in $L^{\infty}$ and a weakly convergent sequence in $L^2$ converges weakly in $L^2$, we obtain the desired result from the two-scale convergence of $\nabla \dtueps$ in $\eqref{conv:tueps_time_2scale}$, which implies the weak convergence of $\teps(\nabla \dtueps)$ (and its transpose) in $L^2((0,T)\times \Omega \times Y)^{n\times n}$.
\end{proof}
Now, we are able to pass to the limit $\vareps \to 0$ in the microscopic equation $\eqref{eq:micro_model_var}$ for a suitable choice of test-functions. We will see that the only critical term remains the term including the higher derivatives, more precisely $\partial_G H_{\vareps} (\nabla^2 \ueps)$. For this we use the convexity of $H$ and some kind of Minty-trick, which was already used (for a lower order problem) in \cite[Proof of Theorem 3.5]{Allaire_TwoScaleKonvergenz}. Let us introduce the following notation
\begin{align*}
%\heps := \partial_G H_{\vareps} (\nabla^2 \ueps), \quad 
\theps := \partial_G H_{\vareps}(\nabla^2 \tueps).
\end{align*}
From the upper bound of $\partial_G H_{\vareps}$ in assumption \ref{AssumptionLowerBoundH} we obtain almost everywhere in $(0,T)$ using $p' (p-1) = p$
\begin{align*}
\|\theps\|_{L^{p'}(\Omega)}^{p'} &\le C \int_{\Omega} \left(1 + |\nabla^2 \tueps|^{p-1} \right)^{p'} dx
\\
&\le C \left( 1 + \|\nabla^2 \tueps\|^p_{L^p(\Omega)} \right) \le C,
\end{align*}
where the last estimate follows from the \textit{a priori} estimates in Lemma \ref{lem:apriori_tueps}. Hence, due to Lemma \ref{BasicTwoScaleCompactness} in the appendix, there exists $h_0 \in L^{\infty}((0,T),L^{p'}(\Omega\times Y))^{n\times n \times n}$,  such that up to a subsequence and arbitrary $s \in [1,\infty)$
\begin{align}\label{conv:heps_ts}
\theps \rightwts{s,p} h_0.
\end{align}
We emphasize that in a first step we only get $h_0 \in L^s((0,T),L^p(\Omega))^{n\times n \times n}$ for $s\in [1,\infty)$. However, then the lower semicontinuity of the norm with respect to the two-scale convergence implies almost everywhere in $(0,T)$
\begin{align*}
\|h_0\|_{L^{p'}(\Omega)} \le \liminf_{\vareps \to 0} \|\theps\|_{L^{p'}(\Omega)} \le C,
\end{align*}
and therefore $h_0 \in L^{\infty}((0,T),L^{p'}(\Omega))^{n\times n \times n}$. Next, we choose test-functions of the form
\begin{align*}
\zeps(t,x) := v_0(t,x) + \vareps^2 v_2\left(t,x,\fxe\right)
\end{align*}
with $v_0 \in C^{\infty}_0([0,T]\times \overline{\Omega}\setminus \Gamma^D)^n$ and $v_2 \in C^{\infty}_0([0,T]\times \Omega , C_{\per}^{\infty}(Y))^n$. Since $\geps^D \subset \Gamma^D$ we have $\zeps = 0 $ on $\geps^D$. Further it holds that
\begin{align*}
\nabla \zeps(t,x) &= \nabla v_0(t,x) + \vareps \nabla_y v_2\left(t,x,\fxe\right) + \vareps^2 \nabla_x v_2 \left(t,x,\fxe\right),
\\
\nabla^2 \zeps(t,x) &= \nabla^2 v_0(t,x) + \nabla_y^2 v_2\left(t,x,\fxe\right) + 2\vareps \nabla_x \nabla_y v_2 \left(t,x,\fxe\right) + \vareps^2 \nabla_x^2 v_2\left(t,x,\fxe\right). 
\end{align*}
Using $\zeps$ as a test-function in $\eqref{eq:micro_model_var}$ we obtain
\begin{align}
\begin{aligned}
\label{eq:deriv_macro_micro_aux}
\int_0^T & \int_{\oeps} \left[\partial_F W_{\vareps}(\nabla\ueps) + \partial_{\dot{F}} R_{\vareps}(\nabla \ueps,\nabla \dueps)\right] : \left[\nabla v_0 + \vareps \nabla_y v_2\left(t,x,\fxe\right) + \vareps^2 \nabla_x v_2 \left(t,x,\fxe\right) \right] dx dt
\\
&+ \int_0^T \int_{\oeps} \underbrace{\partial_G H_{\vareps}(\nabla^2 \ueps)}_{= \theps} \vdots \left[ \nabla^2 v_0 + \nabla_y^2 v_2\left(t,x,\fxe\right) + 2\vareps \nabla_x \nabla_y v_2 \left(t,x,\fxe\right) + \vareps^2 \nabla_x^2 v_2\left(t,x,\fxe\right)\right] dx dt
\\
=& \int_0^T \int_{\oeps} f_{\vareps} \cdot \left[v_0 + \vareps^2 v_2 \left(t,x,\fxe\right) \right] dx dt 
+ \int_0^T \int_{\geps} g_{\vareps} \cdot \left[ v_0 + \vareps^2 v_2 \left(t,x,\fxe\right) \right] d\sigma dt.
\end{aligned}
\end{align}
From the assumptions for $f_{\vareps}$ and $g_{\vareps}$ in \ref{ass:f_eps} and \ref{ass:g_eps} we immediately obtain that the right-hand side converges to
\begin{align*}
\int_0^T \int_{\Omega} \underbrace{\int_{Y_s} f_0(t,x,y) dy}_{=:\bar{f}_0(t,x)} \cdot v_0(t,x)  dx dt + \int_0^T \int_{\Omega} \underbrace{\int_{\Gamma} g_0(t,x,y) d\sigma_y}_{=: \bar{g}_0} \cdot v_0(t,x) dx dt.
\end{align*}
For the first term in $\eqref{eq:deriv_macro_micro_aux}$ we get with the convergence results in Corollary \ref{cor:conv_nonlinear} that it converges to 
\begin{align*}
\int_0^T \int_{\Omega} \left[ \partial_F \overline{W}(\nabla u_0) + \partial_{\dot{F}} \overline{R}(\nabla u_0 , \nabla \dot{u}_0) \right] : \nabla v_0 dx dt
\end{align*}
with 
\begin{align*}
\overline{W}(F):= \int_{Y_s} W(y,F)dy,\quad \overline{R}(F,\dot{F}) := \int_{Y_s} R(y,F,\dot{F}) dy.
\end{align*}
Finally, using the two-scale convergence of $\theps$ we obtain for the second term on the left-hand side in $\eqref{eq:deriv_macro_micro_aux}$ the limit 
\begin{align*}
\int_0^T\int_{\Omega} \int_{Y_s} h_0 \vdots \left[\nabla v_0 + \nabla_y^2 v_2(t,x,y) \right] dy dx dt.
\end{align*}
Altogether, we have the limit equation
\begin{align}
\begin{aligned}\label{eq:deriv_macro_aux}
\int_0^T& \int_{\Omega} \left[ \partial_F \overline{W}(\nabla u_0) + \partial_{\dot{F}} \overline{R}(\nabla u_0 , \nabla \dot{u}_0) \right] : \nabla v_0 dx dt
\\
&+ \int_0^T\int_{\Omega} \int_{Y_s} h_0 \vdots \left[\nabla^2 v_0 + \nabla_y^2 v_2(t,x,y) \right] dy dx dt
\\
=& \int_0^T \int_{\Omega}  \bar{f}_0(t,x)  \cdot v_0(t,x)  dx dt + \int_0^T \int_{\Omega}\bar{g}_0(t,x)  \cdot v_0(t,x) dx dt.
\end{aligned}
\end{align}
By density this equation holds for all $v_0 \in L^2((0,T),W^{2,p}_{\Gamma^D}(\Omega))^n$ and $v_2 \in L^1((0,T),L^p(\Omega,W_{\per}^{2,p}(Y)))^n$.
Choosing $v_0 = 0$ we directly obtain
\begin{align}
\label{eq:minty_aux}
\int_0^T \int_{\Omega} \int_{Y_s} h_0 \vdots \nabla_y^2 v_2 dy dx dt  =0
\end{align}
for all $v_2 \in L^1((0,T),L^p(\Omega,W_{\per}^{2,p}(Y)))^n$. We 
want to identify the function $h_0$ with $\partial_G H(\cdot_y,\nabla^2_x u_0 + \nabla_y^2 u_2)$. For this we define for almost every $(t,x)\in (0,T)\times \Omega$
\begin{align*}
\mu_{\vareps} (t,x):= \nabla^2_x u_0(t,x) + \nabla_y^2 v_2\left(t,x,\fxe\right) + \delta \phi\left(t,x,\fxe\right)
\end{align*}
with $\delta \in \R$,  $\phi \in C^{\infty}_0 ((0,T)\times \Omega \times Y_s)^{n\times n \times n}$  and $v_2 \in  C^{\infty}_0([0,T]\times \Omega , C_{\per}^{\infty}(Y))^n $ as above. Since $H$ is convex, see assumption \ref{AssumpHConvexity}, we obtain
\begin{align}\label{ineq:minty_aux}
\int_0^T \int_{\oeps} \left[ \partial_G H_{\vareps}(\nabla^2 \ueps) - \partial_G H_{\vareps}(\mu_{\vareps})\right] \vdots \left[\nabla^2 \ueps - \mu_{\vareps} \right] dx dt \geq 0.
\end{align}
We want to pass to the limit on the left-hand side. Here the most critical term is the one including $\partial_G H_{\vareps}(\nabla^2 \ueps)\vdots\nabla^2 \ueps$, which will be considered later. For the other ones we can argue in the following way: The oscillation lemma, see \cite[Lemma 5.2]{Allaire_TwoScaleKonvergenz} for the stationary case, implies that for arbitrary $s\in [1,\infty)$ it holds that
\begin{align*}
\mu_{\vareps} \rightsts{s,p} \mu_0 := \nabla^2_x u_0 + \nabla_y^2 v_2 + \delta \phi.
\end{align*}
In others words, we have $\teps(\mu_{\vareps}) \rightarrow \mu_0$ in $L^s((0,T),L^p(\Omega \times Y))^{n\times n \times n}$. 
Especially, the convergence holds up to a subsequence almost everywhere in $(0,T)\times \Omega \times Y$ and using assumption \ref{AssumptionLowerBoundH} we get
\begin{align*}
\left|\partial_G H\left(y,\teps(\mu_{\vareps})(t,x,y)\right)\right|^{p'} \le C |\teps(\mu_{\vareps})|^p. 
\end{align*}
Hence, the (generalized) dominated convergence theorem of Lebesgue, see \cite[Theorem 3.25]{AltLFAenglisch}, implies 
\begin{align*}
\partial_G H(\cdot_y,\teps(\mu_{\vareps})) \rightarrow \partial_G H(\cdot_y , \mu_0) \quad \mbox{ in } L^{p'}((0,T)\times \Omega \times Y)^{n\times n \times n}.
\end{align*}
Since the sequence $\partial_G H(\cdot_y,\teps(\mu_{\vareps}))$ is bounded in $L^{\infty}((0,T),L^{p'}(\Omega \times Y))^{n\times n \times n}$ and $\partial_G H(\cdot_y , \mu_0) \in L^{\infty}((0,T),L^{p'}(\Omega \times Y))^{n\times n \times n}$, we obtain that the convergence above is valid in $L^s((0,T),L^{p'}(\Omega \times Y))^{n\times n \times n}$ for arbitrary $s \in [1,\infty)$. Hence, with the two-scale convergence for $\nabla^2 \ueps$ from Proposition \ref{prop:conv:tueps} we get 
\begin{align*}
\lim_{\vareps \to 0} \int_0^T \int_{\oeps} \partial_G H_{\vareps} (\mu_{\vareps}) \vdots &\left[\nabla^2 \ueps - \mu_{\vareps}\right] dx dt
\\
&= \int_0^T \int_{\Omega}\int_{Y_s} \partial_G H(y,\mu_0) \vdots \left[\nabla^2_x u_0 + \nabla_y^2 u_2 - \mu_0\right]dy dx dt. 
\end{align*}
From the two-scale convergence of $\theps=\partial_G H_{\vareps}(\nabla^2\ueps)$ to $h_0$ obtained in $\eqref{conv:heps_ts}$, we get with the strong two-scale convergence of $\mu_{\vareps}$ that
\begin{align*}
\lim_{\vareps \to 0}\int_0^T \int_{\oeps} \partial_G H_{\vareps} (\nabla^2 \ueps) \vdots \mu_{\vareps} dx dt = \int_0^T \int_{\Omega} \int_{Y_s} h_0 \vdots \mu_0 dy dx dt .
\end{align*}
Finally, for the remaining term in $\eqref{ineq:minty_aux}$ we use the microscopic equation $\eqref{eq:micro_model_var}$ to get with $\eqref{eq:deriv_macro_aux}$
\begin{align*}
\int_0^T& \int_{\oeps} \partial_G H_{\vareps}(\nabla^2 \ueps) \vdots \nabla^2 \ueps dx dt 
\\
=&- \int_0^T \int_{\oeps} \left[\partial_F W_{\vareps} (\nabla \ueps) + \partial_{\dot{F}}R_{\vareps}(\nabla \ueps ,\nabla \dueps) \right] : \nabla \ueps dx dt
\\
&+ \int_0^T \int_{\oeps} f_{\vareps} \cdot \ueps dx dt + \int_0^T \int_{\geps} g_{\vareps} \cdot \ueps d\sigma dt
\\
\rightarrow& - \int_0^T \int_{\Omega} \left[ \partial_F \overline{W}(\nabla u_0) + \partial_{\dot{F}} R(\nabla u_0 , \nabla \dot{u}_0)\right] : \nabla u_0 dx dt
\\
&+ \int_0^T \int_{\Omega} \bar{f}_0 \cdot u_0 dx dt + \int_0^T \int_{\Omega} \bar{g}_0 \cdot u_0 dx dt
\\
=& \int_0^T \int_{\Omega} \int_{Y_s} h_0 \vdots \nabla^2 u_0 dy dx dt.
\end{align*}
where for the convergence we used the results from Proposition \ref{prop:conv:tueps} and Corollary \ref{cor:conv_nonlinear}. Altogether, we obtain from $\eqref{ineq:minty_aux}$ and using $\eqref{eq:minty_aux}$
\begin{align*}
0 \le& \int_0^T \int_{\Omega} \int_{Y_s} h_0 \vdots \left[\nabla_x^2 u_0 - \mu_0 \right] dy dx dt 
\\
&- \int_0^T \int_{\Omega} \int_{Y_s} \partial_G H(y,\mu_0) \vdots \left[\nabla_x^2 u_0 + \nabla_y^2 u_2 - \mu_0 \right] dy dx dt
\\
=& \int_0^T \int_{\Omega} \int_{Y_s} \left[h_0 - \partial_G H(y,\mu_0)\right] \vdots \left[ \nabla_y^2 u_2 - \nabla_y^2 v_2 + \delta \phi \right] dy dx dt.
\end{align*}
By density we can choose $v_2 = u_2$ and obtain
\begin{align*}
0 \le \int_0^T \int_{\Omega} \int_{Y_s} \left[h_0 - \partial_G H(y,\nabla_x^2 u_0 + \nabla_y^2 u_2 + \delta \phi ) \right] \vdots \delta \phi dy dx dt.
\end{align*}
We divide by $\delta$ for sequences $\delta \nearrow 0$ and $\delta \searrow 0$ to obtain
\begin{align*}
0 = \int_0^T \int_{\Omega} \int_{Y_s} \left[h_0 - \partial_G H(y,\nabla_x^2 u_0 + \nabla_y^2 u_2)\right] \vdots \phi dy dx dt
\end{align*}
for all $\phi \in C^{\infty}_0 ((0,T)\times \Omega \times Y_s)^{n\times n \times n}$. This implies 
\begin{align*}
h_0 = \partial_G H(y,\nabla_x^2 u_0 + \nabla_y^2 u_2).
\end{align*}
Especially, we obtain from $\eqref{eq:minty_aux}$ that for every $v_2 \in W_{\per}^{2,p}(Y_s)^n$ and almost everywhere in $(0,T)\times \Omega$
\begin{align}\label{Cell_problem_u_2}
\int_{Y_s} \partial_G H(y, \nabla_x^2 u_0 + \nabla_y^2 u_2) \vdots \nabla_y^2 v_2 dy = 0.
\end{align}
In other words, $u_2$ is a weak solution of the following system
\begin{align}
\begin{aligned}\label{Cell_problem_u_2_strong}
\nabla_y \cdot \left( \nabla \cdot \partial_G H(\nabla_x^2 u_0 + \nabla_y^2 u_2)\right) &= 0 &\mbox{ in }& (0,T)\times \Omega \times Y_s,
\\
- \left[\nabla \cdot \partial_G H (\nabla_x^2 u_0 + \nabla_y^2 u_2)\right] \nu 
\\
-\nabla_S \cdot \left(\partial_G H(\nabla_x^2 u_0 + \nabla_y^2 u_2)\nu \right) &= 0 &\mbox{ on }& (0,T)\times \Omega \times \Gamma,
\\
\partial_G H(\nabla_x^2 u_0 + \nabla_y^2 u_2 ) : (\nu \otimes \nu ) &= 0 &\mbox{ on }& (0,T)\times \Omega \times \Gamma.
\end{aligned}
\end{align}
Let us define for $G \in \R^{n\times n\times n}$ 
\begin{align}\label{def:H_hom}
H_{hom}(G):= \inf_{v_2 \in W_{\per}^{2,p}(Y_s)^n} \int_{Y_s} H(y, G + \nabla_y v_2) dy .
\end{align}
\begin{lemma}
For almost every $(t,x) \in (0,T)\times  \Omega$ it holds that
\begin{align*}
H_{hom}(\nabla_x^2 u_0(t,x)) = \int_{Y_s} H\left(y, \nabla_x^2 u_0(x) + \nabla_y^2 u_2(t,x,y)\right) dy.
\end{align*}
\end{lemma}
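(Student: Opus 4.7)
The plan is to identify $u_2(t,x,\cdot)$ as the minimizer in the definition $\eqref{def:H_hom}$ of $H_{hom}$ by using convexity of $H$ together with the cell equation $\eqref{Cell_problem_u_2}$, which is precisely the Euler--Lagrange equation associated with that minimization.

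First, I would fix a point $(t,x) \in (0,T) \times \Omega$ for which $u_2(t,x,\cdot) \in W^{2,p}_{\per}(Y_s)^n$ and the identity $\eqref{Cell_problem_u_2}$ holds (a null set can be discarded). Writing $G := \nabla_x^2 u_0(t,x)$ and $\bar{u}_2 := u_2(t,x,\cdot)$, the definition of $H_{hom}$ immediately gives the trivial upper bound
\begin{align*}
H_{hom}(G) \le \int_{Y_s} H(y, G + \nabla_y^2 \bar{u}_2) dy,
\end{align*}
since $\bar{u}_2$ is an admissible competitor.

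For the reverse inequality, I would exploit the convexity of $H$ in its second argument from assumption \ref{AssumpHConvexity}. For an arbitrary $v_2 \in W^{2,p}_{\per}(Y_s)^n$, the convexity inequality at the point $G + \nabla_y^2 \bar{u}_2$ yields
\begin{align*}
H(y, G + \nabla_y^2 v_2) \geq H(y, G + \nabla_y^2 \bar{u}_2) + \partial_G H(y, G + \nabla_y^2 \bar{u}_2) \vdots \left( \nabla_y^2 v_2 - \nabla_y^2 \bar{u}_2 \right).
\end{align*}
Integrating over $Y_s$ and applying the cell equation $\eqref{Cell_problem_u_2}$ with the admissible test function $v_2 - \bar{u}_2 \in W^{2,p}_{\per}(Y_s)^n$, the last term vanishes, so that
\begin{align*}
\int_{Y_s} H(y, G + \nabla_y^2 v_2) dy \geq \int_{Y_s} H(y, G + \nabla_y^2 \bar{u}_2) dy.
\end{align*}
Taking the infimum over $v_2$ gives $H_{hom}(G) \geq \int_{Y_s} H(y, G + \nabla_y^2 \bar{u}_2) dy$, which combined with the upper bound finishes the proof.

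The argument is essentially routine once convexity and the cell equation are in hand; the only point to be slightly careful about is the measurable selection of the null set on which $\eqref{Cell_problem_u_2}$ is to be used pointwise in $(t,x)$. Since $\eqref{Cell_problem_u_2}$ was derived as an identity in $L^1((0,T)\times \Omega)$ by choosing test functions of the form $\chi(t,x) v_2(y)$ with $\chi \in C_0^{\infty}((0,T)\times \Omega)$ and $v_2 \in C^{\infty}_{\per}(Y_s)^n$, a standard separability argument (taking a countable dense subset of test functions $v_2$) yields a single null set outside of which $\eqref{Cell_problem_u_2}$ holds for every admissible $v_2$; this is the only mild technicality in the proof.
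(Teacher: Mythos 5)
Your proof is correct and follows essentially the same route as the paper: identify the cell equation $\eqref{Cell_problem_u_2}$ as the Euler--Lagrange equation for the minimization defining $H_{hom}$, use admissibility of $u_2(t,x,\cdot)$ for the trivial upper bound, and invoke convexity of $H$ for the reverse inequality. The paper's version is terser (it omits the explicit convexity inequality and the null-set/separability remark, which you handle carefully), but the underlying argument is identical.
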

\begin{proof}
This follows by standard variational methods. In fact, a minimizer $\tilde{u}_2$ of the right-hand side in $\eqref{def:H_hom}$ is a solution of the Euler-Lagrange equation
\begin{align*}
\int_{Y_s} \partial_G H(y, \nabla_x^2 u_0 + \nabla_y^2 \tilde{u}_2) \vdots \nabla_y^2 v_2 dy = 0
\end{align*}
for all $v_2 \in W_{\per}^{2,p}(Y_s)^n$.
Conversely, if $\tilde{u}_2$ solves the Euler-Lagrange equation above, the convexity of $H$ implies that $\tilde{u}_2$ is a minimizer of $\eqref{def:H_hom}$.
\end{proof}

\begin{remark}
We emphasize that a weak solution of $\eqref{Cell_problem_u_2_strong}$ or equivalent a minimizer of  the right-hand side in $\eqref{def:H_hom}$ is in general not unique. Uniqueness is obtained for $H$ strictly convex.
\end{remark}
Finally, using $\eqref{eq:deriv_macro_aux}$ with $v_2 = 0$ we end up in the  macroscopic variational equation $\eqref{eq:macro_model_var}$.
The initial condition $u_0(0) = u_0^{in}$ follows from the assumption \ref{ass:ueps_in} and the convergence results in Proposition \ref{prop:conv:tueps}.
Hence, $u_0$ is a weak solution of the macroscopic model $\eqref{eq:macro_model}$ and Theorem \ref{thm:main_thm_conv_and_macro_model} is proved.

\begin{appendix}

\section{Auxiliary results}
\label{sec:auxiliary_results}

In this section we give some technical results necessary to show the $\vareps$-uniform estimates for the microscopic solution. Most of the results are well-known in the literature for fixed domains $\Omega$ (independent of $\vareps$). Hence, our aim is to generalize these results to the case of perforated domains. 

\subsection{Poincar\'e and trace inequality}

We have the following uniform Poincar\'e-type inequalities in the perforated domain $\oeps$:
%\begin{lemma}\label{lem:Poincare-inequality}
%There exists a constant $C>0$ independent of $\vareps$, such that for all $\veps \in W^{1,s}_{\geps^D}(\oeps)$ (with $s \in [1,\infty)$) it holds that
%\begin{align*}
%\|\veps\|_{W^{1,s}(\oeps)} \le C \|\nabla \veps\|_{L^s(\oeps)}.
%\end{align*}
%
%\end{lemma}
\begin{lemma}\label{lem:Poincare-inequality}\
\begin{enumerate}
[label = (\roman*)]
\item Let $p\in (1,\infty)$. Then for all $\veps \in L^1(\oeps)$ with $\nabla \veps \in L^p(\oeps)^n$ it holds that
\begin{align*}
\|\veps \|_{L^p(\oeps)} \le C \left( \| \nabla \veps \|_{L^p(\oeps)} + \|\veps\|_{L^1(\oeps)}\right),
\end{align*}
for a constant $C>0$ independent of $\vareps$.
\item Let $p \in [1,\infty)$. For every $\veps \in W^{1,p}_{\geps^D}(\oeps)$ it holds that
\begin{align*}
\|\veps \|_{L^p(\oeps)} \le C \|\nabla \veps\|_{L^p(\oeps)},
\end{align*}
with $C>0$ independent of $\vareps$.
\end{enumerate}
\end{lemma}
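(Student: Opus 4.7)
The plan is to reduce both inequalities to their classical counterparts on the fixed macroscopic domain $\Omega$ by means of the $\vareps$-uniform extension operator $E_{\vareps}$ from Theorem \ref{thm:main_thm_extension_operator}, exploiting that this operator extends into the perforations while leaving the outer boundary values intact.

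First I would prove (i). Given $\veps \in L^1(\oeps)$ with $\nabla\veps \in L^p(\oeps)^n$, set $\tveps := E_{\vareps}\veps \in W^{1,p}(\Omega)$. The extension estimates of Theorem \ref{thm:main_thm_extension_operator} (applied both in the $W^{1,p}$-version and the $W^{1,1}$-version) give, with $C>0$ independent of $\vareps$,
\begin{align*}
\|\nabla \tveps\|_{L^p(\Omega)} \le C\|\nabla \veps\|_{L^p(\oeps)}, \qquad \|\tveps\|_{L^1(\Omega)} \le C\bigl(\|\veps\|_{L^1(\oeps)} + \|\nabla \veps\|_{L^1(\oeps)}\bigr).
\end{align*}
Applying the classical Poincar\'e--Wirtinger-type inequality $\|u\|_{L^p(\Omega)} \le C(\|\nabla u\|_{L^p(\Omega)} + \|u\|_{L^1(\Omega)})$ on the fixed domain $\Omega$ to $\tveps$, and combining with $\|\veps\|_{L^p(\oeps)} \le \|\tveps\|_{L^p(\Omega)}$ together with H\"older's inequality $\|\nabla \veps\|_{L^1(\oeps)} \le |\Omega|^{1-1/p}\|\nabla\veps\|_{L^p(\oeps)}$, yields (i).

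For (ii) I would argue analogously, setting $\tveps := E_{\vareps}\veps \in W^{1,p}(\Omega)$ for $\veps \in W^{1,p}_{\geps^D}(\oeps)$. In the disconnected case $Y\setminus Y_s \ssubset Y$, the perforations are strictly interior, so $\partial\Omega \subset \partial\oeps$ and $\geps^D = \Gamma^D$. Inspecting the local construction underlying Lemma \ref{LemmaLocalExtension} (and its $W^{1,p}$-variant), $E_{\vareps}$ only modifies $\veps$ inside the holes, so $\tveps|_{\partial\Omega} = \veps|_{\partial\Omega}$ and hence $\tveps \in W^{1,p}_{\Gamma^D}(\Omega)$. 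The classical Poincar\'e inequality $\|\tveps\|_{L^p(\Omega)} \le C\|\nabla\tveps\|_{L^p(\Omega)}$ on $W^{1,p}_{\Gamma^D}(\Omega)$ (valid since $|\Gamma^D|>0$), combined with $\|\veps\|_{L^p(\oeps)} \le \|\tveps\|_{L^p(\Omega)}$ and $\|\nabla\tveps\|_{L^p(\Omega)}\le C\|\nabla\veps\|_{L^p(\oeps)}$, gives (ii).

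The main obstacle is part (ii) in the general setting where $\Omega\setminus\oeps$ is connected. There the Acerbi-type extension from \cite{Acerbi1992} is required, and one must verify that its cell-based construction preserves the zero trace on $\Gamma^D$. This uses the assumption that $\Gamma^D$ decomposes exactly into scaled boundary cells $\vareps(S((0,1)^{n-1}\times\{0\})+k)$, so that the extension procedure can be applied cell-wise without altering boundary traces on $\Gamma^D$; beyond this technicality, the argument is identical to the disconnected case.
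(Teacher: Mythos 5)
Your part (i) is essentially the paper's argument, up to a different choice of extension operator: the paper uses a strong-$1$-extension $E^\ast_\vareps$ built from \cite{Acerbi1992} (which satisfies the pure estimate $\|E^\ast_\vareps w\|_{L^s(\Omega)}\le C\|w\|_{L^s(\oeps)}$ without the gradient on the right), whereas you invoke the $W^{1,s}$-mode of $E_\vareps$ from Theorem \ref{thm:main_thm_extension_operator}, whose $L^1$-bound carries $\|\nabla\veps\|_{L^1(\oeps)}$; this is harmless since you absorb that term via H\"older. Both routes are fine and neither is circular.

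Part (ii), however, contains a genuine gap. Your direct argument is correct in the disconnected case (there $\geps^D=\Gamma^D$ and $\partial\Omega$ lies inside $\overline\oeps$, so the extension indeed reproduces the zero trace and the classical Poincar\'e inequality on $W^{1,p}_{\Gamma^D}(\Omega)$ applies). But the reduction you sketch for the connected case cannot work: when $\Omega\setminus\oeps$ is connected, $\geps^D=\mathrm{int}(\partial\oeps\cap\Gamma^D)$ is a \emph{proper} subset of $\Gamma^D$ (of positive codimension-one measure in the complement), and $\veps$ only vanishes on $\geps^D$. On $\Gamma^D\setminus\geps^D$ the extended function takes values determined by the Acerbi-type extension into the matrix $\Omega\setminus\oeps$, and there is no reason for those to vanish. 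So "verify that the cell-based construction preserves the zero trace on $\Gamma^D$" is not a technicality that could be checked; it is simply false. The paper circumvents this by a compactness/contradiction argument: normalize $\|\veps\|_{L^p(\oeps)}=1$ with $\|\nabla\veps\|_{L^p(\oeps)}\to 0$, extend with $E^\ast_\vareps$, extract $\tveps\rightharpoonup C_0$ in $W^{1,p}(\Omega)$ and $\tveps\to C_0$ in $L^p(\Omega)$, and then use the \emph{quantitative} trace bound of Lemma \ref{lem:estimate_outer_bd_extension}, namely $\|\tveps\|_{L^p(\Gamma^D)}\le C\vareps^{1-1/p}\|\nabla\tveps\|_{L^p(\Omega)}\to 0$, to force $C_0=0$, contradicting the normalization. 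The trace on $\Gamma^D$ is not zero; it is merely $o(1)$ in $\vareps$, and that is exactly what the compactness argument exploits. You would need to replace your final paragraph by this mechanism.
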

\begin{proof}
It is well-known, see for example \cite[Lemma II.6.1]{Galdi}, that for $\veps \in L^1(\oeps)$ with $\nabla \veps \in L^p(\oeps)^n$ it holds that$\veps \in W^{1,p}(\oeps)$.  Following the proof of \cite[Theorem 2.1]{Acerbi1992} (see also the construction of the local extension operator in the proof of \cite[Lemma 2.6]{Acerbi1992}), we obtain the existence of an extension operator $E^{\ast}_{\vareps}: W^{k,s}(\oeps) \rightarrow W^{k,s}(\Omega)$ for all $s \in [1,\infty)$ and $k=0,1$ (\ie $E^{\ast}_{\vareps}$ is acting on both spaces) with 
\begin{align*}
\|E^{\ast}_{\vareps} \weps\|_{L^s(\Omega)} &\le C_s \|\weps\|_{L^s(\oeps)} &\mbox{ for }& \weps\in L^s(\oeps),
\\
\|\nabla E^{\ast}_{\vareps} \weps\|_{L^s(\Omega)} &\le C_s \|\nabla \weps\|_{L^s(\oeps)} &\mbox{ for }& \weps \in W^{1,s}(\oeps),
\end{align*}
with a constant $C_s >0$ independent of $\vareps$, but depending on $s$. In other words, $E^{\ast}_{\vareps}$ is a  strong $1$-extension operator, see \cite[5.17]{AdamsSobelevSpaces2003}. Now, we obtain with the Poincar\'e inquality on $W^{1,p}(\Omega)$ (for functions with mean value zero)
\begin{align*}
\|\veps\|_{L^p(\oeps)} &\le \| E_{\vareps}^{\ast} \veps\|_{L^p(\Omega)} 
\le 
 C\left(\|\nabla E_{\vareps}^{\ast} \veps \|_{L^p(\Omega)} + \|E_{\vareps}^{\ast} \veps\|_{L^1(\Omega)}\right)
 \\
 &\le C \left(\|\nabla \veps \|_{L^p(\oeps)} + \|\veps\|_{L^1(\oeps)} \right).
\end{align*} 
Next, we consider $\veps \in W^{1,p}_{\geps^D}(\oeps)$. From Lemma \ref{lem:estimate_outer_bd_extension} we get for all $\weps \in W_{\geps^D}^{1,p}(\oeps)$ that
\begin{align}\label{TraceInequalityOuterBoundary}
\|\weps \|_{L^p(\partial \Gamma^D)} \le C \vareps^{1-\frac{1}{p}} \|\nabla \weps\|_{L^p(\oeps)}.
\end{align}

Now, the result follows by a standard contradiction argument, where we only sketch some details (see also the proof of Theorem \ref{TheoremKornContFunct} at the end of Section \ref{sec:Korn_connected} for a similar argument and more details): If the statement is false, we can find a sequence (we use the same index $\vareps$) $\veps \in W^{1,p}(\oeps)$ such that (here we use that the inequality is valid for fixed $\vareps$, but a constant which might depend on $\vareps$)
\begin{align*}
1 = \|\veps\|_{L^p(\oeps)} \geq C_{\vareps}\|\nabla \veps\|_{L^p(\oeps)}
\end{align*}
with $C_{\vareps} \to \infty$ for $\vareps \to 0$.
Using the extension operator from above we define  $\tveps:= E_{\vareps}^{\ast} \veps$ and easily obtain that $\tveps$ is bounded in $W^{1,p}(\Omega)$ with gradient converging to zero in $L^p(\Omega)^n$. Hence, we obtain the existence of a constant $C_0$ such that
\begin{align*}
\tveps &\rightharpoonup C_0 &\mbox{ weakly in }& W^{1,p}(\Omega),
\\
\tveps &\rightarrow C_0 &\mbox{ in }& L^p(\Omega).
\end{align*}
Due to $\eqref{TraceInequalityOuterBoundary}$ the trace of $\tveps$ on $\Gamma^D$ converges to zero  and therefore $C_0 = 0$, which contradicts $\|\veps\|_{L^p(\oeps)} = 1$.
\end{proof}

We have the following well-known trace inequality for heterogeneous domains $\oeps$, which can be obtained by a standard decomposition argument.
\begin{lemma}\label{Lem:TraceInequality}
Let $p \in [1,\infty)$. For every $\theta>0$ there exists a constant $C(\theta)>0$ independent of $\vareps$, such that for all $\ueps \in W^{1,p}(\oeps)$  it holds that
\begin{align*}
   \vareps^{\frac{1}{p}} \|\ueps\|_{L^p(\geps)}  \le C(\theta)\|\ueps\|_{L^p(\oeps)} + \theta \vareps \|\nabla \ueps\|_{L^p(\oeps)}.
\end{align*}
\end{lemma}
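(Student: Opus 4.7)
The plan is a standard rescaling-and-sum argument based on a parameter-dependent trace inequality on the fixed reference cell $Y_s$.

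First I would establish the local scaled inequality on $Y_s$: for every $\sigma > 0$ there exists $C(\sigma) > 0$ so that for all $u \in W^{1,p}(Y_s)$,
\begin{align*}
\|u\|_{L^p(\Gamma)}^p \le C(\sigma)\|u\|_{L^p(Y_s)}^p + \sigma \|\nabla u\|_{L^p(Y_s)}^p.
\end{align*}
This is classical: pick a smooth vector field $g \in C^1(\overline{Y_s})^n$ with $g \cdot \nu = 1$ on $\Gamma$ and $g\cdot\nu = 0$ on $\partial Y_s \cap \partial Y$ (possible since $\Gamma$ is a relatively open Lipschitz piece separated from $\partial Y$), apply the divergence theorem to $|u|^p g$, and absorb the $|u|^{p-1}|\nabla u|$ cross-term via Young's inequality with parameter $\sigma$ (using $(p-1)p' = p$). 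This produces the stated bound with $C(\sigma)$ blowing up like $\sigma^{-1/(p-1)}$.

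Next, I would decompose the perforated domain and its oscillating boundary by the index set: up to sets of measure zero,
\begin{align*}
\oeps = \bigcup_{k \in K_\vareps} \vareps(Y_s + k), \qquad \geps = \bigcup_{k \in K_\vareps} \vareps(\Gamma + k),
\end{align*}
since the opposite-face matching condition on $Y_s$ sends the part $\partial Y_s \cap \partial Y$ into the interior of $\oeps$. For each cell, I set $u^k(y) := \ueps(\vareps(y+k))$ and apply the reference inequality to $u^k$. The change of variables gives the volume scaling $\vareps^n$, the surface scaling $\vareps^{n-1}$, and the gradient scaling $\vareps^{-1}$, which together yield
\begin{align*}
\vareps \int_{\vareps(\Gamma + k)} |\ueps|^p d\sigma \le C(\sigma)^p \int_{\vareps(Y_s + k)} |\ueps|^p dx + \sigma^p \vareps^p \int_{\vareps(Y_s + k)} |\nabla \ueps|^p dx.
\end{align*}

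Finally, summing over $k \in K_\vareps$ and taking the $p$-th root, using $(a+b)^{1/p} \le a^{1/p} + b^{1/p}$, gives the claimed inequality with $\theta = \sigma$ (after renaming). I do not expect a real obstacle here; the only point to watch is that in the reference inequality the constant in front of $\|\nabla u\|_{L^p(Y_s)}^p$ scales correctly after the $\vareps$-rescaling, which is exactly what forces the factor $\vareps$ in front of $\|\nabla \ueps\|_{L^p(\oeps)}$ (rather than $\vareps^{1/p}$ or $1$) and is the reason one needs the Young-type version of the trace inequality in Step 1 rather than the ordinary trace inequality.
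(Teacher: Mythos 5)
Your proposal is the standard decomposition-and-rescaling argument the paper alludes to, and the scaling bookkeeping (volume $\vareps^n$, surface $\vareps^{n-1}$, gradient $\vareps^{-1}$) is correct: summing the cell-wise inequality and taking $p$-th roots gives exactly the claimed bound. Two caveats are worth noting. First, the Young-absorption step in the reference inequality genuinely requires $p>1$ (as your own exponent bookkeeping $(p-1)p'=p$ and the blow-up rate $\sigma^{-1/(p-1)}$ reveal); for $p=1$ the cross-term $|\nabla u|\,|g|$ cannot be absorbed with an arbitrarily small coefficient, and in fact the lemma as stated fails for $p=1$ (consider a boundary-layer function concentrated in a shell of width $\delta\to 0$ around $\Gamma$: the left side stays order one while the right side tends to $\theta|\Gamma|$). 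The paper only ever invokes the lemma with $p=2$, so this is an error in the paper's range of $p$ that your argument correctly exposes rather than a gap of your own. Second, the auxiliary vector field $g$ with $g\cdot\nu=1$ on $\Gamma$ and $g\cdot\nu=0$ on $\partial Y_s\cap\partial Y$ is only obviously constructible when $Y\setminus Y_s$ is strictly included in $Y$, so that $\Gamma$ is genuinely separated from $\partial Y$; in the connected case $\overline{\Gamma}$ meets $\partial Y$ and this construction needs care. A cleaner route, which avoids the issue entirely, is to take $g\in C^1(\overline{Y_s})^n$ with $g\cdot\nu\geq c_0>0$ on all of $\partial Y_s$ (always possible for a Lipschitz domain), prove the parameter-dependent trace bound on $\partial Y_s$, and then use $\|u\|_{L^p(\Gamma)}\le \|u\|_{L^p(\partial Y_s)}$.
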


\subsection{Embedding results on perforated domains}

To obtain $\vareps$-uniform \textit{a priori} bounds in suitable H\"older spaces, which are important for the homogenization, it is necessary to use embedding results from (parabolic) Sobolev spaces into H\"older spaces. The crucial point is that the embedding constant is independent of $\vareps$ (or with an explicit dependence on $\vareps$).

\begin{lemma}\label{cor:embedding_Sobolev_hoelder}
For every $\veps \in W^{2,p}(\oeps)$ for $p>n$ it holds that $\veps \in  C^{1,\gamma}(\overline{\oeps})$  for $\gamma \in \left(0,1  - \frac{n}{p}\right] $ with 
\begin{align*}
\|\veps\|_{C^{1,\gamma}(\overline{\oeps})} \le C \|\veps\|_{W^{2,p}(\oeps)}
\end{align*}
for a constant $C>0$ independent of $\vareps$.
\end{lemma}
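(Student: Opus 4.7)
The plan is to reduce the statement to the classical Sobolev embedding on the fixed domain $\Omega$ by means of the $\vareps$-uniform extension operator $E_{\vareps}$ from Theorem \ref{thm:main_thm_extension_operator}. Since that operator is precisely designed so that its norm as a map $W^{2,p}(\oeps)\to W^{2,p}(\Omega)$ does not depend on $\vareps$, the $\vareps$-independence of the embedding constant will be an immediate corollary of the classical statement on $\Omega$.

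First, given $\veps\in W^{2,p}(\oeps)$, set $\tveps := E_{\vareps}\veps \in W^{2,p}(\Omega)$. Using the three estimates in Theorem \ref{thm:main_thm_extension_operator}, together with $\vareps\le 1$, we obtain
\begin{align*}
\|\tveps\|_{W^{2,p}(\Omega)} \le C \|\veps\|_{W^{2,p}(\oeps)},
\end{align*}
with $C$ independent of $\vareps$.

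Second, since $p>n$, the classical Morrey embedding on the fixed Lipschitz domain $\Omega$ gives
\begin{align*}
W^{2,p}(\Omega) \hookrightarrow C^{1,\gamma}(\overline{\Omega}) \qquad \text{for every } \gamma\in \bigl(0,\,1-\tfrac{n}{p}\bigr],
\end{align*}
with an embedding constant $C_\Omega$ depending only on $\Omega$, $n$, $p$, and $\gamma$ (not on $\vareps$). Applying this to $\tveps$ and restricting to $\overline{\oeps}\subset\overline{\Omega}$ — where $\tveps = \veps$ by the extension property — yields $\veps\in C^{1,\gamma}(\overline{\oeps})$ with
\begin{align*}
\|\veps\|_{C^{1,\gamma}(\overline{\oeps})} \le \|\tveps\|_{C^{1,\gamma}(\overline{\Omega})} \le C_\Omega \|\tveps\|_{W^{2,p}(\Omega)} \le C \|\veps\|_{W^{2,p}(\oeps)},
\end{align*}
which is the desired inequality.

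There is essentially no obstacle here beyond invoking Theorem \ref{thm:main_thm_extension_operator}; the only point requiring attention is that the H\"older seminorm on the perforated set $\overline{\oeps}$ is controlled by the H\"older seminorm on the larger connected set $\overline{\Omega}$, which is trivial since pairs of points in $\overline{\oeps}$ are a subset of pairs of points in $\overline{\Omega}$. Without such a norm-preserving extension the argument would fail, because on a perforated domain one cannot in general connect two points by a short path inside $\oeps$, and the direct Morrey argument on $\oeps$ would produce a constant blowing up with the geometry of the perforations as $\vareps\to 0$.
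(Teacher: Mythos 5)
Your proof is correct and follows exactly the same route as the paper: extend via $E_{\vareps}$ with the $\vareps$-uniform bound from Theorem \ref{thm:main_thm_extension_operator}, apply the classical Morrey embedding $W^{2,p}(\Omega)\hookrightarrow C^{1,\gamma}(\overline{\Omega})$ on the fixed domain, and restrict back to $\overline{\oeps}$. The paper's proof is a terser version of the same chain of inequalities.
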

\begin{proof}
The embedding constant for the embedding $W^{2,p}(\Omega)$ into $C^{1,\gamma}(\overline{\Omega})$ is depending on $\Omega$, and therefore independent of $\vareps$. Now the result follows directly from Theorem \ref{thm:main_thm_extension_operator}, in fact we have
\begin{align*}
\|\veps\|_{C^{1,\gamma}(\overline{\oeps})} \le \|E_{\vareps} \veps\|_{C^{1,\gamma}(\overline{\Omega})} \le C \|E_{\vareps} \veps \|_{W^{2,p}(\Omega)} \le C\|\veps\|_{W^{2,p}(\oeps)}.
\end{align*}
\end{proof}

Next we give an embedding result from Bochner spaces into H\"older spaces. A result of this form in a more general context can be found in \cite[Theorem 5.2]{AmannCompactEmbeddingsVectorValuedFunctions}. For our applications the precise dependence on the scaling parameter $\vareps$ is important.
\begin{lemma}\label{lem:embedding_hoelder}
Let $p>n$ and $\theta \in (0,1)$ with
\begin{align*}
\theta < \frac{2 (p-n)}{np + 2(p-n)}.
\end{align*}
Then for $s=(1-\theta) \in (0,1)$ and $q\in (2,p)$  with 
\begin{align*}
\frac{1}{q} = \frac{\theta}{2} + \frac{1-\theta}{p}, 
\end{align*}
we have for $\beta = s - \frac{n}{q} \in (0,1)$ the embedding
\begin{align}
\begin{aligned}\label{embedding_hoelder_anis_sobolev}
L^{\infty}((0,T),W^{1,p}(\oeps)) \cap H^1((0,T),L^2(\oeps)) &\hookrightarrow  C^{0,\frac{\theta}{2}} \left([0,T],W^{s,q}(\oeps)\right)
\\
&\hookrightarrow C^{0,\frac{\theta}{2}} \left([0,T],C^{0,\beta}(\overline{\oeps})\right),
\end{aligned}
\end{align}
such that for a constant $C>0$ independent of $\vareps$ for every $\veps \in L^{\infty}((0,T),W^{1,p}(\oeps)) \cap H^1((0,T),L^2(\oeps))$ it holds that
\begin{align*}
\|\veps\|_{C^{0,\frac{\theta}{2}} \left([0,T],C^{0,\beta}(\overline{\oeps})\right)} &\le C\|\veps\|_{C^{0,\frac{\theta}{2}} \left([0,T],W^{s,q}(\oeps)\right)}
\\
&\le C\|\veps\|_{L^{\infty}((0,T),W^{1,p}(\oeps)) \cap H^1((0,T),L^2(\oeps))}.
\end{align*}

\end{lemma}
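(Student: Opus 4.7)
The plan is to transplant the problem to the fixed domain $\Omega$ via the strong $1$-extension operator $E^{\ast}_{\vareps}$ constructed in the proof of Lemma \ref{lem:Poincare-inequality}, which simultaneously satisfies
\begin{align*}
\|E^{\ast}_{\vareps} w\|_{L^{s}(\Omega)} \le C\|w\|_{L^{s}(\oeps)},\qquad \|\nabla E^{\ast}_{\vareps} w\|_{L^{s}(\Omega)} \le C\|\nabla w\|_{L^{s}(\oeps)},
\end{align*}
with constants independent of $\vareps$. Setting $\widetilde{v}_{\vareps}(t) := E^{\ast}_{\vareps} v_{\vareps}(t)$ and using that $E^{\ast}_{\vareps}$ is linear and acts pointwise in $t$, one obtains $\widetilde{v}_{\vareps} \in L^{\infty}((0,T),W^{1,p}(\Omega))\cap H^{1}((0,T),L^{2}(\Omega))$ with norms controlled by those of $v_{\vareps}$ uniformly in $\vareps$.

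On the fixed domain $\Omega$ the classical Gagliardo--Nirenberg interpolation for fractional Sobolev spaces yields, for the parameters $\theta,s,q$ of the statement,
\begin{align*}
\|w\|_{W^{s,q}(\Omega)} \le C_{\Omega}\,\|w\|_{W^{1,p}(\Omega)}^{1-\theta}\,\|w\|_{L^{2}(\Omega)}^{\theta},
\end{align*}
and the Sobolev embedding $W^{s,q}(\Omega) \hookrightarrow C^{0,\beta}(\overline{\Omega})$ applies since the hypothesis on $\theta$ is precisely $\beta = s - n/q > 0$. To obtain temporal H\"older continuity, I would apply the interpolation inequality to $w = \widetilde{v}_{\vareps}(t_{1}) - \widetilde{v}_{\vareps}(t_{2})$, estimating $\|w\|_{W^{1,p}(\Omega)} \le 2\|\widetilde{v}_{\vareps}\|_{L^{\infty}((0,T),W^{1,p}(\Omega))}$ by the $L^{\infty}$-bound and, via the fundamental theorem of calculus in the Bochner sense together with the uniform $L^{2}$-extension bound,
\begin{align*}
\|w\|_{L^{2}(\Omega)} \le C\|v_{\vareps}(t_{1})-v_{\vareps}(t_{2})\|_{L^{2}(\oeps)} \le C|t_{1}-t_{2}|^{1/2}\,\|\dot{v}_{\vareps}\|_{L^{2}((0,T),L^{2}(\oeps))}.
\end{align*}
This yields $\|w\|_{W^{s,q}(\Omega)} \le C|t_{1}-t_{2}|^{\theta/2}$ with $C$ independent of $\vareps$.

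The two embeddings of the statement then follow by restriction: for the first,
$\|v_{\vareps}(t_{1}) - v_{\vareps}(t_{2})\|_{W^{s,q}(\oeps)} \le \|\widetilde{v}_{\vareps}(t_{1}) - \widetilde{v}_{\vareps}(t_{2})\|_{W^{s,q}(\Omega)}$, and for the second one chains $\|\widetilde{v}_{\vareps}(t)\|_{C^{0,\beta}(\overline{\Omega})} \le C\|\widetilde{v}_{\vareps}(t)\|_{W^{s,q}(\Omega)}$ and restricts from $\overline{\Omega}$ to $\overline{\oeps}$. The main delicate point, and the reason one must be careful with the choice of extension, is that the $W^{2,p}$-extension $E_{\vareps}$ from Theorem \ref{thm:main_thm_extension_operator} only yields $\|E_{\vareps}u\|_{L^{p}(\Omega)} \le C(\|u\|_{L^{p}(\oeps)} + \vareps\|\nabla u\|_{L^{p}(\oeps)})$; the extra term $\vareps\|\nabla u\|_{L^{p}(\oeps)}$ would ruin the $|t_{1}-t_{2}|^{1/2}$-estimate in $L^{2}$. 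The strong $1$-extension $E^{\ast}_{\vareps}$, which is genuinely $L^{s}$-to-$L^{s}$ bounded uniformly in $\vareps$, is precisely what is needed; once this choice is made, the rest reduces to fixed-domain Gagliardo--Nirenberg and Sobolev embedding arguments whose constants do not depend on $\vareps$.
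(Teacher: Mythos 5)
Your proposal is correct and follows the same structural scheme as the paper's proof: extend to the fixed domain $\Omega$, apply the fractional Gagliardo--Nirenberg interpolation between $W^{1,p}(\Omega)$ and $L^2(\Omega)$ to obtain the $W^{s,q}$-estimate with the $|t_1-t_2|^{\theta/2}$ decay, and then use the fixed-domain Sobolev embedding $W^{s,q}(\Omega)\hookrightarrow C^{0,\beta}(\overline{\Omega})$ before restricting to $\oeps$. The parameter checks (that $sq>n$ is equivalent to the stated constraint on $\theta$) are right.

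The one genuinely different choice you make is also the right one, and in fact it patches an imprecision in the paper's own argument. The paper uses the second-order extension operator $E_{\vareps}$ from Theorem~\ref{thm:main_thm_extension_operator} and asserts that it acts as a total extension on $L^2(\oeps)$ and $W^{1,p}(\oeps)$ ``with the same embedding constants.'' But Theorem~\ref{thm:main_thm_extension_operator} only yields
\begin{align*}
\|E_{\vareps}v\|_{L^s(\Omega)} \le C\bigl(\|v\|_{L^s(\oeps)} + \vareps\|\nabla v\|_{L^s(\oeps)}\bigr),
\end{align*}
because its local building block $\tau$ in Lemma~\ref{LemmaLocalExtension} subtracts an affine function and hence cannot control $\|\tau u\|_{L^p}$ by $\|u\|_{L^p}$ alone; in particular it does not furnish a uniform $L^2(\oeps)\to L^2(\Omega)$ bound. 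As you observe, the leftover $\vareps\|\nabla v\|$-term does \emph{not} vanish as $|t_1-t_2|\to 0$, so it destroys the $|t_1-t_2|^{1/2}$-estimate needed for the H\"older exponent $\theta/2$. Your switch to the strong $1$-extension $E^{\ast}_{\vareps}$ from the proof of Lemma~\ref{lem:Poincare-inequality} (constructed as in Acerbi et al.), which \emph{does} satisfy $\|E^{\ast}_{\vareps}w\|_{L^s(\Omega)} \le C\|w\|_{L^s(\oeps)}$ and $\|\nabla E^{\ast}_{\vareps}w\|_{L^s(\Omega)} \le C\|\nabla w\|_{L^s(\oeps)}$ uniformly in $\vareps$, is exactly what the interpolation step requires. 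The remaining steps (fundamental theorem of calculus for the $L^2$-difference in time, boundedness of $\|\widetilde{v}_{\vareps}(t_1)-\widetilde{v}_{\vareps}(t_2)\|_{W^{1,p}(\Omega)}$ by the $L^{\infty}((0,T),W^{1,p})$-norm, and the final restriction estimates) are carried out correctly. So this is not merely a blind reproduction of the paper's proof but a version with the correct extension operator inserted.
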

\begin{proof}
The result follows by a similar argument as in the proof of Lemma \ref{cor:embedding_Sobolev_hoelder} above by using the extension operator from Theorem \ref{thm:main_thm_extension_operator} and the general embedding $\eqref{embedding_hoelder_anis_sobolev}$ with $\Omega$ instead of $\oeps$. However, for the reader who is not familiar with parabolic Sobolev spaces and associated embedding results we give a detailed proof using more standard techniques.

First of all, the mean value theorem implies for all $t_1,t_2 \in (0,T)$
\begin{align*}
\|\veps(t_1) - \veps(t_2)\|_{L^2(\oeps)} &\le \int_{t_1}^{t_2} \|\dot{v}_{\vareps}\|_{L^2(\oeps)} ds
\\
&\le |t_1 - t_2|^{\frac12} \|\dot{v}_{\vareps}\|_{L^2((0,T),L^2(\oeps))}.
\end{align*}
We define the extension  $\tveps:= E_{\vareps}\veps\in L^{\infty}((0,T),W^{2,p}(\Omega))^n$ with the extension operator from Theorem \ref{thm:main_thm_extension_operator}. We emphasize that we used that the extension operator $E_{\vareps}$ acts pointwise in time. The Gagliardo-Nirenberg interpolation inequality (see for example \cite{brezis2018gagliardo} for an overview) implies
\begin{align*}
\|\veps(t_1) - \veps(t_2)\|_{W^{s,q}(\oeps)} &\le \|\tveps(t_1) - \tveps(t_2)\|_{W^{s,q}(\Omega)}
\\
&\le C\|\tveps(t_1) - \tveps(t_2)\|_{L^2(\Omega)}^{\theta} \|\tveps(t_1) - \tveps(t_2)\|_{W^{1,p}(\Omega)}^{1-\theta}
\\
&\le C\|\veps(t_1) - \veps(t_2)\|_{L^2(\oeps)}^{\theta} \|\veps(t_1) - \veps(t_2)\|_{W^{1,p}(\oeps)}^{1-\theta}
\end{align*}
for a constant $C>0$ independent of $\vareps$. In the last inequality we  again used that $E_{\vareps}$ acts pointwise in time, is a linear operator, and a total extension operator (it can be applied as a mapping on $L^2$ and $W^{1,p}$ with the same embedding constants).
Hence, we obtain
\begin{align*}
\|\veps(t_1) - \veps(t_2)\|_{W^{s,q}(\oeps)} \le C |t_1 - t_2|^{\frac{\theta}{2}} \|\dot{v}_{\vareps}\|_{L^2((0,T),L^2(\oeps))}^{\theta} \|\veps(t_1) - \veps(t_2)\|_{W^{1,p}(\oeps)}^{1-\theta}.
\end{align*}
An elemenatal calculation shows that $n < sq$. In fact, we have  
\begin{align*}
\theta < \frac{2 (p-n)}{np + 2(p-n)} \quad 
%\, &\Leftrightarrow \, \theta [np + 2(p-n)] < 2(p-n) \, \Leftrightarrow \, n [\theta p + 2(1-\theta)] < 2p (1 - \theta)
%\\
\Leftrightarrow \quad n< (1-\theta) \frac{2p }{\theta p + 2 (1-\theta)} = (1-\theta ) q = sq.
\end{align*}
Hence, we have the continuous embedding $W^{s,q}(\Omega) \hookrightarrow C^{0,\beta}(\overline{\Omega})$, see \cite[Theorem 8.2]{di2012hitchhikers}. This implies
\begin{align*}
\|\veps (t_1) - \veps(t_2)\|_{C^{0,\beta}(\overline{\oeps})} &\le \|\tveps(t_1)-\tveps(t_2)\|_{C^{0,\beta}(\overline{\Omega})}
\\
&\le C \| \tveps(t_1) - \tveps(t_2)\|_{W^{s,q}(\Omega)} 
\\
&\le C\|\veps(t_1)  - \veps(t_2)\|_{W^{s,q}(\oeps)}.
\end{align*}
In the last inequality we used that the extension operator $E_{\vareps}$ can be considered as extension operator from $W^{s,q}(\oeps)$ to $W^{s,q}(\Omega)$ with embedding constant independent of $\vareps$, since $W^{s,q}$ is an interpolation space. 
Therefore we get
\begin{align*}
\|\veps (t_1) - \veps(t_2)\|_{C^{0,\beta}(\overline{\oeps})} \le C|t_1 - t_2|^{\frac{\theta}{2}} \|\dot{v}_{\vareps}\|_{L^2((0,T),L^2(\oeps))}^{\theta} \|\veps\|_{L^{\infty}((0,T),W^{1,p}(\oeps))}^{1-\theta}.
\end{align*}
With similar arguments we can show that for every $t \in (0,T)$ it holds that
\begin{align*}
\|\veps(t)\|_{C^{0,\beta}(\overline{\oeps})} \le C\|\veps(t)\|_{W^{s,p}(\oeps)} \le C\|\dot{v}_{\vareps}\|_{L^2((0,T),L^2(\oeps))}^{\theta} \|\veps\|_{L^{\infty}((0,T),W^{1,p}(\oeps))}^{1-\theta}.
\end{align*}
\end{proof}

\section{Two-scale convergence and unfolding}
\label{SectionTwoScaleConvergence}
In this section we briefly summarize the concept of the two-scale convergence and the unfolding operator. These methods provide the basic techniques to pass to the limit $\vareps \to 0$ in the microscopic problem.

\subsection{Two-scale convergence}
\label{SubsectionTwoScaleConvergence}

We start with the definition of  the two-scale convergence, which was first introduced and analyzed in \cite{Nguetseng} and \cite{Allaire_TwoScaleKonvergenz}, see also \cite{LukkassenNguetsengWallTSKonvergenz}.  Here we formulate the two-scale convergence for time-dependent functions. However, the aforementioned references can be easily extended to such problems, since time acts as a parameter. For the initial data we also use the usual two-scale convergence for time-independent functions, which can be defined in the same way.

\begin{definition}\label{DefinitionTSConvergence}
A sequence $\ueps \in  L^q((0,T),L^p( \Omega))$ for $p,q\in [1,\infty)$ is said to converge  in the two-scale sense (in $L^q L^p$) to the limit function $u_0\in L^q((0,T),L^p( \Omega \times Y))$, if for every $\phi \in L^{q'}((0,T);L^{p'}( \Omega, C_{\per}^0(Y)))$  the following relation holds
\begin{align*}
\lim_{\vareps\to 0}\int_0^T \int_{\Omega}u_{\vareps}(t,x)\phi\left(t,x,\frac{x}{\vareps}\right)dxdt = \int_0^T\int_{\Omega}\int_Y u_0(t,x,y)\phi(t,x,y)dydxdt ,
\end{align*}
and we write $\ueps \rightwts{q,p} u_0$. For $q = p$ we use the notation $\ueps \rightwts{p} u_0$.

A  two-scale convergent sequence $u_{\vareps}$ convergences strongly in the two-scale sense to $u_0$ (in $L^q L^p$), if  
\begin{align*}
\lim_{\vareps\to 0}\|u_{\vareps}\|_{L^q((0,T), L^p( \Omega))} =\|u_0\|_{L^q((0,T), L^p( \Omega  \times Y))} ,
\end{align*}
and we write $\ueps \rightsts{q,p} u_0$. For $q = p$ we write $\ueps \rightsts{p} u_0$.
\end{definition}

In \cite{AllaireDamlamianHornung_TwoScaleBoundary,Neuss_TwoScaleBoundary} the method of two-scale convergence was extended to oscillating surfaces:
\begin{definition}\label{DefinitionTSKonvergenzBoundary}
A sequence of functions $\ueps  \in L^q((0,T),L^p(\Gamma_{\vareps}))$ for $p,q \in [1,\infty)$ is said to converge in the two-scale sense on the surface $\Gamma_{\vareps}$ (in $L^qL^p$) to a limit $u_0\in L^q((0,T),L^p(\Omega \times \Gamma))$, if for every $\phi \in C\left([0,T]\times\overline{\Omega},C_{per}^0(\Gamma)\right)$ it holds that
\begin{align*}
\lim_{\vareps \to 0} \vareps \int_0^T\int_{\Gamma_{\vareps}} u_{\vareps}(t,x)\phi\left(t,x,\frac{x}{\vareps}\right)d\sigma dt = \int_0^T\int_{\Omega}\int_{\Gamma} u_0(t,x,y)\phi(t,x,y)d\sigma_ydxdt .
\end{align*}
We write $\ueps \rightwts{q,p} u_0$ on $\geps$. For $q = p$ we use the notation $\ueps \rightwts{p} u_0$ on $\geps$.
\end{definition}

We have the following compactness results (see e.g., \cite{Allaire_TwoScaleKonvergenz,LukkassenNguetsengWallTSKonvergenz,Neuss_TwoScaleBoundary}, with slight modification for time-depending functions):
\begin{lemma}\label{BasicTwoScaleCompactness}\
For every $p,q \in (1,\infty)$ we have:
\begin{enumerate}
[label = (\roman*)]
\item For every bounded sequence $\ueps \in L^q((0,T),L^p(\Omega))$ there exists $u_0 \in L^q((0,T),L^p( \Omega \times Y))$ such that up to a subsequence
\begin{align*}
\ueps &\rightwts{q,p} u_0.
\end{align*}
\item\label{item:lem_basic_two_scale_comp} For every bounded sequence $\ueps \in L^q((0,T),W^{1,p}(\Omega))$ there exist $u_0 \in L^q((0,T),W^{1,p}( \Omega))$ and $u_1 \in L^q((0,T), L^p( \Omega,W^{1,p}_{\per}(Y)/\R))$, such that up to a subsequence
\begin{align*}
\ueps &\rightwts{q,p} u_0,
\\
\nabla \ueps &\rightwts{q,p} \nabla_x u_0 + \nabla_y u_1 .
\end{align*}
%\item For every sequence $\ueps \in L^q((0,T),W^{1,p}(\Omega))$ with $\ueps $ and $\vareps\nabla \ueps$ bounded in $L^q((0,T), L^p(\Omega))$, there exists $u_0 \in L^q((0,T), L^p(\Omega, W^{1,p}_{\per}(Y)))$ such that up to a subsequence it holds that
%\begin{align*}
%    \ueps &\rightwts{q,p} u_0,
%    \\
%    \vareps \nabla \ueps &\rightwts{q,p} \nabla_y u_0.
%\end{align*}
%\item For every sequence $\ueps \in L^q((0,T),L^p( \geps))$ with
%\begin{align*}
%\vareps^{\frac{1}{p}}\Vert \ueps\Vert_{L^q((0,T),L^p(\geps))} \le C,
%\end{align*}
%there exists $u_0 \in L^q((0,T),L^p(\Omega \times \Gamma))$ such that up to a subsequence
%\begin{align*}
%\ueps \rightwts{q,p} u_0 \quad \mbox{ on } \geps.
%\end{align*}
\end{enumerate}

\end{lemma}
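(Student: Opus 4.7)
The plan is to follow the classical duality approach of Nguetseng and Allaire, adapted to the time-dependent setting. Since $p,q \in (1,\infty)$ all spaces involved are reflexive, and the time variable $t$ will essentially act as a parameter via Fubini/measurable-selection arguments, so the proof reduces to the stationary case.

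For (i), I would introduce for each $\vareps>0$ the linear functional
\[
\Lambda_\vareps : L^{q'}\!\bigl((0,T), L^{p'}(\Omega, C^0_{\per}(Y))\bigr) \to \R, \qquad \Lambda_\vareps(\phi) := \int_0^T\!\!\int_\Omega u_\vareps(t,x)\,\phi\!\left(t,x,\tfrac{x}{\vareps}\right) dx\, dt.
\]
The pointwise bound $|\phi(t,x,x/\vareps)| \le \|\phi(t,x,\cdot)\|_{C^0_{\per}(Y)}$ combined with H\"older's inequality gives $|\Lambda_\vareps(\phi)| \le \|u_\vareps\|_{L^q L^p} \,\|\phi\|_{L^{q'} L^{p'}(\Omega, C^0_{\per}(Y))}$, so $\{\Lambda_\vareps\}$ is uniformly bounded. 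Banach--Alaoglu (applied on a separable predual) yields a subsequence with weak-$\ast$ limit $\Lambda_0$. Since $C^0_{\per}(Y)$ is dense in $L^{p'}(Y)$, the test function space is dense in $L^{q'}((0,T), L^{p'}(\Omega\times Y))$; hence $\Lambda_0$ extends continuously to this larger reflexive space and is, by Riesz representation, of the form $\phi \mapsto \int_0^T\!\!\int_\Omega\!\int_Y u_0 \phi\, dy\, dx\, dt$ for some $u_0 \in L^q((0,T), L^p(\Omega\times Y))$.

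For (ii), apply (i) along a common subsequence to extract two-scale limits $u_0^\ast$ of $u_\vareps$ and $\xi_0$ of $\nabla u_\vareps$. Reflexivity of $L^q((0,T),W^{1,p}(\Omega))$ yields a weak limit $u_0$, and testing the two-scale convergence of $u_\vareps$ against functions $\phi(t,x,y)=\psi(t,x)$ shows $u_0^\ast(t,x,y)=u_0(t,x)$. To characterize $\xi_0$, take test functions $\phi \in C_0^\infty((0,T)\times\Omega, C^\infty_{\per}(Y))^n$ satisfying the solenoidal constraint $\nabla_y \cdot \phi \equiv 0$. Integration by parts in $x$ gives
\[
\int_0^T\!\!\int_\Omega \nabla u_\vareps \cdot \phi\!\left(\cdot, \tfrac{x}{\vareps}\right)\, dx\, dt \;=\; -\int_0^T\!\!\int_\Omega u_\vareps\, (\nabla_x\cdot\phi)\!\left(\cdot, \tfrac{x}{\vareps}\right)\, dx\, dt,
\]
the potentially singular $\vareps^{-1}(\nabla_y\cdot\phi)$-term dropping by the constraint. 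Passing to the limit on both sides (using strong two-scale convergence of $\phi(\cdot,\cdot/\vareps)$ and $\nabla_x\phi(\cdot,\cdot/\vareps)$) and integrating by parts back in $x$ on the right yields $\int(\xi_0 - \nabla_x u_0) \cdot \phi \, dy\, dx\, dt = 0$. Thus $\xi_0 - \nabla_x u_0$ is $L^p(\Omega\times Y)^n$-orthogonal to every smooth $Y$-periodic $y$-solenoidal field, and a periodic de~Rham / Helmholtz-decomposition argument (applied $(t,x)$-a.e.\ together with a measurable-selection step) produces $u_1 \in L^q((0,T), L^p(\Omega, W^{1,p}_{\per}(Y)/\R))$ with $\xi_0 = \nabla_x u_0 + \nabla_y u_1$.

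The main obstacle is the representation step in (i): one must ensure that $\Lambda_0$ is represented by an $L^q L^p$-function on $\Omega\times Y$ rather than merely by a measure. This hinges on $p,q \in (1,\infty)$ (giving reflexivity of the target space) and on the density of $L^{q'}((0,T), L^{p'}(\Omega, C^0_{\per}(Y)))$ in $L^{q'}((0,T), L^{p'}(\Omega\times Y))$; both fail in the endpoint cases. A secondary, milder issue in (ii) is arranging measurable dependence of $u_1$ on $(t,x)$ via a parametric version of the periodic Helmholtz decomposition, which is standard but must be done carefully to land in the correct Bochner space.
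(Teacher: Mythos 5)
You are proposing to prove the lemma by the classical Nguetseng--Allaire duality argument (the paper itself does not prove this lemma but cites it as standard, so I will judge your proposal against that standard argument). Your outline is in the right spirit, but two steps as written contain genuine gaps.

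\paragraph{Gap 1: the representation step in (i).} You argue that, because $L^{q'}((0,T),L^{p'}(\Omega,C^0_{\per}(Y)))$ is dense in $L^{q'}((0,T),L^{p'}(\Omega\times Y))$ and the latter is reflexive, $\Lambda_0$ extends continuously and is represented by an $L^qL^p$ function. But density alone does not give an extension: the only bound you have derived is $|\Lambda_0(\phi)|\le C\,\|\phi\|_{L^{q'}L^{p'}(\Omega,C^0_{\per}(Y))}$, and this norm (which contains $\sup_y$) is strictly \emph{stronger} than $\|\phi\|_{L^{q'}L^{p'}(\Omega\times Y)}$. To extend to the larger space you must show $|\Lambda_0(\phi)|\le C\,\|\phi\|_{L^{q'}L^{p'}(\Omega\times Y)}$, and for this you must pass to the limit in H\"older's estimate \emph{before} invoking duality, using the Riemann--Lebesgue (oscillation) property of admissible test functions:
\begin{align*}
\lim_{\vareps\to 0}\bigl\|\phi(\cdot,\cdot,\tfrac{\cdot}{\vareps})\bigr\|_{L^{q'}((0,T),L^{p'}(\Omega))} = \|\phi\|_{L^{q'}((0,T),L^{p'}(\Omega\times Y))}.
\end{align*}
This lemma, not reflexivity or density per se, is what prevents $\Lambda_0$ from degenerating into a measure. (Reflexivity and the oscillation lemma both fail at $p=1$, so your remark about endpoints is true, but the mechanism you name is not the operative one.)

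\paragraph{Gap 2: $y$-independence of the two-scale limit in (ii).} You claim that testing the two-scale convergence of $u_\vareps$ against $y$-independent $\phi(t,x,y)=\psi(t,x)$ shows $u_0^\ast(t,x,y)=u_0(t,x)$. This only shows $\int_Y u_0^\ast(t,x,y)\,dy = u_0(t,x)$, i.e.\ identifies the $Y$-average; it does \emph{not} show that $u_0^\ast$ is $y$-independent. To establish $y$-independence one must test $\nabla u_\vareps$ against $\vareps\,\phi(t,x,x/\vareps)$ with $\phi$ genuinely $y$-dependent: writing $\vareps\nabla u_\vareps\cdot\phi(x/\vareps) = \nabla(\vareps u_\vareps\phi)-u_\vareps\bigl(\vareps\nabla_x\phi+\nabla_y\phi\bigr)(x/\vareps)$, boundedness of $\nabla u_\vareps$ forces the left-hand side to vanish, and passing to the two-scale limit gives $\int\!\!\int\!\!\int u_0^\ast\,\nabla_y\phi\,dy\,dx\,dt = 0$, hence $\nabla_y u_0^\ast=0$ distributionally. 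Only then can you conclude $u_0^\ast=u_0$. The remainder of your argument for (ii)---restricting to $y$-solenoidal test fields, cancelling the $\vareps^{-1}$ term, and invoking the periodic Helmholtz/de Rham decomposition with a measurable-selection step---is the correct route and matches the standard proof.
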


\subsection{The unfolding operator}

When dealing with nonlinear problems it is helpful to work with the unfolding method which gives a characterization for the weak and strong two-scale convergence, see Lemma \ref{LemmaAequivalenzTSKonvergenzUnfolding} below.
The unfolding operator, in the beginning also called periodic modulation or dilation operator, was first used for homogenization in \cite{VogtHomogenization} and later in \cite{ArbogastDouglasHornung}. Later, this operator was analyzed more closely in \cite{Cioranescu_Unfolding1,Cioranescu_Unfolding2,Cioranescu_BoundaryUnfolding} under the name unfolding operator. For a detailed investigation of the unfolding operator and its properties we refer to \cite{CioranescuGrisoDamlamian2018}. For a perforated domain (here we also allow the case $Y_s = Y$, \ie $\oeps = \Omega$) we define the unfolding operator for $p ,q\in [1,\infty]$ by
\begin{align*}
\teps : L^q((0,T) ,L^p( \oeps)) \rightarrow L^q((0,T),L^p( \Omega \times Y_s)), \\
\teps(\ueps)(t,x,y)= \ueps\left(t,\vareps \left[\fxe\right] + \vareps y\right).
\end{align*}
In the same way, we define the boundary unfolding operator for the oscillating surface $\geps$ via
\begin{align*}
\teps : L^q((0,T), L^p( \geps)) \rightarrow L^q((0,T), L^p( \Omega \times \Gamma)), \\
\teps(\ueps)(t,x,y)= \ueps\left(t,\vareps \left[\fxe\right] + \vareps y\right).
\end{align*}
We emphasize that we use the same notation for the unfolding operator on $\oe$ and the boundary unfolding operator $\geps$. We summarize some basic properties of the unfolding operator, see \cite{CioranescuGrisoDamlamian2018} generalized in an obvious way to time depending functions with different integrability $p$ and $q$:

\begin{lemma}\label{LemmaPropertiesUnfoldingOperator}
Let $q,p \in [1,\infty]$.
\begin{enumerate}[label = (\roman*)]
\item   For $\ueps \in L^q((0,T),L^p(\oeps))$ it holds that
\begin{align*}
\| \teps (\ueps) \|_{L^p((0,T), L^p(\Omega \times Y_s))} = \|\ueps\|_{L^q((0,T),L^p(\oeps))}.
\end{align*}
\item For $\ueps \in L^q((0,T),W^{1,p}(\oeps))$ it holds that
\begin{align*}
\nabla_y \teps (\ueps )= \vareps \teps( \nabla_x \ueps).
\end{align*} 
\item  For $\ueps \in L^q((0,T),L^p(\geps))$ it holds that
\begin{align*}
\|\teps (\ueps) \|_{L^q((0,T),L^p(\Omega \times \Gamma))} = \vareps^{\frac{1}{p}} \|\ueps\|_{L^q((0,T),L^p(\geps))}.
\end{align*}
\end{enumerate}
\end{lemma}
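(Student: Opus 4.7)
The plan is to prove all three properties by the same basic mechanism: use the periodic decomposition of $\Omega$ into the disjoint micro-cells $\vareps(k+Y)$ for $k\in K_{\vareps}$ (which is exact because $\Omega$ is a hyper-rectangle whose sides are divided by $\vareps$), observe that on each such cell the Gauss bracket $\left[\tfrac{x}{\vareps}\right]$ equals the constant $k$, and then change variables $z=\vareps(k+y)$. Since the time variable appears only as a parameter in the definition of $\teps$, the $L^q$-in-time structure is preserved throughout and the verifications reduce to the stationary case; I will therefore work pointwise in $t$ and invoke Fubini to conclude.

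For (i), I would start from
\begin{align*}
\|\teps(\ueps)(t)\|_{L^p(\Omega\times Y_s)}^p = \sum_{k\in K_{\vareps}}\int_{\vareps(k+Y)}\int_{Y_s}\bigl|\ueps(t,\vareps k+\vareps y)\bigr|^p\,dy\,dx,
\end{align*}
use that the $x$-integrand is constant on each cell to pull out the factor $|\vareps(k+Y)|=\vareps^n$, and then change variables $z=\vareps(k+y)$ on the $y$-integral (Jacobian $\vareps^n$) to recover exactly $\int_{\vareps(k+Y_s)}|\ueps(t,z)|^p\,dz$. Summing over $k$ reconstructs $\oeps$. Property (ii) is a direct application of the chain rule to $y\mapsto\ueps(t,\vareps[x/\vareps]+\vareps y)$, which produces the factor $\vareps$ in front of $\teps(\nabla_x\ueps)$; here nothing beyond differentiation under the integral sign is needed, and the identity actually holds pointwise.

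For (iii), the same cell decomposition applies, but one must carefully track how the surface measure rescales under $z=\vareps(k+y)$: the $(n-1)$-dimensional Hausdorff measure picks up a factor $\vareps^{n-1}$, not $\vareps^n$. Thus
\begin{align*}
\|\teps(\ueps)(t)\|_{L^p(\Omega\times\Gamma)}^p = \vareps^n\sum_{k\in K_{\vareps}}\int_{\Gamma}\bigl|\ueps(t,\vareps(k+y))\bigr|^p\,d\sigma_y = \vareps^{n-(n-1)}\int_{\geps}|\ueps(t,z)|^p\,d\sigma_z,
\end{align*}
which after taking $p$-th roots gives the claimed factor $\vareps^{1/p}$. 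Finally, integrating the three pointwise-in-time identities against $dt$ (resp.\ raising to the $q$-th power in the norm identities and then integrating) yields the Bochner-space versions stated.

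There is no real obstacle here beyond bookkeeping; the only care point is the mismatch between the volume rescaling $\vareps^n$ coming from the $x$-integral over a cell and the surface rescaling $\vareps^{n-1}$ coming from the boundary change of variables, which is precisely what produces the $\vareps^{1/p}$ factor in (iii) and its absence in (i). Because $\Omega$ is assumed to be exactly tiled by micro-cells, no boundary layer terms arise, so the equalities are exact rather than merely asymptotic.
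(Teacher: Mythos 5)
Your argument is correct and follows the standard route: exact tiling of $\Omega$ by the cells $\vareps(Y+k)$, constancy of $\teps(\ueps)(t,\cdot,y)$ on each cell, and the change of variables $z=\vareps(k+y)$ with Jacobian $\vareps^n$ in the interior (resp.\ $\vareps^{n-1}$ on the surface), which is exactly what produces the factor $1$ in (i) and $\vareps^{1/p}$ in (iii). The paper does not give a proof of this lemma --- it cites \cite{CioranescuGrisoDamlamian2018} and remarks that the time variable only enters as a parameter --- so there is no author argument to compare against, but yours is the expected one.

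Two small remarks. First, the lemma is stated for $p,q\in[1,\infty]$, while your calculations only cover finite $p$; for $p=\infty$ the argument is not a change-of-variables identity for an integral but an observation that $\teps$ is a rearrangement of the essential supremum on each cell, and in (iii) the exponent $\vareps^{1/p}$ degenerates to $1$. This is trivial but worth a sentence to avoid claiming more than you proved. Second, the left-hand side in (i) as written in the paper reads $\|\cdot\|_{L^p((0,T),L^p(\Omega\times Y_s))}$, which for $p\neq q$ is inconsistent with the right-hand side; your proof correctly produces the $L^q$-in-time norm, so you are in fact proving the (intended) corrected statement, and it would be worth flagging the typo.
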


The following lemma gives a relation between the unfolding operator and the two-scale convergence. Its proof is quite standard and we refer the reader to \cite{BourgeatLuckhausMikelic} and \cite{CioranescuGrisoDamlamian2018} for more details (see also \cite[Proposition 2.5]{visintin2006towards}).
\begin{lemma}\label{LemmaAequivalenzTSKonvergenzUnfolding}
Let $p,q \in (1,\infty)$.
\begin{enumerate}
[label = (\alph*)]
\item For   a sequence $\ueps \in L^q((0,T),L^p( \Omega))$, the following statements are equivalent:
\begin{enumerate}[label = (\roman*)]
\item $\ueps \rightwts{q,p} u_0$ ($\ueps \rightsts{q,p} u_0$)
\item $\teps (\ueps )\rightharpoonup u_0$  ($\teps(\ueps) \rightarrow u_0 $) in $L^q((0,T), L^p( \Omega \times Y))$.
\end{enumerate}
\item  For a sequence $\ueps \in L^q((0,T),L^p(\geps))$  the following statements are equivalent:
\begin{enumerate}[label = (\roman*)]
\item $\ueps \rightwts{q,p} u_0$ ($\ueps \rightsts{q,p} u_0$) on $\geps$,
\item $\teps (\ueps) \rightharpoonup u_0$ ($\teps(\ueps) \rightarrow u_0$) in $L^q((0,T),L^p(\Omega \times \Gamma))$.
\end{enumerate}
\end{enumerate}
\end{lemma}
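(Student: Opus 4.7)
The plan is to reduce both equivalences to a single folding identity relating an oscillating integral to an integral of the unfolded function. The key observation is that, by decomposing $\Omega = \bigcup_{k \in K_{\vareps}}\vareps(Y+k)$ and changing variables $x=\vareps(k+y)$ cell by cell, one obtains for any integrable $h$ on $\Omega$ the equality $\int_{\Omega} h(x)\,dx = \int_{\Omega}\int_Y \teps(h)(x,y)\,dy\,dx$. Applying this to $h(t,x) = \ueps(t,x)\phi(t,x,x/\vareps)$, using that $\teps$ is multiplicative, and exploiting $Y$-periodicity of $\phi$ in the oscillating argument (so that $[x/\vareps]$ is harmless modulo $\Z^n$), yields the identity
\begin{align*}
\int_0^T\!\int_{\Omega}\ueps(t,x)\,\phi\!\left(t,x,\tfrac{x}{\vareps}\right)dx\,dt = \int_0^T\!\int_{\Omega}\!\int_Y \teps(\ueps)(t,x,y)\,\phi\!\left(t,\vareps\!\left[\tfrac{x}{\vareps}\right]\!+\vareps y,\,y\right)dy\,dx\,dt.
\end{align*}
The auxiliary ``shifted'' test function $\phi_{\vareps}(t,x,y):=\phi(t,\vareps[x/\vareps]+\vareps y,y)$ then converges strongly to $\phi$ in $L^{q'}((0,T),L^{p'}(\Omega\times Y))$: for smooth $\phi$ this is uniform continuity in the second argument (the shift is at most $\vareps$ in diameter), and the general case follows by density of smooth functions in $L^{q'}((0,T),L^{p'}(\Omega,C_{\per}^0(Y)))$.

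With the folding identity and this perturbation estimate in hand, the weak direction of (a) is immediate. If $\teps(\ueps)\rightharpoonup u_0$ weakly in $L^q((0,T),L^p(\Omega\times Y))$, the right-hand side converges to $\int_0^T\!\int_\Omega\!\int_Y u_0\,\phi\,dy\,dx\,dt$, since we are pairing a weakly convergent sequence with a strongly convergent one; the left-hand side is precisely the definition of two-scale convergence. Conversely, if $\ueps\rightwts{q,p} u_0$, the norm identity $\|\teps(\ueps)\|_{L^q((0,T),L^p(\Omega\times Y))}=\|\ueps\|_{L^q((0,T),L^p(\Omega))}$ from Lemma \ref{LemmaPropertiesUnfoldingOperator}(i) gives weak compactness of $\teps(\ueps)$, and the same folding identity together with the density of product test functions $\phi(t,x,y)=\psi(t,x)\varphi(y)$ in $L^{q'}((0,T),L^{p'}(\Omega\times Y))$ identifies every weak cluster point with $u_0$, whence the full sequence converges weakly.

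For the strong equivalence, one uses that $\ueps\rightsts{q,p} u_0$ means $\ueps\rightwts{q,p} u_0$ plus norm convergence $\|\ueps\|_{L^q L^p(\Omega)}\to\|u_0\|_{L^q L^p(\Omega\times Y)}$. By the norm identity above, this is equivalent to weak convergence $\teps(\ueps)\rightharpoonup u_0$ together with $\|\teps(\ueps)\|\to\|u_0\|$ in $L^q((0,T),L^p(\Omega\times Y))$; since this Bochner space is uniformly convex for $p,q\in(1,\infty)$, weak plus norm convergence upgrades to strong convergence. The boundary case (b) follows from the same blueprint with the analogous identity
\begin{align*}
\vareps\int_0^T\!\int_{\geps}\ueps\,\phi\!\left(t,x,\tfrac{x}{\vareps}\right)d\sigma\,dt = \int_0^T\!\int_{\Omega}\!\int_{\Gamma}\teps(\ueps)\,\phi\!\left(t,\vareps\!\left[\tfrac{x}{\vareps}\right]\!+\vareps y,\,y\right)d\sigma_y\,dx\,dt,
\end{align*}
stemming from $\geps=\bigcup_k \vareps(k+\Gamma)$ and the surface rescaling $d\sigma=\vareps^{n-1}d\sigma_y$, combined with the $L^p$-isometry (iii) of Lemma \ref{LemmaPropertiesUnfoldingOperator}. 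The only genuinely delicate point is the test-function perturbation $\phi_{\vareps}\to\phi$ for \emph{general} $\phi\in L^{q'}((0,T),L^{p'}(\Omega,C_{\per}^0(Y)))$, which I would handle by first reducing to tensor products $\psi(t,x)\varphi(y)$ via density and then using uniform continuity of $\varphi$ on $\overline{Y}$ together with a Lebesgue differentiation / covering argument in $x$; this is where the $C_{\per}^0(Y)$-valued structure of the test space is genuinely used.
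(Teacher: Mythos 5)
The paper does not actually prove this lemma; it states ``Its proof is quite standard'' and refers the reader to Bourgeat--Luckhaus--Mikeli\'c, Cioranescu--Griso--Damlamian, and Visintin. Your argument is precisely the standard one from those references, adapted to the time-dependent $L^qL^p$ setting, and it is correct. The folding identity $\int_\Omega h\,dx=\int_\Omega\int_Y\teps(h)\,dy\,dx$ holds exactly here because $\Omega$ is a hyper-rectangle with $\vareps^{-1}\in\N$, so the cells tile $\Omega$ with no boundary layer (in the general unfolded literature one has to handle the defect set $\Lambda_\vareps$; you correctly need not). The only place you should be a touch more explicit is the density step for $\phi_\vareps\to\phi$: to close it you need the elementary uniform bound $\|\phi_\vareps\|_{L^{q'}((0,T),L^{p'}(\Omega\times Y))}\le\|\phi\|_{L^{q'}((0,T),L^{p'}(\Omega,C^0_{\per}(Y)))}$, which follows from the same cell-by-cell change of variables; without it the three-$\varepsilon$ argument doesn't run. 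Likewise, in $(i)\Rightarrow(ii)$ the a priori boundedness of $\teps(\ueps)$ that you invoke for weak compactness comes from Banach--Steinhaus applied to the definition of two-scale convergence (the operator norm of $\phi\mapsto\int\!\!\int\ueps\phi(\cdot,\cdot,\cdot/\vareps)$ equals $\|\ueps\|_{L^qL^p}$, as one sees by testing with $y$-independent $\phi$). Both are one-line additions; the structure of the proof --- folding identity, test-function perturbation by density, Radon--Riesz via uniform convexity of the Bochner space for $p,q\in(1,\infty)$, and the analogous surface identity with $d\sigma=\vareps^{n-1}d\sigma_y$ --- is exactly what the cited references do.
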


\begin{remark}
In the same way we can define the time-independent unfolding operator (we use the same notations) and the same results are valid. 
\end{remark}

In the following Lemma we show that for a converging sequence in $C^0$ we obtain the convergence of the associated unfolded sequence in $L^{\infty}$:
\begin{lemma}\label{lem:unfolding_c0_linfty}
Let $\veps \in C^0(\overline{\Omega})$ with $\veps \rightarrow v_0$ in $C^0(\overline{\Omega})$. Then we have
\begin{align*}
\teps(\veps) \rightarrow v_0 \qquad\mbox{in } L^{\infty}(\Omega \times Y).
\end{align*}
The same result is valid for the time-dependent case.
\end{lemma}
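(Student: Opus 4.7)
The plan is to bound $\|\teps(\veps) - v_0\|_{L^\infty(\Omega \times Y)}$ by a triangle-inequality split and then use two uniform estimates: one coming from the hypothesis $\veps \to v_0$ in $C^0(\overline{\Omega})$, and one coming from uniform continuity of the limit $v_0$.

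More precisely, for almost every $(x,y) \in \Omega \times Y$ set $x_\vareps := \vareps[x/\vareps] + \vareps y$, so that $\teps(\veps)(x,y) = \veps(x_\vareps)$. I would then write
\begin{align*}
|\teps(\veps)(x,y) - v_0(x)| \le |\veps(x_\vareps) - v_0(x_\vareps)| + |v_0(x_\vareps) - v_0(x)|.
\end{align*}
The first term is bounded uniformly in $(x,y)$ by $\|\veps - v_0\|_{C^0(\overline{\Omega})}$, which tends to zero by assumption. For the second term I would note that $|x_\vareps - x| = |\vareps[x/\vareps] + \vareps y - x| \le \vareps \sqrt{n}$, since $\vareps[x/\vareps]$ is the lower-left corner of the micro-cell containing $x$ and $y \in Y = (0,1)^n$. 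Then $v_0 \in C^0(\overline{\Omega})$ on a compact set is uniformly continuous, so for every $\eta > 0$ there exists $\delta > 0$ such that $|v_0(x_\vareps) - v_0(x)| < \eta$ whenever $\vareps \sqrt{n} < \delta$.

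Combining the two estimates gives $\|\teps(\veps) - v_0\|_{L^\infty(\Omega \times Y)} \to 0$ as $\vareps \to 0$. For the time-dependent case $\veps \in C^0([0,T]\times\overline{\Omega})$ the same argument applies pointwise in $t$, using uniform continuity of $v_0$ on the compact set $[0,T]\times\overline{\Omega}$. There is no real obstacle here; the only minor care is to note that $x_\vareps$ stays in $\overline{\Omega}$ (which holds since $k \in K_\vareps$ implies $\vareps(\overline{Y}+k) \subset \overline{\Omega}$), so that the $C^0$-estimate and the modulus of continuity of $v_0$ can be applied at $x_\vareps$.
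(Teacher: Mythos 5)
Your proof is correct and follows essentially the same triangle-inequality decomposition as the paper's own argument. In fact you are a bit more careful than the paper on one point: to obtain convergence in the $L^{\infty}$ norm (uniform in $(x,y)$) the second term really does require \emph{uniform} continuity of $v_0$ on the compact set $\overline{\Omega}$, which you invoke explicitly, whereas the paper only says ``continuity of $v_0$ and $\vareps[\fxe]+\vareps y\to x$.''
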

\begin{proof}
For almost every $(x,y) \in  \Omega \times Y$ it holds that
\begin{align*}
\left|\teps(\veps) (x,y) - v_0(x)\right| \le& \left|\veps\left(\vareps \left[\fxe\right] + \vareps y\right) - v_0 \left(\vareps \left[\fxe\right] + \vareps y \right)\right|
\\
&+ \left|  v_0 \left(\vareps \left[\fxe\right] + \vareps y \right) - v_0(x)\right|
\\
\le& \|\veps - v_0 \|_{C^0( \overline{\Omega})} +  \left|  v_0 \left(\vareps \left[\fxe\right] + \vareps y \right) - v_0(x)\right|. 
\end{align*}
The first term converges to zero, due to the strong convergence of $\veps$ to $v_0$ in $C^0(\overline{\Omega})$. The second term because of the continuity of $v_0$ and $\vareps \left[\fxe\right] + \vareps y \rightarrow x$.
\end{proof}

\subsection{Two-scale compactness for second order Sobolev spaces}
\label{SectionTSConvergenceHigherOrder}

In this section we state a general two-scale compactness result for sequences with second order derivatives, respectively a compactness result for the associated unfolded sequence. The following Proposition was shown in \cite[Theorem 3.6]{Cioranescu_Unfolding2} for the time-independent case and can be easily extended functions depending on time. We use the local averaging operator for $v \in L^p(\Omega)$ defined for almost every $x\in \Omega$ by
\begin{align*}
\meps(v)(x):= \int_Y \teps(v) (x,y) dy
\end{align*}
and similar for vector valued functions. To keep the notation close to the existing literature we use the same notation as for the mechanical energy. However, there should be no confusion, since both are not used in the same sections.
\begin{proposition}\label{prop:ts_higher_order}
Let $p,s \in (1,\infty)$ and $\ueps$ a bounded sequence in $L^s((0,T),W^{2,p}(\Omega))$. Then there exist $u_0 \in L^s((0,T), W^{2,p}(\Omega))$ and $u_2 \in L^s((0,T),L^p(\Omega,W^{2,p}_{\per}(Y)/\R))$ such that up to a subsequence
\begin{align*}
\frac{1}{\varepsilon^2} \big(\teps (\ueps) -  \varepsilon \meps(\nabla \ueps) &\cdot y^c - \meps (\ueps) \big)
\\
&\rightharpoonup \frac12 \nabla_x^2 u_0 y^c \cdot y^c + u_2 \quad\mbox{ weakly in } L^s((0,T),L^p(\Omega,W^{2,p}(Y))).
\end{align*}
Especially it holds up to a subsequence
\begin{align*}
\teps(\ueps) &\rightharpoonup u_0 &\mbox{ weakly in }& L^s((0,T),L^p(\Omega \times Y)),
\\
\teps(\nabla \ueps) &\rightharpoonup \nabla u_0 &\mbox{ weakly in }& L^s((0,T),L^p(\Omega \times Y))^n,
\\
\teps(\nabla^2 \ueps ) &\rightharpoonup \nabla_x^2 u_0 + \nabla_y^2 u_2 &\mbox{ weakly in }& L^s((0,T),L^p(\Omega \times Y))^{n\times n}.
\end{align*}
In the notation of two-scale convergence we have up to a subsequence (see Lemma \ref{LemmaAequivalenzTSKonvergenzUnfolding})
\begin{align*}
\ueps \rightwts{s,p} u_0,\quad \nabla \ueps \rightwts{s,p} \nabla u_0,\quad 
\nabla^2 \ueps \rightwts{s,p} \nabla_x^2 u_0 + \nabla_y^2 u_2 .
\end{align*}
\end{proposition}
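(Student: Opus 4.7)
The plan is to analyze directly the sequence
\[
z_\vareps(t,x,y) := \frac{1}{\vareps^2}\Bigl(\teps(\ueps)(t,x,y) - \vareps\,\meps(\nabla \ueps)(t,x)\cdot y^c - \meps(\ueps)(t,x)\Bigr),
\]
where $y^c = y - y_c$ with $y_c = (\tfrac12,\ldots,\tfrac12)$ so that $\int_Y y^c dy = 0$. The identities $\nabla_y\teps(\ueps) = \vareps\,\teps(\nabla\ueps)$ and $\nabla_y^2\teps(\ueps) = \vareps^2\teps(\nabla^2 \ueps)$ give directly
\[
\nabla_y z_\vareps = \tfrac{1}{\vareps}\bigl(\teps(\nabla\ueps) - \meps(\nabla\ueps)\bigr), \qquad \nabla_y^2 z_\vareps = \teps(\nabla^2 \ueps).
\]
By construction, $\int_Y z_\vareps\,dy = 0$ and $\int_Y \nabla_y z_\vareps\,dy = 0$ (since $\meps(\cdot)$ is the $y$-average of $\teps(\cdot)$). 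Applying the Poincar\'e--Wirtinger inequality on $Y$ componentwise gives $\|\nabla_y z_\vareps\|_{L^p(Y)} \le C\|\nabla_y^2 z_\vareps\|_{L^p(Y)}$ and $\|z_\vareps\|_{L^p(Y)} \le C\|\nabla_y z_\vareps\|_{L^p(Y)}$. Since $\|\teps(\nabla^2\ueps)\|_{L^p(\Omega\times Y)} = \|\nabla^2\ueps\|_{L^p(\Omega)}$, the sequence $z_\vareps$ is bounded in $L^s((0,T), L^p(\Omega, W^{2,p}(Y)))$, so along a subsequence it converges weakly to some $z_0$.

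Next I identify $z_0$. Since $\ueps$ is bounded in $L^s W^{1,p}(\Omega)$, Lemma \ref{BasicTwoScaleCompactness}\ref{item:lem_basic_two_scale_comp} produces $u_0 \in L^s W^{1,p}(\Omega)$ and $u_1 \in L^s L^p(\Omega, W^{1,p}_{\per}(Y)/\R)$ with $\ueps \rightwts{s,p} u_0$ and $\nabla \ueps \rightwts{s,p} \nabla u_0 + \nabla_y u_1$. Applying the same lemma componentwise to the gradient sequence $\nabla \ueps$ (bounded in $L^s W^{1,p}(\Omega)^n$) yields some $v_0 \in L^s W^{1,p}(\Omega)^n$ and $w_1 \in L^s L^p(\Omega, W^{1,p}_{\per}(Y)/\R)^n$ with $\nabla \ueps \rightwts{s,p} v_0$ and $\nabla^2 \ueps \rightwts{s,p} \nabla_x v_0 + \nabla_y w_1$. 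Uniqueness of the two-scale limit forces $v_0 = \nabla u_0 + \nabla_y u_1$; since $v_0$ does not depend on $y$, we get $\nabla_y u_1 = 0$, hence $u_1 = 0$ in the quotient space and $u_0 \in L^s W^{2,p}(\Omega)$ with $v_0 = \nabla u_0$.

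The main obstacle is to show $w_1$ is in fact a $y$-gradient, i.e. $w_1 = \nabla_y u_2$ for some $u_2 \in L^s L^p(\Omega, W^{2,p}_{\per}(Y)/\R)$. For this I use the symmetry $\partial_i\partial_j \ueps = \partial_j\partial_i \ueps$: testing against matrix fields $\phi \in C^\infty_c((0,T)\times \Omega, C^\infty_{\per}(Y))^{n\times n}$ with $\phi_{ij} = -\phi_{ji}$ and passing to the limit gives
\[
\int\!\!\int\!\!\int (\nabla_y w_1) : \phi\, dy\,dx\,dt = 0
\]
for all antisymmetric $\phi$, so that $\partial_{y_j}(w_1)_i = \partial_{y_i}(w_1)_j$ in distributions. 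Choosing the representative of $w_1$ with $\int_Y (w_1)_i\,dy = 0$ for each $i$, one checks that the periods of the closed 1-form $\sum_i (w_1)_i\,dy_i$ on the torus vanish: indeed, by closedness the period in the $k$-th direction is independent of the remaining coordinates, and is equal to $\int_Y(w_1)_k\,dy = 0$. Since closed forms with vanishing periods on the torus are exact, we obtain a periodic potential $u_2$ with $w_1 = \nabla_y u_2$, giving $\nabla^2\ueps \rightwts{s,p} \nabla_x^2 u_0 + \nabla_y^2 u_2$.

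Finally I reconstruct $z_0$. Weak convergence gives $\nabla_y^2 z_0 = \nabla_x^2 u_0 + \nabla_y^2 u_2$ (as a function of $y$, for a.e.\ $(t,x)$). Integrating once in $y$,
\[
\nabla_y z_0 = \nabla_x^2 u_0 \,y + \nabla_y u_2 + A(t,x),
\]
and the constraint $\int_Y \nabla_y z_0\,dy = 0$ combined with $\int_Y y\,dy = y_c$ and periodicity of $u_2$ forces $A = -\nabla_x^2 u_0\, y_c$, so $\nabla_y z_0 = \nabla_x^2 u_0\, y^c + \nabla_y u_2$. Integrating again, using symmetry of $\nabla_x^2 u_0$,
\[
z_0 = \tfrac12 \nabla_x^2 u_0\, y^c \cdot y^c + u_2(t,x,y) + B(t,x),
\]
and $B(t,x)$ is absorbed into the equivalence class of $u_2$ in $W^{2,p}_{\per}(Y)/\R$. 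This yields the claimed weak convergence, and the subsidiary statements for $\teps(\ueps)$, $\teps(\nabla\ueps)$, $\teps(\nabla^2\ueps)$ follow directly from the analysis above together with Lemma \ref{LemmaAequivalenzTSKonvergenzUnfolding}.
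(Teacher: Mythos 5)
The paper does not give a proof of this proposition; it simply cites \cite[Theorem 3.6]{Cioranescu_Unfolding2} and remarks that the time-independent version extends readily to time-dependent functions. Your proposal therefore supplies a self-contained argument where the paper supplies none, and the argument you give is correct and closely tracks the structure of the cited Cioranescu--Damlamian--Griso result: you center and rescale the unfolded sequence to get $z_\vareps$ with mean-zero constraints, use Poincar\'e--Wirtinger twice to get boundedness in $L^s L^p(\Omega,W^{2,p}(Y))$, extract a weak limit $z_0$, and then identify $\nabla_y^2 z_0$ by applying the first-order two-scale compactness result componentwise to $\nabla\ueps$.

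The one step that genuinely requires care, and which you handle correctly, is showing that the corrector $w_1$ produced by applying Lemma \ref{BasicTwoScaleCompactness} to $\nabla \ueps$ is a $y$-gradient. Your route -- testing $\teps(\nabla^2\ueps)$ against antisymmetric oscillating test tensors to conclude $\partial_{y_j}(w_1)_k = \partial_{y_k}(w_1)_j$, then observing that the periods of the resulting closed $1$-form vanish once the mean-zero representative of $w_1$ is chosen (the period in direction $k$ is $y'$-independent by closedness and periodicity, hence equals $\int_Y (w_1)_k\,dy = 0$), and invoking exactness of period-free closed forms on the torus -- is the standard de~Rham argument and is valid, provided one notes that the exact potential can be obtained with Sobolev regularity (e.g.\ via the periodic Helmholtz decomposition: solve $\Delta_y u_2 = \nabla_y\cdot w_1$ periodically and use that a curl-free, divergence-free periodic field with zero mean vanishes). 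You state this step somewhat telegraphically, but the idea and the hypotheses you verify are exactly what is needed. The final reconstruction of $z_0$ from the three mean-zero constraints $\int_Y z_0 = \int_Y \nabla_y z_0 = 0$ and $\nabla_y^2 z_0 = \nabla_x^2 u_0 + \nabla_y^2 u_2$, absorbing the additive $y$-constant into the quotient class of $u_2$, is also correct. In short: the paper outsources this proposition to the literature; your proof is a valid, essentially canonical reconstruction of that external argument, with the torus de~Rham/exactness step as the substantive ingredient.
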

For perforated domains we get the following result:
\begin{corollary}\label{cor:two_scale_conv_second_order}
Let $p,s\in (1,\infty)$ and  $(\ueps)$ be a sequence in $L^s((0,T),W^{2,p}(\oeps))$ with
\begin{align*}
\|\ueps\|_{L^s((0,T),W^{2,p}(\oeps))} \le C
\end{align*}
for a constant $C>0$ independent of $\vareps$. Then there exists $u_0\in L^s((0,T),W^{2,p}(\Omega))$ and $u_2 \in L^s((0,T),L^p(\Omega,W^{2,p}_{\per}(Y_s)/\R))$ such that up to a subsequence
\begin{align*}
\chi_{\oeps} \nabla^2 \ueps &\rightwts{s,p} \chi_{Y_s}\left(\nabla_x^2 u_0 + \nabla_y^2 u_2\right).
\end{align*}
\end{corollary}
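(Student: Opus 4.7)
The plan is to reduce to the already-proved Proposition \ref{prop:ts_higher_order} by means of the extension operator from Theorem \ref{thm:main_thm_extension_operator}.

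First I would set $\tueps := E_{\vareps}\ueps$, applied pointwise in $t$. Since $E_{\vareps}$ is a linear, time-independent operator and since the estimates in Theorem \ref{thm:main_thm_extension_operator} are $\vareps$-uniform, the hypothesis $\|\ueps\|_{L^s((0,T),W^{2,p}(\oeps))}\le C$ gives
\begin{align*}
\|\tueps\|_{L^s((0,T),W^{2,p}(\Omega))} \le C
\end{align*}
with a constant independent of $\vareps$. Applying Proposition \ref{prop:ts_higher_order} to $(\tueps)$ yields (up to a subsequence) limit functions $u_0 \in L^s((0,T),W^{2,p}(\Omega))$ and $\tilde{u}_2 \in L^s((0,T),L^p(\Omega,W^{2,p}_{\per}(Y)/\R))$ such that $\nabla^2 \tueps \rightwts{s,p} \nabla_x^2 u_0 + \nabla_y^2 \tilde{u}_2$.

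Next I would transfer this to the perforated setting via the identity $\chi_{\oeps}(x) = \chi_{Y_s}(\{x/\vareps\})$, which is the key bookkeeping step. For any admissible test function $\phi \in L^{s'}((0,T),L^{p'}(\Omega,C_{\per}(Y)))^{n\times n \times n}$, since $\ueps = \tueps$ on $\oeps$,
\begin{align*}
\int_0^T \int_{\Omega} \chi_{\oeps}(x)\,\nabla^2 \ueps : \phi\!\left(t,x,\tfrac{x}{\vareps}\right) dx\, dt
= \int_0^T \int_{\Omega} \nabla^2 \tueps : \tilde{\phi}\!\left(t,x,\tfrac{x}{\vareps}\right) dx\, dt,
\end{align*}
where $\tilde{\phi}(t,x,y):= \chi_{Y_s}(y)\phi(t,x,y)$ is again an admissible test function (it lies in $L^{s'}((0,T),L^{p'}(\Omega\times Y))$ and can be approximated by continuous-in-$y$ test functions in the usual way, or one can equivalently work with the unfolding operator and use Lemma \ref{LemmaAequivalenzTSKonvergenzUnfolding}). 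Passing to the limit using the two-scale convergence of $\nabla^2 \tueps$ yields
\begin{align*}
\int_0^T\int_{\Omega}\int_{Y_s} \bigl(\nabla_x^2 u_0 + \nabla_y^2 \tilde{u}_2\bigr):\phi(t,x,y)\, dy\, dx\, dt,
\end{align*}
which is exactly the two-scale limit $\chi_{Y_s}(\nabla_x^2 u_0 + \nabla_y^2 u_2)$ with $u_2 := \tilde{u}_2|_{Y_s}$.

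The only remaining point is that $u_2 \in L^s((0,T),L^p(\Omega,W^{2,p}_{\per}(Y_s)/\R))$ in the sense of the paper's notation section, which reads $W^{k,p}_{\per}(Y_s)$ as the restriction to $Y_s$ of elements of $W^{k,p}_{\per}(Y)$; this is immediate since $\tilde{u}_2 \in W^{2,p}_{\per}(Y)/\R$ by Proposition \ref{prop:ts_higher_order}. I do not anticipate any genuine obstacle in this argument: the perforated/extension step is essentially cosmetic once Theorem \ref{thm:main_thm_extension_operator} is in hand, and the identification via $\chi_{\oeps}(x) = \chi_{Y_s}(\{x/\vareps\})$ is the standard device for converting two-scale limits on $\Omega$ to two-scale limits on $\oeps$ against $Y_s$-supported test functions.
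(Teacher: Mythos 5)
Your proposal is correct and follows exactly the route the paper takes: its proof of this corollary is the one-line citation "This is a consequence of Proposition~\ref{prop:ts_higher_order} and the extension operator from Theorem~\ref{thm:main_thm_extension_operator}," and you have simply spelled out the details that citation leaves implicit (extend, apply the full-domain compactness result, restrict back via $\chi_{\oeps}(x)=\chi_{Y_s}(\{x/\vareps\})$ or, equivalently, via the unfolding operator and Lemma~\ref{LemmaAequivalenzTSKonvergenzUnfolding}).
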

\begin{proof}
This is a consequence of Proposition \ref{prop:ts_higher_order} and the extension operator from Theorem \ref{thm:main_thm_extension_operator}.
\end{proof}

\section{Korn inequality for $\Omega \setminus \oeps$ connected}
\label{sec:Korn_connected}
Here, we give the detailed proof of the Korn inequality in Theorem \ref{thm:main_thm_Korn_inequality} for domains $\oeps$ where the perforations $\Omega \setminus \oeps$ are also connected, or in other words the inclusions $Y\setminus Y_s $ are not strictly included in $Y$. This leads to two crucial difficulties: The main problem is that the construction of the global extension operator by cluing together the extensions of the restrictions in every micro-cell $\vareps (Y_s + k)$ for $k\in K_{\vareps}$ does not work, because in general we obtain discontinuities across the interfaces of neighboring cells. To overcome this problem we consider local extensions for restriction not only to one micro-cell, but also neighboring cells and then using a partition of unity. This ideas was already used in \cite{Acerbi1992}. This leads to some additional technical calculations and in our case an additional term in the last estimate in Proposition \ref{TheoremGlobalExtension}. 

The second problem is that the Dirichlet-boundary $\geps^D$ is depending on $\vareps$ and changing in every $\vareps$-step. In the proof of Theorem \ref{TheoremKornContFunct}, see inequality $\eqref{ineq:thm_korn_C0_aux}$, we used  \cite[Corollary 4.1]{pompe2003korn} which is only valid for a fixed Dirichlet-boundary condition. To overcome this issue we use that the $L^p$-norm on the boundary of the extension on the perforated part $\Gamma^D\setminus \geps^D$ vanishes for $\vareps \to 0$.

We introduce some additional notations necessary for the construction of a global extension operator with similar properties as the operator $E_{A_{\vareps}}$ in Proposition \ref{TheoremGlobalExtension}. Throughout this section we assume $p\in (1,\infty)$. For $j\in \N$ we define 
\begin{align*}
\E^j := \{-j,\ldots,0,\ldots,j\}^n
\end{align*}
including shifts $l\in \Z^n$ with length $|l|:= \max\{l_1,\ldots,l_n\} \le j$. Further we define
\begin{align*}
\mathcal{Z}^j_s:= \left\{ W^I_s = \mathrm{int} \bigcup_{\alpha \in I} \overline{Y_s} \, : \, I \subset \E^j, \, W_s^I \mbox{ is connected}\right\}.
\end{align*}
Especially, we define 
\begin{align*}
\hY_s := W^{\E^1}_s = \mathrm{int}\bigcup_{\alpha \in \E^1} \overline{Y_s} \in \mathcal{Z}^1,
\end{align*}
and 
\begin{align*}
\hY:= \mathrm{int}\bigcup_{\alpha \in \E^1} \overline{Y}.
\end{align*}
Hence, $\hY_s$ (resp. $\hY$) are reference elements consisting of $Y_s$ (resp. $Y$) with all neighboring elements. Additionally, we set
\begin{align*}
K_{\vareps}^{\ast} := \left\{ \alpha \in \Z^n \, : \, \vareps(\hY^1 + \alpha) \cap \Omega \neq \emptyset\right\}\supset K_{\vareps}.
\end{align*}
 Now, for every $k \in K_{\vareps}^{\ast}$  there exists an $I\subset \E^1$ such that 
\begin{align*}
\vareps (\hY + k) \cap \oeps = \vareps (W_s^I + k).
\end{align*}
For most of the cells on the left-hand side we have $W_s^I = \hY_s$. More precisely, only for $\vareps(\hY + k) \cap \partial \Omega \neq \emptyset$ it holds that $W_s^I \neq \hY_s$.
For the construction of the global extension operator we use local extension operators on the reference elements $W_s^I \in \mathcal{Z}^1$ with $I\subset \E^1$, by transforming micro-cells $\vareps(W_s^I + k)$ for $k \in \Z^n$ to the fixed reference element. By the previous considerations  only for microscopic cells close to the boundary we have $W_s^I \neq \hY_s$.

\begin{lemma}\label{lem:local_extension_A_connected}
Let $p \in (1,\infty)  $ and $A\in \R^{n\times n}$ such that $\det(A)>0$.  Then there exists a constant $C>0$, such that for every $W_s^I \in \mathcal{Z}^1_s$ there exists an extension operator $\tau_A^I: W^{1,p}(W_s^I)^n \rightarrow W^{1,p}(\hY)^n$, such that the inequalities from Lemma \ref{lem:local_extension_A} are valid with $\hY$ instead of $Y$ and $W_s^I$ instead of $Y_s$.
\end{lemma}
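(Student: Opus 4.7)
The plan is to reduce the statement to finitely many fixed reference configurations and then reproduce the calculation of Lemma \ref{lem:local_extension_A} on each one. The collection $\mathcal{Z}^1_s$ is indexed by subsets $I\subset \mathcal{E}^1$, hence contains at most $2^{|\mathcal{E}^1|} = 2^{3^n}$ elements. Since the conclusion claims a single constant independent of $I$, it suffices to produce, for every individual $W^I_s \in \mathcal{Z}^1_s$ separately, an extension operator satisfying the three estimates, and then take $C$ to be the maximum of the resulting constants.

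For a fixed $W^I_s$, the key step is to construct an auxiliary operator $S^I: W^{1,p}(W^I_s)^n \to W^{1,p}(\hat Y)^n$ satisfying the direct analogues of the three estimates from \cite[Lemma 2]{GahnJaegerTwoScaleTools}:
\begin{align*}
\|S^I v\|_{L^p(\hat Y)} &\le C\bigl(\|v\|_{L^p(W^I_s)} + \|\nabla v\|_{L^p(W^I_s)}\bigr),\\
\|\nabla S^I v\|_{L^p(\hat Y)} &\le C\,\|\nabla v\|_{L^p(W^I_s)},\\
\|e(S^I v)\|_{L^p(\hat Y)} &\le C\,\|e(v)\|_{L^p(W^I_s)}.
\end{align*}
I would build $S^I$ by the partition-of-unity-plus-local-reflection approach used in \cite{Acerbi1992} and \cite{GahnJaegerTwoScaleTools}: for each cell $\overline{Y_s}+\alpha$ with $\alpha \in \mathcal{E}^1\setminus I$ that has to be filled in, use a localized reflection of the solid part across a Lipschitz portion of $\partial W^I_s$; the matching condition on opposite faces of $Y_s$ guarantees that the resulting interfaces are Lipschitz and glue consistently. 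To obtain the symmetric-gradient estimate and not merely the gradient estimate, every local patch extension should be preceded by subtracting its mean rigid motion (as in the proof of Lemma \ref{LemmaLocalExtension}) and adding it back after the reflection, so that Korn's inequality on each patch produces bounds in terms of $e(v)$ rather than $\nabla v$. Since we are working on a fixed Lipschitz reference domain $W^I_s$, all resulting constants are finite.

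With $S^I$ available, define
\begin{equation*}
\tau_A^I(v) := A^{-1}\,S^I(Av).
\end{equation*}
The computations are then verbatim copies of those in Lemma \ref{lem:local_extension_A}: the $L^p$ and gradient estimates pick up a factor $|A^{-1}||A|$ each, which is controlled by $C|A|^{n-1}|A|/\det(A)$ and hence by the hypothesis $\det(A)>0$ enters only through the norm of $A^{-1}$; the symmetric-gradient estimate collapses to $\|e(S^I(Av))\|_{L^p(\hat Y)}\le C\|e(Av)\|_{L^p(W^I_s)} = C\|e_A(v)\|_{L^p(W^I_s)}$, so the factors of $A$ cancel entirely. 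Setting finally $C := \max\{C_I : W^I_s \in \mathcal{Z}^1_s\}$ concludes the proof.

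The principal obstacle is the construction of $S^I$ with the three simultaneous estimates on domains $W^I_s$ that may touch themselves periodically: the reflection must be done with enough care that the local rigid-motion correction is consistent across patches, so that gluing via the partition of unity does not spoil the symmetric-gradient bound. Once this geometric preparation is handled cell-by-cell, the rest of the argument is routine thanks to the finiteness of $\mathcal{Z}^1_s$.
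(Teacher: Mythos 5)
Your proposal is correct and follows essentially the same route as the paper: construct a total extension operator $S^I : W^{1,p}(W_s^I)^n \to W^{1,p}(\hY)^n$ satisfying the $L^p$, gradient, and symmetric-gradient bounds (the paper dispatches this by asserting that the construction in \cite[Lemma 2]{GahnJaegerTwoScaleTools} works verbatim with $\hY$ in place of $Y$ and $W_s^I$ in place of $Y_s$), then set $\tau_A^I(v) := A^{-1} S^I(Av)$ and repeat the three-line computation of Lemma \ref{lem:local_extension_A}. Your added remark that $\mathcal{Z}^1_s$ is finite, so a single constant can be obtained by maximizing over $I$, is precisely the (implicit) reason the constant in the paper's statement is $I$-independent.
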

\begin{proof}
The proof follows the same lines as the proof of Lemma \ref{lem:local_extension_A}. We only emphasize that the operator $S$ from \cite[Lemma 2]{GahnJaegerTwoScaleTools} used in the proof is also valid  with $\hY$ instead of $Y$ and $W_s^I$ instead of $Y_s$.
\end{proof}
Next, we give the generalization of Proposition \ref{TheoremGlobalExtension} for $\Omega \setminus \oeps$ connected. As already mentioned above, using the definition $\eqref{def:global_extension_E_Aeps}$ in the proof of Proposition \ref{TheoremGlobalExtension}, would in general lead to jumps across the interfaces of neighboring micro-cells. Hence, we follow an approach from \cite{Acerbi1992}.
Let us define for $\alpha \in \Z^n$  the transformation between the macro- and micro-cells by
\begin{align*}
\pi_{\epsilon,\alpha} (y):= \epsilon (y + \alpha).
\end{align*}
Now, let $(\phi^{\alpha})_{\alpha \in \Z^n}$ be a $Y$-periodic partition of unity with respect to the open cover $(\hY + \alpha)_{\alpha \in \Z^n}$, i.e., for all $\alpha, \beta \in \Z^n $ it holds that
\begin{align*}
\phi^{\beta} = \phi^{\alpha} \circ \pi_{1,\alpha - \beta}.
\end{align*}
Especially, we have
\begin{align*}
\sum_{k \in \Z^n} \phi^{\alpha + k} = \sum_{e \in \mathcal{E}^1} \phi^{\alpha + e} = 1 \qquad\mbox{in } Y + \alpha.
\end{align*}
If we define $\phi:= \phi^0$ we have
\begin{align*}
\phi^{\alpha} = \phi \circ \pi_{1,-\alpha} = \phi (\cdot_y - \alpha),
\end{align*}
and 
\begin{align}
\label{eq:sum_phi^e}
\sum_{e \in \mathcal{E}^1} \phi^e  = \sum_{e \in \mathcal{E}^1} \phi(\cdot + e) = 1 \qquad \mbox{in } Y .
\end{align}

\begin{lemma}\label{lem:aux_e_B(tau_B(v))}
For every $A,B\in \R^{n\times n}$ and every $v \in W^{1,p}(W_s^I)^n$ for $p\in (1,\infty)$ with $W_s^I \in \mathcal{Z}_s^1$ it holds that
\begin{align*}
\left\|e_B(\tau_A^I(v))\right\|_{L^p(\hY)} \le C \left\|e_A(v)\right\|_{L^p(W_s^I)} + C |B - A| |A^{-1}| |A| \|\nabla v\|_{L^p(W_s^I)}.
\end{align*}
\end{lemma}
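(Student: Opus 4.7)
The plan is to exploit the bilinearity of $e_A$ in the matrix $A$ and reduce the statement to inequalities already established for $\tau_A^I$ in Lemma \ref{lem:local_extension_A_connected}. Writing out the symmetric gradient with non-constant coefficients, one sees
\begin{align*}
e_B(w) - e_A(w) = \tfrac{1}{2}\bigl((B-A)\nabla w + ((B-A)\nabla w)^{\top}\bigr),
\end{align*}
so pointwise $|e_B(w)-e_A(w)|\le |B-A|\,|\nabla w|$. This will let me trade the difference $B-A$ against a plain gradient term, which is exactly what the right-hand side of the claimed inequality suggests.

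First I would decompose
\begin{align*}
e_B(\tau_A^I(v)) = e_A(\tau_A^I(v)) + \bigl(e_B-e_A\bigr)(\tau_A^I(v))
\end{align*}
and apply the triangle inequality in $L^p(\hY)$. For the first summand I invoke the third estimate of Lemma \ref{lem:local_extension_A_connected} (the analogue of the $e_A$-bound in Lemma \ref{lem:local_extension_A} applied on the reference patch $W_s^I \subset \hY$), obtaining
\begin{align*}
\|e_A(\tau_A^I(v))\|_{L^p(\hY)} \le C \|e_A(v)\|_{L^p(W_s^I)}.
\end{align*}

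For the second summand I use the pointwise bound displayed above together with the $\nabla$-estimate from Lemma \ref{lem:local_extension_A_connected}:
\begin{align*}
\|(e_B-e_A)(\tau_A^I(v))\|_{L^p(\hY)} \le |B-A|\,\|\nabla \tau_A^I(v)\|_{L^p(\hY)} \le C|B-A|\,|A^{-1}|\,|A|\,\|\nabla v\|_{L^p(W_s^I)}.
\end{align*}
Adding the two contributions yields the desired inequality. There is no real obstacle here; the statement is essentially a perturbation identity, and all the structural work has already been done in setting up the extension operator $\tau_A^I$ with the correct dependence of its operator norms on $A$. The only point worth a second look is making sure the constant $C$ in the final estimate is indeed independent of $I$, which follows from the uniform constant in Lemma \ref{lem:local_extension_A_connected}.
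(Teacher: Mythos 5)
Your proof is correct and follows essentially the same route as the paper: the same decomposition $e_B(\tau_A^I(v)) = e_A(\tau_A^I(v)) + \tfrac12[(B-A)\nabla\tau_A^I(v) + \nabla\tau_A^I(v)^{\top}(B-A)^{\top}]$, followed by the $e_A$-estimate and the gradient estimate from Lemma \ref{lem:local_extension_A_connected}. The paper states this more tersely, but the content is identical; your explicit check that $C$ is independent of $I$ via the uniform constant in Lemma \ref{lem:local_extension_A_connected} is a sensible clarification.
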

\begin{proof}
An elemental calculation shows 
\begin{align*}
e_B(\tau_A^I(v)) = e_A(\tau_A^I(v)) + \frac12 \left[ (B-A) \nabla \tau_A^I(v) + \nabla \tau_A^I(v)^{\top} (B - A)^{\top}\right].
\end{align*}
Now the desired result follows from Lemma \ref{lem:local_extension_A_connected}.
\end{proof}

\begin{proposition}\label{prop:extension_Aeps_connected}
Let $A_{\vareps} \in L^{\infty}(\R^n)^{n\times n}$ such that $A_{\vareps}$ is constant on every micro-cell $\vareps (Y+ k)$ with $k\in \Z^n$ and fulfills
\begin{align*}
\|A_{\vareps}\|_{L^{\infty}(\R^n)} \le M, \qquad \det A_{\vareps} \geq \mu >0 \quad \mbox{a.\,e. in } \R^n
\end{align*}
for constants $\mu>0$ and $ M>0$ independent of $\vareps$. Then for every $p\in (1,\infty)$ there exists an extension operator $E_{A_{\vareps}} : W^{1,p}(\oeps)^n \rightarrow W^{1,p}(\Omega)^n$, such that  for all $\veps \in W^{1,p}(\oeps)^n$ it holds that
\begin{subequations}
\begin{align}
\label{ineq:glob_ext_Aeps_W1p}\|E_{A_\vareps}(\veps)\|_{L^p(\Omega)} &\le C_G \frac{M^n}{\mu} \left( \|\veps\|_{L^p(\oeps)}  + \vareps \|\nabla \veps \|_{L^p(\oeps)} \right),
\\
\label{ineq:glob_ext_Aeps_Grad}\|\nabla E_{A_{\vareps}}(\veps) \|_{L^p(\Omega)} &\le C_G \frac{M^n}{\mu} \|\nabla \veps \|_{L^p(\oeps)} ,
\\
\label{ineq:glob_ext_Aeps_eAeps}\|e_{A_{\vareps}}(E_{A_{\vareps}}(\veps))\|_{L^p(\Omega)} &\le C_G \|e_{A_{\vareps}} (\veps)\|_{L^p(\oeps)} + C_G \Delta_{\vareps,1} \left(\frac{M^n}{\mu} + 1\right) \|\nabla \veps \|_{L^p(\oeps)}
\end{align}
\end{subequations}
for a constant $C_G>0$ independent of $\vareps$ and
\begin{align*}
\Delta_{\vareps,1}:= \sup_{l\in K_{\vareps}, e \in \E^1} \left| A_{\vareps}^{l + e} - A_{\vareps}^l \right|.
\end{align*}

\end{proposition}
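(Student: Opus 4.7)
My plan is to follow the Acerbi--Chiadò-Piat--Dal-Maso--Percivale approach from \cite{Acerbi1992}, using the enlarged-cell local extensions from Lemma \ref{lem:local_extension_A_connected} glued together by the partition of unity $(\phi^\alpha)_{\alpha \in \Z^n}$ already introduced. For each $k \in K_\vareps^\ast$, there is a unique $I_k \subset \E^1$ with $\vareps(\hY + k)\cap \oeps = \vareps(W_s^{I_k} + k)$. Setting $A_\vareps^k := A_\vareps|_{\vareps(Y+k)}$ and $\veps^k(y) := \veps(\vareps(y+k))$ for $y \in W_s^{I_k}$, I would define local extensions on the enlarged cell by
\begin{align*}
E^k \veps(x) := \tau_{A_\vareps^k}^{I_k}(\veps^k)\!\left(\tfrac{x}{\vareps}- k\right), \qquad x \in \vareps(\hY + k),
\end{align*}
and then the global extension by $E_{A_\vareps}\veps(x) := \sum_{k \in K_\vareps^\ast} \phi^k(x/\vareps)\, E^k\veps(x)$. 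The extension property on $\oeps$ follows immediately from the partition-of-unity identity $\eqref{eq:sum_phi^e}$, since every $E^k$ coincides with $\veps$ on $\vareps(Y_s + l)$ whenever $l\in\E^1+k$.

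For the $L^p$ and gradient estimates $\eqref{ineq:glob_ext_Aeps_W1p}$--$\eqref{ineq:glob_ext_Aeps_Grad}$, I would fix a cell $\vareps(Y+l)$ and write the sum as running only over $l+e$, $e\in\E^1$. For the gradient I would exploit $\sum_{e\in\E^1} \nabla\phi^{l+e} \equiv 0$ on $Y+l$ to rewrite
\begin{align*}
\nabla E_{A_\vareps}\veps = \sum_{e\in\E^1}\!\left[\vareps^{-1}\nabla\phi^{l+e}(x/\vareps)\bigl(E^{l+e}\veps - E^l\veps\bigr) + \phi^{l+e}(x/\vareps)\,\nabla E^{l+e}\veps\right],
\end{align*}
so that the singular $\vareps^{-1}$ factor is absorbed by a Poincar\'e inequality on the difference $E^{l+e}\veps - E^l\veps$. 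Since both are extensions of $\veps$ from the overlap of their defining enlarged cells, the difference vanishes on $\vareps(Y_s+l)$ and its $L^p$ norm is controlled by $\vareps\|\nabla E^{l+e}\veps\|_{L^p} + \vareps\|\nabla E^l\veps\|_{L^p}$; Lemma \ref{lem:local_extension_A_connected} then bounds each of these by $\frac{M^n}{\mu}\|\nabla\veps\|_{L^p}$ on a neighbourhood of $\vareps(Y_s+l)$. Summing over $l$, the finite-overlap property of the covering $(\vareps(\hY+k))_k$ yields $\eqref{ineq:glob_ext_Aeps_Grad}$, and the $L^p$ bound $\eqref{ineq:glob_ext_Aeps_W1p}$ is analogous.

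The main obstacle is the mixed-coefficient estimate $\eqref{ineq:glob_ext_Aeps_eAeps}$, where the difficulty is that on the cell $\vareps(Y+l)$ the coefficient $A_\vareps$ equals $A_\vareps^l$ but the local extension $E^{l+e}\veps$ has been built using the possibly different coefficient $A_\vareps^{l+e}$. Proceeding as before,
\begin{align*}
e_{A_\vareps^l}\bigl(E_{A_\vareps}\veps\bigr) = \sum_{e\in\E^1}\!\left[\tfrac{1}{2\vareps}\bigl(\nabla\phi^{l+e}\otimes (E^{l+e}\veps - E^l\veps) + (\cdot)^{\top}\bigr) + \phi^{l+e}(x/\vareps)\,e_{A_\vareps^l}(E^{l+e}\veps)\right].
\end{align*}
For the second group I apply Lemma \ref{lem:aux_e_B(tau_B(v))} with $A = A_\vareps^{l+e}$ and $B=A_\vareps^l$, producing the correct symmetric-gradient control plus an error proportional to $|A_\vareps^l - A_\vareps^{l+e}|\le \Delta_{\vareps,1}$ against $\|\nabla\veps\|_{L^p}$ on the enlarged cell, with prefactor $|A_\vareps^{-1}||A_\vareps| \lesssim M^n/\mu$. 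For the first group, the same Poincar\'e-on-differences trick applied above turns the $\vareps^{-1}$ factor into a bound by $\|\nabla E^{l+e}\veps - \nabla E^l\veps\|_{L^p}$; since both pieces agree with $\veps$ on $\vareps(Y_s+l)$ and their symmetric gradients with respect to $A_\vareps^l$ differ from $e_{A_\vareps}(\veps)$ only through coefficient mismatches of size $\Delta_{\vareps,1}$, these terms produce the additional error $C_G\,\Delta_{\vareps,1}(M^n/\mu+1)\|\nabla\veps\|_{L^p(\oeps)}$. Summing over $l$ and using the finite overlap of the enlarged-cell cover gives the stated bound $\eqref{ineq:glob_ext_Aeps_eAeps}$. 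I expect the bookkeeping of which constants pick up the $M^n/\mu$ factor versus the $\Delta_{\vareps,1}$ error to be the only delicate point; the geometric and functional-analytic ingredients are all supplied by Lemmas \ref{lem:local_extension_A_connected} and \ref{lem:aux_e_B(tau_B(v))} together with the partition of unity identity.
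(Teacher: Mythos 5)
Your overall architecture matches the paper's: local extensions $\tau^{I_k}_{A_\vareps^k}$ on the enlarged cells $\hY_s$ glued with the partition of unity, the cancellation $\sum_{e\in\E^1}\nabla\phi^{l+e}\equiv 0$ to produce cell-to-cell differences that vanish on $\vareps(Y_s+l)$, and Lemma \ref{lem:aux_e_B(tau_B(v))} for the coefficient mismatch in the second group. However, your handling of the first group (the $\vareps^{-1}\nabla\phi$ terms) in the estimate \eqref{ineq:glob_ext_Aeps_eAeps} has a genuine gap.

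You invoke Poincar\'e on the difference $E^{l+e}\veps - E^l\veps$ to absorb the $\vareps^{-1}$, obtaining a bound by $\|\nabla(E^{l+e}\veps - E^l\veps)\|_{L^p}$, and then claim this contributes only a $\Delta_{\vareps,1}$-sized error. That conclusion does not follow: the full gradient $\nabla(E^{l+e}\veps - E^l\veps)$ is only bounded by $\tfrac{M^n}{\mu}\|\nabla\veps\|_{L^p}$ and is \emph{not} small when the coefficients agree. Even if $A_\vareps^{l+e}=A_\vareps^l$, the two extensions are built on different enlarged cells and generically disagree on $\vareps((Y\setminus Y_s)+l)$, so $\|\nabla(E^{l+e}\veps - E^l\veps)\|$ carries no $\Delta_{\vareps,1}$ factor. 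Your statement that ``their symmetric gradients differ from $e_{A_\vareps}(\veps)$ only through coefficient mismatches'' is true but irrelevant once you have passed to the full gradient via Poincar\'e — you have lost the symmetric-gradient structure and cannot recover it.

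What the paper uses instead at exactly this point is the \emph{Korn} inequality, not Poincar\'e. The difference $w_e := A_\vareps^{k_0}\bigl(\tilde v_{\vareps,e+k_0}(\cdot-e)-\tilde v_{\vareps,k_0}\bigr)$ vanishes on $Y_s$ (identity \eqref{id:v_eps_v_eps}), so the first Korn inequality controls $\|w_e\|_{L^p(Y)}$ by $\|e(w_e)\|_{L^p(Y)}$, which is precisely $\|e_{A_\vareps^{k_0}}(\tilde v_{\vareps,e+k_0})(\cdot-e) - e_{A_\vareps^{k_0}}(\tilde v_{\vareps,k_0})\|_{L^p(Y)}$. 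Only after this Korn step can Lemma \ref{lem:aux_e_B(tau_B(v))} and a further coefficient swap produce the decomposition into $\|e_{A_\vareps}(\veps)\|$ plus a $\Delta_{\vareps,1}$-error. If you use Poincar\'e where Korn is required, you will prove the weaker bound $\|e_{A_\vareps}(E_{A_\vareps}\veps)\|_{L^p(\Omega)} \le C\tfrac{M^n}{\mu}\|\nabla\veps\|_{L^p(\oeps)}$, which is not what \eqref{ineq:glob_ext_Aeps_eAeps} asserts. Your treatment of \eqref{ineq:glob_ext_Aeps_Grad} via Poincar\'e is fine, since there the target is exactly the full-gradient bound; it is only for the refined $e_{A_\vareps}$-estimate that Korn is indispensable.
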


\begin{remark}
We assume that $A_{\vareps}$ is defined on the whole set $\R^n$. In the proof we will see that it is only important to have values in micro-cells in $\Omega$ or touching its lateral boundary $\partial \Omega$ (see the definition of $K_{\vareps}^{\ast}$).
\end{remark}
\begin{proof}
%We define
%\begin{align*}
%K_{\vareps}^{\ast} := \left\{ \alpha \in \Z^n \, : \, \vareps(\hY^1 + \alpha) \cap \Omega \neq \emptyset\right\}\supset K_{\vareps}.
%\end{align*}
For every $\alpha \in K_{\vareps}^{\ast}$ there exists (a unique) $W_s^{I(\alpha)} \in \mathcal{Z}_s^1$, such that 
\begin{align}\label{RelationZs1}
\vareps(\hY + \alpha) \cap \oeps = \vareps (\hY_s + \alpha ) \cap \Omega   = \vareps( W_s^{I(\alpha)} + \alpha) = \pi_{\vareps,\alpha}\left( W_s^{I(\alpha)} \right).
\end{align}
Hence, for all $\veps \in W^{1,p}(\oeps)^n$  we have
\begin{align*}
\veps\vert_{\vareps(\hY_s + \alpha) \cap \Omega} \circ \pi_{\vareps,\alpha} \in W^{1,p}(W_s^{I(\alpha)})^n .
\end{align*}
In the following, we denote for $W_s^I \in \mathcal{Z}_s^1$ the associated extension operator from Lemma \ref{lem:local_extension_A_connected} by $\tau^I$. For every $\alpha \in K_{\vareps}^{\ast}$ we denote by $W_s^{I(\alpha)} \in \mathcal{Z}_s^1$ the associated set given by $\eqref{RelationZs1}$, and define 
\begin{align*}
v_{\vareps,\alpha} := \veps\vert_{\vareps(\hY_s + \alpha) \cap \Omega} \circ \pi_{\vareps,\alpha}  = \veps\vert_{\vareps(\hY_s + \alpha) \cap \Omega} \left(\vareps (\cdot_y + \alpha)\right)
\end{align*}
 and with $A_{\vareps}^{\alpha}:= A_{\vareps}|_{\vareps(Y + \alpha)}$ the local extension  by 
\begin{align*}
\tilde{v}_{\vareps,\alpha}:= \tau^{I(\alpha)}_{A_{\vareps}^{\alpha}} (v_{\vareps,\alpha}).
\end{align*}
Especially, $\tilde{v}_{\vareps,\alpha}$ fulfills
\begin{align*}
\|\tilde{v}_{\vareps,\alpha}\|_{L^p(\hY)} &\le C |A_{\vareps}^{\alpha}| |(A_{\vareps}^{\alpha})^{-1}|\|v_{\vareps,\alpha}\|_{W^{1,p}(W_s^{I(\alpha)})},
\\
 \|\nabla_y \tilde{v}_{\vareps,\alpha}\|_{L^p(\hY)}&\le C |A_{\vareps}^{\alpha}| |(A_{\vareps}^{\alpha})^{-1}|\|\nabla_y v_{\vareps,\alpha}\|_{L^p(W_s^{I(\alpha)})} ,
 \\
 \|e_{A_{\vareps}^{\alpha}}(\tilde{v}_{\vareps,\alpha})\|_{L^p(\hY)} &\le C\|e_{A_{\vareps}^{\alpha}}(v_{\vareps,\alpha})\|_{L^p(W_s^{I(\alpha)})}.
\end{align*}
Now, we define the global extension operator $E_{A_{\vareps}} : W^{1,p}(\oeps)^n \rightarrow W^{1,p}(\Omega)^n$ by
\begin{align*}
E_{A_{\vareps}}\veps := \sum_{\alpha \in K_{\vareps}^{\ast}} \left(\tilde{v}_{\vareps,\alpha} \circ \pi_{\vareps,\alpha}^{-1}\right) \phi^{\alpha}\left(\frac{\cdot}{\vareps}\right).
\end{align*}
Hence, for almost every $x \in \Omega$ it holds that
\begin{align*}
E_{A_{\vareps}}\veps (x) = \sum_{\alpha \in K_{\vareps}^{\ast}} \tilde{v}_{\vareps,\alpha}\left(\frac{x}{\vareps} - \alpha\right) \phi\left(\frac{x}{\vareps} - \alpha\right).
\end{align*}
It is straightforward to check that $E_{A_{\vareps}}$ is in fact an extension operator.
Next, we consider the $L^p$-norm of $E_{A_{\vareps}}$. This illustrates already the main steps for the more technical estimates for the terms including $\nabla E_{A_{\vareps}}(\veps) $ and $e_{A_{\vareps}}(E_{A_{\vareps}}(\veps))$. For $k_0 \in K_{\vareps}$ we have
\begin{align*}
\|E_{A_{\vareps}}&(\veps)\|_{L^p(\vareps(Y + k_0))}^p = \int_{\vareps (Y + k_0)} \left| \sum_{\alpha \in K_{\vareps}^{\ast}} \tilde{v}_{\vareps,\alpha} \left(\fxe - \alpha \right) \phi \left(\fxe - \alpha\right)\right|^p dx
\\
&= \vareps^n \int_Y \left| \sum_{\alpha \in\E^1 + k_0} \tilde{v}_{\vareps,\alpha} (y + k_0 - \alpha) \phi(y + k_0 - \alpha) \right|^p dy
\\
&= \vareps^n \int_Y \left| \sum_{e \in \E^1} \tilde{v}_{\vareps, e + k_0} (y - e) \phi(y-e) \right|^p dy 
\\
%&\le C \vareps^n \sum_{e \in \E^1} \int_Y \left| \tilde{v}_{\vareps,e + k_0} (y - e)\right|^p dy
%\\
&\le C \vareps^n \sum_{e \in \E^1} \int_{Y + e} \left|\tilde{v}_{\vareps,e +k_0}(y)\right|^p dy
\\
&\le C \vareps^n \sum_{e \in \E^1} \int_{\hY} \left|\tilde{v}_{\vareps,e + k_0}(y) \right|^p dy
\\
&\le C \vareps^n \sum_{e \in \E^1} \left|A_{\vareps}^{e + k_0}\right|^p \left|(A_{\vareps}^{e+ k_0})^{-1}\right|^p \int_{W_s^{I(e+ k_0)}} \left| \veps(y + e  + k_0)\right|^p + \vareps^p \left| \nabla \veps (y + e + k_0)\right|^p dy 
\\
&\le C\left(\frac{M^n}{\mu}\right)^p \sum_{e \in \E^1} \int_{\vareps \left( W_s^{I(e  + k_0)}+ e + k_0\right)} |\veps(x)|^p + \vareps^p \left|\nabla \veps\right|^p dx
\\
&= C \left(\frac{M^n}{\mu}\right)^p \sum_{e \in \E^1} \int_{\vareps (\hY + k_0 + e) \cap \oeps} |\veps(x)|^p + \vareps^p \left|\nabla \veps(x)\right|^p dx
\\
&\le C \left(\frac{M^n}{\mu}\right)^p \int_{\vareps ( \hY^2 + k_0)} |\veps(x)|^p + \vareps^p \left|\nabla \veps(x)\right|^p dx.
\end{align*}
Summing up over $k_0 \in K_{\vareps}$ implies inequality $\eqref{ineq:glob_ext_Aeps_W1p}$.
Next, we consider the norm of $e_{A_{\vareps}}(E_{A_{\vareps}}(\veps))$. We have for all $k_0 \in K_{\vareps}$
\begin{align*}
\big\| &e_{A_{\vareps}}(E_{A_{\vareps}}(\veps))\big\|_{L^p(\vareps(Y + k_0))}^p 
\\
&= \int_{\vareps (Y + k_0)} \left|\frac12 \sum_{\alpha \in K_{\vareps}^{\ast}} \left\{ A_{\vareps}^{k_0} \nabla \left(\tilde{v}_{\vareps,\alpha} \left(\fxe - \alpha\right) \phi\left(\fxe - \alpha\right) \right) + \nabla \left(A_{\vareps}^{k_0} \tilde{v}_{\vareps,\alpha}\left(\fxe-\alpha\right) \phi\left(\fxe -\alpha\right)\right)^{\top} \right\} \right|^p dx
\\
&\le C\vareps^{-p}\bigg\{\int_{\vareps (Y + k_0)} \left| \sum_{\alpha \in K_{\vareps}^{\ast}} \left[A_{\vareps}^{k_0} \nabla_y \tilde{v}_{\vareps,\alpha}\left(\fxe - \alpha\right) + \nabla_y\tilde{v}_{\vareps,\alpha}\left(\fxe -\alpha\right)^{\top}(A_{\vareps}^{k_0})^{\top}\right] \phi\left(\fxe -\alpha\right)  \right|^p dx
\\
& \hspace{3em} + \int_{\vareps (Y + k_0)} \left| \sum_{\alpha \in K_{\vareps}^{\ast}} \left[A_{\vareps}^{k_0} \tilde{v}_{\vareps,\alpha}\left(\fxe - \alpha\right) \otimes \nabla_y \phi\left(\fxe -\alpha\right) + \nabla_y \phi\left(\fxe- \alpha\right) \otimes A_{\vareps}^{k_0}\tilde{v}_{\vareps,\alpha}\left(\fxe-\alpha\right)\right] \right|^p dx \bigg\}
\\
&=: B_{\vareps}^1 + B_{\vareps}^2.
\end{align*}
We estimate the two terms $B_{\vareps}^1$ and $B_{\vareps}^2$ separately.
As in the proof of Proposition \ref{TheoremGlobalExtension} we use the same notation $e_A$ (for arbitrary $A\in \R^{n\times n}$)
for derivatives with respect to $x$ and $y$.
% More precisely, for functions defined on the microscopic domain $\oeps$ the operator $e_A$ includes derivatives with respect to $x$, and for functions on reference elements $W_s^I$ we have derivatives with respect to $y$. 
With similar arguments as above we obtain
\begin{align*}
B_{\vareps}^1 &=  C\vareps^{n-p} \int_{Y} \left| \sum_{e \in \E^1}\left[ A_{\vareps}^{k_0} \nabla_y \tilde{v}_{\vareps,e+k_0} (y - e) + \nabla_y \tilde{v}_{\vareps,e+k_0} (y-e )^{\top } (A_{\vareps}^{k_0})^{\top} \right] \phi(y -e) \right|^p dy
\\
&\le C \vareps^{n-p} \sum_{e \in \E^1} \int_{\hY} \left| A_{\vareps}^{k_0} \nabla_y \tilde{v}_{\vareps,e+k_0}(y) + \nabla_y \tilde{v}_{\vareps,e + k_0}(y)^{\top} (A_{\vareps}^{k_0})^{\top} \right|^p dy
\\
&= C \vareps^{n-p} \sum_{e \in \E^1} \int_{\hY} \left|e_{A_{\vareps}^{k_0}} \left( \tau_{A_{\vareps}^{e+k_0}}^{I(e + k_0)} (v_{\vareps,e + k_0}) \right) \right|^p dy.
\end{align*}
Now we apply Lemma \ref{lem:aux_e_B(tau_B(v))} to obtain
\begin{align*}
B_{\vareps}^1 &\le C \vareps^{n-p}\sum_{e \in \E^1} \bigg\{ \int_{W_s^{I(e + k_0)}} \left| e_{A_{\vareps}^{e +k_0}} (v_{\vareps, e + k_0}) \right|^p dy 
\\
&\hspace{4em} + \left|A_{\vareps}^{e + k_0} - A_{\vareps}^{k_0}\right|^p |A_{\vareps}^{e +k_0}|^p \left|(A_{\vareps}^{e + k_0})^{-1}\right|^p \int_{W_s^{I(e + k_0)}}| \nabla_y v_{\vareps,e + k_0}|^p dy \bigg\}
\\
&\le C\vareps^{n-p} \sum_{e \in \E^1} \left\{ \int_{W_s^{I(e + k_0)}} \left| e_{A_{\vareps}^{e +k_0}} (v_{\vareps, e + k_0}) \right|^p dy + \Delta_{\vareps,1}^p \left(\frac{M^n}{\mu}\right)^p   \int_{W_s^{I(e + k_0)}}| \nabla_y v_{\vareps,e + k_0}|^p dy 
\right\}
\\
&=: B_{\vareps}^{1,1} + B_{\vareps}^{1,2}.
\end{align*}
For the first term we get (with similar arguments as in the proof of Lemma \ref{lem:aux_e_B(tau_B(v))})
\begin{align*}
B_{\vareps}^{1,1} &= C  \sum_{e \in \E^1} \int_{\vareps( W_s^{I(e + k_0)} + k_0 + e) } \left| e_{A_{\vareps}^{e + k_0}} (\veps)\right|^p dx
\\
&\le C \sum_{e \in \E^1}  \left\{ \int_{\vareps(W_s^{I(e + k_0)} + k_0 + e)} \left| e_{A_{\vareps}^{k_0}} (\veps)\right|^p + 2^p \left|A_{\vareps}^{k_0} - A_{\vareps}^{k_0 + e} \right|^p |\nabla \veps |^p dx \right\}
\\
&\le C \left\{\int_{\vareps (\hY^2 + k_0) \cap \oeps }  \left| e_{A_{\vareps}^{k_0}} (\veps)\right|^p  dx + \Delta_{\vareps,1}^p \int_{\vareps(\hY^2 + k_0) \cap \oeps} |\nabla \veps|^p dx\right\}.
\end{align*}
Hence, we obtain
\begin{align*}
B_{\vareps}^1 \le  C \left\{\int_{\vareps (\hY^2 + k_0) \cap \oeps }  \left| e_{A_{\vareps}^{k_0}} (\veps)\right|^p  dx + \Delta_{\vareps,1}^p \left( 1+  \left(\frac{M^n}{\mu}\right)^p \right) \int_{\vareps(\hY^2 + k_0)\cap \oeps} |\nabla \veps|^p dx\right\}.
\end{align*}
It remains to estimate the term $B_{\vareps}^2$. Using $|B| = |B^{\top}|$ for all $B \in \R^{n\times n}$ and $\sum_{e \in \E^1} \nabla_y \phi(y-e) = 0$, see $\eqref{eq:sum_phi^e}$, we get
\begin{align*}
B_{\vareps}^2&\le C\vareps^{n-p} \int_Y \left| \sum_{e \in \E^1} A_{\vareps}^{k_0} \tilde{v}_{\vareps,e + k_0} (y - e) \otimes \nabla_y \phi(y-e)\right|^p dy
\\
&= C\vareps^{n-p} \int_Y \left| \sum_{e \in \E^1} A_{\vareps}^{k_0} \left(\tilde{v}_{\vareps,e+k_0}(y - e ) - \tilde{v}_{\vareps,k_0}(y) \right) \otimes \nabla_y \phi(y - e)\right|^p dy
\\
&\le C\vareps^{n-p} \sum_{e \in \E^1} \int_Y \left| A_{\vareps}^{k_0} \left(\tilde{v}_{\vareps,e+k_0}(y - e ) - \tilde{v}_{\vareps,k_0}(y) \right)\right|^p dy.
\end{align*}
We note that for almost every $y \in Y_s$ it holds 
\begin{align}\label{id:v_eps_v_eps}
\tilde{v}_{\vareps,e+k_0}(y - e ) - \tilde{v}_{\vareps,k_0}(y) = 0.
\end{align}
In fact, for $y \in Y_s$ we have $\vareps(y + k_0) \in \vareps(\hY_s + k_0 + e)$ and since $k_0 \in K_{\vareps}$ it holds that $\vareps (y + k_0) \in \oeps$. From this we obtain together with $\eqref{RelationZs1}$ that
\begin{align*}
\vareps(y + k_0) \in \vareps(\hY + e + k_0) \cap \oeps = \vareps\left(W_s^{I(e + k_0)} + e + k_0\right).
\end{align*}
This implies $y- e \in W_s^{I(e + k_0)}$. Hence, for almost every $y \in Y_s$ we have that
\begin{align*}
\tilde{v}_{\vareps,e + k_0}(y-e) = v_{\vareps,e + k_0}(y - e) = \veps(\vareps (y + k_0)) = \tilde{v}_{\vareps,k_0}(y).
\end{align*}
This is $\eqref{id:v_eps_v_eps}$.
Therefore, we can apply the standard Korn inequality for functions vanishing on $\Gamma$ to obtain
\begin{align*}
B_{\vareps}^2 &\le C \vareps^{n-p} \sum_{e \in \E^1} \int_Y \left| e \left(A_{\vareps}^{k_0}\tilde{v}_{\vareps,e+k_0}(\cdot_y -e ) - A_{\vareps}^{k_0} \tilde{v}_{\vareps,k_0}\right)(y) \right|^p dy
\\
&= C \vareps^{n-p} \sum_{e \in \E^1} \int_Y \left|e_{A_{\vareps}^{k_0}}(\tilde{v}_{\vareps,e + k_0})(y - e) - e_{A_{\vareps}^{k_0}}(\tilde{v}_{\vareps,k_0})\right|^p dy
\\
&\le C \vareps^{n-p} \sum_{e \in \E^1} \int_{\hY} \left| e_{A_{\vareps}^{k_0}}(\tilde{v}_{\vareps,e + k_0}) \right|^p dy.
\end{align*}
Now, this term can be estimated in the same way as $B_{\vareps}^1$, to obtain
\begin{align*}
B_{\vareps}^2 \le   C \left\{\int_{\vareps (\hY^2 + k_0) \cap \oeps }  \left| e_{A_{\vareps}^{k_0}} (\veps)\right|^p  dx + \Delta_{\vareps,1}^p \left( 1+  \left(\frac{M^n}{\mu}\right)^p \right) \int_{\vareps(\hY^2 + k_0)\cap \oeps} |\nabla \veps|^p dx\right\}.
\end{align*}
Summing again over $k_0 \in K_{\vareps}$ we get inequality $\eqref{ineq:glob_ext_Aeps_eAeps}$. Estimate $\eqref{ineq:glob_ext_Aeps_Grad}$ follows by similar arguments (using Poincar\'e inequality instead of the Korn inequality).
\end{proof}
In contrast to Proposition \ref{TheoremGlobalExtension}, we obtain in the case $\Omega \setminus \oeps$ connected an additional term in $\eqref{ineq:glob_ext_Aeps_eAeps}$ depending on the gradient. However, we will see that in the following proof of Theorem \ref{TheoremKornContFunct} this term is negligible. The other difficulty in the proof, in contrast to the proof in Section \ref{sec:Korn_inequality}, is that the Dirichlet-boundary $\geps^D$ is depending on $\vareps$. To overcome this problem we use a contradiction argument and the following lemma (which was also obtained for thin layers in \cite[Lemma 4.2]{GahnJaegerTwoScaleTools}):
\begin{lemma}\label{lem:estimate_outer_bd_extension}
Let $p \in (1,\infty)$. For every $\weps \in W_{\geps^D}^{1,p}(\Omega)$ it holds that
\begin{align*}
\|\weps\|_{L^p(\Gamma^D)} \le C \vareps^{1 - \frac{1}{p}} \|\nabla \weps\|_{L^p(\Omega)}
\end{align*}
for a constant $C>0$ independent of $\vareps$.
\end{lemma}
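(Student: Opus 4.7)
The plan is to reduce to a local estimate on the reference cell and then sum. By the geometric assumption, $\Gamma^D$ is the disjoint union (up to a null set) of small boundary faces $T_k^\vareps = \vareps(S((0,1)^{n-1}\times\{0\}) + k)$ with $k$ ranging over an index set $I_\vareps^D$. Each $T_k^\vareps$ is a face of a bulk micro-cell $C_k^\vareps := \vareps(R(Y) + k) \subset \Omega$ (where $R$ is the rotation induced by $S$), so that $T_k^\vareps \subset \partial C_k^\vareps$. The crucial observation is that on each such face the trace of $\weps$ vanishes on a subset of positive $(n-1)$-measure: indeed, after scaling and rotation back to the reference cell $Y$, the portion of $T_k^\vareps$ lying in $\geps^D$ corresponds to $\partial Y_s \cap \{y_n = 0\}$ (possibly another face by the rotation), which has positive $(n-1)$-measure by the standing assumption $|\partial Y_s \cap \{x_i = 0\}| > 0$ for $i = 1,\dots,n$.

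Hence, for the rescaled function $u(y) := \weps(\vareps R(y+k)) \in W^{1,p}(Y)$, which vanishes on $\partial Y_s \cap \{y_n = 0\}$, a standard Poincaré--trace inequality on the fixed Lipschitz domain $Y$ with vanishing on a set of positive measure gives
\begin{align*}
\|u\|_{L^p(\{y_n=0\})} \le C \|\nabla u\|_{L^p(Y)},
\end{align*}
with a constant $C$ depending only on $Y$ and $Y_s$ (and, after taking a maximum, on the finitely many possible face-rotations arising in the decomposition of $\partial\Omega$).

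A direct change of variables then yields
\begin{align*}
\|\weps\|_{L^p(T_k^\vareps)}^p = \vareps^{n-1} \|u\|_{L^p(\{y_n=0\})}^p, \qquad \|\nabla u\|_{L^p(Y)}^p = \vareps^{p-n} \|\nabla \weps\|_{L^p(C_k^\vareps)}^p,
\end{align*}
so that combining the two estimates gives the local bound
\begin{align*}
\|\weps\|_{L^p(T_k^\vareps)}^p \le C \vareps^{p-1} \|\nabla \weps\|_{L^p(C_k^\vareps)}^p.
\end{align*}
Since the bulk cells $C_k^\vareps$ are pairwise disjoint subsets of $\Omega$, summing over $k \in I_\vareps^D$ yields
\begin{align*}
\|\weps\|_{L^p(\Gamma^D)}^p = \sum_{k\in I_\vareps^D} \|\weps\|_{L^p(T_k^\vareps)}^p \le C\vareps^{p-1} \|\nabla \weps\|_{L^p(\Omega)}^p,
\end{align*}
and taking the $p$-th root delivers the claimed inequality with a constant independent of $\vareps$.

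The only real delicate point is the Poincaré--trace step on $Y$: one must verify that the constant there is uniform over the finitely many configurations (rotation of the face, orientation of the vanishing set), but this is immediate from compactness since there are only finitely many reference cells to consider. Everything else is a straightforward rescaling computation, and no extension operator or global argument is needed because the vanishing on $\geps^D$ occurs already inside every boundary micro-cell.
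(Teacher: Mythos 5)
Your proposal is correct and takes essentially the same approach as the paper: decompose $\Gamma^D$ into its constituent micro-faces, rescale each face together with its adjacent bulk cell to the reference cell $Y$, apply a Poincaré--trace inequality there (valid because the rescaled function vanishes on the image of $\geps^D$, which has positive $(n-1)$-measure by the assumption $|\partial Y_s\cap\{x_i=0\}|>0$), and sum the resulting local estimates using the disjointness of the bulk cells. The paper phrases the rescaled vanishing set as $\Gamma_k^D=\{y\in\partial Y:\vareps(y+k)\in\geps^D\}$ and notes that there are only finitely many such configurations, which is exactly the uniformity observation you make via the finitely many reference rotations; the scaling bookkeeping ($\vareps^{n-1}$ for the boundary integral, $\vareps^{p-n}$ for the gradient, giving $\vareps^{p-1}$ overall) is identical.
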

\begin{proof}
%The proof follows the same ideas as in \cite[Lemma 4]{GahnJaegerTwoScaleTools}. However, for the sake of completeness we give some details.
 Let 
\begin{align*}
K_{\vareps}^{b,D}:= \left\{k \in K_{\vareps} \, : \, \vareps\left( \overline{Y} + k\right) \cap \partial \Gamma^D \neq \emptyset \right\}.
\end{align*}
In other words, $K_{\vareps}^{b,D}$ are the elements, such that the micro-cells $\vareps(Y + k)$ for $k\in K_{\vareps}^b$ touch the outer boundary $\partial \Omega$. For $k \in K_{\vareps}^b$ we define 
\begin{align*}
\Gamma^D_k:= \left\{y \in \partial Y\, : \, \vareps(y + k) \in \geps^D\right\}.
\end{align*}
Then, the function $\weps(\vareps(\cdot_y + k)) \in W^{1,p}(Y)$ has a zero boundary condition on $\Gamma^D_k$. We emphasize that number of elements  in the set $\{\Gamma^D_k\}_{k\in K_{\vareps}}$ is independent of $\vareps$ and only depending on $n$ (also for domains $\Omega$ given as the union of rectangles). We apply the Poincar\'e and the trace inequality to obtain 
\begin{align*}
\|\weps\|_{L^p(\Gamma^D)}^p &= \sum_{k \in K_{\vareps}^{b,D}} \|\ueps\|^p_{L^p(\vareps (\partial Y + k) \cap \partial \Gamma^D)}
\\
&\le \vareps^{n-1} \sum_{k \in K_{\vareps}^{b,D}} \|\weps(\vareps(\cdot_y + k))\|^p_{L^p(\partial Y)}
\\
&\le C \vareps^{n-1 + p} \sum_{k \in K_{\vareps}^{b,D}} \|\nabla \weps(\vareps(\cdot_y + k))\|^p_{L^p(Y)}
\\
&\le C\vareps^{p-1} \|\nabla \weps\|_{L^p(\Omega)}^p.
\end{align*}
This implies the desired result.
\end{proof}
Now we prove Theorem \ref{TheoremKornContFunct}:
\begin{proof}[Proof of Theorem \ref{TheoremKornContFunct}]
We assume that the inequality from Theorem \ref{TheoremKornContFunct} is not valid for all $\veps \in W^{1,p}_{\Gamma_D^{\vareps}}(\oeps)^n$. Hence, there exists a subsequence $\{\vareps_n\}_{n\in \N} \subset \{\vareps\}$ and elements $v_{\vareps_n} \in W^{1,p}(\Omega_{\vareps_n})^n$ such that
\begin{align*}
1 = \|\nabla v_{\vareps_n} \|_{L^p(\Omega_{\vareps_n})} > n \|e_A(v_{\vareps_n})\|_{L^p(\Omega_{\vareps_n})}.
\end{align*}
Further, we can assume without loss of generality that $\vareps_n \to 0$ for $n\to \infty$. In fact, otherwise we have $\liminf_{n\to \infty} \vareps_n \neq 0$. Since $\vareps$ is a sequence tending to zero, the sequence $\{\vareps_n\}_{n\in \N}$ only consists of finitely many elements which we denote by $\vareps^1,\ldots,\vareps^m$ for $m\in \N$. From  \cite[Corollary 4.1]{pompe2003korn}  we obtain the existence of constants $C_{\vareps^i}>0$ for $i=1,\ldots,m$, such that for all $w_{\vareps^i} \in W^{1,p}(\Omega_{\vareps^i})^n$ it holds that
\begin{align*}
\|\nabla w_{\vareps^i}\|_{L^p(\Omega_{\vareps^i})} \le C_{\vareps^i} \|e_A(w_{\vareps^i})\|_{L^p(\Omega_{\vareps^i})}.
\end{align*}
Choosing $C_{\max}:= \max_{i=1,\ldots,m} \{C_{\vareps^i}\}$, we obtain in particular for the sequence $v_{\vareps_n}$ that for all $n\in \N$
\begin{align*}
n \|e_A(v_{\vareps_n})\|_{L^p(\Omega_{\vareps_n})} <\|\nabla v_{\vareps_n}\|_{L^p(\Omega_{\vareps_n})} \le C_{\max} \|e_A(v_{\vareps_n})\|_{L^p(\Omega_{\vareps_n})}.
\end{align*}
This leads to a contradiction for $n\to \infty$ and we have $\liminf_{n \to \infty} \vareps_n = 0$ and we can assume $\vareps_n\to 0$ for $n\to \infty$.

To simplify the notation we denote the subsequence $\vareps_n$ by $\vareps$. More precisely, we can formulate our assumption in the following way: There exists a sequence $\veps \in W^{1,p}(\oeps)$ and  constants $C_{\vareps}>0$ with $C_{\vareps} \to \infty$ for $\vareps\to 0$ such that
\begin{align*}
1 = \|\nabla \veps \|_{L^p(\oeps)} > C_{\vareps} \|e_A(\veps)\|_{L^p(\oeps)}.
\end{align*}
Especially, we get
\begin{align*}
\|e_A(\veps)\|_{L^p(\oeps)} \overset{\vareps \to 0}{\longrightarrow} 0.
\end{align*}
Let $\Omega^{\delta}:=\{x \in \R^n\, : \, \mathrm{dist}_{\infty}(x,\Omega)<\delta\}$ for $\delta >0$ be the $\delta$-neighborhood of $\Omega$. Due to the Tietze extension theorem, we can extend $A$ to a function in $C^0(\overline{\Omega^{\delta}})^{n\times n}$ (we use the same notation for the extension $A$) for arbitrary $\delta>0$. The Lipschitz continuity of the determinant and the continuity of $A$ imply the existence of $\mu>0$ and $\delta>0$, such that
\begin{align*}
\det(A)\geq \mu >0 \qquad\mbox{in } \overline{\Omega^{\delta}}.
\end{align*}
In the following we assume without loss of generality that $\vareps <\delta$.
Now, let $A_{\vareps} \in L^{\infty}(\Omega^{\delta})^{n\times n}$ be a sequence of matrix-valued functions which are constant on every micro-cell $\vareps(Y+ k)$ for $k\in K_{\vareps}^{\ast}$, and converging to $A$ in $L^{\infty}(\Omega^{\delta})^{n\times n}$ (this is possible since $A \in C^0(\overline{\Omega^{\delta}})^{n\times n}$). We emphasize that $\vareps (Y + k) \subset \Omega^{\delta}$ for $k\in K_{\vareps}^{\ast}$ and we are not interested in the values of $A_{\vareps}$ in micro-cells with $k\in \Z^n\setminus K_{\vareps}^{\ast}$.
There exists a constant $M>0$, such that (for more details we refer to the proof of Theorem \ref{TheoremKornContFunct} in Section \ref{sec:Korn_inequality})
\begin{align*}
\|A_{\vareps}\|_{L^{\infty}(\Omega^{\delta})} \le M, \qquad \det A_{\vareps} \geq \mu >0 \quad \mbox{ a.\,e. in } \Omega^{\delta}.
\end{align*}
Further we define $\tveps := E_{A_{\vareps}} (\veps)$ with the extension operator $E_{A_{\vareps}}$ from Proposition \ref{prop:extension_Aeps_connected}.  From the properties of the extension operator $E_{A_{\vareps}}$ and the Poincar\'e inequality in Lemma \ref{lem:Poincare-inequality} we obtain for a constant $C>0$ independent of $\vareps$
\begin{align*}
\|\tveps\|_{W^{1,p}(\Omega)} \le C \|\nabla \veps \|_{L^p(\oeps)} \le C.
\end{align*}
Hence, $\tveps$ is bounded in $W^{1,p}(\Omega)^n$. Further, we get from inequality $\eqref{ineq:glob_ext_Aeps_eAeps}$ that
\begin{align*}
\|e_{A_{\vareps}}(\tveps)\|_{L^p(\Omega)} &\le C\left( \|e_{A_{\vareps}} (\veps)\|_{L^p(\oeps)} +  \Delta_{\vareps,1} \|\nabla \veps \|_{L^p(\oeps)} \right),
\end{align*}
with 
\begin{align*}
\Delta_{\vareps,1}:= \sup_{l\in K_{\vareps}, e \in \E^1} \left| A_{\vareps}^{l + e} - A_{\vareps}^l \right| \le \sup\left\{|A(x) - A(y)| \, : \, x,y \in \overline{\Omega^{\delta}}, \, |x-y | \le 2\sqrt{n}\vareps\right\}.
\end{align*}
Therefore $\Delta_{\vareps,1} \rightarrow 0$ for $\vareps\to 0$ and we obtain
\begin{align*}
\|e_A(\tveps)\|_{L^p(\Omega)} &\le \|e_{A_{\vareps}}(\tveps)\|_{L^p(\Omega)} + 2 \|A_{\vareps} - A\|_{L^{\infty}(\Omega)}\underbrace{\|\nabla \tveps\|_{L^p(\Omega)}}_{\le C}
\\
&\le C \left( \|e_{A_{\vareps}}(\veps)\|_{L^p(\oeps)} + \|A_{\vareps} - A\|_{L^{\infty}(\Omega)} + \Delta_{\vareps,1}\right)
\\
&\overset{\vareps\to 0}{\longrightarrow} 0.
\end{align*}
From  \cite[Theorem 2.3]{pompe2003korn} we obtain the existence of a constant $C_A>0$ independent of $\vareps$, such that 
\begin{align*}
1 \le \|\nabla \tveps\|_{L^p(\Omega)} \le C_A \left( \|\tveps\|_{L^p(\Gamma^D)} + \|e_A(\tveps)\|_{L^p(\Omega)} \right).
\end{align*}
The second term on the right-hand side tends to zero, due to the calculations above, and the first term tends  to zero because of Lemma \ref{lem:estimate_outer_bd_extension},
what leads to a contradiction and implies the desired result.
\end{proof}

\end{appendix}

\bibliographystyle{abbrv}
\bibliography{literature} 

\end{document}